\theoremstyle{definition}
\newtheorem{defi}{Définition}[subsection]
\newtheorem{exe}[defi]{Exemple}
\newtheorem{rem}[defi]{Remarque}
\theoremstyle{plain}
\newtheorem{prop}[defi]{Proposition}
\newtheorem{lem}[defi]{Lemme}
\newtheorem{cor}[defi]{Corollaire}
\newtheorem{theo}[defi]{Théorème}
\newtheorem{notation}[defi]{Notation}
\newtheorem*{thoeremnonnum}{Théorème}
\newtheorem{theolettre}{Théorème}
\newtheorem{corlettre}[theolettre]{Corollaire}
\newcommand{\var}{\underline{\hspace{7pt}}}
\newcommand{\N}{\mathcal{N}}
\definecolor{darkgreen}{rgb}{0,0.45,0}
\definecolor{darkred}{rgb}{0.75,0,0}
\definecolor{darkblue}{rgb}{0,0,0.6}
\newcommand{\pushoutcorner}[1][dr]{\save*!/#1+1.2pc/#1:(1,-1)@^{|-}\restore}
\newcommand{\note}[1]{{\color{black}#1}}
\newcommand\infcat{\textbf{$\omega$-cat} }
\newcommand\infcatB{  \mbox{$\textbf{$\omega$-cat}_B$}   }
\newcommand\strat{\textbf{Strat}}
\DeclareMathOperator{\dec}{dec}
\newcommand\CDA{\textbf{CDA}~}
\newcommand\CDAB{\mbox{$\textbf{CDA}_B$}~}
\newcommand{\nocontentsline}[3]{}
\newcommand{\tocless}[2]{\bgroup\let\addcontentsline=\nocontentsline#1{#2}\egroup}
\def\commutatif{\ar@{}[d]|{\circlearrowright}}
\author{Félix Loubaton}
\title{Conditions de Kan sur les nerfs des $\omega$-catégories}
\begin{document}
\maketitle

\selectlanguage{french}
\begin{abstract}
On montre que  le nerf de Street $\N(C)$ d'une $\omega$-catégorie stricte $C$ est un complexe de Kan (respectivement une quasi-catégorie) si et seulement si les $n$-cellules de $C$  pour $n\geq 1$ (respectivement $n> 1$) sont faiblement inversibles.
\note{
 De plus, on munit $\N(C)$ d'une structure d'ensemble complicial saturé où les $n$-simplexes marqués correspondent aux morphismes du $n^{i\grave{e}me}$ oriental vers $C$ envoyant l'unique $n$-cellule non triviale du domaine sur une cellule faiblement inversible de $C$.} 
\end{abstract}

\selectlanguage{english}
\begin{abstract}
We show that the Street nerve of a strict $\omega$-category  $C$  is a Kan complex (respectively a quasi-category) if and only if the $n$-cells of $C$ for $n\geq 1$ (respectively $n> 1$) are weakly invertible. 
\note{
Moreover, we equip $\N(C)$ with a structure of saturated complicial set  where the $n$-simplices correspond to morphisms from the $n^{th}$ oriental to $C$ sending the unique non-trivial $n$-cell of the domain to a weakly invertible cell of $C$.
}
\end{abstract}

\selectlanguage{french}

\tableofcontents

\section*{Introduction}

Dans \cite{groth}, Grothendieck introduit le foncteur nerf entre la catégorie des petites catégories et celle des ensembles simpliciaux. Ce foncteur est défini grâce à l'objet cosimplicial qui envoie $[n]$ sur la petite catégorie suivante:
$$ h_0([n]) := 
\xymatrix{
0\ar[r] & 1 \ar[r]& \cdots \ar[r]& n.
}$$
Le nerf d'une petite catégorie $C$ est défini par la formule $\N_{cat}(C)_n:=Hom(h_0([n]),C)$. De plus, ce foncteur admet un adjoint à gauche, qui associe à un ensemble simplicial $X$, la catégorie $h_0(X)$. De nombreuses notions de la théorie des catégories peuvent alors être "traduites" dans le langage des ensembles simpliciaux, ce qui est le point de départ de la théorie  des $(\infty,1)$-catégories. Intéressons nous en particulier à la notion de groupoïde, et à sa "traduction" dans les ensembles simpliciaux. 

L'inclusion $\Lambda^0[2]\to \Delta[2]$ est envoyée par le foncteur $h_0$ sur l'inclusion de catégories:
$$
\xymatrix{
& 1&&   &
& 1 \ar@{}[d]|-{\circlearrowleft}  \ar[rd]^{\sigma_{1,2}}&\\
0\ar@{}[rrrrurr]|-\hookrightarrow\ar[rr]_{\sigma_{0,2}}\ar[ru]^{\sigma_{0,1}}&& 2
&&0\ar[rr]_{\sigma_{0,2}}\ar[ru]^{\sigma_{0,1}}&& 2}
$$
Ainsi, par adjonction, l'ensemble simplicial $\N_{cat}(C)$ a la propriété de relèvement à droite par rapport à $\Lambda^0[2]\to \Delta[2]$, si et seulement si  pour tout couple de morphismes $(f,g)$  de $C$ de même domaine, il existe un morphisme $x:t(g)\to t(f)$ tel que $f = x\circ g$. Dans le formalisme que l'on développera, on dira que  \textit{l'équation  $\textbf{Eq}(\sigma_{0 2} = x\circ \sigma_{0 1})$ admet une solution pour tout choix de paramètre}. On peut alors démontrer simplement que $\N_{cat}(C)$ a cette propriété  de relèvement  si et seulement si tous les morphismes de $C$ admettent des  inverses à gauche. De façon analogue, $\N_{cat}(C)$ a la propriété de relèvement à droite par rapport à $\Lambda^2[2]\to \Delta[2]$,  si et seulement si tous les morphismes de $C$ admettent des  inverses à droite, ou dans notre formalisme, si et seulement si \textit{l'équation  $\textbf{Eq}(\sigma_{0 2} = \sigma_{1 2}\circ x)$ admet une solution pour tout choix de paramètre}. Enfin on en déduit que $C$ est un groupoïde si et seulement si $\N_{cat}(C)$ a la propriété de relèvement à droite par rapport à $\Lambda^i[2]\to \Delta[2]$ pour $i=0,2$.  Le nerf de $C$ a en fait la propriété de relèvement à droite par rapport à toute les inclusions de cornets: 
\begin{thoeremnonnum}[Boardman \& Vogt]
Une catégorie $C$ est un groupoïde si et seulement si l'ensemble simplicial $\N_{cat}(C)$ a la propriété de relèvement par rapport aux inclusions $\Lambda^i[n]\to \Delta[n]$ pour $0\leq i\leq n$.
\end{thoeremnonnum}

Dans \cite{street}, Street définit un nerf de la catégorie des petites $\omega$-catégories \footnote{Une $\omega$-catégorie est la donnée d'un ensemble de $0$-cellule, pour tout couple de $0$-cellules d'un ensemble de $1$-cellules, pour tout couple parallèle  de $1$-cellules, un ensemble de $2$-cellules, etc..., muni de compositions vérifiant des lois d'associativité  et de distributivité strictes. \note{Cette notion est précisément définie dans la définition \ref{def:omegacat}}.} vers la catégorie des ensembles simpliciaux. Ce nerf est construit grâce à l'objet cosimplicial qui à $[n]$, associe le $n$\up{ième} oriental, noté $|[n]|$. Pour les petites dimensions, on peut en donner une représentation graphique:

$$
 \def\arraystretch{2}
\begin{array}{rcl}
[0]&\mapsto &
\xymatrix{
0
}\\
~[1] &\mapsto&
\xymatrix{
0\ar[rr]^{\sigma_{01}}& & 1
} 
\\
\xymatrix{ ~\\ ~ \ar@{}[u]|-{\txt{$[2]$}}} &\xymatrix{ ~\\ ~ \ar@{}[u]|-{\txt{$\mapsto$}}}&
\xymatrix{
&1\ar[rd]^{\sigma_{12}}&\\
0\ar[rr]_{\sigma_{02}}\ar[ru]^{\sigma_{01}}& &\ar@{}[u]^{\rotatebox{90}{$\Rightarrow~~~$}\sigma_{012}~~~~~} 2
}\\
\xymatrix{ ~\\ ~ \ar@{}[u]|-{\txt{$[3]$}}} &\xymatrix{ ~\\ ~ \ar@{}[u]|-{\txt{$\mapsto$}}}&
\xymatrix{
1\ar[rr]^{\sigma_{12}}&&2\ar[rd]^{\sigma_{23}}&
&&&1 \ar[rr]^{\sigma_{12}} 
\ar[rd]|-{\sigma_{13}} && 2 \ar[ld]^{\sigma_{23}}&&&&&&&
\\ 
\ar@{}[rrrrrrrru]|-\Rrightarrow_{\sigma_{0123}}&0\ar[rr]_{\sigma_{03}}\ar[ul]^{\sigma_{01}}\ar[ru]|-{\sigma_{02}}\ar@<-14pt>@{}[u]^------{\rotatebox{140}{$\Rightarrow$}\sigma_{012}} &  &\ar@{}[u]^{\rotatebox{90}{$\Rightarrow~~~$}\sigma_{023}~~~~~} 3
&& 0\ar[rr]_{\sigma_{03}}\ar[ru]^{\sigma_{01}}& &\ar@{}[u]^{\rotatebox{90}{$\Rightarrow~~~$}\sigma_{013}~~~~~} 3 \ar@<-18pt>@{}[u]^------{\rotatebox{40}{$\Rightarrow$}\sigma_{123}} }
\end{array}
$$

Le nerf de Street est alors défini par la formule $\N(C)_n:=Hom(|[n]|,C)$. Par exemple, une $\omega$\nobreakdash -catégorie $C$ a la propriété de relèvement par rapport à l'inclusion $\Lambda^0[2]$ si pour tout couple de $1$-cellules de même domaine $(f,g)$, il existe une $1$-cellule $x$ ainsi qu'une $2$-cellule $y:f\to x*_0 g$. On dira alors que \textit{l'équation $\textbf{Eq}(y:\sigma_{03}\to x*_0 \sigma_{03})$ admet une (pré)-solution pour tout choix de paramètre}. De même, $C$ a la propriété de relèvement par rapport à l'inclusion $\Lambda^0[3]\to \Delta[3]$ si pour tout sextuplet de $1$-cellules $(f,g,h,i,j,k)$, et tout triplet de $2$-cellules $(\alpha,\beta,\gamma)$ telles que $\alpha:g\to i*_0 f$, $\beta:h\to k*_0 g$, et $\gamma:h\to j*_0f$, il existe une $2$-cellule $x:j\to k*_0 i$, ainsi qu'une $3$-cellule 
$$y: (k*_0 \alpha)*_1 \beta \to (x*_0 f)*_1 \gamma.$$
On dira alors que \textit{l'équation 
$$\textbf{Eq}(y: (\sigma_{23}*_0 \sigma_{012})*_1(\sigma_{023})\to (x*_0 \sigma_{01})*_1 \sigma_{013})$$
admet une (pré-)solution pour tout choix de paramètre}.

L’objectif \note{premier} est d'étudier les équations définies par les inclusions de cornets, afin d'en déduire le théorème suivant:
\begin{theolettre}[\ref{theo:theo principale}]
\label{theo:intro1}
Soit $C$ une $\omega$-catégorie. L'ensemble simplicial $\N(C)$ a la propriété de relèvement par rapport aux inclusions ${\Lambda^i[n+1]\to \Delta[n+1]}$ pour tout $n>0$ et tout $0\leq i\leq n$ (resp. pour tout $n>1$ et tout $0<i<n$) si et seulement si les cellules de $C$ de dimension supérieure ou égale à $1$ (resp strictement supérieur à $1$) sont faiblement inversibles.
\end{theolettre}

\note{
Une propriété importante du  nerf catégorique est que l'on peut caractériser son image: un ensemble simplicial y appartient si et seulement si il a la propriété de relèvement unique à droite par rapport aux inclusions de cornets intérieurs. Cependant, pour le nerf de Street, il n'existe pas d'ensemble de monomorphismes $S$ tel qu'un ensemble simplicial soit le nerf de Street d'une $\omega$-catégorie si et seulement si il a la propriété de relèvement unique à droite par rapport aux morphismes de $S$.
Le problème provient du fait que pour une $\omega$-catégorie $C$, il est impossible de distinguer des autres les $n$-simplexes qui correspondent à des morphismes $|[n]|\to C$ qui envoient l'unique $n$-cellule non triviale du domaine sur une unité. Pour pallier ce problème, Robert, puis Street considérèrent un nerf à valeur dans la catégorie des \textit{ensembles simpliciaux stratifiés}. Cette catégorie a pour objet les couples $(K,tK)$ où $K$ est un ensemble simplicial et $tK$ est un sous ensemble des simplexes de $K$ comprenant les simplexes dégénérés. Les morphismes entre $(K,tK)$ et $(L,tL)$ sont les morphismes $f:K\to L$ tel que $f(tK)\subset tL$. Un simplexe dans $tK$ est dit \textit{marqué}. On munit alors $\N(C)$ de la stratification composée des simplexes $|[n]|\to C$ qui envoient l'unique $n$-cellule non triviale du domaine sur une unité. Robert conjecturas alors l'existence d'un ensemble de monomorphismes $S$ tel qu'un ensemble simplicial stratifié soit le nerf de Street d'une $\omega$-catégorie si et seulement si il a la propriété de relèvement unique à droite par rapport aux morphismes de $S$. Cette conjecture a finalement été démontrée par Verity (\cite{verity}). Il est intéressant de noter qu'au niveau des ensembles simpliciaux, ces morphismes sont des inclusions de cornets.

De la même façon que les ensembles simpliciaux ayant la propriété de relèvement à droite par rapport aux inclusions de cornets intérieurs, c'est à dire les quasi-catégories, forment un modèle des $(\infty,1)$-catégories, on voudrait que les ensembles simpliciaux stratifiés ayant la propriété de relèvement à droite par rapport aux morphismes de $S$, appelés \textit{ensembles compliciaux}, soient un modèle des $(\infty,\omega)$-catégories. Dans ce contexte, les simplexes marqués correspondraient à des $n$-cellules (faiblement) inversibles. Cependant, pour que cette interprétation des simplexes marqués soit correcte, il faut ajouter une condition supplémentaire sur la stratification, qui correspond à un analogue de la propriété  ($2$ parmi $6$)\footnote{Si $C$ est une catégorie, une classe $W$ de morphismes vérifie la propriété  ($2$ parmi $6$) lorsque pour tout triplet de morphismes $f,g,h$ tel que $gf$ et $hg$ soient dans $W$, alors $f,g,h$ et $hgf$ sont dans $W$.} que doivent vérifier les $n$-cellules faiblement inversibles. Cela amène à considérer la notion d'\textit{ensemble complicial saturé} (définition \ref{defi:ensembles complicial sature}). Cependant, bien que la stratification évoquée au paragraphe précédent munisse $\N(C)$ d'une structure d'ensemble complicial, elle n'est pas saturée en général.

Dans \cite{riehl}, Riehl évoque la possibilité de considérer d'autres stratifications sur le nerf d'une $\omega$-catégorie, afin d'obtenir une structure d'ensemble complicial saturé. Si $C$ est une $1$-catégorie, une telle stratification est obtenue  en marquant les  $1$-simplexes qui correspondent à des $1$-cellules inversibles  (\cite[proposition 3.1.5]{riehl}). 
De plus, Ozornova et Rovelli ont montré que si $C$ est une $2$-catégorie, une telle stratification est obtenue en marquant les $n$-simplexes qui correspondent à des morphismes $|[n]|\to C$ qui envoient l'unique $n$-cellule non triviale du domaine sur une $1$-cellule $2$-inversible si $n=1$, sur une $2$-cellule inversible si $n=2$ et sur une identité si $n\geq 3$ (\cite[théorème 5.2]{martina}). 
Gagna, Harpaz et Lanari démontre un résultat analogue pour les ensembles simpliciaux échelonnées dans \cite{harpaz}. Grâce à l'étude des inclusions de cornets qu'on se propose d'effectuer, on pourra montrer une généralisation de ces résultats:
\begin{theolettre}[\ref{theo: n(c) est un ensemble complicial}]
\label{theo:intro2}
Soit $C$ une $\omega$-catégorie. Si on définit $t\N(C)$ comme étant l'ensemble des simplexes de $C$ correspondant aux morphismes $|\Delta[n]|\to C$ qui envoient l'unique $n$-cellule non triviale du domaine sur une cellule faiblement inversible (définition \ref{def:definition faiblement inversible}), l'ensemble simplicial stratifié $(\N(C),t\N(C))$ est un ensemble complicial saturé. 
\end{theolettre}

Si on appelle $n$-trivial une $\omega$-catégorie (resp. un ensemble complicial) dont toutes les cellules de dimension strictement supérieures à $n$ sont faiblement inversibles (resp. tous les simplexes de dimension strictement supérieur à $n$ sont marqués), on
 peut  en déduire un analogue du théorème $\ref{theo:intro1}$ pour les ensembles simpliciaux stratifiés:

\begin{corlettre}
Soit $C$ une $\omega$-catégorie. L’ensemble complicial $(\N(C),t\N(C))$ est $n$-trivial si et seulement si $C$ est $n$-trivial.
\end{corlettre}

}

Malheureusement, comme en témoignent les diagrammes présents dans l'article de Street, les orientaux deviennent rapidement très compliqués lorsque la dimension augmente. On doit donc réaliser un important travail préliminaire avant de pouvoir démontrer ces deux théorèmes. 

La première étape va être d'étudier la théorie développée dans \cite{steiner}. Dans cet article, Steiner construit un foncteur $\nu : \CDA\to \infcat$ entre une catégorie composée de complexes de chaînes munis d'une structure additionnelle, appelés les complexes dirigés augmentés, et la catégorie des  $\omega$-catégories strictes. Il montre de plus que ce foncteur admet un adjoint à gauche. Restreint aux complexes dirigés augmentés libres admettant une "bonne" base, il devient une équivalence de catégorie dont le codomaine est composé des $\omega$-catégories admettant un "bon" ensemble de générateurs.

Nous allons construire un autre foncteur $\mu$, entre la catégorie des complexes dirigés augmentés  admettant une "bonne" base et la catégorie des $\omega$-catégories admettant un "bon" ensemble de générateurs, isomorphe à $\nu$, qui utilisera le formalisme  des \textit{chaînes}. C'est alors un  cadre adapté pour définir  un "algorithme" qui exprime les cellules de $\nu K\cong \mu K$ en un composé de générateurs, où $K$ est un complexe dirigé augmenté. 

Les orientaux correspondent alors à l'objet cosimplicial défini par le composé des foncteurs suivants:
$$\Delta \xrightarrow{C_\bullet} \CDAB\xrightarrow{\mu} \infcat$$
où le premier foncteur envoie un ensemble simplicial sur le complexe de chaîne réduit associé, muni d'une structure de complexe dirigé augmenté admettant une "bonne" base.  Grâce à l'algorithme de décomposition, on \note{pourra} alors exprimer les équations que doivent vérifier les $\omega$-catégories pour que leur nerf \note{verifient} la condition de relèvement à droite par rapport aux inclusions de cornets, et après un examen attentif de ces équations,  \note{on en déduira} le théorème \ref{theo:intro1}.

Cette étude approfondie des équations de cornets nous \note{permettra alors de définir la structure d'ensemble complicial saturé sur le nerf de Street d'une $\omega$-catégorie.}

\tocless \subsection*{\textbf{Organisation de l'article.}}
On rappelle dans la première section quelques définitions et résultats sur les $\omega$-catégories et on expose la théorie de Steiner. 

L'objectif de la deuxième section est de construire le foncteur $\mu$ et de donner l'algorithme  de décomposition (théorème \ref{theo:decomposition explicite}).

Dans la troisième section, on présente deux développements. Le premier est un théorème qui donne des conditions suffisantes pour qu'une somme amalgamée dans la catégorie des complexes dirigés induise une somme amalgamée dans les $\omega$-catégories (théorème \ref{theo:theo dans le cas fort quasi-rigide}). Le deuxième est la présentation de la notion d'équation dans une $\omega$-catégorie.

Dans la quatrième section, on se sert du théorème de décomposition  et de la notion d'équation pour montrer que le nerf de Street d'une $\omega$-catégorie $C$ a la propriété de relèvement par rapport aux inclusions de cornets si et seulement si on peut toujours y résoudre des équations d'une certaine forme. L'étude précise de ces équations permet alors de montrer que le nerf de $C$ est un complexe de Kan (respectivement une quasi-catégorie) si et seulement si les $n$-cellules de $C$  pour $n\geq 1$ (respectivement $n>1$) sont faiblement inversibles (théorème  \ref{theo:theo principale}). 

Enfin, la dernière section présente une généralisation de ces résultats aux ensembles compliciaux. On y montre qu'on peut munir $\N(C)$ d'une stratification vérifiant les axiomes des ensembles compliciaux  \note{saturés} (théorème \ref{theo: n(c) est un ensemble complicial}).  L'ensemble complicial $\N(C)$ est alors $n$-trivial si et seulement si  les $k$-cellules  de $C$ pour $k\geq n$ sont faiblement inversibles.

\tocless \subsection*{\textbf{Remerciements.}}
Je tiens à remercier Georges Maltsiniotis, sans qui cet article n'aurait pu exister. C'est lui qui m'a proposé ce problème, et qui, par ses nombreuses relectures attentives, m'a appris à rédiger proprement et rigoureusement. 

\note{Je tiens aussi à remercier François Metayer pour m'avoir communiqué ses notes sur la coinduction, et la.le raporteur.teuse pour les nombreuses, précises et toujours justes remarques et corrections.}

\section{Quelques définitions et rappels}

\subsection{$\omega$-Catégories}
Dans cette partie, on va définir les $\omega$-catégories (strictes) et en donner quelques propriétés. \note{
Cette partie est très inspirée  de \cite{meteyer}, la seule différence est qu'on n'utilise pas le principe de co-induction pour les définitions et résultats liés aux cellules faiblement inversibles.}

\begin{defi}
On définit la  petite catégorie $\textbf{O}$ dont les objets sont les entiers naturels $0,1,2,...$ et dont les morphismes sont engendrés par 
$\delta^-_n,\delta^+_n: n\to n+1$ sujets aux équations:
$$
\begin{array}{rcl}
\delta^+_{n+1}\circ \delta^+_n &= &\delta^-_{n+1}\circ \delta^+_n; \\
\delta^+_{n+1}\circ \delta^-_n &=& \delta^-_{n+1}\circ \delta^-_n.
\end{array}
$$
\end{defi}

\begin{defi}
Un \textit{ensemble globulaire} est un préfaisceau sur $\textbf{O}$. On définit la catégorie des ensembles globulaires: 
$$\textbf{Glob} := \textbf{Set}^{\textbf{O}^{op}}.$$
\end{defi}
Un ensemble globulaire $X$ est donc la donnée d'une famille d'ensembles $X_n:=X(n)$ \note{pour $n\geq 0$}, et de morphismes $d^+_n := X(\delta^+_n) : X_{n+1}\to X_n$ et $d^-_n := X(\delta^-_n) : X_{n+1}\to X_n$  qui vérifient les équations : 
$$
\begin{array}{rcl}
d^+_{n}\circ d^+_{n+1} &= &d^+_{n}\circ d^-_{n+1}; \\
d^-_{n}\circ d^+_{n+1} &=& d^-_{n}\circ d^-_{n+1}.
\end{array}
$$

Une cellule est un élément de l'ensemble $\coprod_{n\in \mathbb{N}} X_n$. Pour une cellule $c$, sa \textit{dimension} est l'entier $n$ tel que $c\in X_n$. On dit alors que $c$ est une \textit{$n$-cellule}. Les morphismes $d^-_n$ et $d^+_n$ sont appelés respectivement $n$-sources et $n$-buts. 

Pour un couple d'entiers  $(n>m)$ et pour $\alpha\in\{-,+\}$, on étend l'application $d_m^\alpha$ à $X_n$:

$$\begin{array}{rccl}
&X_n & \to & X_m\\
~~~~~~~~~~~~~~~~~d_m^\alpha:&x&\mapsto& d_m^{\alpha} \circ d_{m+1}^\alpha \circ ... \circ d^\alpha_{n-1} x
\end{array}$$

Soient $c$ une $n$-cellule de dimension strictement positive et $m$ un entier strictement inférieur à $n$.
La $m$-cellule $a:= d_{m}^-c$ (resp. la $m$-cellule $b:=d_{m}^+c$) est appelée la \textit{$m$-source} (resp. \textit{$m$-but}) de $c$, et on écrit alors $c:a\to_m b$. Dans le cas où $m=n-1$, la cellule $a$ (resp. $b$) est simplement appelée la source (resp. le but) de $c$ et on écrit alors $c:a\to b$.

\begin{defi}
Pour deux entiers $n>m$ et deux $n$-cellules $c$ et $d$, on dit que les cellules $c$ et $d$ sont \textit{$m$-composables} lorsque $d_m^+c = d_m^- d$. Elle sont \textit{$m$-parallèles} lorsque $d_m^+c = d_m^+ d$ et $d_m^-c = d_m^- d$. 
Deux $n$-cellules $(n-1)$-composables (resp. $(n-1)$-parallèles) sont dites \textit{composables} (resp. \textit{parallèles}).
\end{defi}

On peut maintenant définir la notion de $\omega$-catégorie (stricte).
\begin{defi}
\label{def:omegacat}
Une  \textit{$\omega$-catégorie (stricte)} est un ensemble globulaire muni d'opérations de \textit{composition}
$$C_i \times_{C_j} C_i\to C_i ~~~~~~~~~0\leq j <i $$
associant à deux $i$-cellules $j$-composables $c$ et $d$, une $i$-cellule $d*_j c$ ainsi que des \textit{unités} 
$$C_j\to C_i~~~~~~~~~0\leq j <i $$
associant à une $j$-cellule $c$, une  $i$-cellule $1^i_c$. On définit $1_c:= 1^{j+1}_c$. De plus, les compositions et unités doivent satisfaire les axiomes suivants: 
\begin{enumerate}
\item (Associativité) Pour tout couple d'entiers $n>m$ et pour tout triplet de $n$-cellules $(c,d,e)$ telles que les couples $(c,d)$ et $(d,e)$ soient $m$-composables:
$$ e *_m(d*_m c) = ( e *_md )*_m c;$$
\item (Distributivité) Pour tout triplet d'entiers $n>m>k$ et pour toutes  $n$-cellules $c,d,e$ et $f$ telles que les couples $(c,d)$ et $(e,f)$ soient $m$-composables et que les couples $(c,e)$ et $(d,f)$ soient $k$-composables:
$$ (f*_m e)*_k (d*_m  c)= (f*_kd)*_m (e*_k c); $$
\item (Unité I) Pour tout couple d'entiers $n>m$ et toute $n$-cellule $c:a\to_m b$:
$$ 1^n_b*_m c = c*_m 1^n_a = c;$$
\item(Unité II) Pour tout triplet d'entiers $n>m>k$ et tout couple de $m$-cellules $(c,d)$ qui sont $k$-composables:
$$1^n_d*_k 1^n_c = 1^n_{d*_k c};$$
\item (Unité III) Pour tout triplet d'entiers $n>m>k$ et toute $k$-cellule $a$: $$1^n_{1^m_a} = 1^n_a.$$
\end{enumerate}
\end{defi}

\begin{notation}
Pour des entiers $n>m>k$, une $n$-cellule $c :a\to_k b$ et une $m$-cellule $d:b\to_k e$, on note $d*_{k}c$  la $k$-composition $1^{n}_d*_{k} c$.
 De façon symétrique, pour une $m$-cellule $c :a\to_k b$ et une $n$-cellule $d:b\to_ke$, on note $d*_{k}c$  la $k$-composition $d*_{k} 1^{n}_c$.
\end{notation}

\begin{defi}
La catégorie $\infcat$ a comme objets les $\omega$-catégories, et comme morphismes les morphismes d'ensembles globulaires qui préservent les compositions et les unités. 
\end{defi}

\note{
\begin{defi}
\label{def:definition faiblement inversible}
Soit $C$ une $\omega$-catégorie. Un \emph{ensemble d'inversibilité} est un ensemble $E\subset \coprod_{n>0} C_n$ 
tel que 
pour tout $n$ et toute $(n+1)$-cellule $b\in E$, il existe $\tilde{b},c,c'\in E$ telles que $\tilde{b} \in C_{n+1}$, $c,c'\in C_{n+2}$ et
$$
\begin{array}{rcl}
c &:& 1_{d^-_n b}  \to \tilde{b}*_{n}b \\
c' &:&1_{d^+_n b} \to       b*_{n}\tilde{b}\\
\end{array}
$$
Une cellule de dimension strictement positive $a$ est \textit{faiblement inversible} s'il existe un ensemble d'inversibilité $E$ tel que $a\in E$. La cellule $\tilde{a}$ est alors appelée un \textit{inverse faible} de $a$.
\end{defi}

\begin{lem}
\label{lem:characterisation des cellules faiblement inversible type coinduction}
Une $n$-cellule $a$ est faiblement inversible si et seulement si il existe une $n$-cellule $\tilde{a}:d^+_{n-1}a\to d^-_{n-1}a$ ainsi que  deux $(n+1)$-cellules faiblement inversibles $c,c'$ telles que:
$$
\begin{array}{rcl}
c &:& 1_{d^-_{n-1} a}  \to \tilde{a}*_{n-1}a \\
c' &:&1_{d^+_{n-1} a} \to       a*_{n-1}\tilde{a}.\\
\end{array}
$$
\end{lem}
\begin{proof}
Supposons tout d'abord que $a$ est faiblement inversible. Il existe donc un ensemble d'inversibilité $E$ comprenant $a$. Par définition des ensembles d'inversibilité, il existe $\tilde{a},c,c'\in E$ ayant les sources et buts désirés. Enfin, $c$ et $c'$ étant comprises dans $E$, elles sont faiblement inversibles.

Réciproquement, supposons qu'il existe $\tilde{a},c,c'$ vérifiant les conditions voulues. Soient $E$ et $E'$ deux ensembles d'inversibilité tels que $c\in E$ et $c'\in E'$. Remarquons alors que 
$$E'':= \{a,\tilde{a}\}\cup E\cup E'$$
est un ensemble d'inversibilité comprenant $a$. La cellule $a$ est donc faiblement inversible.
\end{proof}
\begin{rem}
Le lemme précédent implique que l'ensemble des cellules faiblement inversibles est un ensemble d'inversibilité. Il est alors l'ensemble d'inversibilité maximal.
\end{rem}

\begin{lem}
\label{lem:les uniés sont faiblement inversible}
Les unités sont faiblement inversibles.
\end{lem}
\begin{proof}
Il suffit de remarquer que l'ensemble des unités est un ensemble d’inversibilité.
\end{proof}

\begin{lem}
\label{lem:cellules faiblment inversible stable par composition horizontales}
Soient $k$ un entier, et $a$ et $b$ deux cellules faiblement inversibles de dimension strictement supérieures à $(k+1)$.
Si $a$ et $b$ sont $k$-composables, alors  $b*_ka$ est faiblement inversible.
\end{lem}
\begin{proof}
Soient $E$ un ensemble d'inversibilité comprenant $a$ et $E'$ un ensemble d'inversibilité comprenant $b$. On définit $E''$ comme étant l’ensemble des cellules pouvant s'écrire sous la forme $f*_k e$ où $e$ (resp. $f$) est une cellule dans $E$  (resp. dans $E'$), de dimension strictement supérieure à $(k+1)$. La cellule $b*_ka$  est comprise dans $E''$, et il suffit donc de montrer que ce dernier est un ensemble d'inversibilité. On se donne donc une $(n+1)$-cellule $g\in E''$. Par définition, il existe $e\in E$ et $f\in E'$ telles que $g:=f*_ke$. Il existe donc $\tilde{e},c,c'\in E$  et $\tilde{f},d,d'\in E$ telles que 
$$
\begin{array}{rclrrcl}
c &:& 1_{d^-_{n} e}  \to \tilde{e}*_{n}e &~~~~&      d &:& 1_{d^-_{n} f}  \to \tilde{f}*_{n}f\\
c' &:&1_{d^+_{n} e} \to       e*_{n}\tilde{e} &~~~~& d' &:&1_{d^+_{n} f} \to   f*_{n}\tilde{f}.\\
\end{array}
$$
Si on définit 
$$\tilde{g}=\tilde{f}*_k \tilde{e},~~~~~~ l:=d*_k c,~~~~~~ l':=d'*_k c'$$ on a alors: 
$$
\begin{array}{rcl}
l &:& 1_{d^-_{n} g}  \to \tilde{g}*_{n}g \\
l' &:&1_{d^+_{n} g} \to       g*_{n}\overline{g}.\\
\end{array}
$$ 
On a donc montré que $E''$ est un ensemble d'inversibilité, ce qui implique que $b*_ka$ est faiblement inversible.
\end{proof}

\begin{lem}
\label{lem:cellules faiblment inversible stable par composition verticales}
Soient $n$ un entier,  et $a$ et $b$ deux $(n+1)$-cellules faiblement inversibles. Si $a$ et $b$ sont $n$-composables, alors  $b*_{n}a$ est faiblement inversible.
\end{lem}
\begin{proof}
On va montrer que l'ensemble $E$, composé des éléments de la forme $a*_{k}b$ où $k$ est un entier quelconque  et $a$ et $b$ sont des $(k+1)$-cellules faiblement inversibles, est un ensemble d'inversibilité. Cela impliquera le résultat. Soit $e$ une $(k+1)$-cellule de $E$. Il existe donc deux $(k+1)$-cellules faiblement inversibles $a$ et $b$ telles que $e:= b*_{k}a$. Soient $\tilde{a},c,c'$ et $\tilde{b},d,d'$ vérifiant:
$$
\begin{array}{rclrrcl}
c &:& 1_{d^-_{k} a}  \to \tilde{a}*_{k}a &~~~~&   d &:& 1_{d^-_{k} b}  \to \tilde{b}*_{k}b\\
c' &:&1_{d^+_{k} a} \to  a*_{k}\tilde{a} &~~~~&   d' &:&1_{d^+_{k} b} \to   b*_{k}\tilde{b}.\\
\end{array}
$$
On définit alors 
$$\tilde{e}=\tilde{a}*_k\tilde{b},~~~~l:= (\tilde{a}*_k d *_k a)*_{k+1} c,~~~~ l':= (b*_k c' *_k \tilde{b})*_{k+1} d'.$$
Le lemme \ref{lem:cellules faiblment inversible stable par composition horizontales} implique que $\tilde{a}*_k d *_k a$ et $b*_k c' *_k \tilde{b}$ sont faiblement inversibles. Les $(k+2)$-cellules $l$ et $l'$ étant des $(k+1)$-compositions de $(k+2)$-cellules faiblement inversibles, elles sont dans $E$. Enfin, par construction on a:
$$
\begin{array}{rcl}
l &:& 1_{d^-_{k+1} e}  \to \tilde{e}*_{k+1}e   \\
l' &:&1_{d^+_{k+1} e} \to      e*_{k+1}\tilde{e}. \\
\end{array}
$$
L'ensemble $E$ est donc bien un ensemble d'inversibilité.
\end{proof}

\begin{defi}
Deux $n$-cellules parallèles $a,b$ sont \textit{$\omega$-équivalentes} s'il existe une $(n+1)$-cellule inversible $c:a\to b$. On note cette relation $\sim$.
\end{defi}

\begin{prop}
Soient deux entiers $n>m\geq 0$. La relation $\sim$ satisfait les propriétés suivantes: 
\begin{enumerate}
	\item (Réflexivité) Pour toute $n$-cellule $a$,  $$a\sim a;$$
	\item (Symétrie) Pour tout couple de $n$-cellules parallèles $(a,b)$, $$a\sim b \mbox{ implique } b\sim a;$$
	\item (Transitivité) Pour tout triplet de $n$-cellules $(a,b,c)$   deux à deux parallèles,  
	$$a\sim b \mbox{ et } b\sim c \mbox{ implique } a\sim c;$$
	\item (Compatibilité avec les compositions) Pour tout couple de $n$-cellules $m$-composables $(a,b)$ et $(c,d)$,
	 $$b\sim d \mbox{ et }a\sim c \mbox{ implique } b*_ma\sim d*_mc.$$
\end{enumerate}
\end{prop}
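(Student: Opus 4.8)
Le plan est de procéder par co-induction, en traitant d'un seul bloc la relation $\sim$ et la notion de cellule faiblement inversible. Je considérerais la plus petite famille de relations $R=(R_n)_{n\geq 0}$, $R_n$ portant sur les $n$-cellules parallèles, qui contient $\sim$ et qui est à la fois réflexive, symétrique, transitive et compatible avec toutes les compositions $*_m$ ; ainsi que l'ensemble $W$ des cellules $c:a\to b$ de dimension strictement positive telles qu'il existe $\tilde c:b\to a$ avec $c*_n\tilde c\mathrel{R}1_b$ et $\tilde c*_n c\mathrel{R}1_a$. Comme, par co-induction, $\sim$ et « faiblement inversible » forment le plus grand point fixe de l'opérateur monotone sous-jacent, il suffira de montrer que le couple $(R,W)$ en est un post-point fixe, c'est-à-dire : \emph{(i)} si $a\mathrel{R}b$, il existe une cellule de $W$ de source $a$ et de but $b$ ; \emph{(ii)} si $c\in W$ alors $c$ admet un « inverse faible modulo $R$ » — ce qui n'est autre que la définition de $W$. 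On en conclura $R\subseteq{\sim}$ (et que tout élément de $W$ est faiblement inversible), puis, $\sim$ étant contenue dans $R$, l'égalité $R={\sim}$, ce qui livre les quatre propriétés simultanément.

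Il restera à établir \emph{(i)}, par induction sur la construction de $R$. Pour les axiomes : si $a\sim b$, une $(n+1)$-cellule faiblement inversible $c:a\to b$ fournit, par définition de l'inverse faible, une cellule $\tilde c$ avec $c*_n\tilde c\sim 1_b$ et $\tilde c*_n c\sim 1_a$, d'où $c\in W$ puisque $\sim\subseteq R$ ; et pour $a\mathrel{R}a$, la cellule $1_a$ convient, avec elle-même pour inverse, car $1_a*_n 1_a=1_a$ par l'axiome d'unité et $1_a\mathrel{R}1_a$ par réflexivité de $R$ en dimension $n+1$. Pour la clause de symétrie, si $b\mathrel{R}a$ provient de $a\mathrel{R}b$, l'hypothèse d'induction fournit $c:a\to b$ dans $W$, d'inverse $\tilde c$, et alors $\tilde c:b\to a$ appartient à $W$, d'inverse $c$. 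Pour la clause de transitivité, si $a\mathrel{R}c$ provient de $a\mathrel{R}b$ et $b\mathrel{R}c$, l'hypothèse d'induction fournit $f:a\to b$ et $g:b\to c$ dans $W$, d'inverses $\tilde f,\tilde g$ ; alors $g*_n f$ appartient à $W$, d'inverse $\tilde f*_n\tilde g$, car l'associativité et l'axiome d'unité donnent
$$(g*_n f)*_n(\tilde f*_n\tilde g)=g*_n(f*_n\tilde f)*_n\tilde g,$$
que la compatibilité de $R$ avec $*_n$ relie à $g*_n 1_b*_n\tilde g=g*_n\tilde g\mathrel{R}1_c$, d'où la conclusion par transitivité de $R$ ; de même $(\tilde f*_n\tilde g)*_n(g*_n f)\mathrel{R}1_a$.

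Le point que j'attends comme principal obstacle est la clause de compatibilité, qui réclame la loi d'échange. Si $b*_m a\mathrel{R}d*_m c$ provient de $a\mathrel{R}c$ et $b\mathrel{R}d$, l'hypothèse d'induction fournit $\alpha:a\to c$ et $\beta:b\to d$ dans $W$, d'inverses $\tilde\alpha,\tilde\beta$ ; la $m$-composabilité de $(a,b)$ et $(c,d)$ entraîne celle de $\alpha,\beta$ puis de $\tilde\alpha,\tilde\beta$, et je poserais $\gamma:=\beta*_m\alpha:b*_m a\to d*_m c$, $\tilde\gamma:=\tilde\beta*_m\tilde\alpha$. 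L'axiome de distributivité appliqué aux dimensions $n+1>n>m$ donne alors
$$\gamma*_n\tilde\gamma=(\beta*_m\alpha)*_n(\tilde\beta*_m\tilde\alpha)=(\beta*_n\tilde\beta)*_m(\alpha*_n\tilde\alpha),$$
et comme $\beta*_n\tilde\beta\mathrel{R}1_d$, $\alpha*_n\tilde\alpha\mathrel{R}1_c$, la compatibilité de $R$ avec $*_m$ jointe à l'axiome d'unité II, qui donne $1_d*_m 1_c=1_{d*_m c}$, entraîne $\gamma*_n\tilde\gamma\mathrel{R}1_{d*_m c}$ ; symétriquement $\tilde\gamma*_n\gamma\mathrel{R}1_{b*_m a}$, donc $\gamma\in W$. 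Ceci achève la vérification de \emph{(i)}. Le seul travail résiduel, purement calculatoire, est la vérification des conditions de composabilité (sources et buts) à chaque étape, qui résulte aussitôt des égalités entre itérés des $d^\pm_k$ sur les cellules parallèles en présence ; l'attention devra surtout porter sur le bon appariement des indices dans l'application de la loi d'échange et de l'axiome d'unité II.
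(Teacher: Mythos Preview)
Your argument is correct and follows the co-inductive strategy the paper merely alludes to with ``Les preuves de ces assertions se font simplement par co-induction.'' You have carefully unpacked this by exhibiting the closure $R$ of $\sim$ under the four properties as a post-fixed point of the defining operator, which is exactly what a clean simultaneous co-inductive proof requires; the paper gives no further detail, so there is nothing to compare.
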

\begin{proof}
La réflexivité est une conséquence du lemme \ref{lem:les uniés sont faiblement inversible}, le symétrie est immédiate, le transitivité provient du lemme \ref{lem:cellules faiblment inversible stable par composition verticales}, et enfin, le  lemme \ref{lem:cellules faiblment inversible stable par composition horizontales} implique la compatibilité de $\sim$ avec les compositions. 
\end{proof}

\begin{lem}
\label{lem:characterisation des cellules faiblement inversible type coinduction, numéro 2}
Une $(n+1)$-cellule $a$ est faiblement inversible si et seulement si il existe une $(n+1)$-cellule $\tilde{a}:d^+_{n}a\to d^-_{n}a$ telle que $1_{d^-_{n} a}  \sim \tilde{a}*_{n}a$ et $1_{d^+_{n} a} \sim  a*_{n}\tilde{a}$.
\end{lem}
\begin{proof}
C'est une conséquence immédiate de la définition de $\sim$ et du lemme \ref{lem:characterisation des cellules faiblement inversible type coinduction}.
\end{proof}

\begin{rem}
Dans \cite{meteyer}, la notion de cellule faiblement inversible est défini par co-induction, et les propriétés basiques que les cellules faiblement inversibles vérifient sont aussi démontrées par co-induction. Cependant, il n'est pas précisé quelles sont les règles pour utiliser ce principe de déduction. C'est une note non publiée de Metayer, dont l'objet est d'expliciter ce principe, qui a inspiré la présentation alternative donnée dans cet article.
\end{rem}

}

\begin{defi}
Soit $c : a\to b$ une $n$-cellule de dimension strictement positive.
\begin{enumerate}
\item La cellule $c$ est \textit{faiblement inversible à gauche} lorsqu'il existe $\tilde{c}:b\to a$ telle que $\tilde{c} *_{n-1} c \sim 1_b$. La cellule $\tilde{c}$ est alors un \textit{inverse faible à gauche} de $c$.
\item La cellule $c$ est \textit{faiblement inversible à droite} lorsqu’il existe $\tilde{c}:b\to a$ telle que $c *_{n-1} \tilde{c} \sim 1_a$. La cellule $\tilde{c}$ est alors un \textit{inverse faible à droite} de $c$.
\end{enumerate}
\end{defi}

\begin{rem}
Une cellule faiblement inversible à droite et à gauche est  faiblement inversible. En effet, soient $n$ un entier strictement positif et $c : a\to b $ une $n$-cellule admettant un inverse faible à droite $\tilde{c}$ et un inverse faible à gauche $\tilde{c}'$. 
On a  alors 
$$ \tilde{c}*_{n-1} c \sim \tilde{c}' *_{n-1}c *_{n-1} \tilde{c}*_{n-1} c \sim \tilde{c}'*_{n-1} c \sim 1_a.$$
La cellule $\tilde{c}$ est donc aussi un inverse à gauche, et $c$ est donc faiblement inversible.
\end{rem}

\begin{prop}
\label{prop:propdedivision a droite} Une $n$-cellule $a$   faiblement inversible a la \emph{propriété de division à droite} :
\begin{enumerate}
\item Pour toute $n$-cellule  $ b$ telle que $d_{n-1}^- a  = d_{n-1}^- b$, il existe une $n$-cellule $x$ telle que 
$$x*_{n-1} a \sim b.$$
De plus si $y$ vérifie la même relation, alors $y\sim x$. On dit alors que la solution est \emph{faiblement unique}.

\item  Soit $m> n$. Pour toute  $m$-cellule $b$ et  tout couple de $(m-1)$-cellules parallèles $s,t$ tel que $s*_{n-1}a = d_{m-1}^-b$ et $t*_{n-1}a = d_{m-1}^+b$, il existe  une $m$-cellule $x : s\to t$ qui vérifie:
$$x*_{n-1} a \sim b.$$
De plus la solution est faiblement unique.

\end{enumerate}
Similairement, la  $n$-cellule $a$ a la propriété de division à gauche.
\end{prop}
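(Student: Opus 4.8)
The plan is to prove (1) and (2) simultaneously by exploiting the inversibility data of $a$, and then derive the left-division statement by the symmetric (or "dual") argument. By Proposition \ref{prop:autrecaracterisation des cellules faiblement inversibles} we may fix a weak inverse $\tilde a : d^+_{n-1}a \to d^-_{n-1}a$ together with weakly invertible $(n{+}1)$-cells $c : 1_{d^-_{n-1}a} \to \tilde a *_{n-1} a$ and $c' : 1_{d^+_{n-1}a} \to a *_{n-1}\tilde a$, so that $\tilde a *_{n-1} a \sim 1$ and $a *_{n-1}\tilde a \sim 1$.

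Let me think about the key steps.

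For existence in (1): given $b$ with $d^-_{n-1}b = d^-_{n-1}a$, set $x := b *_{n-1}\tilde a$. This is $(n{-}1)$-composable since $d^-_{n-1}b = d^-_{n-1}a = d^+_{n-1}\tilde a$ — wait, I need to check the source/target bookkeeping carefully, which is exactly the kind of routine verification I will spell out but not belabor here. Then $x *_{n-1} a = (b *_{n-1}\tilde a) *_{n-1} a = b *_{n-1}(\tilde a *_{n-1} a)$ by associativity, and since $\tilde a *_{n-1} a \sim 1$, compatibility of $\sim$ with composition (property (4) of the proposition on $\sim$) gives $x *_{n-1}a \sim b *_{n-1} 1 = b$ by the unit axiom. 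For weak uniqueness: if $y *_{n-1} a \sim b$ as well, then $y \sim y *_{n-1} 1 \sim y *_{n-1}(a *_{n-1}\tilde a) = (y *_{n-1}a) *_{n-1}\tilde a \sim b *_{n-1}\tilde a = x$, again using associativity, the unit axioms, and transitivity/symmetry of $\sim$; note this step needs that $a *_{n-1}\tilde a \sim 1$, i.e. the \emph{other} half of weak invertibility, which is why one genuinely uses both.

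For part (2): the same formula $x := b *_{n-1}\tilde a$ works, where now $b$ is an $m$-cell with $m > n$ and the composite $b *_{n-1}\tilde a$ is interpreted via the notation convention for composing higher cells with lower ones. One checks $x : s \to t$ by computing $d^\pm_{m-1}(b *_{n-1}\tilde a) = d^\pm_{m-1}(b) *_{n-1}\tilde a$ and using the hypotheses $s *_{n-1}a = d^-_{m-1}b$, $t *_{n-1}a = d^+_{m-1}b$ together with the fact — established in the existence argument of (1) applied at dimension $m-1$ — that $\tilde a$ is a division operator inverse to composition with $a$; here distributivity (axiom (2)) is what lets the face maps pass through the $*_{n-1}$. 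The relation $x *_{n-1}a \sim b$ and weak uniqueness then follow exactly as in (1). Finally, the left-division property of $a$ is obtained by the mirror-image argument: set $x := \tilde a *_{n-1} b$ and swap the roles of the two witnesses $c, c'$; formally this is the statement (1)–(2) applied in the $\omega$-category $C^{\mathrm{co}}$ obtained by reversing $(n{-}1)$-composition, in which $a$ is again weakly invertible.

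The main obstacle I anticipate is not conceptual but bookkeeping: verifying that all the composites written down are actually well-defined (the globular source/target conditions), and tracking which instance of $\tilde a *_{n-1}a \sim 1$ versus $a *_{n-1}\tilde a \sim 1$ is needed at each point, especially in part (2) where one composes cells of different dimensions and must invoke the notational conventions and the distributivity axiom to push face operators through. The inductive flavour is mild — part (2) essentially bootstraps off part (1) — so no co-induction is needed here; the weak invertibility of $a$ has already been unpacked once and for all via the inversibility set.
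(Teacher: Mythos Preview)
Your argument for part~(1) is fine and is essentially what the paper does in its base case $\mathcal{E}_{n,n}$, $\mathcal{U}_{n,n}$. The gap is in part~(2). Your candidate $x := b *_{n-1}\tilde a$ does \emph{not} have source $s$ and target $t$: by distributivity,
\[
d^-_{m-1}(b *_{n-1}\tilde a) \;=\; d^-_{m-1}(b) *_{n-1}\tilde a \;=\; (s*_{n-1}a)*_{n-1}\tilde a \;=\; s*_{n-1}(a*_{n-1}\tilde a),
\]
which is only $\omega$-equivalent to $s$, not equal to it. The sentence ``$\tilde a$ is a division operator inverse to composition with $a$'' hides exactly this problem: part~(1) gives you a weak inverse up to $\sim$, not a strict one, so you cannot conclude $d^-_{m-1}x = s$.

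Repairing this is precisely what makes the paper's proof nontrivial. One fixes a witness $r : a*_{n-1}\tilde a \xrightarrow{\sim} 1$ and observes that the desired $x$ must satisfy
\[
x *_n (s*_{n-1}r) \;\sim\; (t*_{n-1}r) *_n (b*_{n-1}\tilde a),
\]
which is now a division problem by the \emph{weakly invertible $(n{+}1)$-cell} $s*_{n-1}r$. This reduces $\mathcal{E}_{n,m}$ to $\mathcal{E}_{n+1,m}$, and one climbs by induction on $m-n$; at each stage the uniqueness statement $\mathcal{U}$ is needed to show that the cell produced really solves the original equation. So the ``mild inductive flavour'' you anticipate is in fact the heart of the argument, and the claim that ``no co-induction is needed'' is misleading: one must repeatedly invoke division by higher-dimensional invertible witnesses, which is an induction on $m-n$ intertwining existence and uniqueness.
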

\begin{proof}
Pour $a$ une $n$-cellule faiblement inversible, on définit les propositions suivantes:
$$
\def\arraystretch{1.4}
\begin{array}{lcl}
\mathcal{E}_{n,m}(a) &:\equiv&\mbox{La cellule $a$ a la proriété de division à droite et à gauche pour les $m$-cellules;
 }\\
 \mathcal{E}_{n,m} &:\equiv&\mbox{Pour  toute  $n$-cellule faiblement inversible $a$, $\mathcal{E}_{n,m}(a)$; }
 \\
\mathcal{U}_{n,m}(a)&:\equiv& \mbox{La division à droite et à gauche des $m$-cellules par $a$ est faiblement unique;} \\
\mathcal{U}_{n,m} &:\equiv&\mbox{Pour   $n$-cellule faiblement inversible $a$, $\mathcal{U}_{n,m}(a)$. }
\end{array}$$
\note{Le premier point de la proposition correspond donc à la conjonction de $\mathcal{E}_{n,n}$ et $\mathcal{U}_{n,n}$ pour tout $n$, et le deuxième à la conjonction de $\mathcal{E}_{n,m}$ et $\mathcal{U}_{n,m}$ pour tout $n<m$.}

On peut démontrer directement $\mathcal{E}_{n,n}$ et $\mathcal{U}_{n,n}$ pour tout $n$. En effet, soient $a$ et $b$ deux $n$-cellules vérifiant les conditions du premier point. On note $\tilde{a}$ un inverse faible de $a$.
On a alors
 $$x*_{n-1} a \sim b\mbox{~~ si et seulement si~~ }x \sim b*_{n-1} \tilde{a}.$$ Cela prouve alors à la fois l'existence et l'unicité faible de la solution.
 
On va maintenant montrer  $\mathcal{E}_{n,n+k}$ et $\mathcal{U}_{n,n+k}$ pour tout $n$ et tout $k$ par récurrence sur $k$.  
L'initialisation correspond au cas $k=1$. Donnons nous une $n$-cellule $a$ et une $(n+1)$-cellule $b$ vérifiant les conditions voulues. Soient $\tilde{a}$ un inverse faible de $a$ et $r : a*_{n-1} \tilde{a}\xrightarrow{\sim} 1_{d^+_{n-1}a} $ une cellule faiblement inversible.

Commençons par montrer $\mathcal{U}_{n,n+1}(a)$. On suppose donc qu'il existe une $(n+1)$-cellule $x:s\to t$ qui vérifie $x*_{n-1} a \sim b$. On a alors $$ x *_n (s*_{n-1}r)=  (t*_{n-1} r)  *_n (x*_{n-1}a*_{n-1}\tilde{a})  \sim (t*_{n-1} r)  *_n (b*_{n-1}\tilde{a}) . $$ Les cellules $x$  et $s*_{n-1}r$ sont de même dimension et la cellule $s*_{n-1}r$ est faiblement inversible. On peut donc utiliser $\mathcal{U}_{n+1,n+1}(s*_{n-1}r)$ qui implique que  la cellule $x$, si elle existe, est faiblement unique. De façon analogue, on montre l'unicité faible de la division à gauche par $a$. On a donc prouvé pour tout $n$,  $\mathcal{U}_{n,n+1}$.

Montrons maintenant $\mathcal{E}_{n,n+1}(a)$. Remarquons  que l'on a :
$$
\begin{array}{rcl}
d_{n}^-((t*_{n-1} r)  *_n (b*_{n-1}\tilde{a}))&= &d_{n}^-(b*_{n-1}\tilde{a}) = s*_{n-1}*a*_{n-1}\tilde{a}\\
d_{n}^-(s*_{n-1}r) &=&s*_{n-1}*a*_{n-1}\tilde{a}
\end{array}
$$

On peut alors utiliser $\mathcal{E}_{n+1,n+1}(s*_{n-1}r)$, qui implique l'existence d'une $(n+1)$-cellule $y:s\to t$ vérifiant $$ y *_n (s*_{n-1}r)  \sim (t*_{n-1} r)  *_n (b*_{n-1}\tilde{a}) . $$
Or 
$$  y *_n (s*_{n-1}r)=  (t*_{n-1} r)  *_n (y*_{n-1}a*_{n-1}\tilde{a})$$
Les cellules $y*_{n-1}a*_{n-1}\tilde{a}$ et $t*_{n-1} r$ sont de même dimension, et $t*_{n-1} r$ est faiblement inversible.
La proposition $\mathcal{U}_{n+1,n+1}(t*_{n-1} r)$ implique que $(y*_{n-1}a*_{n-1}\tilde{a}) \sim (b*_{n-1}\tilde{a})$. 
Or, comme la dimension de $\tilde{a}$ est $n$, on peut utiliser $\mathcal{U}_{n,n+1}(\tilde{a})$ qui implique alors que $y*_{n-1} a\sim b $. La cellule $y$ est donc bien une solution. On a donc prouvé $\mathcal{E}_{n,n+1}$ pour tout $n$.

Supposons maintenant $\mathcal{E}_{n,n+k}$ et $\mathcal{U}_{n,n+k}$ pour tout $n$. Donnons nous une $n$-cellule $a$ et une ${(n+k+1)}$-cellule $b$ vérifiant les conditions voulues. Soient $\tilde{a}$ un inverse faible de $a$ et $r : a*_{n-1} \tilde{a}\xrightarrow{\sim} 1_{d^+_{n-1}a} $ une cellule faiblement inversible.

Commençons par montrer $\mathcal{U}_{n,n+k+1}(a)$. Pour cela, on suppose qu'il existe une ${(n+k+1)}$-cellule $x:s\to t$ qui vérifie $x*_{n-1} a \sim b$. On a alors,  pour tout $\alpha\in\{-,+\}$, $d_{n-1}^\alpha x=d_{n-1}^\alpha s =d_{n-1}^\alpha t$, et 
 $$ x *_n (d_{n-1}^-s*_{n-1}r)=  (d_{n-1}^+t*_{n-1} r)  *_n (x*_{n-1}a*_{n-1}\tilde{a})  \sim (d_{n-1}^+t *_{n-1} r)  *_n (b*_{n-1}\tilde{a}) . $$ La cellule $d_{n-1}^-s*_{n-1}r$ est de dimension $n+1$ et est faiblement inversible. La proposition $\mathcal{U}_{n+1,n+k+1}(d_{n-1}^-s*_{n-1}r)$ implique alors que $x$, s'il existe, est faiblement unique. On a donc montré $\mathcal{U}_{n,n+k+1}$ pour tout $n$.

Prouvons maintenant $\mathcal{E}_{n,n+k+1}(a)$. Remarquons cette fois qu'on a:
$$\def\arraystretch{1.4}
\begin{array}{rcl}
d_{n+k}^-((d_{n-1}^+t *_{n-1} r)  *_n (b*_{n-1}\tilde{a})) &=&  (d_{n-1}^+t *_{n-1} r)  *_n (s*_{n-1}a*_{n-1}\tilde{a})\\
 &=&  (d_{n-1}^+s *_{n-1} r)  *_n (s*_{n-1}a*_{n-1}\tilde{a})\\
 &=& s*_{n-1}r  =   s*_{n} (d_{n-1}^-s *_{n-1} r)\\
 d_{n+k}^+((d_{n-1}^+t *_{n-1} r)  *_n (b*_{n-1}\tilde{a})) &=&  (d_{n-1}^+t *_{n-1} r)  *_n (t*_{n-1}a*_{n-1}\tilde{a})\\
 &=& t*_{n-1}r  \\&= &  t*_{n} (d_{n-1}^-t *_{n-1} r)= t*_{n} (d_{n-1}^-s *_{n-1} r)
\end{array}
$$
Par hypothèse de récurrence, on peut utiliser  $\mathcal{E}_{n+1,n+k+1}(d_{n-1}^-s*_{n-1}r)$ qui implique qu'il existe $y:s\to t$ vérifiant $$ y *_n (d_{n-1}^-s*_{n-1}r)  \sim (d_{n-1}^+t *_{n-1} r)  *_n (b*_{n-1}\tilde{a}) . $$ 
Or 
$$y *_n (d_{n-1}^-s*_{n-1}r) =  (d_{n-1}^+t*_{n-1} r)  *_n (y*_{n-1}a*_{n-1}\tilde{a}).$$
La proposition $\mathcal{U}_{n+1,n+k+1} (d_{n-1}^+t*_{n-1} r)$ implique que $(y*_{n-1}a*_{n-1}\tilde{a}) \sim (b*_{n-1}\tilde{a})$. 
La cellule $\tilde{a}$ est de dimension $n$. On peut donc utiliser $\mathcal{U}_{n,n+k+1}(\tilde{a})$ et on obtient  $y*_{n-1} a\sim b $. La cellule $y$ est donc une solution. On a donc prouvé $\mathcal{E}_{n,n+k+1}$.
\end{proof}

\begin{cor}
\label{cor;compatibilite de faible inversible}
Soient deux entiers $m\geq n>0$,  $a$ une $n$-cellule faiblement inversible  et $b$ une $m$-cellule telles que $a$ et $b$ soient $(n-1)$-composables. Alors si $b *_{n-1} a$ est faiblement inversible, $b$ est aussi faiblement inversible. 
\end{cor}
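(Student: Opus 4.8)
\emph{Proof plan.} The plan is to exhibit a weak inverse of $b$ directly and verify its two defining relations with the division property. Set $c := b *_{n-1} a$, fix a weak inverse $\tilde a$ of $a$, and (since $c$ is weakly invertible by hypothesis) fix a weak inverse $\tilde c$ of $c$. Two facts will carry the argument. First, the operation $x \mapsto x *_{n-1} a$ is compatible with composition and units: for $m > n$ the distributivity axiom gives $(x *_{m-1} y) *_{n-1} a = (x *_{n-1} a) *_{m-1} (y *_{n-1} a)$ for $m$-cells $x,y$, and the Unit~II and Unit~III axioms give $1_{u *_{n-1} a} = 1_u *_{n-1} a$ for $(m-1)$-cells $u$; for $m = n$ one uses associativity directly. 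Second, by Proposition~\ref{prop:propdedivision a droite}, right-division by the weakly invertible cell $a$ is weakly unique, and for $m > n$ it is solvable as soon as the prescribed boundaries factor through $a$. Combining these, I can transport the invertibility witnesses of $c$ back to $b$.

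Consider first the case $m = n$. Here $\tilde c : d^+_{n-1} b \to d^-_{n-1} a$, so $\tilde b := a *_{n-1} \tilde c$ is defined and has the boundary required of a weak inverse of $b$. By associativity, $b *_{n-1} \tilde b = (b *_{n-1} a) *_{n-1} \tilde c = c *_{n-1} \tilde c \sim 1_{d^+_{n-1} b}$. For the other relation, associativity gives $(\tilde c *_{n-1} b) *_{n-1} a = \tilde c *_{n-1} c \sim 1_{d^-_{n-1} a} \sim \tilde a *_{n-1} a$; weak uniqueness of right-division by $a$ then forces $\tilde c *_{n-1} b \sim \tilde a$, whence $\tilde b *_{n-1} b = a *_{n-1} (\tilde c *_{n-1} b) \sim a *_{n-1} \tilde a \sim 1_{d^-_{n-1} b}$ by compatibility of $\sim$ with composition. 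So $\tilde b$ is a weak inverse of $b$.

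For $m > n$ the composite $a *_{n-1} \tilde c$ is no longer defined, so I would produce $\tilde b$ via Proposition~\ref{prop:propdedivision a droite}(2): since $d^{\pm}_{m-1} c = d^{\pm}_{m-1} b *_{n-1} a$ and $\tilde c$ reverses the top boundary, the $(m-1)$-boundaries of $\tilde c$ factor through $a$, so there is an $m$-cell $\tilde b : d^+_{m-1} b \to d^-_{m-1} b$ with $\tilde b *_{n-1} a \sim \tilde c$. Then, using the first paragraph's facts, $(b *_{m-1} \tilde b) *_{n-1} a = c *_{m-1} (\tilde b *_{n-1} a) \sim c *_{m-1} \tilde c \sim 1_{d^+_{m-1} c} = 1_{d^+_{m-1} b} *_{n-1} a$, and symmetrically $(\tilde b *_{m-1} b) *_{n-1} a \sim 1_{d^-_{m-1} b} *_{n-1} a$; applying weak uniqueness of right-division by $a$ a second time yields $b *_{m-1} \tilde b \sim 1_{d^+_{m-1} b}$ and $\tilde b *_{m-1} b \sim 1_{d^-_{m-1} b}$, so $\tilde b$ is once more a weak inverse of $b$. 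In either case $b$ admits a weak inverse and is therefore weakly invertible.

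The main obstacle is not conceptual but bookkeeping: one must pad the $n$-cell $a$ up to dimension $m$ when $m > n$ and recheck, at each application, the composability hypotheses of the distributivity axiom and the exact boundaries demanded by the division property. The one point that needs care is the bifurcation between $m = n$, where $x \mapsto x *_{n-1} a$ shifts the $(n-1)$-boundary and the weak inverse has to be written down by hand, and $m > n$, where it only modifies the $(m-1)$-boundary and the weak inverse comes for free from the division property.
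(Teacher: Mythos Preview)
Your argument is correct and, for the case $m>n$, essentially identical to the paper's: both produce the candidate inverse $\tilde b$ via Proposition~\ref{prop:propdedivision a droite}(2) and then cancel $a$ from the two unit relations using weak uniqueness of division. The only genuine difference is in the case $m=n$: the paper observes in one line that $b \sim (b*_{n-1}a)*_{n-1}\tilde a$ is $\omega$-equivalent to a composite of weakly invertible cells and is therefore weakly invertible, whereas you build an explicit inverse $\tilde b = a*_{n-1}\tilde c$ and verify both relations by hand. Your route works, but the paper's shortcut avoids the second application of weak uniqueness entirely in that case.
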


\begin{proof}
On se place tout d'abord dans le cas où $m=n$. Soit $\tilde{a}$ un inverse faible de $a$. On a alors $b\sim (b *_{n-1} a)*_{n-1} \tilde{a}$. La propriété d'être faiblement inversible étant stable par composition, cela prouve que $b$ est faiblement inversible. Supposons maintenant que $m>n$. 
Notons $c$ un inverse faible de la $m$-cellule $b *_{n-1} a$. On a donc $d_{m-1}^-c =d_{m-1}^+ b *_{n-1}a   $ et $d_{m-1}^+c =d_{m-1}^- b *_{n-1}a $. Par l'existence de la division à droite par $a$, il existe $b':d_{m-1}^+ b\to d_{m-1}^- b$ telle que $b'*_{n-1}a \sim c$.  On a  alors $(b  *_{m-1}b') *_{n-1} a  = (b *_{n-1} a) *_{m-1}(b' *_{n-1} a) \sim 1_{d^+_{m-1} b} *_{n-1} a $. Par l'unicité de la division à droite par $a$, cela implique que $ b  *_{m-1}b'\sim 1_{d^+_{m-1} b}$. On montre de façon analogue que $ b' *_{m-1}b \sim 1_{d^-_{m-1} b}$. La $m$-cellule $b$ est donc bien faiblement inversible. 
\end{proof}

On a en fait montré quelque chose d'un peu plus fort : en reprenant les notations de l’énoncé, si  $b *_{n-1} a$ est faiblement inversible à droite (resp. à gauche) alors $b$ est faiblement  inversible à droite (resp. à gauche)

\begin{defi}
Une $\omega$ catégorie est \textit{$n$-triviale} lorsque toutes les cellules de dimension strictement supérieure $n$ sont faiblement inversibles.
\end{defi}

\subsection{Rappel de la théorie de Steiner}
Tous les résultats de cette section sont dus à Steiner \cite{steiner}.

\begin{defi}
Un \textit{complexe dirigé augmenté} $(K,K^*,e)$ est la donnée d'un complexe de groupes abéliens $K$, avec une  augmentation $e$: $$\mathbb{Z}\xleftarrow{e} K_0 \xleftarrow{\partial_0} K_1\xleftarrow{\partial_1} K_2 \xleftarrow{\partial_2} K_3\xleftarrow{\partial_3}... $$
et d'un ensemble gradué $K^* = (K^*_n)_{n\in\mathbb{N}}$ tel que  pour tout $n$, $K_n^*$ est un sous-monoïde de $K_n$. Un morphisme de complexes dirigés entre $(K,K^*,e)$ et $(L,L^*,e')$ est la donnée d'un morphisme de complexes augmentés de groupes abéliens: $f : (K,e)\to (L,e')$ tel que $f(K^*_n)\subset L^*_n$ pour tout $n$. On note \CDA la catégorie des complexes dirigés augmentés. 
\end{defi}

Steiner construit alors une adjonction 
$$\lambda : \xymatrix{ \infcat \ar@/^13pt/[r]& \ar@/^13pt/[l] \CDA } : \nu. $$
Le foncteur $\lambda$ est le plus simple à définir :

\begin{defi}
Soit $C$ une  $\omega$ catégorie.
On note $(\lambda C)_n $ le groupe abélien engendré par l’ensemble $\{[x]_n: x\in C_n\}$ et les relations 
$$ [x*_m y]_n \sim [x]_n + [y]_n  \mbox{ pour $m<n$ }.$$

On définit  le morphisme $\partial_n : (\lambda C)_{n+1} \to (\lambda C)_n$ sur les générateurs par la formule: 
$$\partial_n([x]_{n+1}) := [d_n^+ x]_{n} - [d_n^- x]_{n}.$$
\end{defi}

Le morphisme $\partial$ est alors une différentielle. On  définit une augmentation $e :  (\lambda C)_{0}\to \mathbb{Z}$ en posant sur les générateurs :  $ e([x]_0)  =1$. 
Soit $(\lambda C)_n^*$  le sous-monoïde additif engendré par les éléments $[x]_n$. Ces données définissent  un complexe dirigé augmenté $\lambda C := (\{(\lambda C)_n \}_{n\in \mathbb{N}},\{(\lambda C)^*_n \}_{n\in \mathbb{N}},e )$. Cette assignation  se relève en un  foncteur: 
$$\begin{array}{ccccc}
\lambda &:& \infcat&\to&\CDA\\
&&C&\mapsto&\lambda C.
\end{array}$$

On va maintenant définir le foncteur $\nu$. Dans la suite, on se fixe un complexe dirigé augmenté $(K,K^*,e)$.

\begin{defi}
Un \textit{tableau de Steiner} (ou plus simplement tableau) de dimension $n$ est la donnée d'une double suite finie: 
$$\left(\begin{matrix}
x^-_0 &x^-_1&x^-_2&x^-_3 &...&x_n^-\\
x^+_0 &x^+_1&x^+_2&x^+_3 &...&x_n^+
\end{matrix}\right)$$
telle que 
\begin{enumerate}
\item $x^-_n=x^+_n$;
\item Pour tout $i\leq n$ et $\alpha\in\{-,+\}$, $x_i^\alpha$ est un élément de $K^*_i$;
\item Pour tout $0<i\leq n$, $\partial_{i-1}(x_i^\alpha)= x_{i-1}^+ - x_{i-1}^-$; 
\end{enumerate}
Un tableau est dit \textit{cohérent} si $e(x^+_0) = e(x^-_0) = 1$.
\end{defi}

\begin{defi}
On définit l'ensemble globulaire $\nu K$ dont les $n$-cellules sont les tableaux cohérents de dimension $n$. Les applications sources et buts sont définies pour $k<n$ par la formule: $$d^\alpha_k\begin{pmatrix}
x^-_0 &x^-_1&x^-_2&...&x^-_n\\
x^+_0 &x^+_1&x^+_2&...& x^+_n
\end{pmatrix} = \begin{pmatrix}
x^-_0 &x^-_1&x^-_2&...& x^-_{k-1}&x^\alpha_k\\
x^+_0 &x^+_1&x^+_2&...& x^+_{k-1}&x^\alpha_k\end{pmatrix}$$
\end{defi}

On munit l'ensemble globulaire $\nu K$ d'une structure de $\omega$-catégorie:

\begin{defi}
On a une structure évidente de groupe sur les tableaux: 
$$\begin{pmatrix}
x^-_0 &x^-_1&...& x^-_n\\
x^+_0 &x^+_1&...& x^+_n
\end{pmatrix}
+
\begin{pmatrix}
y^-_0 &y^-_1&...& y^-_n\\
y^+_0 &y^+_1&...& y^+_n
\end{pmatrix}
=
\begin{pmatrix}
x^-_0+y^-_0 &x^-_1+ y^-_1&...&x^-_n+ y^-_n \\
x^+_0+y^+_0 &x^+_1+ y^+_1&...&x^+_n +y^+_n 
\end{pmatrix}
$$
\label{defi:definition des composition et unites de nu k}

\begin{itemize}
\item
Pour deux tableaux cohérents $x$ et $y$ tels que $d^-_k(x) =d^+_k(y) = z$, on définit leur $k$-composition par la formule suivante : 
$$x*_k y := x- z + y .$$ Plus explicitement : 
$$\begin{pmatrix}
x^-_0 &...& x^-_n\\
x^+_0 &...& x^+_n
\end{pmatrix}
*_k
\begin{pmatrix}
y^-_0 &...& y^-_n\\
y^+_0 &...& y^+_n
\end{pmatrix}
 := 
\begin{pmatrix}
y^-_0&...&y_k^-& y_{k+1}^- + x_{k+1}^- & ...& y_{n}^- + x_{n}^-\\
x^+_0 &...&x_k^+& y_{k+1}^+ + x_{k+1}^+ & ...& y_{n}^+ + x_{n}^+ 
\end{pmatrix}
$$

\item
Pour un entier $m>n$, on définit le tableau $1^m_x$ de taille $m$ : 
$$1^m_x :=
\begin{pmatrix}
x^-_0 &...& x^-_n& 0 &...&0\\
x^+_0 &...& x^+_n& 0 &...&0	
\end{pmatrix}$$
\end{itemize}

L'ensemble globulaire $\nu K$, muni des compositions et unités de la définition \ref{defi:definition des composition et unites de nu k} est alors une $\omega$-catégorie. 
\end{defi}

\begin{defi}
On définit le foncteur $\nu : \CDAB\to\infcat$ qui associe à un complexe dirigé augmenté $K$, la $\omega$-catégorie $\nu K$, et à un morphisme de complexes dirigés augmentés $f :K\to L$, le morphisme de $\omega$-catégories
$$
\begin{array}{rccc}
\nu f : &\nu K &\to& \nu L\\
& \left(\begin{matrix}
x^-_0  &...&x_n^-\\
x^+_0&...&x_n^+
\end{matrix}\right) 
&\mapsto&
\left(\begin{matrix}
f_0(x^-_0)  &...&f_n(x_n^-)\\
f_0(x^+_0)&...&f_n(x_n^+)
\end{matrix}\right) 
\end{array}
$$

\end{defi}

\begin{theo}
\label{theo:ajdonction de steiner avec unite et counite explicite}
Les foncteurs $\lambda$ et $\nu$ sont adjoints l'un de l'autre : 
$$\lambda : \xymatrix{
    \infcat  \ar@/^/[r] \ar@{}[r]|-{\bot}  &
    \CDA \ar@/^/[l] }: \nu.$$
Pour une $\omega$-catégorie $C$, l'unité de l'adjonction est donnée par la transformation naturelle : 
$$\begin{array}{rrcl}
~~~~~\eta :&  C &\to & \nu \lambda C \\
& x\in C_n &\mapsto  & 
\begin{pmatrix}
[d^-_0(x)]_0&...&[d^-_{n-1}(x)]_{n-1}&[x]_n\\
[d^+_0(x)]_0&...& [d^+_{n-1}(x)]_{n-1}&[x]_n
\end{pmatrix}
\end{array}
$$
Pour un complexe dirigé augmenté $K$, la co-unité est donnée par :
$$\begin{array}{rrcl}
\pi :&  \lambda \nu K &\to & K~~~~~~~~~~~~~~~~ \\
& [x ]_n \in (\lambda \nu K)_n&\mapsto  &  x_n^+ = x_n^-
\end{array}
$$

\end{theo}
\begin{proof}
Voir \cite[théorème 2.11]{steiner}.
\end{proof}

On va maintenant s’intéresser à une sous catégorie des complexes dirigés augmentés correspondant à ceux qui admettent une "bonne base".
\begin{defi}
Une \textit{base}  pour un complexe dirigé augmenté $(K,K^*,e)$ est la donnée d'un ensemble gradué $B = (B_n)_{n\in\mathbb{N}}$ tel que pour tout $n$, $B_n$ soit à la fois une base du monoïde $K_n^*$ et  du groupe $K_n$. 
\end{defi}

\begin{rem}
	Les éléments de $B_n$ peuvent être caractérisés comme les éléments minimaux de  $K_n^*\backslash\{0\}$ pour la relation d'ordre suivante: 
	$$x\leq y \mbox{ ssi } y-x \in K_n^*$$
	Cela prouve que si une base existe, elle est unique. 
\end{rem}

Tout élément de  $K_n$ peut alors s'écrire de façon unique comme une somme $\Sigma_{b\in B_n} \lambda_b b$. Cela nous incite à définir de nouvelles opérations : 

\begin{defi}
Pour un élément $x := \sum_{b\in B_n} \lambda_b b$ de $K_n$, on définit la \textit{partie positive}  et la \textit{partie négative}: 
$$
\begin{array}{rcl}
(x)_+ &:=& \sum_{b\in B_n, \lambda_b> 0} ~\lambda_bb\\
(x)_- &:=& \sum_{b\in B_n, \lambda_b< 0} -\lambda_bb
\end{array}
$$
On a alors  $x = (x)_+ - (x)_-$. Un élément $x$ est \textit{positif} (resp. \textit{négatif}) \note{lorsque $x =(x)_+$} (resp. \note{lorsque $x =-(x)_-$}).
Soit $y = \sum_{b\in B_n} \mu_b b$, on définit : 
$$
\begin{array}{rcl}
x\vee y &:=& \sum_{b\in B_n} \mbox{ max}(\lambda_b, \mu_b) ~b\\
x\wedge y &:=& \sum_{b\in B_n} \mbox{ min}(\lambda_b, \mu_b)~ b \\
x \diagdown y &:=& (x-(y)_+)_+
\end{array}
$$
\end{defi}
En utilisant ces notations, on pose : 
$$
\begin{array}{rcl}
\partial_n^+(\var) &:=& (\partial_n(\var))_+ : K_{n+1}\to K^*_n\\
\partial_n^-(\var) &:= &(\partial_n(\var))_- : K_{n+1}\to K^*_n
\end{array}
$$
Lorsqu'un élément $b$ de la base est dans le support de $x$, c'est-à-dire que $\lambda_b\neq 0$, on dit que $b$ appartient à $x$, ce que l'on note $b\in x$.

\begin{defi}
\label{defi:tableau_de_steiner_associe_a_un singloton}
Soit $a\in K^*_n$. On définit par récurrence descendante sur $k\leq n$ : 
 $$ \begin{array}{rclc}
 \langle a\rangle_k^\alpha &:= & a & \mbox{si $k = n$}\\
 &:= & \partial_k^\alpha\langle a\rangle^\alpha_{k+1} & \mbox{sinon}
\end{array} 
$$
Le tableau associé à $a$ est alors : 
$$\langle a\rangle := \begin{pmatrix}
\langle a\rangle^-_0 &...&\langle a\rangle^-_{n-1}&a\\
\langle a\rangle^+_0 &...&\langle a\rangle^+_{n-1}&a
\end{pmatrix}$$
\end{defi}
 
\begin{defi}
La base est dite \textit{unitaire} lorsque pour tout $b\in B$, le tableau $\langle b\rangle$ est cohérent.
\end{defi}

On définit la relation $\odot_n$ sur $B$ comme étant la plus petite relation transitive et réflexive telle que pour tout couple d'élément de la base $a,b$ de dimension supérieure ou égale à $n$:

$$a\odot_n b \mbox{ si }\langle a\rangle_n^-\wedge\langle b\rangle_n^+ \neq 0$$

\begin{defi}
Une base est dite \textit{sans boucles} lorsque  pour tout $n$,  la relation $\odot_n$ est un ordre (partiel) sur $B$.
\end{defi}

On définit maintenant la sous-catégorie $\CDAB$ de $\CDA$ composée de complexes dirigés augmentés qui admettent une base unitaire et sans boucles. 
On va maintenant décrire l'analogue de la notion de base pour les $\omega$-catégories. 

\begin{defi}
Une $\omega$-catégorie  $C$ est \textit{générée par composition} par un ensemble $E\subset C$ lorsque toute  cellule peut s’écrire comme une composition d'éléments de $E$ et d'unités itérées d'éléments de $E$.  Cet ensemble est \textit{une base} si $\{[e]_{d(e)}\}_{e\in E}$ est une base du complexe dirigé augmenté  $\lambda C$. 
\end{defi}

On donne maintenant les analogues des notions de base sans boucles et unitaire.
\begin{defi}
Une base $E$ d'une $\omega$-catégorie est : 
\begin{enumerate}
\item \textit{Sans boucles} lorsque  $\{[e]_{d(e)}\}_{e\in E}$ l'est.
\item \textit{Atomique} lorsque $[d_n^+ e]_n \wedge [d_n^- e]_n  = 0$ pour tout $e\in E$ et tout entier $n$ strictement inférieur à la dimension de $e$.  
\end{enumerate}
\end{defi}

\begin{prop}
 Si une base sans boucles $E$ est atomique alors $\{[e]\}_{e\in E}$ est unitaire.
 \end{prop}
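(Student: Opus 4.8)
The statement to prove is: if a loop-free basis $E$ of an $\omega$-category $C$ is atomic, then $\{[e]\}_{e\in E}$ is unitary, i.e. for each $e \in E$ the associated Steiner table $\langle [e]_{d(e)}\rangle$ is coherent. By definition of coherence, one must show $\varepsilon(\langle [e]_{d(e)}\rangle^-_0) = \varepsilon(\langle [e]_{d(e)}\rangle^+_0) = 1$. The plan is to identify $\langle [e]_{d(e)}\rangle^\alpha_k$ with $[d^\alpha_k e]_k$ for every $k < d(e)$ (and $\alpha \in \{-,+\}$), and then conclude, since applying the augmentation to $[d^\alpha_0 e]_0$ gives $1$ by the very definition of the augmentation on $\lambda C$ (each $[x]_0$ is sent to $1$).

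The heart of the argument is therefore the claim
\[
\langle [e]_{d(e)}\rangle^\alpha_k = [d^\alpha_k e]_k \quad\text{for all } 0 \le k < d(e),\ \alpha\in\{-,+\},
\]
which I would prove by descending induction on $k$, starting from $k = d(e)$ where both sides equal $[e]_{d(e)}$ by convention. For the inductive step, write $n = d(e)$ and suppose the identity holds at level $k+1$. By definition $\langle [e]_n\rangle^\alpha_k = \partial^\alpha_k\langle [e]_n\rangle^\alpha_{k+1} = \partial^\alpha_k [d^\alpha_{k+1} e]_{k+1} = (\partial_k [d^\alpha_{k+1}e]_{k+1})_\pm$. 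Now $\partial_k [d^\alpha_{k+1}e]_{k+1} = [d^+_k d^\alpha_{k+1} e]_k - [d^-_k d^\alpha_{k+1} e]_k = [d^+_k e]_k - [d^-_k e]_k$ (using the globular identities, which make $d^\pm_k d^\alpha_{k+1} = d^\pm_k$ on $C_{k+2}$, extended to $C_n$). So I must show that for $\alpha = -$ this difference has positive part $[d^-_k e]_k$ and negative part $[d^+_k e]_k$ — wait, rather: $(\,[d^+_k e]_k - [d^-_k e]_k\,)_+ = [d^+_k e]_k$ and $(\,\cdot\,)_- = [d^-_k e]_k$ — and this is exactly where atomicity enters: atomicity says $[d^+_k e]_k \wedge [d^-_k e]_k = 0$, i.e. the two positive elements $[d^+_k e]_k$ and $[d^-_k e]_k$ of $K^*_k$ have disjoint supports, so no cancellation occurs and the positive/negative parts of their difference are exactly the two terms. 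This pins down $\langle [e]_n\rangle^+_k = [d^+_k e]_k$ and $\langle [e]_n\rangle^-_k = [d^-_k e]_k$, completing the induction.

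One technical point to verify along the way is that $[d^\alpha_j e]_j$ is indeed a basis element of $\lambda C$ at level $j$ (not merely a positive combination), so that phrases like "disjoint support" make literal sense; this follows because $E$ is a basis and $d^\alpha_j e$ is, up to composition with units, a single cell — more carefully, one should argue that in a loop-free atomic basis each $[d^\alpha_j e]_j$ lies in $K^*_j$ and its support consists of basis elements, which is enough for the wedge computation. Another is that the globular extension of $d^\alpha_k$ to higher cells commutes appropriately, which is immediate from the axioms recalled at the start of §1.1. The main obstacle, and the only place where a hypothesis is genuinely used, is the non-cancellation step: without atomicity the positive part of $[d^+_k e]_k - [d^-_k e]_k$ could be a proper "difference" rather than $[d^+_k e]_k$ itself, and the table would fail to be coherent. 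Loop-freeness is not needed directly here but is carried along because it is part of the standing hypothesis and guarantees we are in $\CDAB$; the coherence conclusion is purely a consequence of atomicity together with the identification above.
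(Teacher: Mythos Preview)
Your argument is correct and is essentially the standard one (Steiner's Proposition~4.6, which the paper simply cites rather than reproving). The descending induction showing $\langle [e]_n\rangle^\alpha_k = [d^\alpha_k e]_k$ via atomicity, followed by $e([d^\alpha_0 e]_0)=1$, is exactly the right route.

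One minor clarification: your worry about whether $[d^\alpha_j e]_j$ is a \emph{basis} element is unnecessary. It need not be; it is automatically an element of $K^*_j$ since $[x]_j\in (\lambda C)^*_j$ for every $j$-cell $x$ by construction. The operations $\wedge$ and ``support'' are defined for arbitrary elements once a basis of the ambient free abelian group exists (which it does, since $E$ is assumed to be a basis of $C$). So the key step---that $[d^+_k e]_k\wedge[d^-_k e]_k=0$ forces $([d^+_k e]_k-[d^-_k e]_k)_+=[d^+_k e]_k$ and $(\cdot)_-=[d^-_k e]_k$---holds for any two positive elements with zero meet, basis elements or not. You are also right that loop-freeness plays no role in this particular implication; it is part of the standing hypotheses for other reasons.
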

\begin{proof}
 Voir \cite[proposition 4.6]{steiner}
 \end{proof}

On définit alors la  catégorie \infcatB comme étant la sous-catégorie pleine de  \infcat composée des $\omega$-catégories admettant une base atomique et sans boucles.

 \begin{theo}
 \label{theo}
 Une fois restreinte, l'adjonction 
$$\lambda : \xymatrix{
    \infcatB  \ar@/^/[r] \ar@{}[r]|-{\bot}  &
    \CDAB \ar@/^/[l] }: \nu.$$
devient une équivalence adjointe, c'est-à-dire que :
$$ \lambda \circ \nu \cong  id_{\CDAB}~~~~~~~ id_{\infcatB }\cong \nu \circ \lambda$$
\end{theo}
\begin{proof}
Voir \cite[théorème 5.11]{steiner}.
\end{proof}

Si $K$ est un complexe dirigé augmenté admettant une base  unitaire et sans boucles $B$, alors la $\omega$-catégorie $\nu K$ admet une base atomique et sans boucles donnée par l’ensemble $\langle B\rangle := \{\langle b\rangle,b\in B\}$. Réciproquement si une $\omega$-catégorie $C$ admet une base atomique et sans boucles $E$, alors le complexe dirigé augmenté $\lambda C$ admet une base unitaire et sans boucles donné par la famille d'ensembles $[E_n] := \{[e]_{d(e)}, e\in E_n\}$. 
\note{Les isomorphismes
$$\lambda \nu K\cong K \mbox{~~~ et ~~~} C\cong \nu\lambda C$$
induisent des isomorphismes:
$$[\langle B\rangle ]\cong B  \mbox{~~~ et ~~~} E \cong \langle [E]\rangle.$$
Les bases, si elles existent, sont uniques pour les complexes dirigés augmentés, elles le sont donc
aussi pour les $\omega$-catégories admettant une base atomique et sans boucles.}
\section{Chaînes}
\subsection{Définition et propriétés des chaînes }
On fixe un complexe dirigé augmenté $K$  admettant une base sans boucles et unitaire $B$.

\begin{defi}
Une \textit{chaîne} est une somme  $\sum_{b\in B} \lambda_b b$
telle que la famille $\{\lambda_b\}_{b\in B}$ soit composée d'entiers positifs, nuls sauf un nombre fini.  

Pour une chaîne $a$, on note $supp(a)\subset B$ son support. Le degré  d'une chaîne non nulle $a$ est le maximum de l'ensemble $\{|b|, b\in supp(a)\}$ et est noté  $|a|$. On étend le degré à toutes les chaînes en posant $|0| = -1$.
\end{defi}

\begin{defi}
\label{defi:definition de la relation d'ordre sur les chaines}
Pour une chaîne quelconque $a := \sum_{b\in B} \lambda_b b$, et un élément $b\in B$, l'entier $\lambda_b$ sera appelé \textit{l'indice de $b$ dans $a$}, et noté $\lambda_b^a$. Un élément est dans le support de $a$ quand son indice est non nul.
Pour deux chaînes $a,a'$, un élément $b\in B$, et $\mu\in\mathbb{N}$ on a : 
$$
\begin{array}{rcl}
\lambda_b^{\mu a} &=& \mu\lambda_b^a\\
\lambda_b^{a + a'} &=& \lambda_b^a + \lambda_b^{a'}\\
\lambda_b^{a \wedge a'} &=& \min(\lambda_b^a, \lambda_b^{a'})\\
\lambda_b^{a \vee a'} &=& \max(\lambda_b^a, \lambda_b^{a'})\\
\lambda_b^{a \diagdown a'} &=& |\lambda_b^a - \lambda_b^{a'}|_+\\
\end{array}$$
où pour un entier relatif $k$, $|k|_+:=\max(k,0)$.

On peut alors définir une relation d'ordre sur les chaînes:
$$a~\leq~a' \mbox{~~~ssi~~~} \forall b\in B,~ \lambda_b^a \leq \lambda_b^{a'}$$
On définit aussi $a\leq 1$ lorsque $\forall b\in B, ~\lambda_b^a \leq 1$. Cette relation d'ordre vérifie alors les propriétés suivantes: pour toutes chaînes $a,a',a''$,
$$
\def\arraystretch{1.4}
\begin{array}{ccl}
a'\leq a \mbox{ ~et~ } a''\leq a &\Rightarrow& a'\wedge a''\leq a'\vee a'' \leq a\\
a\leq a' \mbox{ ~et~ } a\leq a'' &\Rightarrow& a\leq a'\wedge a''\leq a'\vee a'' \leq a'+a''\\
a'\leq a \mbox{ ~et~ } a''\leq a \mbox{~et~ } supp(a')\cap supp(a'')=\emptyset  &\Rightarrow& a'+ a'' \leq a\\
\end{array}
$$
\end{defi}

\begin{defi}
Soient $\sum_{b\in B} \lambda_b b$ une chaîne et $k$ un entier. Le \textit{$k$-reste} de $a$, noté $r_k(a)$, est défini par 
$$r_k(a) :=\sum_{b\in B}\xi^k_b \lambda_b b,$$
où $\xi_b^k = 1$ lorsque $|b|\leq k$ et nul sinon. 

La partie \textit{$k$-homogène} de $a$, notée $(a)_k$, est définie par
$$(a)_k :=\sum_{b\in B}\sigma^k_b \lambda_b b,$$
où $\sigma^k_b = 1$ lorsque $|b|= k$ et nul sinon. 

Une chaîne $a$ vérifiant   $a =(a)_{|a|}$ est dite \textit{homogène}. 
\end{defi}

Tout élément de $K^*$ correspond à une chaîne homogène, réciproquement, toute chaîne homogène correspond à un élément de $K^*$.

\begin{defi}
\label{defi:des sources et but}
On définit alors les applications \textit{sources} et \textit{buts} par co-induction. Pour un entier $n$ et $\alpha\in\{-,+\}$ : 
$$
d_n^\alpha(a) := \left\{
\begin{array}{ll}
 a & \mbox{ si $|a|\leq n $ } \\ 
  \partial_n^\alpha ( (d^\alpha_{n+1} a)_{n+1}~ ) + r_n (d^\alpha_{n+1} a) & \mbox{ sinon}\\
 \end{array} \right.$$
Pour une chaîne quelconque $a$, un entier $n$ et $\alpha\in\{-,+\}$, le degré de $d_n^\alpha(a)$ est inférieur ou égal à $n$ et on a donc par construction $$ d^\alpha_{n}(a) = d^\alpha_{n}(d^\alpha_{n+1}(a)).$$ 
\end{defi}

\begin{rem}
\label{rem:remarque sur le reste}
Pour une chaîne $a$, $n$ un entier et $\alpha\in\{-,+\}$, on a l'égalité:
$$r_{n}(a) = r_n(d_{n+1}^\alpha (a)).$$
\end{rem}

\begin{rem}
\label{remarquer dans le cas coherent}
Si $a$ est une chaîne homogène, $d_n^\alpha(a)$ est aussi une chaîne homogène et on a alors $d_{n}^\alpha a = \partial_n^\alpha ( (d^\alpha_{n+1} a))$ pour tout $n<|a|$. Il est alors intéressant de remarquer la ressemblance avec la définition \ref{defi:tableau_de_steiner_associe_a_un singloton}. On a en effet pour tout $k\leq n$ et $\alpha\in\{-,+\}$,
$$ d^\alpha_k(a)=\langle a\rangle^\alpha_k.$$
\end{rem}

\begin{defi}
Deux chaînes $a$ et $b$ sont \textit{$n$-parallèles} lorsque
$$ d_n^-(a)=d_n^-(b) \mbox{  et  } d_n^+(a)=d_n^+(b).$$
\end{defi}

\begin{lem}
Soit $a$ une chaîne de degré $n+1$. On a alors 
$$d^+_{n-1}\circ d^+_{n}(a) = d^+_{n-1} \circ d^-_{n}(a)\mbox{ ~~~~ et~~~~  } d^-_{n-1} \circ d^+_n(a)= d^-_{n-1} \circ d^-_n(a).$$ 
\end{lem}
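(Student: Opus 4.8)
The plan is to unwind the co-inductive definition of the source and target operators (Definition~\ref{defi:des sources et but}) twice, and to reduce both globularity identities to the relation $\partial_{n-1}\circ\partial_n=0$ together with the decomposition $x=(x)_+-(x)_-$ on $K_{n-1}$.

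First I would exploit the hypothesis $|a|=n+1$. Since $|a|\le n+1$ we have $d_{n+1}^\alpha(a)=a$, so the defining formula reads $d_n^\alpha(a)=\partial_n^\alpha\big((a)_{n+1}\big)+r_n(a)$ for $\alpha\in\{-,+\}$. Setting $u:=(a)_{n+1}$ and writing $r_n(a)=(a)_n+r_{n-1}(a)$, and observing that $\partial_n^\alpha(u)=(\partial_n u)_{\pm}$ is homogeneous of degree $n$ (it lies in $K_n$), I get that $d_n^\alpha(a)$ has degree $\le n$, that its degree-$n$ part is $\big(d_n^\alpha(a)\big)_n=\partial_n^\alpha(u)+(a)_n$, and that $r_{n-1}\big(d_n^\alpha(a)\big)=r_{n-1}(a)$ does not depend on $\alpha$.

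Next I would apply the definition a second time to compute $d_{n-1}^\beta$ of this chain. Because $|d_n^\alpha(a)|\le n$ we have $d_n^\beta\big(d_n^\alpha(a)\big)=d_n^\alpha(a)$, so --- whether or not the degree-$n$ part happens to vanish --- the defining formula collapses to
\[ d_{n-1}^\beta\big(d_n^\alpha(a)\big)=\partial_{n-1}^\beta\big(\partial_n^\alpha(u)+(a)_n\big)+r_{n-1}(a),\qquad \beta\in\{-,+\}. \]
So the lemma comes down to the equality $\partial_{n-1}^\beta\big(\partial_n^+(u)+(a)_n\big)=\partial_{n-1}^\beta\big(\partial_n^-(u)+(a)_n\big)$ for each $\beta$. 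This is where the only real content lies: by definition of the positive and negative parts, $\partial_n^+(u)-\partial_n^-(u)=\partial_n(u)$, so the two arguments of $\partial_{n-1}^\beta$ differ by $\partial_n(u)$; applying $\partial_{n-1}$ and using that $\partial$ is a differential gives $\partial_{n-1}\big(\partial_n^+(u)+(a)_n\big)=\partial_{n-1}\big(\partial_n^-(u)+(a)_n\big)$ in $K_{n-1}$, and two equal elements of $K_{n-1}$ have equal positive parts and equal negative parts. Substituting back yields $d_{n-1}^\beta(d_n^+(a))=d_{n-1}^\beta(d_n^-(a))$ for $\beta\in\{-,+\}$, which is precisely the two asserted identities.

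The delicate, though routine, step is the degree bookkeeping in the second paragraph: one must check that a single application of $d_n^\alpha$ lands in degree $\le n$ and that its low-degree remainder $r_{n-1}$ is already independent of $\alpha$, so that the second application of the definition separates cleanly into a degree-$n$ piece, where the whole argument takes place, and a common tail. Once that is set up, the remaining mathematics is just $\partial^2=0$ and $(x)_+-(x)_-=x$.
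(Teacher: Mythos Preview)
Your proof is correct and follows essentially the same route as the paper: unwind the definition of $d$ twice, reduce to $\partial_{n-1}^\beta\big(\partial_n^+(u)+(a)_n\big)=\partial_{n-1}^\beta\big(\partial_n^-(u)+(a)_n\big)$, and conclude from $\partial_{n-1}\partial_n=0$ together with the fact that equal elements have equal positive and negative parts. Your degree bookkeeping is in fact slightly cleaner than the paper's, which writes $\partial_{n-1}^+\big(\partial_n^+((a)_{n+1})+r_n(a)\big)$ where only the degree-$n$ part $(a)_n$ of $r_n(a)$ is really meant.
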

\begin{proof}
En appliquant la définition des applications sources et buts, on obtient:
$$\def\arraystretch{1.4}\begin{array}{lcl}
d^+_{n-1}\circ d^+_{n}(a) &=& d^+_{n-1} \big(\partial_{n}^+((a)_{n+1}) + r_n(a)\big)\\
&=& \partial_{n-1}^+\big(\partial_{n}^+((a)_{n+1}) + (a)_n\big) + r_{n-1}\big(\partial_{n}^+((a)_{n+1}) + r_n(a)\big) \\
 &=& \partial_{n-1}^+\big(\partial_{n}^+((a)_{n+1}) + (a)_n\big) + r_{n-1}(a) \\
\mbox{de même:} \\
 d^+_{n-1}\circ d^-_{n}(a) &=&\partial_{n-1}^+\big(\partial_{n}^-((a)_{n+1}) + (a)_n\big) + r_{n-1}(a) \\
\end{array}
$$
Or 
$$
\def\arraystretch{1.4}\begin{array}{lcl}
\partial_{n-1}\big(\partial_{n}^+((a)_{n+1}) + (a)_n\big)  - \partial_{n-1}\big(\partial_{n}^-((a)_{n+1}) + (a)_n\big)  &=& \partial_{n-1}\big(\partial_{n}^+((a)_{n+1}) - \partial_{n}^-((a)_{n+1}) \big)\\
&=& \partial_{n-1} \partial_{n}((a)_{n+1})  = 0
\end{array}
$$
d'où
$$
\def\arraystretch{1.4}\begin{array}{ll}
&\partial_{n-1}^+\big(\partial_{n}^+((a)_{n+1}) + (a)_n\big)  = \partial_{n-1}^+\big(\partial_{n}^-((a)_{n+1}) + (a)_n\big)\\
\Leftrightarrow & d^+_{n-1} \circ d^+_n(a)= d^+_{n-1} \circ d^-_n(a)
 \end{array}$$

On montre de façon analogue l'égalité $ d^-_{n-1} \circ d^+_n(a)= d^-_{n-1} \circ d^-_n(a)$.
\end{proof}

\begin{prop}
\label{prop:d_n est une differencielle}
Pour tout entier $n>0$, 
$$d^+_{n-1}\circ d^+_{n} = d^+_{n-1} \circ d^-_{n}\mbox{ ~~~~ et~~~~  } d^-_{n-1} \circ d^+_n = d^-_{n-1} \circ d^-_n.$$
\end{prop}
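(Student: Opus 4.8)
The plan is to deduce the general statement from the preceding lemma by a \emph{descending} induction on $n$, with the chain $a$ fixed throughout. Write $P(n,a)$ for the conjunction "$d^+_{n-1}\circ d^+_n(a) = d^+_{n-1}\circ d^-_n(a)$ and $d^-_{n-1}\circ d^+_n(a) = d^-_{n-1}\circ d^-_n(a)$", i.e. "$d^\beta_{n-1}(d^+_n a) = d^\beta_{n-1}(d^-_n a)$ for $\beta\in\{-,+\}$". Two cases are for free: if $|a|\le n$ then $d^+_n(a)=d^-_n(a)=a$, so $P(n,a)$ holds trivially; and if $|a|=n+1$ then $P(n,a)$ is exactly the lemma just proved. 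The induction will bring every remaining case down to one of these two.

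For the inductive step, assume $|a|\ge n+2$ and that $P(n+1,a)$ is known; in particular one has $d^-_n\circ d^+_{n+1}(a)=d^-_n\circ d^-_{n+1}(a)$. Using the identity $d^\alpha_n = d^\alpha_n\circ d^\alpha_{n+1}$ recorded after Definition \ref{defi:des sources et but}, rewrite $d^\beta_{n-1}(d^+_n a)=d^\beta_{n-1}\big(d^+_n(d^+_{n+1}a)\big)$. Now the chain $b:=d^+_{n+1}(a)$ has degree $\le n+1$, so $P(n,b)$ is \emph{already} available with no further work — it is trivial when $|b|\le n$ and it is the lemma when $|b|=n+1$ — and it gives $d^\beta_{n-1}(d^+_n b)=d^\beta_{n-1}(d^-_n b)$. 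Then apply $d^\beta_{n-1}$ to the equality $d^-_n(d^+_{n+1}a)=d^-_n(d^-_{n+1}a)$ furnished by $P(n+1,a)$, and finally invoke $d^-_n = d^-_n\circ d^-_{n+1}$ once more to arrive at $d^\beta_{n-1}(d^-_n a)$. Chaining these four equalities yields $P(n,a)$. Since $P(n,a)$ holds outright for all $n\ge|a|$, the descending induction then delivers it for every $n>0$.

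The point that makes the argument non-trivial — and the reason a bare appeal to $d^\alpha_n=d^\alpha_n\circ d^\alpha_{n+1}$ does not settle the proposition immediately — is that $d^+_n(a)$ is governed by the truncation $d^+_{n+1}(a)$ whereas $d^-_n(a)$ is governed by $d^-_{n+1}(a)$, and these are a priori different chains; bridging them is precisely the globular identity one dimension up, which is what forces the recursion on $n$. The crucial observation that unblocks everything is that $d^+_{n+1}(a)$ automatically has degree $\le n+1$, so that $P(n,\cdot)$ is known for it for free; after that the induction closes cleanly, the lemma playing the double role of the base case $|a|=n+1$ and of the input $P(n,d^+_{n+1}a)$ inside the step. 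I expect the only real care needed is bookkeeping the two boundary cases ($|a|\le n$ and $|a|=n+1$) so that the descending induction is genuinely well-founded.
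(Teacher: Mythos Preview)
Your argument is correct and is essentially the same as the paper's: the paper inducts on $k:=|a|-n$ (which is the same as your descending induction on $n$ with $a$ fixed), uses the identity $d^\alpha_n=d^\alpha_n\circ d^\alpha_{n+1}$, applies the preceding lemma to the chain $d^+_{n+1}(a)$ of degree $\le n+1$, and bridges via the induction hypothesis $d^-_n(d^+_{n+1}a)=d^-_n(d^-_{n+1}a)$ to obtain exactly your four-step chain of equalities. Your phrasing is in fact slightly more careful, since you allow $|d^+_{n+1}(a)|\le n+1$ (covering the trivial case $\le n$ as well), whereas the paper writes $|d^+_{n+1}(a)|=n+1$.
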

\begin{proof}
Soit $a$ une chaîne quelconque. Si $n$ est supérieur ou égal au degré de $a$ alors $d^-_{n}(a) =d^+_{n}(a) = a$, et l'égalité est donc trivialement vraie.
On va montrer le résultat dans le cas général pour toute chaîne $a$ et un entier $n$, par récurrence sur $k:=|a|-n$.

Le lemme précédent  correspond au cas $|a|-n = 1 \Leftrightarrow |a| = n+1$, et est donc l’initialisation de notre récurrence.

Supposons maintenant le résultat vrai  au rang $k$, et donnons nous une chaîne $a$ et un entier $n$ vérifiant $|a|-n = k+1$. On a donc $|a|-(n+1) =k$ et l'hypothèse de récurrence nous permet d’affirmer que 
$ d^-_{n}(d^+_{n+1}(a))= d^-_{n}(d^-_{n+1}(a))$.
La définition des applications sources et buts implique quant à elle que pour $\alpha\in\{-,+\}$, $ d^\alpha_{n}(a) = d^\alpha_{n}(d^\alpha_{n+1}(a))$. 
Comme $ |d^+_{n+1}(a)| = n+1$,  le lemme précédent indique que $$d^+_{n-1}\circ d^+_{n}(d^+_{n+1}(a)) = d^+_{n-1}\circ d^-_{n}(d^+_{n+1}(a)).$$ 
En mettant tout ensemble, on obtient le résultat voulu:
$$d^+_{n-1}\circ d^+_{n}(a) = d^+_{n-1}\circ d^+_{n}(d^+_{n+1}(a)) =
 d^+_{n-1}\circ d^-_{n}(d^+_{n+1}(a)) =
  d^+_{n-1}\circ d^-_{n}(d^-_{n+1}(a))=
   d^+_{n-1}\circ d^-_{n}(a)$$
L'autre égalité se montre de façon analogue.
\end{proof}

On va se servir de la proposition précédente pour définir une augmentation.

\begin{prop}
Soit $a$ une chaîne quelconque, alors $d_0^+(a)$ et $d_0^-(a)$ sont des chaînes  homogènes de degré zero, et 
$$e(d_0^+ a) =  e(d_0^- a).$$
\end{prop}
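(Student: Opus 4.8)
The plan is to split off the easy structural claim first. By the recursion in Definition~\ref{defi:des sources et but}, the chain $d_0^\alpha(a)$ has degree $\le 0$; a chain of degree $\le 0$ has its support among the degree-$0$ generators, hence is homogeneous (of degree $0$, or the zero chain). So the only real content is the equality $e(d_0^+a)=e(d_0^-a)$, and for this I would reduce to chains of degree $\le 1$.

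Set $b:=d_1^-(a)$, so $|b|\le 1$. Using the "par construction" identity $d_n^\alpha = d_n^\alpha\circ d_{n+1}^\alpha$ noted after Definition~\ref{defi:des sources et but} we get $d_0^-(a)=d_0^-(d_1^-(a))=d_0^-(b)$, and from Proposition~\ref{prop:d_n est une differencielle} with $n=1$ (the identity $d_0^+\circ d_1^+=d_0^+\circ d_1^-$) we get $d_0^+(a)=d_0^+(d_1^+(a))=d_0^+(d_1^-(a))=d_0^+(b)$ as well. Thus it suffices to prove $e(d_0^+(b))=e(d_0^-(b))$ for a chain $b$ with $|b|\le 1$.

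When $|b|\le 0$ this is immediate since $d_0^\pm(b)=b$. When $|b|=1$ we have $d_1^\alpha(b)=b$, so Definition~\ref{defi:des sources et but} gives $d_0^\alpha(b)=\partial_0^\alpha\big((b)_1\big)+r_0(b)$ for both signs, with the same degree-$0$ remainder $r_0(b)$. Subtracting, and using $\partial_0^\pm=(\partial_0(\var))_\pm$ together with $x=(x)_+-(x)_-$,
$$d_0^+(b)-d_0^-(b)=\partial_0^+\big((b)_1\big)-\partial_0^-\big((b)_1\big)=\partial_0\big((b)_1\big).$$
Applying $e$ and invoking $e\circ\partial_0=0$ (valid because $(K,e)$ is an augmented chain complex) gives $e(d_0^+(b))-e(d_0^-(b))=e(\partial_0((b)_1))=0$, whence $e(d_0^+(a))=e(d_0^+(b))=e(d_0^-(b))=e(d_0^-(a))$.

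I do not anticipate a genuine obstacle here: everything rests on the two facts that both $0$-dimensional boundaries of $a$ are computed from the single lower-degree chain $d_1^-(a)$ (Proposition~\ref{prop:d_n est une differencielle}) and that the augmentation annihilates $\mathrm{im}\,\partial_0$. The one point to handle with care is, for $|b|=1$, the splitting of $d_0^\alpha(b)$ into a pure degree-$1$ boundary term $\partial_0^\alpha((b)_1)$ plus a sign-independent degree-$0$ term $r_0(b)$, so that the difference collapses exactly to $\partial_0((b)_1)$.
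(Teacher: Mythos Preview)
Your proof is correct and follows essentially the same route as the paper: reduce via the globular identities (Proposition~\ref{prop:d_n est une differencielle}) to a chain of degree at most~$1$, split off the sign-independent remainder $r_0$, and then use $e\circ\partial_0=0$. The only cosmetic differences are that the paper reduces through $d_1^+$ rather than $d_1^-$, and phrases the last step as a chain of equalities $e(\partial_0^+(a'))=e(\partial_0^-(a'))$ rather than taking the difference first.
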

\begin{proof}
La proposition \ref{prop:d_n est une differencielle} indique  que pour tout $\alpha\in\{-,+\}$, $d_0^\alpha(a) = d_0^\alpha(d_1^+ a)$. Quitte à remplacer $a$ par  $d_1^+ a$, on peut supposer que $a$ est de degré $1$. On définit alors $r := r_0(a)$ et $a' :=a - r$. 
On a alors 
$$\def\arraystretch{1.4}
\begin{array}{rcl}
e(d_0^+ a)  &= &e (\partial_0^+(a') + r) \\
&=& e (\partial_0^+(a')) + e(r)\\
& =& e (\partial_0^-(a')) + e(r) \\
&=&e(d_0^- a).
\end{array}$$
\end{proof}
\begin{defi}
Pour une chaîne quelconque $a$, on définit $e(a) := e(d_0^+ a) =  e(d_0^- a).$
\end{defi}

\begin{lem}
\label{lem:d_n ne voit pas les truc en dessous de n}
Soient $a,c$ deux chaînes, $n$ un entier tel que $|a|\geq n >|c|$ et $\alpha\in\{-,+\}$. Alors $d_n^\alpha(a+c)=d_n^{\alpha}(a)+c$.
\end{lem}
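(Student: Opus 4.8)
The plan is to argue by induction on the non‑negative integer $k := |a| - n$, imitating the pattern used in the proof of Proposition~\ref{prop:d_n est une differencielle}, where an analogous co‑inductively defined family of operators was controlled by inducting on a finite degree gap.

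For the base case $k = 0$ one has $|a| = n$ and $|c| < n$, so that the degree‑$|a|$ part of $a+c$ is $(a)_{|a|} \neq 0$ and no higher degree occurs; thus $|a+c| \leq n$. The first branch of Definition~\ref{defi:des sources et but} then gives $d_n^\alpha(a+c) = a+c$ and $d_n^\alpha(a) = a$, so the identity is immediate. For the inductive step, assume the statement for $k$ and take $a$, $c$, $n$ with $|a| - n = k+1$ and $n > |c|$. Since $|c| < n < |a|$, the degree‑$|a|$ component of $a+c$ coincides with that of $a$ and is non‑zero, whence $|a+c| = |a| > n$ and the second branch of the definition applies:
$$d_n^\alpha(a+c) = \partial_n^\alpha\bigl((d_{n+1}^\alpha(a+c))_{n+1}\bigr) + r_n\bigl(d_{n+1}^\alpha(a+c)\bigr).$$
Because $|a| - (n+1) = k$ and $n+1 > |c|$, the induction hypothesis gives $d_{n+1}^\alpha(a+c) = d_{n+1}^\alpha(a) + c$. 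It then suffices to note that $(\var)_{n+1}$ and $r_n$ are additive in their chain argument, that $(c)_{n+1} = 0$ since $|c| < n+1$, and that $r_n(c) = c$ since $|c| \leq n$; substituting these into the display and comparing with the definition of $d_n^\alpha(a)$ (again the second branch, as $|a| > n$) yields exactly $d_n^\alpha(a+c) = d_n^\alpha(a) + c$.

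The only point that needs genuine care — essentially the sole obstacle — is making sure that at each level the \emph{second} ("sinon") branch of the co‑inductive definition is the one in force, i.e. that adjoining the low‑degree chain $c$ never pulls the degree of $a$ down to $n$ or below; this is precisely where $|c| < n \leq |a|$ (together with the fact that the top‑degree homogeneous component of $a+c$ equals that of $a$) is used. The remaining manipulations are a routine bookkeeping of the additivity of $(\var)_{n+1}$ and $r_n$ and of the behaviour of these operators on a chain of degree $< n$, and the hypotheses on degrees are exactly what is needed to make each of those three small facts hold.
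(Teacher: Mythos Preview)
Your proof is correct and follows essentially the same route as the paper's: both argue by induction on the gap between $|a|$ and $n$ (you phrase it as ascending induction on $k=|a|-n$, the paper as descending induction on $n$ with $a,c$ fixed), with the same base case $n=|a|$ and the same inductive step unfolding the second branch of Definition~\ref{defi:des sources et but} and using additivity of $(\var)_{n+1}$ and $r_n$ together with $(c)_{n+1}=0$ and $r_n(c)=c$.
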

\begin{proof}
On va procéder par une récurrence descendante sur $|c|<n\leq |a|$. Si $n=|a|$, alors $d^{\alpha}_n(a+c)= a+c = d^{\alpha}_n(a)+c$. Supposons maintenant le résultat vrai pour $n>|c|+1$ . On a alors 
$$\def\arraystretch{1.4}
\begin{array}{rcl}
d^\alpha_{n-1}(a+c) &= &\partial^\alpha_{n-1}\big( (d_n^\alpha (a+c))_{n}\big) + r_n(d^\alpha_n (a+c) )\\
&= &\partial^\alpha_{n-1}\big( (d_n^\alpha a +c )_{n}\big) + r_n(d^\alpha_n a + c )\\
&= &\partial^\alpha_{n-1}\big( (d_n^\alpha a )_{n}\big) + r_n(d^\alpha_n a) +  c \\
&= &d^\alpha_{n-1}(a) + c.
\end{array}$$
Ainsi, le résultat est vrai pour $(n-1)$, ce qui conclut la preuve.
\end{proof}

\begin{prop}
Soient $a,c$ deux chaînes, $n := min(|a|,|c|)$ et $\alpha\in\{-,+\}$. On suppose de plus que $r_{n-1}(a+c)= 0$. On a alors
$$d^\alpha_{n-1}(a+c) = (d^\alpha_{n-1}(a) \diagdown d^{-\alpha}_{n-1} (c)) + (d^\alpha_{n-1}(c) \diagdown d^{-\alpha}_{n-1} (a)).$$
\label{prop:presque lineraite des sources et but}
\end{prop}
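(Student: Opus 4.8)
The plan is to reduce to the case where $a$ and $c$ are homogeneous of degree $n$, where the statement becomes a purely arithmetic identity about positive and negative parts, and then to treat the general case by truncating $c$ via $d_n^\alpha$.

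Since $r_{n-1}$ is additive and $a,c$ have nonnegative coefficients, the hypothesis $0 = r_{n-1}(a+c) = r_{n-1}(a)+r_{n-1}(c)$ forces $r_{n-1}(a)=r_{n-1}(c)=0$. As the right-hand side is symmetric in $a$ and $c$, I may assume $|a|\le|c|$, so that $n=|a|$ and $a$ is homogeneous of degree $n$, i.e. $a\in K_n^*$. The first step is the homogeneous case: for $x,y\in K_n^*$,
$$d_{n-1}^\alpha(x+y) = (d_{n-1}^\alpha x \diagdown d_{n-1}^{-\alpha} y) + (d_{n-1}^\alpha y \diagdown d_{n-1}^{-\alpha} x).$$
Indeed $x+y$ is again homogeneous of degree $\le n$, so Remark \ref{remarquer dans le cas coherent} identifies every $d_{n-1}^\beta$ occurring here with $\partial_{n-1}^\beta$, and (for $\alpha=+$; the case $\alpha=-$ is obtained by negating) the claim reduces to the elementary identity $(u+v)_+ = \big((u)_+\diagdown(v)_-\big) + \big((v)_+\diagdown(u)_-\big)$ for $u,v\in K_{n-1}$. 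Using the coefficient rules of Definition \ref{defi:definition de la relation d'ordre sur les chaines} (in particular $\lambda_b^{s\diagdown t}=|\lambda_b^s-\lambda_b^t|_+$) this is checked coefficient by coefficient by distinguishing the four sign combinations of $\lambda_b^u$ and $\lambda_b^v$.

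For the general case, set $c':=d_n^\alpha(c)$. Its degree is $\le n$, and $r_{n-1}(c')=r_{n-1}(c)=0$ by Remark \ref{rem:remarque sur le reste}, so $c'\in K_n^*$. Next I would prove $d_k^\alpha(a+c)=a+d_k^\alpha(c)$ for all $n\le k\le|c|$ by descending induction on $k$: the base case $k=|c|$ is immediate since $|a+c|=|c|$, and in the inductive step one unwinds the definition of $d_{k-1}^\alpha$, using $|a|=n<k$ to get $(a+d_k^\alpha c)_k=(d_k^\alpha c)_k$ and $r_{k-1}(a+d_k^\alpha c)=a+r_{k-1}(d_k^\alpha c)$; this is a mild variant of Lemma \ref{lem:d_n ne voit pas les truc en dessous de n}. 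Taking $k=n$ gives $d_n^\alpha(a+c)=a+c'$, hence $d_{n-1}^\alpha(a+c)=d_{n-1}^\alpha\big(d_n^\alpha(a+c)\big)=d_{n-1}^\alpha(a+c')$. Applying the homogeneous case to $x=a$, $y=c'$, and then replacing $d_{n-1}^\beta(c')=d_{n-1}^\beta(d_n^\alpha c)$ by $d_{n-1}^\beta(c)$ — valid for $\beta=\alpha$ by construction of the source/target maps ($d_{n-1}^\beta\circ d_n^\beta=d_{n-1}^\beta$) and for $\beta=-\alpha$ because $d_{n-1}^\beta\circ d_n^{-\beta}=d_{n-1}^\beta\circ d_n^{\beta}$ by Proposition \ref{prop:d_n est une differencielle} — yields exactly the claimed formula.

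Most of this is bookkeeping. The point that needs care is the final substitution $d_{n-1}^{-\alpha}(d_n^\alpha c)=d_{n-1}^{-\alpha}(c)$ with mismatched superscripts, which is precisely where Proposition \ref{prop:d_n est une differencielle} enters, together with getting the four-case coefficient computation in the homogeneous step right.
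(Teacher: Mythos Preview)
Your proof is correct and follows essentially the same strategy as the paper: first treat the case where both chains are homogeneous of degree $n$ by reducing to the arithmetic identity $(u+v)_+=((u)_+\diagdown(v)_-)+((v)_+\diagdown(u)_-)$, then handle the general case by truncating the longer chain via $d_n^\alpha$ and invoking the globular relations of Proposition~\ref{prop:d_n est une differencielle} to fix the sign mismatch. The only cosmetic difference is that the paper assumes $|a|>|c|$ and truncates $a$, whereas you assume $|a|\le|c|$ and truncate $c$; your inductive proof of $d_k^\alpha(a+c)=a+d_k^\alpha(c)$ is exactly the mirror of the paper's appeal to Lemma~\ref{lem:d_n ne voit pas les truc en dessous de n}.
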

\begin{proof}
\note{On prouve cette formule pour $\alpha:=+$, l'autre cas étant similaire.}
Supposons tout d'abord que $|a|=|c|$.  
 Les deux chaînes sont alors homogènes.
On a alors :
$$
\def\arraystretch{1.4}
\begin{array}{rcl}
d^+_{n-1}(a+c) &=& \partial^+_{n-1}(a+c)\\
&=&(\partial_{n-1}(a+c))_{+}\\
&=&(\partial_{n-1}^+(a) - \partial_{n-1}^-(a) + \partial_{n-1}^+(c) -\partial_{n-1}^-(c))_+\\
\end{array}$$
Comme les supports de $ \partial_{n-1}^+(a)$ et $ \partial_{n-1}^{-}(a)$ (resp. $ \partial_{n-1}^+(c)$ et $ \partial_{n-1}^{-}(c)$) sont disjoints par construction, on a bien
$$
\def\arraystretch{1.4}
\begin{array}{rcl}
d^\alpha_{n-1}(a+c) &=&(\partial_{n-1}^+(a) - \partial_{n-1}^{-}(c))_+ +(  \partial_{n-1}^+(c) -\partial_{n-1}^{-}(a))_+\\
&=&(\partial^+_{n-1}(a) \diagdown \partial^{-}_{n-1} (c)) + (\partial^+_{n-1}(c) \diagdown \partial^{-}_{n-1} (a)),\\
\end{array}
$$
\note{et comme $d^+_{n-1}(a) =\partial^+_{n-1}(a)$ et $d^+_{n-1}(c) =\partial^+_{n-1}(c)$, cela démontre la formule.}

Supposons maintenant que $|a|>|c|$. \note{On a donc par définition: }
$$
\def\arraystretch{1.4}
\begin{array}{rcl}
d^+_{n-1}(a+c) &=& \partial^+_{n-1}(d^+_n(a+c)) \\
&=&\partial^+_{n-1}(d^+_n(a)+c) \mbox{ (\ref{lem:d_n ne voit pas les truc en dessous de n})}\\
&=&d_{n-1}^+(d^+_n(a)+c)
\end{array}$$
Or $|d_n^+(a)|=|c|=n$, on s'est donc ramené au cas précédent, et on obtient : 
$$
\def\arraystretch{1.4}
\begin{array}{rcl}
d^\alpha_{n-1}(a+c) &=&(\partial_{n-1}^+(d^+_n(a)) - \partial_{n-1}^{-}(c))_+ +(  \partial_{n-1}^+(c) -\partial_{n-1}^{-}(d^+_n(a)))_+\\
&=&(d_{n-1}^+(a) - d_{n-1}^{-}(c))_+ +(  d_{n-1}^+(c) -d_{n-1}^{-}(a))_+\\
 &=&(d^+_{n-1}(a) \diagdown d^{-}_{n-1} (c)) +
(d^+_{n-1}(c) \diagdown d^{-}_{n-1} (a)).
\end{array}$$
\end{proof}

En particulier, pour deux chaînes $a$ et $c$ vérifiant les hypothèses de la proposition, $d^\alpha_{n-1}(a+c)$ ne dépend que de $d^+_{n-1}(a),d^-_{n-1}(a)$ et $d^+_{n-1}(c),d^-_{n-1}(c)$. 

\begin{lem}
\label{lem:le reste est le wedge de d- et d+}
Soient $a$ une chaîne, $n$ un entier inférieur ou égale au degré de $a$. On a alors $r_{n-1}(a)=d^-_{n-1}(a)\wedge d^+_{n-1}(a)$.
\end{lem}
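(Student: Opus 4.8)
Le plan est de se ramener au cas $|a|\leq n$, puis de conclure par un calcul coefficient par coefficient. Pour la réduction, on remarque que le degré de $d_n^+(a)$ est toujours $\leq n$, que $r_{n-1}(a)=r_{n-1}(d_n^+(a))$ (remarque \ref{rem:remarque sur le reste}) et que $d_{n-1}^\alpha(a)=d_{n-1}^\alpha(d_n^+(a))$ pour $\alpha\in\{-,+\}$ : en effet, la définition \ref{defi:des sources et but} donne directement $d_{n-1}^+(a)=d_{n-1}^+(d_n^+(a))$, et pour $\alpha=-$ on combine $d_{n-1}^-(a)=d_{n-1}^-(d_n^-(a))$ avec l'identité $d_{n-1}^-\circ d_n^-=d_{n-1}^-\circ d_n^+$ de la proposition \ref{prop:d_n est une differencielle}. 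Les deux membres de l'égalité cherchée ne dépendant que de $d_{n-1}^-(a)$, $d_{n-1}^+(a)$ et $r_{n-1}(a)$, on peut donc remplacer $a$ par $d_n^+(a)$, c'est-à-dire supposer désormais $|a|\leq n$.

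Dans ce cas $d_n^\alpha(a)=a$, donc la définition \ref{defi:des sources et but} donne, pour $\alpha\in\{-,+\}$,
$$d_{n-1}^\alpha(a)=\partial_{n-1}^\alpha\big((a)_n\big)+r_{n-1}(a)$$
(si $|a|\leq n-1$ le premier terme est nul et les deux membres valent $a$). Posons $p:=\partial_{n-1}^+((a)_n)$, $q:=\partial_{n-1}^-((a)_n)$, $s:=r_{n-1}(a)$, de sorte que $d_{n-1}^+(a)=p+s$ et $d_{n-1}^-(a)=q+s$. Comme $p$ et $q$ sont la partie positive et la partie négative de $\partial_{n-1}((a)_n)\in K_{n-1}$, ce sont des chaînes positives de supports disjoints.

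Il reste à vérifier $(p+s)\wedge(q+s)=s$. D'après la définition \ref{defi:definition de la relation d'ordre sur les chaines}, pour tout $b\in B$,
$$\lambda_b^{(p+s)\wedge(q+s)}=\min\big(\lambda_b^p+\lambda_b^s,\ \lambda_b^q+\lambda_b^s\big)=\lambda_b^s+\min\big(\lambda_b^p,\lambda_b^q\big)=\lambda_b^s,$$
la dernière égalité puisqu'au moins l'un des entiers positifs $\lambda_b^p$, $\lambda_b^q$ est nul (leurs supports étant disjoints). Ainsi $d_{n-1}^-(a)\wedge d_{n-1}^+(a)=s=r_{n-1}(a)$.

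Je m'attends à ce que le seul vrai point d'attention soit la réduction initiale : c'est elle qui garantit que $\partial_{n-1}^+$ et $\partial_{n-1}^-$ portent \emph{sur le même} élément $(a)_n$, ce qui rend leurs parties positive et négative de supports disjoints. Si l'on travaillait directement avec $(d_n^+a)_n$ et $(d_n^-a)_n$, rien n'assurerait cette disjonction et l'argument s'effondrerait ; le calcul final, lui, est immédiat.
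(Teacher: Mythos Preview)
Your proof is correct and takes essentially the same approach as the paper. The only cosmetic difference is in the reduction step: the paper keeps a general $a$ and first derives the auxiliary identity $\partial_{n-1}^+((d_n^+ a)_n)=\partial_{n-1}^+((d_n^- a)_n)$ from $d_{n-1}^+ d_n^+ = d_{n-1}^+ d_n^-$, whereas you replace $a$ by $d_n^+(a)$ so that both $\partial_{n-1}^+$ and $\partial_{n-1}^-$ are already applied to the same $(a)_n$; in either case one ends with $(p+s)\wedge(q+s)=s$ for $p,q$ of disjoint supports.
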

\begin{proof}
\note{L'égalité $d_{n-1}^+d_{n}^+(a)=d_{n-1}^+d_{n}^-(a)$, prouvée en \ref{prop:d_n est une differencielle}, implique que
$$\partial_{n-1}^+((d_{n}^+ a)_{n}) + r_{n-1}(d_{n}^+ a) = \partial_{n-1}^+((d_{n}^- a)_{n}) + r_{n-1}(d_{n}^- a)$$
et donc 
$$
\def\arraystretch{1.4}
\begin{array}{rcl}
d^-_{n-1}(a)\wedge d^+_{n-1}(a) &=& \big(\partial^-_{n-1}((d^-_n a)_{n}) + r_{n-1}(d^-_na)\big)\wedge \big(\partial^+_{n-1}((d^+_n a)_{n}) + r_{n-1}(d^+_na)\big)\\
&=& \big(\partial^-_{n-1}((d^-_n a)_{n}) + r_{n-1}(d^-_na)\big)\wedge \big(\partial_{n-1}^+((d_{n}^- a)_{n}) + r_{n-1}(d_{n}^- a)\big).\\
\end{array}$$
Or $\partial^-_{n-1}((d^-_n a)_{n}) \wedge \partial^+_{n-1}((d^-_n a)_{n})=0$, et on en déduit donc:
$$d^-_{n-1}(a)\wedge d^+_{n-1}(a) = r_{n-1}(d^-_na).$$
Enfin, on a remarqué en \ref{rem:remarque sur le reste} que  $r_{n-1}(d_n^- a)=r_{n-1}(a)$, et cela conclut donc la preuve.}\end{proof}

\begin{cor}
\label{cor:lembis}
Soient $a$ et $a'$  deux chaînes de même degré, et $c$ une chaîne quelconque. Soient $\alpha\in\{-,+\}$ et $n:= min(|a|,|c|)$. On suppose que pour tout $\beta\in\{-,+\}$,  $d^\beta_{n-1}(a)=d^\beta_{n-1}(a')$. On a alors :
$$d^\alpha_{n-1}(a+c) =d^\alpha_{n-1}(a'+c).$$
\end{cor}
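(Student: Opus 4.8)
Le plan est de me ramener au cas où $r_{n-1}(a+c)=0$, cas dans lequel la conclusion est immédiate : la proposition \ref{prop:presque lineraite des sources et but} y exprime $d^\alpha_{n-1}(a+c)$ comme $(d^\alpha_{n-1}(a)\diagdown d^{-\alpha}_{n-1}(c))+(d^\alpha_{n-1}(c)\diagdown d^{-\alpha}_{n-1}(a))$, qui ne dépend que de $d^+_{n-1}(a),d^-_{n-1}(a),d^+_{n-1}(c),d^-_{n-1}(c)$. La première observation est que les restes de $a$ et $a'$ coïncident : puisque $|a|=|a'|\geq n:=\min(|a|,|c|)$, le lemme \ref{lem:le reste est le wedge de d- et d+} donne $r_{n-1}(a)=d^-_{n-1}(a)\wedge d^+_{n-1}(a)=d^-_{n-1}(a')\wedge d^+_{n-1}(a')=r_{n-1}(a')$, d'où $r_{n-1}(a+c)=r_{n-1}(a'+c)$ ; on note $\rho$ cette chaîne commune, de degré $\leq n-1$.

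Le point technique sera un sous-lemme : pour toutes chaînes $x,y$ telles que $|x|\geq n$ et $|y|\leq n-1$, on a $d^\alpha_{n-1}(x+y)=d^\alpha_{n-1}(x)+y$. C'est une légère amélioration du lemme \ref{lem:d_n ne voit pas les truc en dessous de n}, qui ne couvre que le cas $|y|\leq n-2$ ; pour $|y|=n-1$, je reviendrais à la définition \ref{defi:des sources et but} : comme $|x+y|=|x|\geq n>n-1$, on a $d^\alpha_{n-1}(x+y)=\partial^\alpha_{n-1}\big((d^\alpha_n(x+y))_n\big)+r_{n-1}(d^\alpha_n(x+y))$, puis le lemme \ref{lem:d_n ne voit pas les truc en dessous de n} appliqué au niveau $n$ donne $d^\alpha_n(x+y)=d^\alpha_n(x)+y$, et comme $|y|\leq n-1$ on a $\big(d^\alpha_n(x)+y\big)_n=\big(d^\alpha_n(x)\big)_n$ ainsi que $r_{n-1}(d^\alpha_n(x)+y)=r_{n-1}(d^\alpha_n(x))+y=r_{n-1}(x)+y$ (remarque \ref{rem:remarque sur le reste}) ; on reconnaît alors $d^\alpha_{n-1}(x)+y$.

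Ce sous-lemme permet de retrancher la partie de bas degré. Je poserais $\tilde a:=a-r_{n-1}(a)$, $\tilde a':=a'-r_{n-1}(a')$ et $\tilde c:=c-r_{n-1}(c)$. Comme $|a|\geq n$, la composante de degré $|a|$ n'est pas dans $r_{n-1}(a)$, donc $|\tilde a|=|a|$, $|\tilde a'|=|a'|=|a|$, $|\tilde c|=|c|$, et de plus $r_{n-1}(\tilde a)=r_{n-1}(\tilde a')=r_{n-1}(\tilde c)=0$, d'où $r_{n-1}(\tilde a+\tilde c)=r_{n-1}(\tilde a'+\tilde c)=0$ et $\min(|\tilde a|,|\tilde c|)=\min(|\tilde a'|,|\tilde c|)=n$. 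Appliqué à $a=\tilde a+r_{n-1}(a)$ et à $a'=\tilde a'+r_{n-1}(a')$, le sous-lemme donne $d^\beta_{n-1}(\tilde a)=d^\beta_{n-1}(a)-r_{n-1}(a)$ et $d^\beta_{n-1}(\tilde a')=d^\beta_{n-1}(a')-r_{n-1}(a')$ ; avec l'hypothèse $d^\beta_{n-1}(a)=d^\beta_{n-1}(a')$ et l'égalité $r_{n-1}(a)=r_{n-1}(a')$, on en tire $d^\beta_{n-1}(\tilde a)=d^\beta_{n-1}(\tilde a')$ pour $\beta\in\{-,+\}$. Appliqué cette fois à $a+c=(\tilde a+\tilde c)+\rho$ et $a'+c=(\tilde a'+\tilde c)+\rho$, il donne $d^\alpha_{n-1}(a+c)=d^\alpha_{n-1}(\tilde a+\tilde c)+\rho$ et $d^\alpha_{n-1}(a'+c)=d^\alpha_{n-1}(\tilde a'+\tilde c)+\rho$.

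On conclut alors : la proposition \ref{prop:presque lineraite des sources et but} s'applique aux couples $(\tilde a,\tilde c)$ et $(\tilde a',\tilde c)$ (le $r_{n-1}$ de la somme étant nul dans les deux cas), et donne $d^\alpha_{n-1}(\tilde a+\tilde c)=(d^\alpha_{n-1}(\tilde a)\diagdown d^{-\alpha}_{n-1}(\tilde c))+(d^\alpha_{n-1}(\tilde c)\diagdown d^{-\alpha}_{n-1}(\tilde a))$, et l'expression analogue avec $\tilde a'$ à la place de $\tilde a$ ; comme $d^\beta_{n-1}(\tilde a)=d^\beta_{n-1}(\tilde a')$ pour $\beta\in\{-,+\}$, ces deux expressions coïncident, si bien que $d^\alpha_{n-1}(a+c)=d^\alpha_{n-1}(\tilde a+\tilde c)+\rho=d^\alpha_{n-1}(\tilde a'+\tilde c)+\rho=d^\alpha_{n-1}(a'+c)$. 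L'obstacle principal est le sous-lemme dans le cas $|y|=n-1$, où le lemme \ref{lem:d_n ne voit pas les truc en dessous de n} ne s'applique pas directement et où il faut dérouler la définition ; tout le reste relève de vérifications routinières sur les degrés. (On suppose implicitement $n\geq 1$, seul cas pertinent.)
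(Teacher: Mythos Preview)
Your proof is correct and follows the same strategy as the paper's: reduce to the case of vanishing $(n-1)$-rest via the substitutions $\tilde a,\tilde a',\tilde c$, then invoke Proposition~\ref{prop:presque lineraite des sources et but}. The paper is terser and writes $d^\alpha_{n-1}(a+c)=d^\alpha_{n-1}(\tilde a+\tilde c)+r_{n-1}(a)+r_{n-1}(c)$ without comment; your sous-lemme is precisely the justification of that step (Lemma~\ref{lem:d_n ne voit pas les truc en dessous de n} alone does not cover the case $|y|=n-1$), so your version is in fact more complete.
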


\begin{proof}
Donnons nous trois chaînes $a,a',c$ vérifiant les conditions du corollaire. Supposons tout d'abord que $r_{n-1}(a+c)=r_{n-1}(a'+c) = 0$.
On a donc:
$$
\def\arraystretch{1.4}
\begin{array}{rcl}
d^\alpha_{n-1}(a+c) 
&=&(d^\alpha_{n-1}(a ) \diagdown d^{-\alpha}_{n-1} (c )) +
(d^\alpha_{n-1}(c ) \diagdown d^{-\alpha}_{n-1} (a ))\\
&=&(d^\alpha_{n-1}(a' ) \diagdown d^{-\alpha}_{n-1} (c )) +
(d^\alpha_{n-1}(c ) \diagdown d^{-\alpha}_{n-1} (a' ))\\
&=&d^\alpha_{n-1}(a' +c ) \\
\end{array} 
$$
Plaçons nous maintenant dans le cas général. 
Le lemme \ref{lem:le reste est le wedge de d- et d+} implique que
$$r_{n-1}(a) = d^-_{n-1}(a)\wedge d^+_{n-1}(a) = d^-_{n-1}(a')\wedge d^+_{n-1}(a')=r_{n-1}(a').$$
En posant $$\begin{array}{lcl}
\tilde{a}&:=& a - r_{n-1}(a)\\
\tilde{a}'&:=& a' - r_{n-1}(a')\\
\tilde{c}&:=& c - r_{n-1}(c),\\
\end{array}$$
on a alors $d^\alpha_{n-1}(\tilde{a} +\tilde{c} )  = d^\alpha_{n-1}(\tilde{a}' +\tilde{c} ) $ et donc
$$
d^\alpha_{n-1}(a+c) = d^\alpha_{n-1}(\tilde{a}+\tilde{c}) + r_{n-1}(a) + r_{n-1}(c)
=d^\alpha_{n-1}(\tilde{a}'+\tilde{c}) + r_{n-1}(a') + r_{n-1}(c)
=d^\alpha_{n-1}(a'+c).
$$

\end{proof}

\begin{defi}
Pour une chaîne $a$, on définit le \textit{degré de composition} $|a|_c$  : 
$$
|a|_c = \mbox{sup}\{ n \in \mathbb{N}\cup\{-1\}| \exists b,b' \in supp(a) \mbox{ tels que } b \neq b',|b|\geq |b'| >n\}
$$
où on choisit la convention 
$\mbox{sup}(\emptyset) :=-1$.
\end{defi}

\begin{rem}
\label{rem:sur le degre de composition}
Si le support d'un chaîne est vide ou réduit à un seul élément, son degré de composition sera $-1$. Dans le cas contraire, pour obtenir le  degré de composition, on soustrait $1$ au degré de l'élément du support admettant le deuxième plus grand degré. \note{Par exemple, si $\lambda,\lambda'$ sont deux entiers non nuls, et $b,b'$ deux éléments de la base, le degré de composition de la chaîne $\lambda b + \lambda' b'$ est $((min(|b|,|b'|)-1)$.}

Pour une chaîne  $a$, il existe au plus un $b$ dans le support de $a$ tel que $|b|>|a|_c+1$. 
\end{rem}

\note{\begin{lem}
\label{lem:pseudolinearite}
Soient  $a:=\sum_{i\leq m}  \lambda_i b_i$  une chaîne dont le $|a|_c$-reste est nul, et $\alpha\in\{-,+\}$. On a l’inégalité suivante :
$$d^\alpha_{|a|_c}(\sum_{i\leq m}  \lambda_i b_i)\leq \sum_{i\leq m}  \lambda_i d^\alpha_{|a|_c}(b_i)$$
\end{lem}
\begin{proof}
Supposons tout d'abord que $a$ est une chaîne homogène. Cela implique en particulier que $|a|-1=|a|_c$. On peut alors montrer le résultat par récurrence sur $m$ en utilisant la proposition \ref{prop:presque lineraite des sources et but}.

Supposons maintenant que $a$ n'est pas une chaîne homogène. Il existe donc un unique élément de la base $b$ de degré strictement supérieur à $(|a|_c+1)$, un entier non nul $\lambda$, et une chaîne homogène $a'$  de degré $|a|_c+1$ tels que 
$$a := a' +\lambda b.$$ 
De plus, on a $|b|>|a_c|+1$. La proposition  \ref{prop:presque lineraite des sources et but} implique alors que 
$$d^\alpha_{|a|_c}(a) = d^\alpha_{|a|_c}(a'+\lambda b)\leq  d^\alpha_{|a|_c}(a') + \lambda d^\alpha_{|a|_c}(b).$$
Enfin, on remarque que $|a'|_c = |a|_c$, et en appliquant la formule à la chaîne homogène $a'$, on conclut la preuve.
\end{proof}
}

\note{\begin{rem}
Pour  $a:=\sum_{i\leq m}  \lambda_i b_i$  une chaîne dont le $|a|_c$-reste est nul, et $\alpha\in\{-,+\}$, on peut déduire du lemme \ref{lem:pseudolinearite} l'inclusion suivante des supports:
$$supp(d^\alpha_{|a|_c}(\sum_{i\leq m}  \lambda_i b_i))\subset \bigcup_{i\leq m} supp(d^\alpha_{|a|_c}(b_i)).$$
\end{rem}}

\note{
\begin{defi}
Soit $(E,\triangleleft)$ un ensemble muni d'une relation d'ordre partiel.
Soit $F$ un sous ensemble de $E$. Un élément $v\in F$ est un \textit{minimum partiel} de $F$ si pour tout $u\in F$, $u\triangleleft v$ est faux. Un tel élément n'est pas forcement unique.

On dit qu'une suite finie $\{u_n\}_{n< m}$ dans $E$ est \textit{ordonnée}, si pour tout $k<m$, $u_k$ est un minimum partiel du sous ensemble $\{u_n,k\leq n< m\}$. Remarquons que pour tout sous ensemble fini $F$ de $E$, il existe au moins une suite $u_\_:\{0,1..,|F|-1\}\to F$ bijective et ordonnée. Cette suite n'est cependant pas unique.
\end{defi}
}

\begin{defi}
\label{defi:forme ordonne}
Soit $a$ une chaîne. Alors il existe un entier $n$ et une \note{suite ordonnée $\{b_i\}_{i\leq n}$ pour l'ensemble $B$ muni de la relation d'ordre partiel $\odot_{|a|_c}$ et tel que }
$$a = \sum_{i\leq n} \lambda_{i} b_i + r_{|a|_c}(a)$$
où les $\lambda_i$ sont des entiers non nuls. On dira alors que la chaîne est \textit{écrite sous forme ordonnée}.

Pour $0\leq k\leq n+1$ on définit alors $a_{<k}:=\sum_{i<k} \lambda_{i} b_i$ et $a_{\geq k}:= \sum_{k\leq i \leq n} \lambda_{i} b_i$. Ces deux chaînes vérifient l'égalité suivante : 
$$a = a_{<k}+a_{\geq k} + r_{|a|_c}(a)$$
\end{defi}

\begin{rem}\note{
Soit $a:=\sum_{i\leq n} \lambda_{i} b_i + r_{|a|_c}(a)$ une chaîne écrite sous forme ordonnée. 
En déroulant les définitions, cela implique que pour tout $j<k\leq n$, il n'existe \textit{pas} de suite finie $\{d_i\}_{i\leq m}$, composée d'éléments de $B$, et telle que 
$$ b_k \odot_{|a|_c} d_0 \odot_{|a|_c} d_1\odot_{|a|_c}...\odot_{|a|_c} d_m\odot_{|a|_c} b_j.$$
}
\end{rem}

\begin{prop}
Soit $a := \sum_{i\leq m} \lambda_{i} b_i +  r_{|a|_c}(a)$ une chaîne sous forme ordonnée de degré de composition supérieur ou égal à 0. 
On a alors
$$e(a) = e\big(d^-_{|a|_c}(a_{<k}) \vee d_{|a|_c}^+(a_{\geq k}) +  r_{|a|_c}(a)\big),$$
et pour tout $\alpha\in\{-,+\}$, pour tout $0\leq k\leq m+1$ et  pour tout $n<|a|_c$
$$d^\alpha_{n}a = d^\alpha_{n}\big(d^-_{|a|_c}(a_{<k}) \vee d_{|a|_c}^+(a_{\geq k}) +  r_{|a|_c}(a)\big).$$
\label{prop:nom provisoire}
\end{prop}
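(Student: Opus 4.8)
Write $c:=|a|_c$ and, for $0\le k\le m+1$, set $E_k:=d^-_c(a_{<k})\vee d^+_c(a_{\ge k})+r_c(a)$, so that the two displayed formulas read $e(a)=e(E_k)$ and $d^\alpha_n a=d^\alpha_n E_k$ for $n<c$. The plan is to reduce both to a single assertion. Iterating the co-inductive identity $d^\alpha_n(x)=d^\alpha_n(d^\alpha_{n+1}x)$ of Definition \ref{defi:des sources et but} gives $d^\alpha_n(x)=d^\alpha_n(d^\alpha_{c-1}x)$ for all $n\le c-1$, and $e(x)=e(d^-_0x)=e(d^-_0 d^-_{c-1}x)$; hence, when $c\ge 1$, it suffices to show
\[
d^\alpha_{c-1}(a)=d^\alpha_{c-1}(E_k)\qquad(\alpha\in\{-,+\},\ 0\le k\le m+1),
\]
the $e$-identity being the case $\alpha=-$. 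The case $c=0$, where the $d$-part of the statement is vacuous, I would postpone and treat by the same bookkeeping with $e$ in place of $d^\alpha_{c-1}$.

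Next I would handle the two boundary values of $k$. For $k=0$ one has $a_{<0}=0$ and $a=a_{\ge 0}+r_c(a)$ with $|r_c(a)|\le c<|a_{\ge 0}|$, so Lemma \ref{lem:d_n ne voit pas les truc en dessous de n}, the definition of $d^+_c$, and — if the single basis element of degree $>c+1$ permitted by Remark \ref{rem:sur le degre de composition} occurs — one extra descent along $d^+_{c+1}$ together with Remark \ref{rem:remarque sur le reste} give $d^+_c(a)=d^+_c(a_{\ge 0})+r_c(a)=E_0$; symmetrically $E_{m+1}=d^-_c(a)$. Since $d^\alpha_{c-1}(a)=d^\alpha_{c-1}(d^+_c a)$ — trivially for $\alpha=+$, and for $\alpha=-$ by Proposition \ref{prop:d_n est une differencielle} — this proves the claim for $k\in\{0,m+1\}$ and shows the common value must be $d^\alpha_{c-1}(a)$. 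It therefore remains to prove $d^\alpha_{c-1}(E_{k+1})=d^\alpha_{c-1}(E_k)$ for $0\le k\le m$, which I would do by a finite induction on $k$.

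For the inductive step, write $a_{<k+1}=a_{<k}+\lambda_k b_k$ and $a_{\ge k}=\lambda_k b_k+a_{\ge k+1}$. The ordered-form hypothesis, read through $d^\alpha_c b=\langle b\rangle^\alpha_c$ (Remark \ref{remarquer dans le cas coherent}), says $d^-_c(b_k)\wedge d^+_c(b_i)=0$ for $i<k$ and $d^-_c(b_i)\wedge d^+_c(b_k)=0$ for $i>k$. Feeding this, the support inclusion of Remark \ref{rem:pseudolinearite}, and the almost-linearity formula of Proposition \ref{prop:presque lineraite des sources et but} (whose remainder hypothesis holds since $a_{<k},a_{\ge k}$ are supported in degrees $>c$, and whose second, inhomogeneous, case absorbs the lone higher-degree generator), one obtains
\[
d^-_c(a_{<k+1})=\bigl(d^-_c(a_{<k})\diagdown\lambda_k d^+_c(b_k)\bigr)+\lambda_k d^-_c(b_k),\qquad
d^+_c(a_{\ge k})=\lambda_k d^+_c(b_k)+\bigl(d^+_c(a_{\ge k+1})\diagdown\lambda_k d^-_c(b_k)\bigr),
\]
the simplifications using that $\diagdown$ returns its first argument on disjoint supports. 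With these identities, the disjointness $d^-_c(b_k)\wedge d^+_c(b_k)=0$ (atomicity of $B$) and the fact that $\odot_c$ is a genuine partial order (no loops), one checks that $d^-_c(a_{<k})\vee d^+_c(a_{\ge k})$ and $d^-_c(a_{<k+1})\vee d^+_c(a_{\ge k+1})$ are parallel chains of the same degree with the same image under $d^\beta_{c-1}$ for $\beta\in\{-,+\}$; Lemma \ref{lem:le reste est le wedge de d- et d+} then shows they share the $(c-1)$-remainder contributed by $r_c(a)$, so Corollary \ref{cor:lembis} (or Lemma \ref{lem:d_n ne voit pas les truc en dessous de n}, according to the degree of $r_c(a)$) lets one add $r_c(a)$ back and conclude $d^\alpha_{c-1}(E_k)=d^\alpha_{c-1}(E_{k+1})$.

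The main obstacle is the middle of this last step: converting the chain-level comparison of the two joins into an equality that survives $d^\beta_{c-1}$. The positive parts produced by almost-linearity need not have pairwise disjoint supports, so one cannot argue by naive disjointness but must instead exploit acyclicity of $\odot_c$, and one has to carry the exceptional generator of degree $>c+1$ throughout. Everything else — the reductions, the computation of $E_0$ and $E_{m+1}$, and the re-absorption of $r_c(a)$ — is a formal consequence of the lemmas quoted above. Finally, the case $c=0$ of the $e$-identity follows by the same induction with $e$ replacing $d^\alpha_{c-1}$: since $e$ is additive on $K_0$ one has $e(E_k)=e(d^-_c(a_{<k})\vee d^+_c(a_{\ge k}))+e(r_c(a))$, and the constancy of the first summand in $k$ comes from the same manipulation, using the atomicity of $B$ to evaluate $e$ on the joins.
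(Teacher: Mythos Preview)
Your reduction to proving $d^\alpha_{c-1}(a)=d^\alpha_{c-1}(E_k)$ (where $c=|a|_c$), and your treatment of the boundary cases $k=0$ and $k=m+1$ via Lemma~\ref{lem:d_n ne voit pas les truc en dessous de n}, are correct and match the paper's setup. The divergence is in the core step: you attempt an induction on $k$, comparing $E_k$ with $E_{k+1}$, and as you honestly note, the step ``$d^\beta_{c-1}$ of the two joins agree'' is left unproven. This is a genuine gap: the combinatorics of the two $\vee$-expressions, once expanded by almost-linearity, do not obviously cancel, and ``exploit acyclicity of $\odot_c$'' is not yet an argument.

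The paper avoids this difficulty entirely by \emph{not} inducting on $k$. Its key observation is the elementary identity
\[
x\vee y = x + (y-x)_+,
\]
applied with $x=d^-_c(a_{<k})$ and $y=d^+_c(a_{\ge k})$. Since $d^-_c(a_{<k})$ and $d^+_c(a_{<k})$ are $(c-1)$-parallel (Proposition~\ref{prop:d_n est une differencielle}), Corollary~\ref{cor:lembis} allows one to swap the first summand under $d^\alpha_{c-1}$:
\[
d^\alpha_{c-1}\bigl(d^-_c(a_{<k})+(d^+_c(a_{\ge k})-d^-_c(a_{<k}))_+\bigr)
= d^\alpha_{c-1}\bigl(d^+_c(a_{<k})+(d^+_c(a_{\ge k})\diagdown d^-_c(a_{<k}))\bigr).
\]
But the right-hand expression inside $d^\alpha_{c-1}$ is \emph{exactly} what Lemma~\ref{lem:source dans le cas ordone} identifies as $d^+_c(\sum_i b_i)$ --- a quantity independent of $k$. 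One more application of Lemma~\ref{lem:d_n ne voit pas les truc en dessous de n} or Corollary~\ref{cor:lembis} reinstates $r_c(a)$, and the $c=0$ case goes through identically with $e$ in place of $d^\alpha_{c-1}$.

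In short: you are using Lemma~\ref{lem:source dans le cas ordone} to peel off \emph{one} generator at a time, which forces you into an induction whose step you cannot close; the paper uses the \emph{same} lemma once, at the full split $a_{<k}\mid a_{\ge k}$, after the $\vee\to +$ rewrite and the parallel swap. That single application lands on a $k$-independent target and the induction vanishes.
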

Pour démontrer ce résultat, on a besoin d'un lemme : 

\begin{lem}

\label{lem:source dans le cas ordone}
Soient $a:=\sum_{i\leq m} \lambda_{i} b_i$ une chaîne sous forme ordonnée de degré de composition supérieur ou égal à $0$  telle que $ r_{|a|_c}(a) = 0$, et  $k\leq m+1$ un entier. On a  alors
$$d^+_{|a|_c} a =  d^+_{|a|_c}(a_{<k}) + ( d_{|a|_c}^+(a_{\geq k})\diagdown d^-_{|a|_c}(a_{<k})).$$
Ce résultat peut aussi s'exprimer de la façon suivante :  pour tout élément $b$ de la base,
$$\lambda_b^{d^+_{|a|_c} a}=  \lambda_b^{d^+_{|a|_c}(a_{<k})}+ |\lambda_b^{ d_{|a|_c}^+(a_{\geq k})}- \lambda_b^{d^-_{|a|_c}(a_{<k})}|_+.$$
\end{lem}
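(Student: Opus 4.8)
The plan is to recognise the formula as an instance of Proposition~\ref{prop:presque lineraite des sources et but} applied to the splitting $a = a_{<k} + a_{\ge k}$, the only extra input being a disjointness of supports forced by the ordered form.

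Write $n := |a|_c$. The extreme values $k = 0$ and $k = m+1$ are immediate: for $k=0$ one has $a_{<0}=0$, $d^+_n(0)=d^-_n(0)=0$ and $x \diagdown 0 = x$ for every chain $x$, so the right-hand side collapses to $d^+_n(a)$, and symmetrically for $k=m+1$ using $0 \diagdown y = 0$. As a preliminary I also record the scaling identity $d^\alpha_j(\lambda c) = \lambda\, d^\alpha_j(c)$, valid for every chain $c$, every $\lambda\in\mathbb{N}$ and all $j,\alpha$: it follows by descending induction on $j$ because $(\var)_{j+1}$, $r_j$ and $\partial^\alpha_j = (\partial_j(\var))_\alpha$ all commute with multiplication by a non-negative integer.

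Now fix $0 < k \le m$. Since $r_n(a)=0$, every $b_i$ has degree $\ge n+1$, and by Remark~\ref{rem:sur le degre de composition} at most one of them has degree $>n+1$. Hence both $a_{<k}$ and $a_{\ge k}$ have degree $\ge n+1$ and at least one of these two degrees equals $n+1$, so $\min(|a_{<k}|,|a_{\ge k}|)=n+1$; moreover $r_n(a_{<k}), r_n(a_{\ge k}) \le r_n(a) = 0$ and $a_{<k}+a_{\ge k}=a$. Proposition~\ref{prop:presque lineraite des sources et but} applied to the pair $(a_{<k},a_{\ge k})$ with $\alpha=+$ (the integer called $n$ in that statement being our $n+1$) then yields
$$d^+_n(a) = \big(d^+_n(a_{<k}) \diagdown d^-_n(a_{\ge k})\big) + \big(d^+_n(a_{\ge k}) \diagdown d^-_n(a_{<k})\big),$$
so it suffices to prove $d^+_n(a_{<k}) \diagdown d^-_n(a_{\ge k}) = d^+_n(a_{<k})$, i.e. $d^+_n(a_{<k}) \wedge d^-_n(a_{\ge k}) = 0$.

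For this I bound supports: $supp\big(d^+_n(a_{<k})\big)\subset\bigcup_{i<k} supp\big(d^+_n(b_i)\big)$ and $supp\big(d^-_n(a_{\ge k})\big)\subset\bigcup_{k\le j\le m} supp\big(d^-_n(b_j)\big)$ — if the relevant piece has at least two generators, its composition degree is exactly $n$ (its second largest generator degree being $n+1$), so this is Remark~\ref{rem:pseudolinearite}, while if it has a single generator it follows from the scaling identity above. Now for $i<k\le j$ the ordered form gives $\neg(b_j \odot_n b_i)$, hence by the definition of $\odot_n$ together with Remark~\ref{remarquer dans le cas coherent} (which identifies $\langle b\rangle^\alpha_n$ with $d^\alpha_n(b)$ on generators, using $n<|b_i|$) one gets $d^-_n(b_j)\wedge d^+_n(b_i)=0$, i.e. $supp\big(d^-_n(b_j)\big)\cap supp\big(d^+_n(b_i)\big)=\emptyset$. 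Therefore the two unions above are disjoint, so are $supp\big(d^+_n(a_{<k})\big)$ and $supp\big(d^-_n(a_{\ge k})\big)$, whence $d^+_n(a_{<k})\wedge d^-_n(a_{\ge k})=0$. The coefficientwise statement is then read off from $\lambda_b^{x+y}=\lambda_b^x+\lambda_b^y$ and $\lambda_b^{x\diagdown y}=|\lambda_b^x-\lambda_b^y|_+$. I expect the degree bookkeeping of the third paragraph — the one that forces $\min(|a_{<k}|,|a_{\ge k}|)=n+1$ and hence makes Proposition~\ref{prop:presque lineraite des sources et but} applicable — to be the real obstacle; once that is in place the rest is purely combinatorial.
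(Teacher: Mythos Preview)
Your proof is correct and follows essentially the same route as the paper: treat the boundary cases $k=0,m+1$ separately, use the degree bookkeeping from Remark~\ref{rem:sur le degre de composition} to show $\min(|a_{<k}|,|a_{\ge k}|)=|a|_c+1$, apply Proposition~\ref{prop:presque lineraite des sources et but} to the splitting, and then kill the term $d^+_{|a|_c}(a_{<k})\wedge d^-_{|a|_c}(a_{\ge k})$ via the ordered-form hypothesis together with the support bound of Remark~\ref{rem:pseudolinearite}. Your treatment is in fact slightly more careful than the paper's at two points: you isolate the scaling identity $d^\alpha_j(\lambda c)=\lambda\,d^\alpha_j(c)$ to cover the single-generator case where Remark~\ref{rem:pseudolinearite} does not literally apply, and you make explicit the identification $d^\alpha_n(b)=\langle b\rangle^\alpha_n$ from Remark~\ref{remarquer dans le cas coherent} needed to read the ordered-form condition as a vanishing of $d^-_n(b_j)\wedge d^+_n(b_i)$.
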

\begin{proof}
Pour $k$ égal à $0$ où $m+1$, l'égalité que l'on cherche à montrer est immédiatement vérifiée car une des chaînes $a_{<k}$ et $a_{\geq k}$ est égale à $a$ et l'autre est nulle.
Supposons donc $0<k\leq m$. Cela implique notamment qu'aucune des chaînes $a_{<k}$ et $a_{\geq k}$ n'est nulle et que $a = a_{<k}+ a_{\geq k}$ n'est pas réduite à un seul élément.

Selon la remarque \ref{rem:sur le degre de composition}, on sait qu'il existe au plus un $i$ tel que $|b_i|>|a|_c+1$, et que tous les autres $b_i$ sont de degré $|a|_c+1 $. On a donc une des chaînes $a_{< k}$ et $a_{\geq k}$ qui est de degré $(|a|_c+1)$ et l'autre de degré supérieur ou égal. Cela peut se résumer dans l'égalité suivante : $|a|_c =  min(|a_{< k}|,|a_{\geq k}|) - 1$.
On peut donc appliquer la proposition \ref{prop:presque lineraite des sources et but} et on obtient 
$$
d^+_{|a|_c} a =  (d^+_{|a|_c}(a_{<k})\diagdown d_{|a|_c}^-(a_{\geq k}) )+  (d_{|a|_c}^+(a_{\geq k})\diagdown d^-_{|a|_c}(a_{<k}))$$

Intéressons nous maintenant à la chaîne $d^+_{|a|_c}(a_{<k})\diagdown d_{|a|_c}^-(a_{\geq k})$. Par hypothèse, la chaîne $a:=\sum_{i\leq m} \lambda_{i} b_i$ est écrite sous forme ordonnée, et donc pour tout $i<j$, $d^+_{|a|_c}(b_i)\wedge d^-_{|a|_c}(b_j) = 0 $. En appliquant l'inégalité du lemme \ref{lem:pseudolinearite}, on obtient :
$$
\def\arraystretch{1.4}
\begin{array}{rcl}
d^+_{|a|_c}(a_{<k})\wedge d_{|a|_c}^-(a_{\geq k} )
&\leq&
 \sum_{i<k}d^+_{|a|_c}(\lambda_i b_i)  ~\wedge ~\sum_{i\geq k}d^-_{|a|_c}(\lambda_i b_i)\\
 &\leq &\sum_{i<k\leq j}~d^+_{|a|_c}(\lambda_i b_i)\wedge d^-_{|a|_c}(\lambda_j b_j)\\
 &\leq & 0.
\end{array}$$
Cela implique que $d^+_{|a|_c}(a_{<k})\diagdown d_{|a|_c}^-(a_{\geq k}) = d^+_{|a|_c}(a_{<k})$. On obtient donc le résultat voulu :

$$
d^+_{|a|_c} a =  d^+_{|a|_c}(a_{<k})+  (d_{|a|_c}^+(a_{\geq k})\diagdown d^-_{|a|_c}(a_{<k}))\\
$$

\end{proof}

\begin{proof}[Preuve de la proposition \ref{prop:nom provisoire}]
Donnons nous une chaîne $a:=\sum_{i\leq m}\lambda_i b_i + r_{|a|_c}(a)$, écrite sous forme ordonnée. Supposons tout d'abord que $|a|_c\geq 1$.
On commence par démontrer le résultat pour $n=|a|_c-1$. Remarquons tout d'abord que 
$$d^-_{|a|_c}(a_{< k}) \vee d_{|a|_c}^+(a_{\geq  k})= 
d^-_{|a|_c}(a_{ < k}) +  (d_{|a|_c}^+(a_{\geq  k})-d^-_{|a|_c}(a_{< k}))_+ .
$$

De plus, on sait que les chaînes $d^-_{|a|_c}(a_{< k})$ et $d^+_{|a|_c}(a_{< k})$  sont $(|a|_c-1)$- parallèles. En appliquant consécutivement  le lemme \ref{lem:source dans le cas ordone} et  le corollaire \ref{cor:lembis}, on obtient:
$$
\def\arraystretch{1.4}
\begin{array}{ccl}
d^\alpha_{|a|_c-1} \big( d^-_{|a|_c}(a_{< k}) \vee d_{|a|_c}^+(a_{\geq  k}) \big) &=&d^\alpha_{|a|_c-1} \big( d^-_{|a|_c}(a_{ < k}) +  (d_{|a|_c}^+(a_{\geq  k})-d^-_{|a|_c}(a_{< k}))_+ \big)\\
&=& d^\alpha_{|a|_c-1} \big(d^+_{|a|_c}(a_{ < k}) +  (d_{|a|_c}^+(a_{\geq  k})-d^-_{|a|_c}(a_{< k}))_+  \big)\\
&=& d^\alpha_{|a|_c-1}\big(d^+_{|a|_c}((a)_{|a|_c})  \big) = d^\alpha_{[a|_c-1}((a)_{|a|_c}) 
\end{array}
$$

En utilisant une dernière fois le corollaire \ref{cor:lembis} on obtient bien que
$$d^\alpha_{|a|_c-1} \big( d^-_{|a|_c}(a_{< k}) \vee d_{|a|_c}^+(a_{\geq  k})  + r_{|a|_c}(a)\big)  = 
d^\alpha_{[a|_c-1}((a)_{|a|_c} + r_{|a|_c}(a))$$
On a donc montré que les chaînes $a$ et $( d^-_{|a|_c}(a_{< k}) \vee d_{|a|_c}^+(a_{\geq  k})  + r_{|a|_c}(a))$ étaient $(|a|_c-1)$-parallèles. Cela implique donc que pour tout $m<|a|_c$, elles sont $m$-parallèles.

Plaçons nous maintenant dans le cas où $|a|_c=0$. En utilisant la linéarité de $e$, l'égalité $e\circ d_0^+ =e\circ d_0^-$ et le lemme \ref{lem:source dans le cas ordone}, on obtient:
$$
\def\arraystretch{1.4}
\begin{array}{ccl}
e \big( d^-_{0}(a_{< k}) \vee d_{0}^+(a_{\geq  k}) \big) &=& e \big( d^-_{0}(a_{ < k}) +  (d_{0}^+(a_{\geq  k})-d^-_{0}(a_{< k}))_+ \big)\\
&=& e \big( d^-_{0}(a_{ < k})\big)  +  e\big((d_{0}^+(a_{\geq  k})-d^-_{0}(a_{< k}))_+ \big)\\
&=& e \big( d^+_{0}(a_{ < k})\big)  +  e\big((d_{0}^+(a_{\geq  k})-d^-_{0}(a_{< k}))_+ \big)\\
&=& e \big(d^+_{0}((a)_{|a|_c})  \big)
\end{array}
$$
Et donc 
$$
\def\arraystretch{1.4}
\begin{array}{rcl}
e \big( d^-_{0}(a_{< k}) \vee d_{0}^+(a_{\geq  k})  + r_{0}(a)\big)  &=&e \big( d^-_{0}(a_{< k}) \vee d_{0}^+(a_{\geq  k})\big)   + e(r_{0}(a))\\
&=&e \big(d^+_{0}((a)_{|a|_c})  \big)  + e(r_{0}(a))\\
&=& e(d^+_{0}(a)
\end{array}
$$
On a donc obtenu le résultat voulu.
\end{proof}

\begin{defi}
Une chaîne $a$ est \textit{cohérente} lorsque $e(a) = 1$.
\end{defi} 

Il est immédiat que pour une chaîne cohérente $a$ et un entier quelconque $n$, les chaînes $d_n^+(a)$ et $d_n^-(a)$ sont aussi cohérentes. De plus si deux chaînes sont $n$-parallèles pour un entier $n$ quelconque, l'une est cohérente si et seulement si l'autre l'est. Enfin la proposition précédente implique qu'une chaîne $a := \sum_{i\leq m} \lambda_{i} b_i +  r_{|a|_c}(a)$ écrite sous forme ordonnée  et de degré de composition supérieur à zéro est cohérente si et seulement si $d^-_{|a|_c}(a_{<k}) \vee d_{|a|_c}^+(a_{\geq k}) +  r_{|a|_c}(a)$ l'est.

\begin{prop}
Soit $b\in B$. Le singleton $b$ est une chaîne cohérente. 
\end{prop}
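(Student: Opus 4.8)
The plan is to reduce the statement directly to the unitarity of the base $B$, using Remark~\ref{remarquer dans le cas coherent}. The key point is that the singleton $b$, read as a chain $\sum_{b'\in B}\lambda_{b'}b'$ with $\lambda_b=1$ and all other coefficients zero, is a \emph{homogeneous} chain of degree $|b|$, since every element of its support (namely $b$ itself) has degree $|b|$.

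First I would invoke Remark~\ref{remarquer dans le cas coherent}: for a homogeneous chain $a$, each $d^\alpha_k(a)$ is again homogeneous, and one has $d^\alpha_k(a)=\langle a\rangle^\alpha_k$ for every $k\le |a|$ and $\alpha\in\{-,+\}$ (this is the recorded comparison with Definition~\ref{defi:tableau_de_steiner_associe_a_un singloton}). Applying this to $a=b$ and $k=0$, which is legitimate since $0\le |b|$, yields $d^+_0(b)=\langle b\rangle^+_0$.

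Then, by the definition of $e$ on an arbitrary chain, $e(b)=e(d^+_0 b)=e(\langle b\rangle^+_0)$. Since $B$ is a unitary base, the Steiner table $\langle b\rangle$ is coherent, i.e.\ $e(\langle b\rangle^+_0)=e(\langle b\rangle^-_0)=1$. Hence $e(b)=1$, which is exactly the assertion that the singleton $b$ is a coherent chain.

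The only thing requiring care is that the cited remark genuinely applies, including in the degenerate case $|b|=0$ (where $d^+_0(b)=b=\langle b\rangle^+_0$ holds trivially) and, more generally, that the co-inductive definition of $d^\alpha_k$ on homogeneous chains does match the descending recursion defining $\langle a\rangle^\alpha_k$ — but this is precisely the content of Remark~\ref{remarquer dans le cas coherent}, so no real obstacle remains. The proof is thus essentially a one-line unwinding of the definition of $e$ on chains together with the hypothesis that the base $B$ is unitary.
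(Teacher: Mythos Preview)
Your proof is correct and follows exactly the same approach as the paper: the paper's proof consists of the single sentence ``Cela découle de la remarque~\ref{remarquer dans le cas coherent}'', and you have simply unpacked that remark together with the unitarity hypothesis on $B$.
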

\begin{proof}
Cela découle de la remarque \ref{remarquer dans le cas coherent}.
\end{proof}

\begin{prop}
\label{prop:coherende de degre moin un est reduit a un singloton}
Une chaîne cohérente non nulle de degré de composition $-1$ est réduite à un élément.
\end{prop}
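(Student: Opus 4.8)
Le plan est de raisonner par disjonction de cas selon le degré $|a|$ de la chaîne, qui est un entier positif ou nul puisque $a$ est non nulle.

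Je commencerais par le cas $|a| = 0$, qui est le plus simple. Ici $a = \sum_{b \in B_0} \lambda_b b$ avec les $\lambda_b$ positifs ou nuls. D'après la proposition précédente, chaque singleton $b \in B_0$ est une chaîne cohérente, donc $e(b) = 1$ ; et puisque la restriction de $e$ aux chaînes de degré $0$ n'est autre que l'augmentation $e \colon K_0 \to \mathbb{Z}$, elle est additive. La cohérence $e(a) = 1$ se réécrit donc $\sum_{b \in B_0} \lambda_b = 1$, ce qui force un unique $\lambda_b$ à valoir $1$ et tous les autres à être nuls : la chaîne $a$ est bien réduite à un élément de $B$.

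Pour le cas $|a| \geq 1$, l'ingrédient central est la remarque \ref{rem:sur le degre de composition} : comme $|a|_c + 1 = 0$, il existe au plus un élément de $supp(a)$ de degré $> 0$, et l'hypothèse $|a| \geq 1$ garantit qu'il en existe exactement un, que je note $b_0$ (de degré $|b_0| = |a|$). J'écrirais alors
$$a = \lambda_0\, b_0 + r, \qquad \lambda_0 := \lambda_{b_0}^{a} \geq 1, \qquad r := r_0(a) \in K_0^*,$$
où $r$ est le $0$-reste de $a$, combinaison à coefficients positifs ou nuls de générateurs de degré $0$. L'objectif est de montrer que la cohérence force $\lambda_0 = 1$ et $r = 0$, auquel cas $a = b_0 \in B$ est réduite à un élément. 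Pour cela, je calculerais $e(a)$ : le lemme \ref{lem:d_n ne voit pas les truc en dessous de n} donne $d_1^+(a) = d_1^+(\lambda_0 b_0) + r$ (puisque $|\lambda_0 b_0| = |a| \geq 1 > |r|$) ; puis, en combinant l'identité $d_0^+ = d_0^+ \circ d_1^+$, la définition de $d_0^+$ (qui laisse $r$ inchangé car $|r| \leq 0$), la linéarité de $d_0^+$ sur le terme homogène $\lambda_0 b_0$ --- car $\partial$ et $(\var)_+$ commutent à la multiplication par l'entier $\lambda_0 > 0$, cf. remarque \ref{remarquer dans le cas coherent} --- et enfin la cohérence de $b_0$ (proposition précédente, qui fournit $e(d_0^+ b_0) = e(b_0) = 1$), on obtient $e(a) = \lambda_0\, e(d_0^+ b_0) + e(r) = \lambda_0 + e(r)$. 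Or $\lambda_0 \geq 1$ et $e(r) \geq 0$, avec $e(r) = 0$ si et seulement si $r = 0$ ; l'égalité $e(a) = 1$ impose donc $\lambda_0 = 1$ et $r = 0$.

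Le seul point demandant un peu de soin est l'égalité $e(a) = \lambda_0 + e(r)$ dans le second cas : tout le reste est soit immédiat, soit une conséquence directe de la remarque \ref{rem:sur le degre de composition}, qui décompose toute chaîne de degré de composition $-1$ en (au plus) un générateur de degré strictement positif plus une chaîne de degré $0$. Une fois cette égalité acquise, la conclusion est la même dans les deux cas.
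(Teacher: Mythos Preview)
Ta démonstration est correcte et suit essentiellement la même approche que celle du papier : exploiter la remarque \ref{rem:sur le degre de composition} pour écrire la chaîne sous la forme $\lambda_0 b_0 + r$ avec $r$ de degré $0$, puis calculer l'augmentation et conclure par cohérence. Le papier traite les deux cas $|a|=0$ et $|a|\geq 1$ d'un seul coup (et de façon un peu plus lapidaire sur les coefficients), mais l'idée est identique.
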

\begin{proof}
Soit $a := \sum_{0\leq i\leq n} b_i$ une chaîne cohérente de degré de composition $-1$. On sait qu'il existe au plus un élément de dimension supérieure à $0$ et, quitte à changer l'indexation, on peut donc supposer que pour tout $i>0$, $b_i$ est de dimension $0$.

On a alors $d_0^+(a) = d_0^+(b_0) + \sum_{1< i\leq n}b_i$ et donc $e(d_0^+(a)) = n$. Par hypothèse $a$ est cohérente, et donc $n=1$.
\end{proof}

\begin{prop}
Soit $a $ une chaîne cohérente. Alors $a\leq 1$ (au sens de la définition  \ref{defi:definition de la relation d'ordre sur les chaines}).
\label{prop:pas de double}
\end{prop}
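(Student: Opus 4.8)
The plan is to argue by induction on the composition degree $|a|_c$. The base case $|a|_c = -1$ is immediate: by proposition \ref{prop:coherende de degre moin un est reduit a un singloton} the coherent chain $a$ is then reduced to a single element of the base, so $a \leq 1$.

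For the inductive step, set $k := |a|_c \geq 0$ and assume the statement for every coherent chain of composition degree $< k$. I would write $a$ under ordered form (definition \ref{defi:forme ordonne}), say $a = \sum_{i \leq m} \lambda_i b_i + r_k(a)$ with all $\lambda_i > 0$. Each $b_i$ has degree $\geq k+1$ (otherwise it would be absorbed into $r_k(a)$), and by remark \ref{rem:sur le degre de composition} at most one of them has degree $> k+1$. Since $r_k(a)$ only involves basis elements of degree $\leq k$, the supports of $\sum_i \lambda_i b_i$ and of $r_k(a)$ are disjoint, so by definition \ref{defi:definition de la relation d'ordre sur les chaines} it suffices to prove separately that $r_k(a) \leq 1$ and that each $\lambda_i = 1$. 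The first is easy: since $k \geq 0$ one has $|a| \geq k+1$, so lemma \ref{lem:le reste est le wedge de d- et d+} gives $r_k(a) = d^-_k(a) \wedge d^+_k(a) \leq d^+_k(a)$, and $d^+_k(a)$ is a coherent chain of degree $\leq k$, hence of composition degree $< k$, so $d^+_k(a) \leq 1$ by the induction hypothesis and therefore $r_k(a) \leq 1$.

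The main point is to show $\lambda_i = 1$ for each $i \leq m$. Fixing $i$, I would apply proposition \ref{prop:nom provisoire} with cut index $i$: it shows that the chain
$$a^{(i)} := d^-_k(a_{<i}) \vee d^+_k(a_{\geq i}) + r_k(a)$$
has $e(a^{(i)}) = e(a) = 1$, hence is coherent, and being of degree $\leq k$ it has composition degree $< k$, so $a^{(i)} \leq 1$ by induction. On the other hand, when $i < m$ the chain $a_{\geq i}$ is itself under ordered form, has vanishing $k$-remainder, and has composition degree $k$ (its support has at least two elements, all of degree $\geq k+1$ and at most one of degree $> k+1$); applying lemma \ref{lem:source dans le cas ordone} to it at cut index $1$, so that $(a_{\geq i})_{<1} = \lambda_i b_i$ and $(a_{\geq i})_{\geq 1} = a_{\geq i+1}$, yields
$$d^+_k(a_{\geq i}) = d^+_k(\lambda_i b_i) + \bigl( d^+_k(a_{\geq i+1}) \diagdown d^-_k(\lambda_i b_i) \bigr) \geq \lambda_i\, d^+_k(b_i),$$
the case $i = m$ being immediate since then $a_{\geq m} = \lambda_m b_m$. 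Now the singleton $b_i$ is a coherent chain, hence so is $d^+_k(b_i)$; in particular $e(d^+_k(b_i)) = 1$, so $d^+_k(b_i)$ is nonzero and some $b^\ast \in B$ has index at least $1$ in it. Then $\lambda^{a^{(i)}}_{b^\ast} \geq \lambda^{d^+_k(a_{\geq i})}_{b^\ast} \geq \lambda_i$, and since $a^{(i)} \leq 1$ this forces $\lambda_i \leq 1$, whence $\lambda_i = 1$.

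The hard part is this last step: proposition \ref{prop:nom provisoire} only lets one descend in composition degree while keeping control of the lower sources and targets of $a$, whereas the coefficients to be bounded sit in the top part of $a$ itself, and it is lemma \ref{lem:source dans le cas ordone}, via the estimate $d^+_k(a_{\geq i}) \geq \lambda_i\, d^+_k(b_i)$, that bridges the gap. Along the way one records the routine identity $d^\pm_k(\lambda b) = \lambda\, d^\pm_k(b)$ for a one-element chain (on a homogeneous chain, sources and targets are iterates of $\partial^\pm$, cf. remark \ref{remarquer dans le cas coherent}, and these scale under positive multiples).
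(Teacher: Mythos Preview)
Your proof is correct and follows essentially the same approach as the paper: induction on the composition degree, with the inductive step using proposition \ref{prop:nom provisoire} to produce the coherent chain $a^{(i)} = d^-_k(a_{<i}) \vee d^+_k(a_{\geq i}) + r_k(a)$ of lower composition degree, and lemma \ref{lem:source dans le cas ordone} to bound $\lambda_i\, d^+_k(b_i) \leq d^+_k(a_{\geq i}) \leq a^{(i)} \leq 1$. Your treatment is in fact a bit more explicit than the paper's: you separately argue $r_k(a) \leq 1$ (the paper leaves this implicit, since $r_k(a)$ is a summand of $a^{(i)}$) and you single out the boundary case $i=m$ where $a_{\geq i}$ has composition degree $-1$.
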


\begin{proof}[Démonstration de la proposition \ref{prop:pas de double}]
On va montrer le résultat par récurrence sur le degré de composition.
Pour l’initialisation, plaçons nous dans le cas où $|a|_c=-1$.  Selon la proposition \ref{prop:coherende de degre moin un est reduit a un singloton}, la chaîne $a$ est réduite à un singleton et on a donc $a\leq 1$.

Supposons maintenant le résultat vrai pour les chaînes de degré de composition $m$, et montrons le résultat pour celles de degré de composition $m+1$. On se donne donc une chaîne cohérente $a:=\sum_{i\leq n} \lambda_i b_i$ écrite sous forme ordonnée et vérifiant $|a|_c = m+1$, et un entier $k\leq n$ quelconque. On veut montrer que  $\lambda_k=1$. Selon la proposition \ref{prop:nom provisoire} la chaîne 
$$c:=d^-_{|a|_c}(a_{<k}) \vee d_{|a|_c}^+(a_{\geq k}) +  r_{|a|_c}(a)$$
est cohérente car $a$ l'est. Comme son degré de composition est égal à $m$, on peut appliquer l'hypothèse de récurrence qui implique que $c\leq 1$.

Cependant, $c\geq d_{|a|_c}^+(a_{\geq k})$ et selon le lemme \ref{lem:source dans le cas ordone},
$$d_{|a|_c}^+(a_{\geq k})=  d_{|a|_c}^+(\lambda_k b_k) +(d_{|a|_c}^+(a_{\geq k+1}) \diagdown d_{|a|_c}^-(\lambda_k b_k) )\geq d_{|a|_c}^+(\lambda_k b_k)= \lambda_k d_{|a|_c}^+( b_k). $$
On obtient donc $\lambda_k d_{|a|_c}^+( b_k)\leq c\leq 1$ et donc $\lambda_k= 1$.
\end{proof}

\begin{rem}
La proposition précédente montre que pour une chaîne cohérente $a$, son  écriture ordonnée est de la forme 
$a= \sum_{i\leq n} b_i + r_{|a|_c}(a)$. Deux chaînes cohérentes sont donc égales si et seulement si elles ont le même support.
\end{rem}

\begin{prop}
Soit $a := \sum_{i\leq n} b_i + r_{|a|_c}(a)$ une chaîne cohérente écrite sous forme ordonnée de degré de composition supérieur ou égal à 0. Alors 
\begin{enumerate}
\item Pour tout $(k<l)$ et pour tout $\alpha\in\{-,+\}$, $d_{|a|_c}^\alpha b_k\wedge d_{|a|_c}^\alpha b_l= 0$;
\item Pour tout $k$ et pour tout $\alpha\in\{-,+\}$,
 $d_{|a|_c}^\alpha b_k \wedge  r_{|a|_c}(a)  =0$. 
\end{enumerate}
 \label{prop: plus de fourche }
\end{prop}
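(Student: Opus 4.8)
Throughout I write $m:=|a|_c$ and $r:=r_m(a)$, and (in the notation of Definition~\ref{defi:forme ordonne}) I set $a_{\le k}:=a_{<k+1}=\sum_{i\le k}b_i$ and $a_{>k}:=a_{\ge k+1}=\sum_{k<i\le n}b_i$. The whole argument rests on one fact: for every $0\le k\le n+1$ the chain
$$c_k\ :=\ d^-_m(a_{<k})\vee d^+_m(a_{\ge k})+r$$
is coherent — this is exactly the remark following Proposition~\ref{prop:pas de double}, applied to the coherent chain $a$ — hence $c_k\le 1$ by Proposition~\ref{prop:pas de double}. Before using this I would record a left-handed companion of Lemma~\ref{lem:source dans le cas ordone}: for a chain $c=\sum_{i\le p}b_i$ in ordered form with $r_{|c|_c}(c)=0$ and $|c|_c=m$, and for any split $0\le j\le p+1$,
$$d^-_m(c)\ =\ d^-_m(c_{\ge j})+\big(d^-_m(c_{<j})\diagdown d^+_m(c_{\ge j})\big),$$
proved exactly as Lemma~\ref{lem:source dans le cas ordone} by feeding Proposition~\ref{prop:presque lineraite des sources et but} the ordered-form identity $d^-_m(b_{i'})\wedge d^+_m(b_i)=0$ for $i<i'$. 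Applying Lemma~\ref{lem:source dans le cas ordone} and this companion to the subchains $a_{\ge k}$ and $a_{\le k}$ — which have no reste and, by Remark~\ref{rem:sur le degre de composition}, still have composition degree $m$ as soon as they have two terms — yields the monotonicities $d^+_m(a_{\ge i})\ge d^+_m(b_i)$ and $d^-_m(a_{\le i})\ge d^-_m(b_i)$ and the iterated identities
$$d^+_m(a_{\ge i})=d^+_m(b_i)+\big(d^+_m(a_{>i})\diagdown d^-_m(b_i)\big),\qquad d^-_m(a_{\le i})=d^-_m(b_i)+\big(d^-_m(a_{<i})\diagdown d^+_m(b_i)\big).$$

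Part~(2) then falls out immediately. Fix $k$ and $\alpha$; if $\alpha=+$ then $c_k\ge d^+_m(a_{\ge k})+r\ge d^+_m(b_k)+r$, and if $\alpha=-$ then $c_{k+1}\ge d^-_m(a_{\le k})+r\ge d^-_m(b_k)+r$. In either case a basis element lying in $supp(d^\alpha_m(b_k))\cap supp(r)$ would be assigned coefficient $\ge 2$ in the coherent chain $c_k$ (resp. $c_{k+1}$), contradicting $c_k\le 1$ (resp. $c_{k+1}\le 1$); hence $d^\alpha_m(b_k)\wedge r=0$.

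For part~(1), take $k<l$; I treat $\alpha=+$, the case $\alpha=-$ being identical with $c_{l+1}\ge d^-_m(a_{\le l})$ and the left companion in place of $c_k\ge d^+_m(a_{\ge k})$ and Lemma~\ref{lem:source dans le cas ordone}, with the roles of sources/targets and of the two indices exchanged. Assume for contradiction that some $b\in B$ lies in $supp(d^+_m(b_k))\cap supp(d^+_m(b_l))$. Since $K$ has a loop-free unitary base, $\nu K$ has the atomic base $\langle B\rangle$, and atomicity applied to $\langle b_k\rangle$ (a generator of dimension $>m$) gives $d^+_m(b_k)\wedge d^-_m(b_k)=0$, so $b\notin supp(d^-_m(b_k))$; moreover, for every $i>k$ the ordered-form condition gives $d^-_m(b_i)\wedge d^+_m(b_k)=0$, so $b\notin supp(d^-_m(b_i))$ for $k<i<l$. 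Iterating the identity $d^+_m(a_{\ge i})=d^+_m(b_i)+\big(d^+_m(a_{>i})\diagdown d^-_m(b_i)\big)$ for $i=k,k+1,\dots,l-1$, none of the corrections $\var\diagdown d^-_m(b_i)$ ($k\le i<l$) lowers the coefficient of $b$, while $d^+_m(a_{\ge l})\ge d^+_m(b_l)$ keeps contributing; tracking $\lambda_b$ through these identities gives
$$\lambda_b^{d^+_m(a_{\ge k})}\ \ge\ \lambda_b^{d^+_m(b_k)}+\lambda_b^{d^+_m(b_l)}\ \ge\ 2,$$
whence $\lambda_b^{c_k}\ge\lambda_b^{d^+_m(a_{\ge k})}\ge 2$, contradicting $c_k\le 1$. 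Therefore $d^+_m(b_k)\wedge d^+_m(b_l)=0$.

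The main obstacle is part~(1): one must realize that the putative common basis element $b$ should be tested against the specific coherent chain $c_k$ — for which the bound $c_k\le 1$ is already in hand — and that the conjunction of the ordered-form inequality and of atomicity of the base is exactly what prevents the $\diagdown$-corrections in the iterated form of Lemma~\ref{lem:source dans le cas ordone} from erasing the contribution of $b_l$ before it is picked up together with that of $b_k$. The left companion of Lemma~\ref{lem:source dans le cas ordone} and the bookkeeping of the composition degrees of $a_{\ge k}$ and $a_{\le k}$ are the only other points that need to be written out carefully.
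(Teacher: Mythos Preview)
Your argument is essentially the paper's own proof: you use the same coherent chains $c_k=d^-_m(a_{<k})\vee d^+_m(a_{\ge k})+r$, the bound $c_k\le 1$ from Proposition~\ref{prop:pas de double}, and the iterated form of Lemma~\ref{lem:source dans le cas ordone} to force a coefficient $\ge 2$ from any putative overlap. One minor remark: your appeal to atomicity of $\langle B\rangle$ for $d^+_m(b_k)\wedge d^-_m(b_k)=0$ is correct but heavier than needed—Lemma~\ref{lem:le reste est le wedge de d- et d+} gives this directly since $r_m(b_k)=0$.
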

\begin{proof}

Donnons nous une chaîne $a:=\sum_{i\leq m}\lambda_i b_i + r_{|a|_c}(a)$, écrite sous forme ordonnée, et un entier $k\leq n$. 
Selon la proposition \ref{prop:nom provisoire} la chaîne 
$$c:=d^-_{|a|_c}(a_{<k}) \vee d_{|a|_c}^+(a_{\geq k}) +  r_{|a|_c}(a)$$
est cohérente car $a$ l'est et donc la proposition \ref{prop:pas de double} indique que $c\leq 1$. D'où l'inégalité,
\begin{equation}
1\geq c\geq d_{|a|_c}^+(a_{\geq k})+  r_{|a|_c}(a).
\label{equation}
\end{equation}
Donnons nous maintenant un élément $b'$ dans le support de $d_{|a|_c}^+(b_k)$. Par hypothèse, pour tout  $k<l$, $d_{|a|_c}^-(b_l)\wedge d_{|a|_c}^+(b_k)= 0$. De plus, on a forcément  $d_{|a|_c}^-(b_k)\wedge d_{|a|_c}^+(b_k)= 0$. Pour tout $k\leq l$, on a donc  $\lambda_{b'}^{d_{|a|_c}^-(b_l)} = 0$. 

Cependant, le lemme \ref{lem:source dans le cas ordone}, implique que pour tout élément $b$ de la base, 
$$\lambda_{b}^{d_{|a|_c}^+(a_{\geq l})}  = \lambda_{b}^{d_{|a|_c}^+(b_l)} + |\lambda_{b}^{d_{|a|_c}^+(a_{\geq l+1})}- \lambda_{b}^{d_{|a|_c}^-(b_l)}|_+ .
$$
Appliquée à l'élément $b'\in supp(d_{|a|_c}^+(b_k))$ et à un entier $k\leq l$, cette égalité devient
$$\lambda_{b'}^{d_{|a|_c}^+(a_{\geq l})} =\lambda_{b'}^{d_{|a|_c}^+(b_l)} + \lambda_{b'}^{d_{|a|_c}^+(a_{\geq l+1})}
$$
et par suite
$$ \lambda_{b'}^{d_{|a|_c}^+(a_{\geq k})} =\lambda_{b'}^{d_{|a|_c}^+(b_k)}+ \lambda_{b'}^{d_{|a|_c}^+(b_{k+1})}+...+  \lambda_{b'}^{d_{|a|_c}^+(b_m)} =1 +\sum_{k<l} \lambda_{b'}^{d_{|a|_c}^+(b_l)}$$
L'inégalité \ref{equation} appliquée  à $b'$ devient alors
$$1\geq \lambda_{b'}^{c} \geq 1 +\sum_{l>k} \lambda_{b'}^{d_{|a|_c}^+(b_l)} + \lambda_{b'}^{ r_{|a|_c}(a)}
\mbox{~~~ et donc ~~~} 
\left\{\def\arraystretch{1.4}\begin{array}{cl}
 \lambda_{b'}^{ r_{|a|_c}(a)}=0& \\
 \lambda_{b'}^{d_{|a|_c}^+(b_l)} =0& \mbox{pour tout $k<l$} \\
\end{array}\right.
$$
Ce calcul étant vrai pour tout élément $b'$ du support de $d_{|a|_c}^+(b_k)$, on a pour tout $k<l$
$$supp(d_{|a|_c}^+(b_k))\cap supp(d_{|a|_c}^+(b_l))=\emptyset \mbox{~~~~et~~~~}
supp(d_{|a|_c}^+(b_k))\cap supp(r_{|a|_c}(a))=\emptyset,$$
et donc
$$d_{|a|_c}^+ b_k\wedge d_{|a|_c}^+ b_l= 0 \mbox{~~~~et~~~~} d_{|a|_c}^+ b_k\wedge r_{|a|_c}(a)= 0 .$$

La démonstration dans le cas $\alpha = -$ est analogue.
\end{proof}

\subsection{La $\omega$-catégorie des chaînes}
On va utiliser les chaînes cohérentes pour décrire une autre façon d'associer à un complexe dirigé augmenté $K$ admettant une base sans boucles et unitaire, une $\omega$-catégorie $\mu K$. On démontrera le théorème \ref{theo:muK est une omega cat} qui affirme que cette assignation est un foncteur isomorphe à $\nu$, c'est-à-dire que $\mu\cong\nu : \CDAB\to \infcat$. L’intérêt de ce nouveau formalisme est qu'il sera plus simple d'exprimer et de démontrer le théorème \ref{theo:decomposition explicite} de décomposition des cellules de $\mu K\cong \nu K$ en éléments de la base.

\begin{defi}
On définit l'ensemble globulaire $\mu K$, dont les cellules de dimension $n$ sont les chaînes cohérentes de degré inférieur ou égal à $n$. Les morphismes sources et buts sont ceux définis en \ref{defi:des sources et but}.
\end{defi}

On fixe un complexe dirigé augmenté $K$ admettant une base sans boucles et unitaire $B$.
On veut maintenant montrer que l'on peut munir l'ensemble globulaire des chaînes cohérentes d'une structure de $\omega$-catégorie. Il est facile de définir les $k$-compositions et les unités. Pour montrer qu'elles vérifient les conditions  de distributivité et d'associativité, on va construire un isomorphisme entre les chaînes cohérentes et les tableaux cohérents, qui respectera les sources, les buts et les compositions, ce qui impliquera le résultat.

\begin{defi}
\label{defi: d s}
Soit $n$ un entier. On définit: 
$$
\begin{array}{rrcl}
\def\arraystretch{1.4}
\phi_n : &\nu K_n&\to& \mu K _n\\
 & x &\mapsto&  x^+_n + \sum_{k<n} (x^+_k - \partial^+_k(x^+_{k+1})) \\ &&& = x^-_n + \sum_{k<n} (x^-_k - \partial_k^-(x^-_{k+1}))
\\
\\
\psi_n : &\mu K_n&\to& \nu K _n\\
 & a &\mapsto&  \psi(a)
\end{array}
$$
où le tableau $\psi(a)$ est défini par :
$$~~~~~~~~\psi(a)	 :=
\left(\begin{matrix}
(d_{0}^-(a))_{0} &...&(d_{n-1}^-(a))_{n-1}&(a)_n\\
(d_{0}^+(a))_{0} &...&(d_{n-1}^+(a))_{n-1}&(a)_n
\end{matrix}\right)$$ 
\end{defi}

Il est immédiat que l'image par $\psi$ d'une chaîne cohérente  est un tableau cohérent. La propriété analogue pour $\phi$ découle de la proposition suivante.

\begin{prop}
$\phi$ et $\psi$ commutent avec les applications sources et buts. 
\end{prop}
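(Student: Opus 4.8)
The plan is to treat the two statements separately, and for each to reduce to the single step from dimension $n$ to dimension $n-1$. On the $\nu K$-side the face operator $d_k^\alpha$ on an $n$-dimensional Steiner tableau is the iterated composite $d_k^\alpha\circ d_{k+1}^\alpha\circ\cdots\circ d_{n-1}^\alpha$ (each step replaces a column by its $\alpha$-entry and drops the higher columns, and this is visibly idempotent in that sense), and on the $\mu K$-side the identity $d_k^\alpha(a)=d_k^\alpha(d_{k+1}^\alpha(a))$ recorded after Definition~\ref{defi:des sources et but} gives the analogous factorisation. So it suffices to prove, for every $n$, every $x\in\nu K_n$, every chain $a$ of degree $\le n$ and $\alpha\in\{-,+\}$,
$$\phi_{n-1}(d_{n-1}^\alpha x)=d_{n-1}^\alpha(\phi_n x)\qquad\text{and}\qquad\psi_{n-1}(d_{n-1}^\alpha a)=d_{n-1}^\alpha(\psi_n a),$$
and then iterate these together with the factorisations above.

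For $\psi$ I would first establish the formal fact that $d_p^\beta\circ d_q^\alpha=d_p^\beta$ as operators on chains whenever $p<q$, for arbitrary signs $\alpha,\beta$: this goes by downward induction on $p$, using the identity $d_p^\beta=d_p^\beta\circ d_{p+1}^\beta$ (which holds for all chains) in the inductive step, the base case $p=q-1$ being the identity $d_{q-1}^\beta\circ d_q^\beta=d_{q-1}^\beta$ combined with Proposition~\ref{prop:d_n est une differencielle}, which states $d_{q-1}^\beta\circ d_q^+=d_{q-1}^\beta\circ d_q^-$. Granting this, set $b:=d_{n-1}^\alpha a$. Using the definitions of $\psi_n$, $\psi_{n-1}$ (Definition~\ref{defi: d s}) and the action of $d_{n-1}^\alpha$ on tableaux, $d_{n-1}^\alpha(\psi_n a)$ is the $(n-1)$-tableau whose column $j$ is $(d_j^\pm a)_j$ for $j\le n-2$ and whose last column is $(d_{n-1}^\alpha a)_{n-1}$, whereas $\psi_{n-1}(b)$ has column $j$ equal to $(d_j^\pm b)_j$ and last column $(b)_{n-1}$. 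The last columns agree since $b=d_{n-1}^\alpha a$, and for $j\le n-2$ the formal fact yields $d_j^\beta(b)=d_j^\beta(d_{n-1}^\alpha a)=d_j^\beta(a)$, so the two tableaux coincide.

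For $\phi$, set $u_n:=x_n^+=x_n^-$ and $u_k:=x_k^+-\partial_k^+(x_{k+1}^+)$ for $k<n$; each $u_k$ is supported in $B_k$, so $\phi_n x=\sum_{k\le n}u_k$ with $(\phi_n x)_k=u_k$ and $r_{n-1}(\phi_n x)=\sum_{k<n}u_k$. From the tableau relation $\partial_k(x_{k+1}^\gamma)=x_k^+-x_k^-$, valid for both $\gamma\in\{-,+\}$, one gets $u_k=x_k^+\wedge x_k^-=x_k^--\partial_k^-(x_{k+1}^-)$ and, more generally, that $\partial_k^\beta(x_{k+1}^\gamma)$ does not depend on $\gamma$. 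Expanding Definition~\ref{defi:des sources et but} in the top case then gives
$$d_{n-1}^\alpha(\phi_n x)=\partial_{n-1}^\alpha(u_n)+r_{n-1}(\phi_n x)=\big(\partial_{n-1}^\alpha(x_n^\alpha)+u_{n-1}\big)+\sum_{k<n-1}u_k=x_{n-1}^\alpha+\sum_{k<n-1}u_k,$$
since $u_{n-1}=x_{n-1}^\alpha-\partial_{n-1}^\alpha(x_n^\alpha)$ (for $\alpha=-$ one uses the second form of $u_{n-1}$). On the other hand the bottom row of $d_{n-1}^\alpha x$ is $(x_0^+,\dots,x_{n-2}^+,x_{n-1}^\alpha)$, and substituting this into the definition of $\phi_{n-1}$, using $\partial_{n-2}^+(x_{n-1}^\alpha)=\partial_{n-2}^+(x_{n-1}^+)$, produces exactly $x_{n-1}^\alpha+\sum_{k<n-1}u_k$. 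The degenerate case $u_n=0$ (i.e.\ $\phi_n x$ of degree $<n$) is immediate, since then $d_{n-1}^\alpha$ acts as the identity. As a byproduct, the case $k=0$ of the $\phi$-statement yields $e(\phi_n x)=e(d_0^+\phi_n x)=e(\phi_0 d_0^+x)=e(x_0^+)=1$, the promised fact that $\phi$ sends coherent tableaux to coherent chains.

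I expect the $\phi$-half to carry essentially all the work: the delicate point there is to keep straight which of $x_k^+,x_k^-,x_k^\alpha$ occurs at each slot and to exploit that $\partial_k^\beta$ applied to any of the $x_{k+1}^\gamma$ yields the same element, so that the $\alpha=+$ and $\alpha=-$ cases both close up. By contrast the $\psi$-half is purely formal once Proposition~\ref{prop:d_n est une differencielle} is in hand.
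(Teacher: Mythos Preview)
Your argument is correct and follows essentially the same route as the paper's own proof: for $\phi$ you reduce to the single step $n\to n-1$ and compute both sides, and for $\psi$ you compare entries of the two tableaux, using the globular identity $d_p^\beta\circ d_q^\alpha=d_p^\beta$ for $p<q$. The only differences are that you make explicit two points the paper leaves implicit: the proof of that globular identity (the paper simply writes $d_k^\alpha(d_{n-1}^\beta a)=d_k^\alpha(a)$ without comment), and the observation that $\partial_k^\beta(x_{k+1}^\gamma)$ is independent of $\gamma$, which is what makes the $k=n-2$ term of $\phi_{n-1}(d_{n-1}^\alpha x)$ match $u_{n-2}$ and also underlies the identity $u_k=x_k^+\wedge x_k^-$ ensuring that $\phi_n x$ really is a chain.
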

\begin{proof}
Soient $x$ un tableau de dimension $n$ et $\alpha\in\{-,+\}$.
$$
\def\arraystretch{1.4}
\begin{array}{rcl}
\phi_{n-1}(d^\alpha_{n-1}(x)) &=& x^\alpha_{n-1}+ \sum_{k<n-1} (x^+_k - \partial_k^+(x_{k+1}^+)) = x^\alpha_{n-1}+ \sum_{k<n-1} (x^-_k - \partial_k^-(x_{k+1}^-))\\
&=& (\partial_{n-1}^\alpha x_n) + x^\alpha_{n-1}-(\partial_{n-1}^\alpha x_n)  + \sum_{k<n-1} (x^\alpha_k - \partial_k^\alpha(x_{k+1}^\alpha))\\
&=& d_{n-1}^\alpha (x_n + x^\alpha_{n-1}-(\partial_{n-1}^\alpha x_n) + \sum_{k<n-1} (x^\alpha_k - \partial_k^\alpha(x_{k+1}^\alpha)))\\
&=& d_{n-1}^\alpha (\phi_{n}(x))\\
\end{array}
$$

Il est immédiat que $\psi$ respecte les sources et buts. En effet, pour une chaîne de degré inférieur ou égal à $n$ et pour  $k<n$ et $\alpha,\beta\in\{-,+\}$, 
$$
\def\arraystretch{1.4}
\begin{array}{ll}
\mbox{si }k<n-1 ~~~~&(\psi_{n-1}(d_{n-1}^\beta a))_k^\alpha = d_{k}^\alpha(d_{n-1}^\beta a) = d_{k}^\alpha(a) = (\psi_n (a))_k^\alpha
= (d_{n-1}^\beta (\psi_n(a)))^\alpha_{k}\\
\mbox{si }k=n-1 &(\psi_{n-1}(d_{n-1}^\beta a))_{n-1}^\alpha  = d_{n-1}^\beta a = (\psi_n (a))_{n-1}^\beta = (d_{n-1}^\beta( \psi_n(a)))^\alpha_{n-1}
\end{array}
$$
\end{proof}
On peut donc en déduire  directement la proposition suivante :
\begin{prop}
Les applications $\phi$ et $\psi$ définissent des morphismes d'ensembles globulaires. 
\end{prop}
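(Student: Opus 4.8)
Le plan est de se ramener directement à la proposition précédente. Rappelons qu'un morphisme d'ensembles globulaires $X\to Y$ n'est rien d'autre qu'une transformation naturelle de préfaisceaux sur $\textbf{O}$, c'est-à-dire exactement la donnée, pour chaque entier $n$, d'une application $f_n : X_n\to Y_n$ compatible avec les applications sources et buts, au sens où $d_n^-\circ f_{n+1}=f_n\circ d_n^-$ et $d_n^+\circ f_{n+1}=f_n\circ d_n^+$. Il suffit donc de vérifier deux choses : d'une part que les $\phi_n$ et les $\psi_n$ sont bien définies, i.e. que $\phi_n$ envoie un tableau cohérent sur une chaîne cohérente et que $\psi_n$ envoie une chaîne cohérente sur un tableau cohérent ; d'autre part que ces applications commutent avec $d_n^-$ et $d_n^+$.

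Le second point est précisément l'énoncé de la proposition précédente, il n'y a donc rien à refaire. Pour le premier point, le cas de $\psi$ a déjà été observé : l'image d'une chaîne cohérente $a$ est visiblement un tableau, et sa cohérence se lit sur la définition de $\psi$, puisque $\psi(a)_0^\alpha=d_0^\alpha(a)$ et $e(d_0^+a)=e(d_0^-a)=e(a)=1$. Pour $\phi$, j'utiliserais la compatibilité déjà établie avec les sources et buts : pour un tableau cohérent $x$ de dimension $n$, on obtient $d_0^\alpha(\phi_n(x))=\phi_0(d_0^\alpha(x))$ en composant les égalités $d_k^\alpha\circ\phi_{k+1}=\phi_k\circ d_k^\alpha$ de $k=n-1$ jusqu'à $k=0$ ; or $d_0^\alpha(x)$ est le tableau de dimension $0$ dont l'unique entrée est $x_0^\alpha$, donc $\phi_0(d_0^\alpha(x))=x_0^\alpha$, et par conséquent $e(\phi_n(x))=e(d_0^+\phi_n(x))=e(x_0^+)=1$. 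La chaîne $\phi_n(x)$ est donc cohérente.

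Il n'y a ici aucun obstacle réel : tout le contenu se trouve dans la proposition précédente, dont cet énoncé n'est qu'une reformulation ; la seule vérification supplémentaire, la bonne définition de $\phi$, se réduit comme ci-dessus à la compatibilité avec $d_0^\pm$ et au calcul de l'augmentation. Si l'on souhaite être totalement formel, on peut d'ailleurs englober ces deux propositions en une seule, en incorporant la bonne définition au moment de vérifier la naturalité.
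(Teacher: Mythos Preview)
Ta démonstration est correcte et reprend exactement l'approche du papier : celui-ci se contente d'indiquer que la proposition découle directement de la précédente (la commutation avec les sources et buts), la bonne définition de $\psi$ étant immédiate et celle de $\phi$ étant obtenue, comme tu le fais, via cette même commutation appliquée jusqu'en dimension~$0$.
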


\begin{prop}
Les morphismes $\phi$ et $\psi$ sont des inverses l'un de l'autre.
\end{prop}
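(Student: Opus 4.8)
The plan is to establish, separately in each dimension $n$, the two identities $\psi_n\circ\phi_n=\mathrm{id}_{\nu K_n}$ and $\phi_n\circ\psi_n=\mathrm{id}_{\mu K_n}$. Two preliminary facts are used throughout. First, combining Definition~\ref{defi:des sources et but} with Remark~\ref{rem:remarque sur le reste} gives, for every chain $a$ and every integer $k$, the uniform formula
$$d^\alpha_k(a)=\partial^\alpha_k\big((d^\alpha_{k+1}(a))_{k+1}\big)+r_k(a),$$
the degenerate range $|a|\le k$ being checked by hand (there $d^\alpha_{k+1}(a)=a$, so $(d^\alpha_{k+1}(a))_{k+1}=0$ and $r_k(a)=a$). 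Second, the two expressions defining $\phi_n$ in Definition~\ref{defi: d s} coincide: since $\partial_j(x^+_{j+1})=x^+_j-x^-_j=\partial_j(x^-_{j+1})$ by the tableau axioms, one gets $\partial^+_j(x^+_{j+1})-\partial^-_j(x^-_{j+1})=(x^+_j-x^-_j)_+-(x^+_j-x^-_j)_-=x^+_j-x^-_j$, hence $x^+_j-\partial^+_j(x^+_{j+1})=x^-_j-\partial^-_j(x^-_{j+1})$; I write $c_j$ for this common chain, which is supported on $B_j$ since $x^\alpha_j,\partial^\alpha_j(x^\alpha_{j+1})\in K^*_j$.

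For $\phi_n\circ\psi_n=\mathrm{id}$: let $a$ be a coherent chain of degree $\le n$. Taking the degree-$k$ homogeneous part of the uniform formula and using that $\partial^+_k\big((d^+_{k+1}(a))_{k+1}\big)\in K^*_k$ is supported on $B_k$ whereas $(r_k(a))_k=(a)_k$, I obtain $(d^+_k(a))_k=\partial^+_k\big((d^+_{k+1}(a))_{k+1}\big)+(a)_k$ for each $k<n$. Since $d^+_n(a)=a$, the $+$-entry of $\psi_n(a)$ in position $k+1$ equals $(d^+_{k+1}(a))_{k+1}$ for every $k<n$; substituting the $+$-entries of $\psi_n(a)$ into the defining formula for $\phi_n$ and telescoping yields
$$\phi_n(\psi_n(a))=(a)_n+\sum_{k<n}\Big((d^+_k(a))_k-\partial^+_k\big((d^+_{k+1}(a))_{k+1}\big)\Big)=(a)_n+\sum_{k<n}(a)_k=a.$$

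For $\psi_n\circ\phi_n=\mathrm{id}$: let $x$ be a tableau of dimension $n$ and set $a:=\phi_n(x)$. By the second preliminary fact, the homogeneous decomposition of $a$ is $(a)_n=x^+_n$ and $(a)_j=c_j$ for $j<n$. I then prove, by descending induction on $k$ from $n$ to $0$, that $d^\alpha_k(a)=x^\alpha_k+\sum_{j<k}c_j$. The base case $k=n$ is exactly the definition of $\phi_n(x)$ (first expression for $\alpha=+$, second for $\alpha=-$). For the step, the induction hypothesis shows that the unique degree-$(k+1)$ summand of $d^\alpha_{k+1}(a)$ is $x^\alpha_{k+1}$, so $(d^\alpha_{k+1}(a))_{k+1}=x^\alpha_{k+1}$ and $r_k(d^\alpha_{k+1}(a))=r_k(a)=\sum_{j\le k}(a)_j=c_k+\sum_{j<k}c_j$; the uniform formula then gives $d^\alpha_k(a)=\partial^\alpha_k(x^\alpha_{k+1})+c_k+\sum_{j<k}c_j$, which equals $x^\alpha_k+\sum_{j<k}c_j$ because $c_k+\partial^\alpha_k(x^\alpha_{k+1})=x^\alpha_k$. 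Reading off the degree-$k$ part for $k<n$ gives $(d^\alpha_k(a))_k=x^\alpha_k$, and together with $(a)_n=x^+_n=x^-_n$ this says precisely that $\psi_n(a)=x$.

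The whole argument is essentially bookkeeping of homogeneous pieces, so I do not expect a serious obstacle; the only genuinely fiddly point is justifying the uniform source formula in the degenerate range $|a|\le k$, and the reason both composites collapse to the identity is the same telescoping cancellation of the terms $\pm\,\partial^\alpha_k(x^\alpha_{k+1})$ in the two computations above.
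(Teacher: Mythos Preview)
Your argument is correct. Both identities follow, as you show, from the uniform formula $d^\alpha_k(a)=\partial^\alpha_k\big((d^\alpha_{k+1}a)_{k+1}\big)+r_k(a)$ together with the homogeneous decomposition $(a)_j=c_j$ of $\phi_n(x)$, and the telescoping is clean.

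The paper organises the computation differently. For $\psi\circ\phi=\mathrm{id}$ it does not compute $d^\alpha_k(\phi_n(x))$ directly; instead it invokes the preceding proposition that $\phi$ and $\psi$ commute with the globular structure, so $(\psi\circ\phi(x))^\alpha_k=(\psi\circ\phi(d^\alpha_k x))^\alpha_k$, and then only the top-degree case $k=n$ needs to be checked. For $\phi\circ\psi=\mathrm{id}$ the paper inducts on the degree of $a$, applying the inductive hypothesis to $d^+_{n-1}(a)$ to collapse all but the top two terms at once, rather than telescoping term by term as you do. Your route is more self-contained (it does not rely on the commutation proposition) and makes the role of the residues $c_j$ completely explicit; the paper's route is slightly shorter because it reuses structure already established. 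Both are essentially the same bookkeeping, and your verification of the uniform formula in the degenerate range $|a|\le k$ is exactly the small check one needs.
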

\begin{proof}
Soit $x$ un tableau de dimension $n$.  Montrons que  pour tout $k\leq n$ et $\alpha\in\{-,+\}$, $\big(\psi\circ\phi (x)\big)_k^\alpha = x_k^{\alpha}$. 

Le cas $k=n$ est simple:
$$\def\arraystretch{1.4}\begin{array}{rcl}
\big(\psi\circ\phi (x)\big)_n^\alpha&=& (x_n^+ + \sum_{k<n} (x^+_k - \partial^+_{k}(x^+_{k+1})))_n \\
&=& x^+_n.
\end{array}$$

Remarquons que pour $k<n$, on a:

$$\def\arraystretch{1.4}\begin{array}{rcl}
\big(\psi\circ\phi (x)\big)_k^\alpha&=& (d_k^\alpha \big(\psi\circ\phi (x)\big))_{k}^\alpha \\
&=&\big(\psi\circ\phi (d_k^\alpha x)\big)_{k}^\alpha.
\end{array}$$

Le tableau $d_k^\alpha x$ est de dimension $k$. En utilisant le calcul précédent, on obtient
$$ \big(\psi\circ\phi (d_k^\alpha x)\big) = (d_k^\alpha x)_k^\alpha= x_{k}^\alpha .$$

Montrons maintenant par récurrence sur les degrés des chaînes cohérentes que $\phi\circ\psi$ est égal à l'identité. Pour une chaîne de degré $0$, le résultat est immédiat. Supposons le vrai pour les chaînes de degré strictement inférieur à $n$ et donnons nous $a$ une chaîne cohérente de degré  $n$. Par hypothèse de récurrence, $\phi(\psi(d_{n-1}^+ a)) = d_{n-1}^+ a$, et donc 
$$
 (d^+_{n-1} a)_{n-1}  + \sum_{k<n-1} ((d^+_k a)_k - \partial_k^+((d^+_{k+1} a)_{k+1}))= d_{n-1}^+ a .
$$
On a donc : 
$$
\def\arraystretch{1.4}
\begin{array}{rcl}
\phi(\psi(a)) &= & (a)_n + \sum_{k<n} ((d^+_k a)_k - \partial_{k}^+((d^+_{k+1} a)_{k+1})) \\ 
&= & (a)_n + (d^+_{n-1} a)_{n-1} - \partial_{n-1}^+(( a)_{n}) +  \sum_{k<n-1} ((d^+_k a)_k - \partial_{k}^+((d^+_{k+1} a)_{k+1})) \\ 
& =& (a)_n + d_{n-1}^+ a - \partial_{n-1}^+ ((a)_n) \\
&=& (a)_n + r_{n-1}(a) \\
&=& a.
\end{array}
$$
\end{proof}

\begin{rem}
La proposition précédente implique donc qu'un tableau $x$ est cohérent si et seulement si son image par $\psi$ est cohérente. On peut alors déduire de la proposition \ref{prop:pas de double} que pour tout entier $n$ inférieur à la dimension de $x$ et pour tout $\alpha\in\{-,+\}$, 
$x^\alpha_n\leq 1$. De plus la chaîne $\partial_{n-1}^{\alpha}(x_n^{\alpha})$ est inférieure à $x_{n-1}^{\alpha}$, on a donc aussi $\partial_{n-1}^{\alpha}(x_n^{\alpha})\leq 1$.
\end{rem}

On peut maintenant promouvoir l'assignation $\mu$ en un foncteur.
\begin{defi}
\label{defi : fonctorialite de mu}
On définit le foncteur 
$$\begin{array}{rccc}
\mu : &\CDAB &\to& \textbf{Glob}\\
&K&\mapsto &\mu K\\
& f:K\to K'&\mapsto &\psi\circ\nu(f)\circ\phi:\mu K\to\mu K'.
\end{array}
$$
Pour un morphisme $f:K\to K'$, on a alors 
$$
\begin{array}{rrcl}
\mu(f) :& \mu K&\to&\mu K'\\
&a&\mapsto   
 & f_{|a|}(a_{|a|}) + \sum_{n<[a|} \bigg(f_n\big((d^+_n a)_n\big) - \partial_{n}^+f_{n+1}\big((d^+_{n+1} a)_{n+1})\big)\bigg)\\
&&&= f_{|a|}(a_{|a|}) + \sum_{n<[a|} \bigg(f_n\big((d^-_n a)_n\big) - \partial_{n}^-f_{n+1}\big((d^-_{n+1} a)_{n+1})\big)\bigg)\\

\end{array}
$$
\end{defi}

\begin{defi}
\label{defi:composition dans les chaines}
Pour $a$ et $a'$ deux chaînes telles que $ d_k^-(a) =d_k^+(a') =: c$, on définit leur $k$-composition  de la façon suivante : 
$$a *_k a' := (a -c+a')_+.$$
\end{defi}

\begin{rem}
\label{rem:rem sur la compo explicite des chaines}
Soient $a$ et $a'$ deux chaînes telles que $ d_k^-(a) =d_k^+(a')$, on a alors pour tout $n>k$, 
$$(a*_k a')_n =(a)_n+(a')_n,$$ 
et donc pour tout élément $b$ de la base de dimension strictement supérieure à $k$:
$$\lambda_b^{a *_k a' } = \lambda_b^{a  } + \lambda_b^{a' }.$$
\end{rem}

\begin{prop}
\label{prop:phi et psi preservent les compositions}
Les morphismes $\phi$ et $\psi$ préservent les compositions.
\end{prop}

\note{
Pour démontrer cette proposition, on  a besoin de plusieurs lemmes:
\begin{lem}
\label{lem:sources des composition, cas horizontal 1}
Soient $k<n$ deux entiers, et $a$ et $a'$ deux chaînes cohérentes telles que  $n+1\geq \max(|a|,|a'|)$, et telles que $ d_{k}^-(a) =d_{k}^+(a')$. Alors, pour tout $\alpha\in\{-,+\}$,  $$ \partial_{n}^\alpha((a)_{n+1}) \wedge \partial_{n}^{-\alpha}((a')_{n+1})=0.$$
\end{lem}
\begin{proof}
Si $n\geq|a|$ ou $n\geq|a'|$, la formule est trivialement vraie. On se place donc dans le cas où $|a|=|a'|=n+1$.
Supposons qu'il existe un élément de la base $b$ de dimension $n$ inclus dans $ \partial_{n}^{\alpha}((a)_{n+1})\wedge  \partial_{n}^{-\alpha}((a')_{n+1})$, et montrons que cela mène à une contradiction. Tout d'abord, remarquons que cela implique:
$$\lambda_b^{\partial^{-\alpha}_{n}((a)_{n+1})}=0 ~~~~~~~~~~\lambda_b^{\partial^{\alpha}_{n}((a')_{n+1})}=0.$$
La chaîne $\partial_{n}^{\alpha}((a)_{n+1})$ étant incluse dans $(d_{n}^{\alpha}(a))_{n}$, cela implique que
$$\lambda_b^{(d_{n}^{\alpha}(a))_n}=1.$$
De plus, le lemme \ref{lem:pseudolinearite} appliqué à la chaîne $\partial_{n}^{\alpha}((a)_{n+1})$  implique qu'il existe un élément $b'\in (a)_{n+1}$ tel que $b$ appartienne à $\partial_{n}^{\alpha}(b')$.

Intéressons nous à la chaîne $d$ définie par la formule:
$$d:=\phi(\psi(a)*_{k}\psi(a')).$$
Étant l'image  d'un tableau cohérent, cette chaîne est cohérente. De plus, par la définition de $\phi$ et $\psi$, elle vérifie l'égalité suivante:
$$d := (a)_{n+1} + (a')_{n+1} + (d^\alpha_{n}(a))_{n} + (d^\alpha_{n}(a'))_{n} - \partial^\alpha_{n}((a)_{n+1}+(a')_{n+1}) +r_{n-1}(d).$$
On a alors
$$\def\arraystretch{1.4}
\begin{array}{rcl}
\lambda_b^d 
 &\geq& \lambda_b^{(d^\alpha_{n}(a))_n }- \lambda_b^{\partial^\alpha_{n}((a)_{n+1}+(a')_{n+1}) }\\
&\geq & \lambda_b^{(d^\alpha_{n}(a))_n }- |\lambda_b^{\partial^\alpha_{n}((a)_{n+1})} -\lambda_b^{\partial^{-\alpha}_{n}((a')_{n+1}) }|_+ -|\lambda_b^{\partial^\alpha_{n}((a')_{n+1})} -\lambda_b^{\partial^{-\alpha}_{n}((a)_{n+1})  }|_+\\
&\geq & 1- |1 -1|_+ -|0 -0|_+ =1, 
\end{array}$$
et on a donc $b\in r_{n}(d)$. Par la définition de $b'$, on a alors:
$$
\begin{array}{rcl}
\partial^\alpha_{n}(b')\wedge r_{n}(d)&\geq & b\\
&\neq& \emptyset
\end{array}$$
L'élément $b'$ étant dans le support de $d$, c'est en contradiction avec le deuxième point de la proposition \ref{prop: plus de fourche }.
\end{proof}

\begin{lem}
\label{lem:sources des composition, cas horizontal 2}
Soient $k<n$ deux entiers, et $a$ et $a'$ deux chaînes cohérentes telles que $d_{k}^-(a) =d_{k}^+(a'):=c$. Alors, pour tout $\alpha\in\{-,+\}$,  $$d_{n}^\alpha(a*_k a') =  (d_{n}^\alpha(a)*_kd_{n}^\alpha(a')).$$
\end{lem}
\begin{proof}
Si $n\geq \max(|a|,|a'|)$, l'égalité est vérifiée. Plaçons nous maintenant dans le cas où $n=\max(|a|,|a'|)-1$.
On a alors 
$$\def\arraystretch{1.4}
\begin{array}{rcl}
d_{n}^{\alpha}((a+a'-c)_+) &:=& \partial_{n}^\alpha( (a)_{n+1} + (a')_{n+1})+r_{n}((a+a'-c)_+)\\
                       &:=& (\partial_{n}^\alpha( (a)_{n+1})\diagdown \partial_{n}^{-\alpha}( (a')_{n+1})) + (\partial_{n}^\alpha( (a')_{n+1})\diagdown \partial_{n}^{-\alpha}( (a)_{n+1}))+r_{n}((a+a'-c)_+)\\
\end{array}$$
Or le lemme \ref{lem:sources des composition, cas horizontal 1} implique que 
$$ \partial_{n}^\alpha( (a)_{n+1})\diagdown \partial_{n}^{-\alpha}( (a')_{n+1})=\partial_{n}^\alpha( (a)_{n+1})~~~~~\partial_{n}^\alpha( (a')_{n+1})\diagdown \partial_{n}^{-\alpha}( (a)_{n+1})=\partial_{n}^\alpha( (a')_{n+1})$$
et  on remarque de plus que 
$$r_{n}((a+a'-c)_+) = (a)_n+(a')_n+r_{n-1}((a+a'-c)_+).$$
On en déduit:
$$\def\arraystretch{1.4}
\begin{array}{rcl}
d_{n}^{\alpha}((a+a'-c)_+) &:=& (\partial_{n}^\alpha( (a)_{n+1}) + (\partial_{n}^\alpha( (a')_{n+1})+(a)_n+(a')_n+r_{n-1}((a+a'-c)_+) \\
                       &:=& (d_{n}^\alpha( a) + d_{n}^\alpha( a') -c)_+
\end{array}$$
d'où $$d_{n}^{\alpha}(a*_ka')=d_{n}^\alpha( a) *_kd_{n}^\alpha( a').$$
Pour le cas $n<\max(|a|,|a'|)-1$, en utilisant le cas déja démontré, on a alors:
$$d_{n}^{\alpha}(a*_ka')= d_{n}^{\alpha}d_{n+1}^{\alpha}...d_{m}^{\alpha}(a*_ka')=d_{n}^{\alpha}d_{n+1}^{\alpha}...d_{m}^{\alpha}( a) *_kd_{n}^{\alpha}d_{n+1}^{\alpha}...d_{m}^{\alpha}( a')=d_{n}^\alpha( a) *_kd_{n}^\alpha( a'),$$
où $m= \max(|a|,|a'|)-1$.
\end{proof}

\begin{lem}
\label{lem:sources des composition, cas vertical 1}
Soient $n$ un entier, et $a$ et $a'$ deux chaînes cohérentes telles que  $ \max(|a|,|a'|)=n+1$ et  telles que $ d_{n}^-(a) =d_{n}^+(a')$. Alors, $ \partial_{n}^+((a)_{n+1}) \wedge \partial_{n}^-((a')_{n+1})=0$.
\end{lem}
\begin{proof}
La démonstration de ce lemme est très similaire à celle du lemme \ref{lem:sources des composition, cas horizontal 1}.
Si $n\geq|a|$ ou $n\geq|a'|$,  l'égalité est vérifiée. On se place donc dans le cas où $|a|=|a'|=n+1$.
Supposons qu'il existe un élément de la base $b$ de dimension $n$ inclus dans $ \partial_{n}^{+}((a)_{n+1})\wedge  \partial_{n}^{-}((a')_{n+1})$, et montrons que cela mène à une contradiction. Tout d'abord, remarquons que cela implique:
$$\lambda_b^{\partial^{-}_{n}((a)_{n+1})}=0 ~~~~~~~~~~\lambda_b^{\partial^{+}_{n}((a')_{n+1})}=0.$$
La chaîne $\partial_{n}^{+}((a)_{n+1})$ étant incluse dans $(d_{n}^{+}(a))_{n}$, cela implique que
$$\lambda_b^{(d_{n}^{+}(a))_n}=1.$$
De plus, le lemme \ref{lem:pseudolinearite} appliqué à la chaîne $\partial_{n}^{\alpha}((a)_{n+1})$  implique qu'il existe un élément $b'\in (a)_{n+1}$ tel que $b$ appartienne à $\partial_{n}^{\alpha}(b')$.

Intéressons nous à la chaîne $d$ définie par la formule:
$$d:=\phi(\psi(a)*_{n}\psi(a')).$$
Étant l'image  d'un tableau cohérent, cette chaîne est cohérente. De plus, par la définition de $\phi$ et $\psi$, elle vérifie l'égalité suivante:
$$d := (a)_{n+1} + (a')_{n+1} + (d^+_{n}(a))_{n}  - \partial^+_{n}((a)_{n+1}+(a')_{n+1}) +r_{n-1}(d).$$
On a alors
$$\def\arraystretch{1.4}
\begin{array}{rcl}
\lambda_b^d 
 &=& \lambda_b^{(d^+_{n}(a))_n }- \lambda_b^{\partial^+_{n}((a)_{n+1}+(a')_{n+1}) }\\
&\geq & \lambda_b^{(d^+_{n}(a))_n }- |\lambda_b^{\partial^+_{n}((a)_{n+1})} -\lambda_b^{\partial^{-}_{n}((a')_{n+1}) }|_+ -|\lambda_b^{\partial^+_{n}((a')_{n+1})} -\lambda_b^{\partial^{-}_{n}((a)_{n+1})  }|_+\\
&\geq & 1- |1 -1|_+ -|0 -0|_+ =1, 
\end{array}$$
et on a donc $b\in r_{n}(d)$. Par la définition de $b'$, on a alors:
$$
\begin{array}{rcl}
\partial^+_{n}(b')\wedge r_{n}(d)&\geq & b\\
&\neq& \emptyset
\end{array}$$
L'élément $b'$ étant dans le support de $d$, c'est en contradiction avec le deuxième point de la proposition \ref{prop: plus de fourche }.
\end{proof}

\begin{lem}
\label{lem:sources des composition, cas vertical 2}
Soient $n\leq k$ un entier, et $a$ et $a'$ deux chaînes cohérentes telles que $ d_{k}^-(a) =d_{k}^+(a'):=c$. Alors,  
$$d_{n}^+(a*_k a') = d_n^+(a)~~~~~~d_{n}^-(a*_k a') = d_n^-(a').$$
\end{lem}
\begin{proof}
Si $k=n$ et $n\geq \max(|a|,|a'|)$, les formules sont trivialement vraies. Plaçons nous maintenant dans le cas où $n=\max(|a|,|a'|)-1$ et $k=n$.
On a alors 
$$\def\arraystretch{1.4}
\begin{array}{rcl}
d_{n}^{+}(a*_{n-1}a')&:=&d_{n}^{+}((a+a'-c)_+) \\
     &=& \partial_{n}^+( (a)_{n+1} + (a')_{n+1})+r_{n}((a+a'-c)_+)\\
                       &:=& (\partial_{n}^+( (a)_{n+1})\diagdown \partial_{n}^{-}( (a')_{n+1})) + (\partial_{n}^+( (a')_{n+1})\diagdown \partial_{n}^{-}( (a)_{n+1}))+r_{n}((a+a'-c)_+)\\
\end{array}$$
Or le lemme \ref{lem:sources des composition, cas horizontal 1} implique que 
$$ \partial_{n}^+( (a)_{n+1})\diagdown \partial_{n}^{-}( (a')_{n+1})=(\partial_{n}^+( (a)_{n+1}).$$
On remarque de plus que 
$$\def\arraystretch{1.4}
\begin{array}{rcl}
r_{n}((a+a'-c)_+) &= &(r_n(a) + r_n(a')- d_{n}^-(a))_+\\
&=&(r_n(a')- \partial_{n}^-((a)_{n+1}))_+\\
&=&r_n(a')\diagdown\partial_{n}^-((a)_{n+1}),\\
\end{array}$$
et donc
$$\def\arraystretch{1.4}
\begin{array}{rcl}
d_{n}^{+}(a*_{n-1}a')&= &\partial_{n}^+( (a)_{n+1})+ (\partial_{n}^+( (a')_{n+1} + r_n(a'))\diagdown\partial_{n}^-((a)_{n+1})\\
&=&\partial_{n}^+( (a)_{n+1}) + (d_{n}^+(a'))\diagdown\partial_{n}^-((a)_{n+1})\\
&=&\partial_{n}^+( (a)_{n+1}) + (d_{n}^-(a))\diagdown\partial_{n}^-((a)_{n+1})\\
&=&\partial_{n}^+( (a)_{n+1}) + r_n(a)\\
&=&d_{n}^{+}(a) \\
\end{array}$$

Pour le cas $n\leq k$ et $n\leq \max(|a|,|a'|)-1$, en utilisant le lemme \ref{lem:sources des composition, cas horizontal 2} et le cas que l'on a déja prouvé,	 on a
$$\def\arraystretch{1.4}
\begin{array}{rcl}
d_{n}^{+}(a*_{k}a')&=& d_{n}^{+}d_{k}^{+}d_{k+1}^{+}(a*_{k}a')\\	
&=& d_{n}^{+}d_{k}^{+}(d_{k+1}^{+}(a)*_{k}d_{k+1}^{+}(a'))\\
&=& d_{n}^{+}d_{k}^{+}d_{k+1}^{+}(a) =d_{n}^{+}(a) \\
\end{array}$$

Pour tout $n\leq k$, on montre de façon analogue l'égalité
$$d_{n}^{-}(a*_{k}a')=d_{n}^{-}(a).$$
\end{proof}

\begin{proof}[Démonstration de la proposition \ref{prop:phi et psi preservent les compositions}]
Comme on sait que $\phi$ et $\psi$ sont inverses l'une de l'autre, il suffit de montrer que  $\psi$ préserve les compositions. On se donne donc deux entiers $m>k$, deux chaînes cohérentes $a,a'\in (\mu K)_m$,  tels que $ d_{k}^-(a) =d_{k}^+(a')$.
Pour tout $\alpha\in\{-,+\}$,  pour tout $n>k$, le lemme \ref{lem:sources des composition, cas horizontal 2} implique que 
$$\def\arraystretch{1.4}
\begin{array}{rcl}
\psi(a*_ka')^\alpha_n&:=& (d_n^\alpha(a*_ka'))_n\\
&=&(d_n^\alpha(a)*_kd_n^\alpha(a')))_n\\
&=&(d_n^\alpha(a))_n+ (d_n^\alpha(a'))_n\\
&=&\psi(a)^\alpha_n+ \psi( a')^\alpha_n.
\end{array}
$$	
Pour tout $n\leq k$, le lemme \ref{lem:sources des composition, cas vertical 2} implique que 
$$\def\arraystretch{1.4}
\begin{array}{rcl}
\psi(a*_ka')^+_n&:=& (d_n^+(a*_ka'))_n\\
&=&(d_n^+(a))_n\\
&=& \psi(a)^+_n\\
\end{array}
~~~~~~~~
\def\arraystretch{1.4}
\begin{array}{rcl}
\psi(a*_ka')^-_n&:=& (d_n^-(a*_ka'))_n\\
&=&(d_n^-(a'))_n\\
&=& \psi(a)^-_n.\\
\end{array}
$$	
On a donc 
$\psi(a*_ka')=\psi(a)*_k\psi(a')$.
\end{proof}
}

\begin{theo}
\label{theo:muK est une omega cat}
L'ensemble globulaire $\mu K$ \note{est muni d'une structure de $\omega$-catégorie, où les compositions sont celles de la définition \ref{defi:composition dans les chaines}. En tant que $\omega$-catégorie, $\mu K$ est  isomorphe à $\nu K$. Enfin, }le foncteur $\mu$ se relève en un foncteur à valeur dans $\infcat$, que l'on note aussi $\mu$, et qui est isomorphe à $\nu$ restreint à $\CDAB$.
\end{theo}

On en déduit directement le corollaire suivant:
\begin{cor}
\label{cor:ajdonction entre chaine et lambda avec unite et counite explicite}
Les foncteurs $\lambda_{|\infcatB}$ et $\mu$ forment une équivalence adjointe 
$$\lambda : \xymatrix{
    \infcatB  \ar@/^/[r] \ar@{}[r]|-{\bot}  &
    \CDAB \ar@/^/[l] }: \mu.$$
Pour une $\omega$-catégorie $C$, l'unité de l'adjonction est donnée par la transformation naturelle : 
$$\def\arraystretch{1.4}
\begin{array}{rrcl}
\eta :&  C &\to & \mu \lambda C \\
& x\in C_n &\mapsto  &  [x]_n + \sum_{k<n} ([d_k^+ x]_k - \partial_k^+([d_{k+1}^+ x]_{k+1})) \\
&&&=[x]_n + \sum_{k<n} ([d_k^- x]_k - \partial_k^-([d_{k+1}^- x]_{k+1})) 
\end{array}
$$
Pour un complexe dirigé augmenté $K$, la co-unité est donnée par :
$$\begin{array}{rrcl}
\pi :&  \lambda \mu K &\to & K \\
& [a]_n\in (\lambda \mu K)_n &\mapsto  &  (a)_n
\end{array}
$$

\end{cor}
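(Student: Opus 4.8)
The plan is to deduce everything formally by transporting Steiner's adjunction along the natural isomorphism between $\nu$ and $\mu$ furnished by Theorem~\ref{theo:muK est une omega cat}. I would start from the elementary fact that an adjunction may be transported along a natural isomorphism of one of its two functors: if $F\dashv G$ with unit $\eta$ and counit $\varepsilon$, and $\alpha\colon G\xrightarrow{\ \sim\ }G'$ is a natural isomorphism, then $F\dashv G'$, with unit the composite $C\xrightarrow{\eta_C}GFC\xrightarrow{\alpha_{FC}}G'FC$ and counit the composite $FG'K\xrightarrow{F(\alpha_K^{-1})}FGK\xrightarrow{\varepsilon_K}K$; moreover, if $F\dashv G$ is an adjoint equivalence, then so is $F\dashv G'$. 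I would apply this with $F:=\lambda_{|\infcatB}$, $G:=\nu$, $G':=\mu$ and $\alpha:=\phi\colon\nu\xrightarrow{\ \sim\ }\mu$ the natural isomorphism whose components are the maps $\phi_K$ of Definition~\ref{defi: d s} (with inverse $\psi$). This is legitimate because $\mu$ really does take its values in $\infcatB$: for $K\in\CDAB$ one has $\mu K\cong\nu K$ by Theorem~\ref{theo:muK est une omega cat}, and $\nu K$ admits an atomic, loop-free basis by the last remark of Section~1.2. Since $\lambda\dashv\nu$ restricts to an adjoint equivalence between $\infcatB$ and $\CDAB$ by Theorem~\ref{theo}, the transported adjunction $\lambda_{|\infcatB}\dashv\mu$ is again an adjoint equivalence, which gives the first assertion together with $\lambda\circ\mu\cong\mathrm{id}_{\CDAB}$ and $\mathrm{id}_{\infcatB}\cong\mu\circ\lambda$.

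It then remains only to unwind the transported unit and counit into the stated closed forms, which is a direct substitution. For the unit: by Theorem~\ref{theo:ajdonction de steiner avec unite et counite explicite}, for $x\in C_n$ the tableau $\eta^{\nu}_C(x)$ has $k$-th column $\big([d^-_k x]_k,[d^+_k x]_k\big)$ for $k<n$ and $n$-th column $\big([x]_n,[x]_n\big)$; applying $\phi_n$ as defined in Definition~\ref{defi: d s} yields $\eta_C(x)=\phi_{\lambda C}\big(\eta^\nu_C(x)\big)=[x]_n+\sum_{k<n}\big([d_k^+x]_k-\partial^+_k([d^+_{k+1}x]_{k+1})\big)$, and the second expression for $\eta$ in the statement is the $(-)$-variant, the two being equal precisely because $\phi_n$ is well defined on tableaux. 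For the counit: on a generator $[a]_n\in(\lambda\mu K)_n$ one computes $\pi_K([a]_n)=\pi^\nu_K\big(\lambda(\psi_K)([a]_n)\big)=\pi^\nu_K\big([\psi(a)]_n\big)=\psi(a)^+_n=(a)_n$, using the description of the $n$-th column of $\psi(a)$ in Definition~\ref{defi: d s} and the formula for $\pi^\nu$ in Theorem~\ref{theo:ajdonction de steiner avec unite et counite explicite}.

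I do not expect a genuine obstacle: the corollary is purely formal given Theorem~\ref{theo:muK est une omega cat}, Theorem~\ref{theo}, and the explicit unit and counit of Theorem~\ref{theo:ajdonction de steiner avec unite et counite explicite}. The only points requiring a little care are the bookkeeping of variance (checking that the natural transformation $\nu\Rightarrow\mu$ is indeed $\phi$, with $\psi$ as its inverse, so that the transported unit is $\phi_{\lambda(-)}\circ\eta^\nu$ and the transported counit is $\pi^\nu\circ\lambda(\psi_{(-)})$) and the verification, already recalled above, that $\mu$ factors through $\infcatB$. If one preferred a self-contained argument avoiding the transport lemma, one could instead verify the two triangle identities for the proposed pair $(\eta,\pi)$ directly, using that $\phi$ and $\psi$ are mutually inverse morphisms of $\omega$-categories natural in $K$; but invoking the transport lemma is shorter and cleaner.
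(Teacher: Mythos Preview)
Your proposal is correct and is precisely the argument the paper has in mind: the corollary is stated immediately after Theorem~\ref{theo:muK est une omega cat} with the remark ``On en déduit directement le corollaire suivant'', i.e.\ it is obtained by transporting Steiner's adjoint equivalence (Theorems~\ref{theo:ajdonction de steiner avec unite et counite explicite} and~\ref{theo}) along the natural isomorphism $\phi\colon\nu\xrightarrow{\sim}\mu$, and your unwinding of the unit and counit via the explicit formulas for $\phi$ and $\psi$ in Definition~\ref{defi: d s} is exactly right.
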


Dans son article, Steiner montre que pour un complexe $K$ admettant une base sans boucles et unitaire, la $\omega$-catégorie $\nu K$ est engendrée par composition. La proposition suivante va donner une forme d'algorithme pour créer une telle décomposition.

\begin{theo}
\label{theo:decomposition explicite}
Soit  $a := \sum_{i\leq m} b_i +  r_{|a|_c}(a)$ une chaîne cohérente écrite sous forme ordonnée (au sens de la définition \ref{defi:forme ordonne}). On a alors  une décomposition de la cellule $a$ de la forme 

$$\begin{array}{rl}
a = &b_0 + ~(d_{|a|_c}^+(\sum_{0<i\leq m}b_i )\diagdown d_{|a|_c}^- b_0)~+ r_{|a|_c}(a) \\

&*_{|a|_c} ...\\
&*_{|a|_c} b_k +~(d_{|a|_c}^-(\sum_{i<k}b_i)\diagdown d_{|a|_c}^+ b_k) \vee (d_{|a|_c}^+(\sum_{k<i\leq m}b_i) \diagdown d_{|a|_c}^- b_k)~+ r_{|a|_c}(a)\\
&*_{|a|_c} ...\\
&*_{|a|_c} b_m +~(d_{|a|_c}^-(\sum_{i<m}b_i)\diagdown d_{|a|_c}^+ b_m )~+ r_{|a|_c}(a)\\
\end{array}$$

\end{theo}
\begin{proof}
On définit :  $$\beta_k  := b_k +~(d_{|a|_c}^-(a_{<k})\diagdown d_{|a|_c}^+ b_k) \vee (d_{|a|_c}^+(a_{\geq k+1}) \diagdown d_{|a|_c}^- b_k)~+ r_{|a|_c}(a)$$

Pour montrer qu'on a la décomposition voulue, il suffit de vérifier les trois points suivants: 	
\begin{enumerate}
\item Les compositions sont bien définies, c'est-à-dire
$d_{|a|_c}^- \beta_k = d_{|a|_c}^+ \beta_{k+1}$;
\item Les chaînes $\beta_k$ sont cohérentes;
\item On a bien $a =\beta_0 *_{|a|_c} \beta_1 *_{|a|_c}... *_{|a|_c} \beta_m $.
\end{enumerate}
Pour le point $(1)$, on calcule que pour $k<m$: 
$$
\def\arraystretch{1.4}\begin{array}{rcl}
d_{|a|_c}^- \beta_k &=&d_{|a|_c}^-  ( b_k +~(d_{|a|_c}^-(a_{<k})\diagdown d_{|a|_c}^+ b_k) \vee (d_{|a|_c}^+(a_{\geq k+1}) \diagdown d_{|a|_c}^- b_k)~+ r_{|a|_c}(a)) \\
&=& d_{|a|_c}^- b_k +~(d_{|a|_c}^-(a_{<k})\diagdown d_{|a|_c}^+ b_k) \vee (d_{|a|_c}^+(a_{\geq k+1}) \diagdown d_{|a|_c}^- b_k)~+ r_{|a|_c}(a)  \\
&=& (d_{|a|_c}^- b_k + d_{|a|_c}^-(a_{<k})\diagdown d_{|a|_c}^+ b_k)) \vee d_{|a|_c}^+(a_{\geq k+1}) +  r_{|a|_c}(a)\\
&=& d_{|a|_c}^-(a_{<k+1}) \vee d_{|a|_c}^+(a_{\geq k+1}) + r_{|a|_c}(a) \\
\end{array}
$$
De façon analogue, pour $k>0$, 
$$d_{|a|_c}^+ \beta_k = d_{|a|_c}^-(a_{<k} )\vee d_{|a|_c}^+(a_{\geq k})+ r_{|a|_c}(a) $$

Cela prouve donc le point $(1)$. De plus, selon la proposition \ref{prop:nom provisoire} les chaînes $d_{|a|_c}^-(a_{<k} )\vee d_{|a|_c}^+(a_{\geq k})+ r_{|a|_c}(a)$ sont cohérentes, et donc les  $\beta_k$ le sont aussi. Cela prouve le point $(2)$.

Enfin pour le point $(3)$, on a : 
$$\begin{array}{rcl}
\beta_0 *_{|a|_c}... *_{|a|_c} \beta_m &=& \sum_{ k \leq m} b_k + \big( \sum_{k \leq m} (d_{|a|_c}^-(a_{<k})\diagdown d_{|a|_c}^+ b_k) \vee (d_{|a|_c}^+(a_{\geq k+1}) \diagdown d_{|a|_c}^- b_k) \\
&&- \sum_{1 \leq k \leq m}d_{|a|_c}^-(a_{<k} )\vee d_{|a|_c}^+(a_{\geq k})
+ r_{|a|_c}(a)\big)_+.\\
\end{array}
$$
Or selon le lemme \ref{lem:pseudolinearite}, $$(d_{|a|_c}^-(a_{<0})\diagdown d_{|a|_c}^+ b_0) \vee (d_{|a|_c}^+(a_{\geq 1})\diagdown d_{|a|_c}^- b_0) =(d_{|a|_c}^+(a_{\geq 1})\diagdown d_{|a|_c}^- b_0)\leq \sum_{1\leq k} d_{|a|_c}^+(b_k), $$
et pour $k> 0$,
$$(d_{|a|_c}^-(a_{<k})\diagdown d_{|a|_c}^+ b_k) \vee (d_{|a|_c}^+(a_{\geq k+1}) \diagdown d_{|a|_c}^- b_k) + d_{|a|_c}^+ b_k = d_{|a|_c}^-(a_{<k} )\vee d_{|a|_c}^+(a_{\geq k}),$$
et donc 
$$
\sum_{k \leq m} (d_{|a|_c}^-(a_{<k})\diagdown d_{|a|_c}^+ b_k) \vee (d_{|a|_c}^+(a_{\geq k+1}) \diagdown d_{|a|_c}^- b_k)
\leq \sum_{1\leq k \leq m} d_{|a|_c}^-(a_{<k} )\vee d_{|a|_c}^+(a_{\geq k}).
$$
Le point $2$ de la proposition \ref{prop: plus de fourche }  indique que pour tout $k$ et pour $\alpha\in\{-,+\}$,  $r_{|a|_c}(a)\wedge d_{|a|_c}^\alpha(b_k) = 0$, et le lemme \ref{lem:pseudolinearite} implique alors que 
$r_{|a|_c}(a)\wedge d_{|a|_c}^\alpha(a_{<k}) \leq \sum_{i< k} r_{|a|_c}(a)\wedge d_{|a|_c}^\alpha(b_i) = 0$, et on a de même 
$r_{|a|_c}(a)\wedge d_{|a|_c}^\alpha(a_{\geq k})=0 $ et donc pour tout $k$,
$r_{|a|_c}(a)\wedge (d_{|a|_c}^-(a_{<k} )\vee d_{|a|_c}^+(a_{\geq k}))=0.$
En combinant tout on obtient
$$
\big(\sum_{k \leq m} (d_{|a|_c}^-(a_{<k})\diagdown d_{|a|_c}^+ b_k) \vee (d_{|a|_c}^+(a_{\geq k+1}) \diagdown d_{|a|_c}^- b_k) -\sum_{1\leq k \leq m} d_{|a|_c}^-(a_{<k} )\vee d_{|a|_c}^+(a_{\geq k}) ~~+ 
r_{|a|_c}(a)\big)_+=r_{|a|_c}(a),
$$
et donc enfin,
$$
\beta_0 *_{|a|_c}... *_{|a|_c} \beta_m = \sum_{ k \leq m} b_k+ r_{|a|_c}(a) = a.$$
\end{proof}

\begin{cor}
\label{cor:decomposition explicite}
Soit $a$ une chaîne cohérente. Alors  $a$ est  une composition d'éléments de la base. De plus, les $|a|$-cellules apparaissant dans cette décomposition sont les $b\in (a)_{|a|}$.
\end{cor}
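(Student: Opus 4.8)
Le plan est de raisonner par récurrence sur le degré de composition $|a|_c\in\{-1,0,1,\dots\}$.

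Pour l'initialisation $|a|_c=-1$, la proposition \ref{prop:coherende de degre moin un est reduit a un singloton} assure que $a$ est réduite à un unique élément $b$ de la base ; alors $a=b$ est trivialement une composition d'éléments de la base, et $(a)_{|a|}=b$, donc la seule $|a|$-cellule apparaissant est $b$.

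Pour l'hérédité, supposons $|a|_c\ge 0$ et écrivons $a=\sum_{i\le m}b_i+r_{|a|_c}(a)$ sous forme ordonnée (définition \ref{defi:forme ordonne}) ; par construction chaque $b_i$ est de degré $>|a|_c$, et $r_{|a|_c}(a)$ est de degré $\le |a|_c<|a|$. Le théorème \ref{theo:decomposition explicite} donne $a=\beta_0*_{|a|_c}\cdots*_{|a|_c}\beta_m$, où chaque $\beta_k$ est une chaîne cohérente de la forme $\beta_k=b_k+z_k+r_{|a|_c}(a)$, la chaîne $z_k$ étant un $\vee$ d'expressions obtenues à partir d'applications sources et buts au niveau $|a|_c$, donc de degré $\le |a|_c$. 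Ainsi le support de $\beta_k$ est formé de $b_k$ — l'unique élément de degré $|b_k|>|a|_c$ — et d'éléments de degré $\le |a|_c$ ; le deuxième plus grand degré de $\beta_k$ est donc $\le |a|_c$, d'où $|\beta_k|_c\le |a|_c-1<|a|_c$, tandis que $|\beta_k|=|b_k|$ et $(\beta_k)_{|b_k|}=b_k$ (et si $\beta_k$ est réduite à $b_k$, alors $|\beta_k|_c=-1<|a|_c$).

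On applique alors l'hypothèse de récurrence à chaque $\beta_k$ : c'est une composition d'éléments de la base dont la seule $|b_k|$-cellule est $b_k$. En remplaçant ces décompositions dans $a=\beta_0*_{|a|_c}\cdots*_{|a|_c}\beta_m$ et en utilisant que $\mu K$ est une $\omega$-catégorie (théorème \ref{theo:muK est une omega cat}), une composition de compositions restant une composition, on obtient une décomposition de $a$ en éléments de la base. Enfin, comme $|a|\ge |a|_c+1$, toute cellule de la décomposition de $\beta_k$ est de degré $\le |b_k|\le |a|$, et il y en a une de degré exactement $|a|$ seulement si $|b_k|=|a|$, auquel cas c'est $b_k$ ; les $|a|$-cellules de la décomposition de $a$ sont donc les $b_i$ de degré $|a|$. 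Comme $r_{|a|_c}(a)$ est de degré $<|a|$ et que les coefficients d'une chaîne cohérente valent tous $1$ (proposition \ref{prop:pas de double}), ces $b_i$ sont exactement les éléments de $(a)_{|a|}$, ce qui conclut.

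L'essentiel de l'argument n'est pas dans sa structure — le corollaire est direct une fois le théorème \ref{theo:decomposition explicite} acquis — mais dans le contrôle des degrés : c'est l'inégalité $|\beta_k|_c<|a|_c$ qui rend la récurrence bien fondée, et l'égalité $(\beta_k)_{|b_k|}=b_k$ qui permet de suivre les cellules de dimension maximale.
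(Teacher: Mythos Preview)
Your proof is correct and follows essentially the same approach as the paper: induction on the degree of composition, using the proposition \ref{prop:coherende de degre moin un est reduit a un singloton} for the base case and the decomposition of théorème \ref{theo:decomposition explicite} for the inductive step. You are in fact more thorough than the paper in two places: you justify explicitly why $|\beta_k|_c<|a|_c$ (the paper merely asserts it), and you track the $|a|$-cells through the recursion to prove the second clause of the corollary, which the paper leaves implicit.
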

\begin{proof}
On va montrer le résultat par récurrence sur le degré de composition de $a$. L’initialisation correspond au cas où le degré de composition est égal à $-1$, et la proposition \ref{prop:coherende de degre moin un est reduit a un singloton} implique directement le résultat.

Supposons donc le résultat vrai pour $n-1$ et donnons nous une chaîne $a := \sum_{i\leq m} b_i +  r_{|a|_c}(a)$ de degré de composition $n$. On définit alors

 $$\beta_k  := b_k +~(d_{|a|_c}^-(a_{<k})\diagdown d_{|a|_c}^+ b_k) \vee (d_{|a|_c}^+(a_{\geq k+1}) \diagdown d_{|a|_c}^- b_k)~+ r_{|a|_c}(a)$$
 
Selon le théorème \ref{theo:decomposition explicite}, on a l'égalité 
$$a = \beta_0 *_{|a|_c} \beta_1 *_{|a|_c}... *_{|a|_c} \beta_m.$$
Les chaînes $\beta_k$ sont de degré de composition strictement inférieur à $n$ et on peut donc leur appliquer l'hypothèse de récurrence. On en déduit que $a$ aussi peut s'exprimer comme une composition d'éléments de la base. 
\end{proof}

\begin{exe}
\label{exe : delta 4}
On note $C_\bullet(\Delta[4])$ le complexe de chaînes réduit associé à $\Delta[4]$:
 $$C_n(\Delta[4]) :=\mathbb{Z} \{\sigma_v : v\in \Delta[4]_n \mbox{~et~}v \mbox{ non dégénéré }\}$$
$$\begin{array}{rccl}
\partial_{n+1}  :& C_{n+1}(\Delta[4])&\to &C_n(\Delta[4]) \\
&v&\mapsto& \sum (-1)^i d_i v
\end{array}
$$
où par convention dans cette somme $d_iv = 0$ si $d_iv$ est un simplexe
dégénéré.

On définit aussi, pour tout $n$, le monoïde additif $C_n^*(\Delta[4])$ engendré par les $n$-simplexes non dégénérés, et une augmentation $e:C_0(\Delta[4])\to \mathbb{Z}$ qui envoie les $0$-simplexes sur $1$. Le triplet $(C_n(\Delta[4]),C_n^*(\Delta[4]),e)$ est alors un complexe dirigé augmenté. De plus, ce complexe admet une base, donnée par l'ensemble des simplexes non dégénérés de $\Delta[4]$.

Dans \cite[Exemple 3.8]{steiner}, Steiner montre que cette base est unitaire et sans boucles.

Servons nous du théorème précédent pour décomposer la source et le but de la $4$-cellule ${\sigma_{0 1 2 3 4} \in \mu(C_\bullet(\Delta[4]))_4)}$ en composition d'éléments de la base.
$$\sigma_{0 1 2 3 4} ~:~\sigma_{0 2 3 4} + \sigma_{0 1 2 4}\to \sigma_{1 2 3 4}+\sigma_{0 1 3 4}+\sigma_{0 1 2 3}$$
Calculons les sources et buts des $3$-cellules correspondant aux $3$-simplexes apparaissant dans la formule précédente.
$$
\begin{array}{rcrcrcl}
{\sigma_{0 2 3 4}}& :& \sigma_{0 3 4}+\sigma_{0 2 3}\to \sigma_{2 3 4}+\sigma_{0 2 4}&:&\sigma_{0 4} \to \sigma_{0 2}+\sigma_{2 3}+\sigma_{3 4}&:& \sigma_0\to \sigma_4\\
{\sigma_{0 1 2 4}}& :& \sigma_{0 2 4}+\sigma_{0 1 2}\to \sigma_{1 2 4}+\sigma_{0 1 4}&:& \sigma_{0 4} \to \sigma_{0 1}+\sigma_{1 2}+\sigma_{2 4}&:& \sigma_0\to \sigma_4\\
\newline\\
{\sigma_{1 2 3 4}}& :& \sigma_{1 3 4}+\sigma_{1 2 3}\to \sigma_{2 3 4}+\sigma_{1 2 4}&:&\sigma_{1 4} \to \sigma_{1 2}+\sigma_{2 3}+\sigma_{3 4}&:& \sigma_1\to \sigma_4\\
{\sigma_{0 1 3 4}}& :& \sigma_{0 3 4}+\sigma_{0 1 3}\to \sigma_{1 3 4}+\sigma_{0 1 4}&:&\sigma_{0 4} \to \sigma_{0 1}+\sigma_{1 3}+\sigma_{3 4}&:& \sigma_0\to \sigma_4\\
{\sigma_{0 1 2 3}}& :&\sigma_{0 2 3}+\sigma_{0 1 2}\to \sigma_{1 2 3}+\sigma_{0 1 3}&:& \sigma_{0 3} \to \sigma_{0 1}+\sigma_{1 2}+\sigma_{2 3} &:& \sigma_0\to \sigma_3 \\
\end{array}
$$
et donc 
$$\sigma_{0 1 2 4} \odot_2 \sigma_{0 2 3 4}  ~~~~~~\sigma_{1 2 3 4}\odot_2 \sigma_{0 1 3 4} \odot_2 \sigma_{0 1 2 3}.
$$
En appliquant le théorème \ref{theo:decomposition explicite}, on obtient : 
$$
\begin{array}{rcl}
\sigma_{0 2 3 4} + \sigma_{0 1 2 4} &=& (\sigma_{0 1 2 4}+  \sigma_{2 3 4}) *_2 (\sigma_{0 2 3 4} + \sigma_{0 1 2})\\
\sigma_{1 2 3 4}+\sigma_{0 1 3 4}+\sigma_{0 1 2 3} &=& (\sigma_{1 2 3 4}+\sigma_{0 1 4})*_2 (\sigma_{0 1 3 4} + \sigma_{1 2 3})*_2 (\sigma_{0 1 2 3}+\sigma_{0 3 4}).
\end{array}
$$
Calculons les sources et buts des $2$-cellules correspondant aux $2$-simplexes apparaissant dans les formules précédentes.
$$
\begin{array}{rcrrl}
{\sigma_{234}}& :& \sigma_{2 4}\to \sigma_{2 3}+\sigma_{3 4}&:& \sigma_2 \to\sigma_4\\
{\sigma_{0 1 2}}& :& \sigma_{0 2}\to \sigma_{0 1}+\sigma_{1 2}&:&\sigma_0 \to\sigma_2\\
{\sigma_{0 1 4}}& :& \sigma_{0 4}\to \sigma_{0 1}+\sigma_{1 4}&:&\sigma_0 \to\sigma_4\\
{\sigma_{1 2 3}}& :& \sigma_{1 3}\to \sigma_{1 2}+\sigma_{2 3}&:&\sigma_1 \to\sigma_3\\
{\sigma_{0 3 4}}& :& \sigma_{0 4}\to \sigma_{0 3}+\sigma_{3 4}&:&\sigma_0 \to\sigma_4\\
\end{array}
$$
et donc 
$$
\begin{array}{rcrrrrr}
\sigma_{2 3 4} &\odot_1 & \sigma_{0 1 2 4} &~~~& \sigma_{0 1 2} &\odot_1 & \sigma_{0 2 3 4}\\
  \sigma_{1 2 3 4}&\odot_1 &\sigma_{0 1 4}&~~~&
 \sigma_{1 2 3} &\odot_1 &\sigma_{0 1 3 4}\\
 \sigma_{0 1 2 3}&\odot_1 &\sigma_{0 3 4}.
\end{array}
$$
En appliquant le théorème \ref{theo:decomposition explicite}, on obtient : 
$$
\begin{array}{rcl}
\sigma_{0 1 2 4}+  \sigma_{2 3 4} &=&( \sigma_{2 3 4} + \sigma_{0 1}+ \sigma_{1 2})*_1\sigma_{0 1 2 4} \\
\sigma_{0 2 3 4} + \sigma_{0 1 2} &=& (\sigma_{0 1 2}+ \sigma_{2 3}+  \sigma_{3 4}) *_1\sigma_{0 2 3 4} \\
\sigma_{1 2 3 4}+\sigma_{0 1 4} &=& (\sigma_{1 2 3 4}+\sigma_{01}) *_1 \sigma_{0 1 4}\\
\sigma_{0 1 3 4} + \sigma_{1 2 3} &=&  ( \sigma_{1 2 3} + \sigma_{0 1}+\sigma_{34}) *_1\sigma_{0 1 3 4} \\
\sigma_{0 1 2 3}+\sigma_{0 3 4} &=& (\sigma_{0 1 2 3}+ \sigma_{3 4}) *_1\sigma_{0 3 4}.
\end{array}
$$
Calculons les sources et buts des $1$-cellules correspondant aux $1$-simplexes apparaissant dans les formules précédentes.
$$
\begin{array}{rcl}
\sigma_{0 1} &:& \sigma_0\to\sigma_1\\
\sigma_{12} &:& \sigma_1\to\sigma_2\\
\sigma_{23} &:& \sigma_2\to\sigma_3\\
\sigma_{34} &:& \sigma_3\to\sigma_4\\
\end{array}
$$
et donc 
$$
\begin{array}{rcl}
\sigma_{2 3 4} \odot_0 \sigma_{1 2} \odot_0 \sigma_{0 1}&~~~&
\sigma_{3 4}  \odot_0 \sigma_{2 3}\odot_0  \sigma_{0 1 2}\\
\sigma_{1 2 3 4}\odot_0 \sigma_{01}&~~~&
\sigma_{34} \odot_0\sigma_{1 2 3} \odot_0 \sigma_{0 1}\\
\sigma_{3 4}\odot_0 \sigma_{0 1 2 3}.\\
\end{array}
$$
En appliquant le théorème \ref{theo:decomposition explicite}, on obtient : 
$$
\begin{array}{rcl}
 \sigma_{2 3 4} + \sigma_{0 1}+ \sigma_{1 2} &=&\sigma_{2 3 4} *_0 \sigma_{1 2} *_0 \sigma_{0 1}\\
 \sigma_{0 1 2}+ \sigma_{2 3}+  \sigma_{3 4}&=&\sigma_{3 4}  *_0 \sigma_{2 3}*_0  \sigma_{0 1 2}\\
 \sigma_{1 2 3 4}+\sigma_{01} &=& \sigma_{1 2 3 4}*_0 \sigma_{01}\\
 \sigma_{1 2 3} + \sigma_{0 1}+\sigma_{34}&=&\sigma_{34} *_0\sigma_{1 2 3} *_0 \sigma_{0 1}\\
 \sigma_{0 1 2 3}+ \sigma_{3 4}&=&\sigma_{3 4}*_0 \sigma_{0 1 2 3}.
\end{array}
$$
En regroupant tout on obtient : 
$$
\begin{array}{rcl}
\sigma_{0 2 3 4} + \sigma_{0 1 2 4} &=& ((\sigma_{2 3 4} *_0 \sigma_{1 2} *_0 \sigma_{0 1})*_1\sigma_{0 1 2 4}) *_2 ((\sigma_{3 4}  *_0 \sigma_{2 3}*_0  \sigma_{0 1 2}) *_1\sigma_{0 2 3 4} )\\
\sigma_{1 2 3 4}+\sigma_{0 1 3 4}+\sigma_{0 1 2 3} &=& (( \sigma_{1 2 3 4}*_0\sigma_{01} ) *_1 \sigma_{0 1 4})*_2 ((\sigma_{34} *_0\sigma_{1 2 3} *_0 \sigma_{0 1}) *_1\sigma_{0 1 3 4})\\&&*_2 ((\sigma_{3 4}*_0 \sigma_{0 1 2 3}) *_1\sigma_{0 3 4})).
\end{array}
$$
\end{exe}

\section{Développements sur les $\omega$-catégories et les complexes dirigés augmentés} 
\subsection{Carrés cocartésiens de complexes dirigés augmentés}
\note{
\begin{defi}
Soit $f:M\to N$  un morphisme entre deux complexes dirigés augmentés admettant des bases unitaires et sans boucles $B_M$ et $B_N$.  Le morphisme $f$ est \textit{quasi-libre} si pour tout $n$ et tout $b\in (B_M)_n$,
$$f_n(b)\neq 0 ~\Rightarrow ~ f_n(b)\in (B_N)_n.$$
Si deux morphismes sont quasi-rigides, alors leur composition l'est aussi. 
\end{defi}

L'objectif de cette partie est de démontrer le théorème suivant: 

\begin{theo}
\label{theo:theo dans le cas preque quasi-rigide}
Soit un carré commutatif dans $\CDA$ tel que tous les complexes dirigés augmentés admettent des bases unitaires et sans boucles : 
\begin{equation}
\tag{$c_0$}
\label{eq:c}
\xymatrix{
K\ar[r]^{k^1} \ar[d]_{k^0} &M_1 \ar[d]^{l^1} \\
M_0 \ar[r]_{l^0}&  M}
\end{equation}
et tel que tous les morphismes soient  quasi-libres. On note $B_K,~B_{M_0},~B_{M_0},~B_{M}$ les bases de $K,~M_0,~M_1,~ M$.
Alors, le carré \ref{eq:c} est cocartésien si et seulement si  le carré induit dans les ensembles:
\begin{equation}
\tag{$c_1$}
\label{eq:cn}
\xymatrix{
B_K\cup\{0\}\ar[r]^{k^0} \ar[d]_{k^1} & B_{M_0}\cup\{0\} \ar[d]^{l^0}\\
B_{M_1}\cup\{0\} \ar[r]_{l^1}&  B_M\cup\{0\}}.
\end{equation}
est cocartésien.
\end{theo}

Jusqu’à la fin de cette partie, on fixe un carré commutatif dans $\CDA$ tel que tous les complexes dirigés augmentés admettent des bases unitaires et sans boucles:
$$
\xymatrix{
K\ar[r]^{k^1} \ar[d]_{k^0} &M_1 \ar[d]^{l^1} \\
M_0 \ar[r]_{l^0}&  M.}
$$
et tel que tous les morphismes soient  quasi-libres. On fixe pour la suite un entier $n$.

\begin{defi}
On définit le morphisme $\gamma: K_n\to \mathbb{Z}$ qui envoie tous les éléments de la base sur $1$.
Si $a$ est un élément de $K$, sa \textit{taille}, noté $t(a)$, est l'entier $\gamma((a)_+)+ \gamma((a)_-)$.
\end{defi}

\begin{defi}
On définit $\sim$ comme la plus petite relation transitive, réflexive et symétrique sur $(B_{M_0})_n\cup\{0\}\coprod (B_{M_1})_n\cup\{0\}$ telle que pour $a\in (B_K)_n\cup\{0\}$, $k^0_n(a)\sim k^1_n(a)$.
Pour un entier $k$, on définit la relation $\sim_k$ sur les éléments de  $(M_0)_n\oplus (M_1)_n$:
$$ a\sim_k a':\equiv \mbox{ il existe $c\in K_n$ tel que $t(c)=k$ et $a-a'=k^0_n(c)-k^1_n(c)$}$$
Enfin, on définit la relation transitive, réflexive et symétrique $\sim_\infty$ sur les éléments de  $(M_0)_n\oplus (M_1)_n$:
$$ a\sim_{\infty} a':\equiv \mbox{il existe $c\in K_n$ tel que $a-a'=k^0_n(c)-k^1_n(c)$}$$
Remarquons que  $a\sim_{\infty} a'$ si et seulement si il existe $k\in\mathbb{N}$ tel que $a\sim_k a'$.
\end{defi}

\begin{rem}
\label{rem:sur les pusout dans cda}
Par définition des sommes amalgamées dans les groupes et dans les ensembles, on a des isomorphismes:
$$(B_{M_0})_n\cup\{0\}\coprod_{(B_K)_n\cup \{0\}} (B_{M_1})_n\cup\{0\} \cong ((B_{M_0})_n\cup\{0\}\coprod (B_{M_1})_n\cup\{0\}\big)_{/\sim}$$
et 
$$ M_0\coprod_{K} M_1 \cong (M_0 \oplus M_1)_{/\sim_{\infty}}.$$
\end{rem}

\begin{lem}
\label{lem:sim et sim infini coiincident}
Soient $b,b'\in (B_{M_0})_n\cup\{0\}\coprod (B_{M_1})_n\cup\{0\}$. Alors $b\sim b'$ si et seulement si $b\sim_{\infty} b'$.
\end{lem}
\begin{proof}
Tout d'abord, remarquons que pour tout $b\in (B_K)_n\cup\{0\}$, $k^0_n(b)\sim_{\infty} k^1_n(b)$. De plus, la relation $\sim_{\infty}$ est transitive, réflexive et symétrique, et donc $b\sim b'$ implique $b\sim_{\infty}b'$. 

Pour la contraposée, on va montrer par récurrence sur $k$ que  $b\sim_k b'$ implique $b\sim b'$.  
Donnons nous $b$ et $b'$ telles que $b\sim_0 b'$. Il existe donc $c\in K_n$ de taille $0$, et tel que $b-b'= k^0_n(c)-k^1_n(c)$. Or, la taille de $c$ étant zéro, $c$ est nul, et donc $b = b'$. La relation $\sim$ étant réflexive, on a bien $b\sim b'$.

Supposons maintenant la propriété démontrée au rang $k$. On se donne donc $b$ et $b'$ telles que $b\sim_{k+1} b'$. Par définition, il existe donc $c\in K_n$ de taille $(k+1)$, tel que $b-b'= k^0_n(c)-k^1_n(c)$. Si $b=b'$, alors on a bien $b\sim b'$. Supposons donc que $b\neq b'$. Quitte à échanger $b$ et $b'$, on peut supposer que $b\neq 0$.
L'élément $b$ est donc dans le support de $k^0_n((c)_+)$ ou dans celui de $k^1_n((c)_-)$. Plaçons nous dans le premier cas, l'autre étant analogue. Le morphisme $k^0 $  étant quasi-libre, il existe un élément $d$ de $(B_{M_0})_n$, appartenant au support de $(c)_+$, et tel que $k^0_n(d)=b$. Si on définit $\tilde{c}:= c-d$, la taille de $\tilde{c}$ est $k$, et 
$$k^1_n(d)-b'= k^0_n(\tilde{c})-k^1_n(\tilde{c}).$$
On a alors
$$b=k^0_n(d)\sim k^1_n(d)\sim _k b',$$
et l'hypothèse de récurrence ainsi que la transitivité de $\sim$ implique $b\sim b'$. Cela conclut la preuve.
\end{proof}

\begin{proof}[Démonstration du théorème \ref{theo:theo dans le cas preque quasi-rigide}]
Supposons tout d'abord que le carré \ref{eq:c} est cocartésien. Pour que le carré \ref{eq:cn} soit cocartésien, la remarque \ref{rem:sur les pusout dans cda} indique qu'il suffit que pour tout entier $n$, pour tout $$b,b'\in (B_{M_0})_n\cup\{0\} \coprod_{(B_K)_n\cup\{0\}}(B_{M_1})_n\cup\{0\} ,$$ $b\sim_\infty b'$ si et seulement si $b\sim b'$. C'est exactement ce que stipule le lemme \ref{lem:sim et sim infini coiincident}.

Réciproquement, supposons que le carré  \ref{eq:cn} est cartésien. Notons $l := l^0-l^1 : M_0\oplus M_1\to M$ le morphisme induit par $l^0$ et $l^1$. Pour que le carré \ref{eq:c} soit cocartésien, la remarque \ref{rem:sur les pusout dans cda} indique qu'il suffit de montrer que pour tout entier $n$, pour tout $a\in (M_0\oplus M_1)_n$ tel que $l_n(a)= 0$, on a $a\sim_{\infty}0$. Donnons nous donc un entier $n$ et un élément $a$ vérifiant ces conditions.

On va procéder par récurrence sur $t(a)$. Si $t(a)=0$ alors $a$ est nul. On a alors $a\sim 0$, et le lemme \ref{lem:sim et sim infini coiincident} implique $a\sim_{\infty}0.$ 

Supposons maintenant que $t(a)=k+1$. Quitte à remplacer $a$ par $-a$, on peut supposer que $(a)_+\neq 0$. Soit $b$ un élément de la base appartenant au support de $(a)_+$. Deux cas sont alors à envisager. 

Le premier est celui ou $l(b)=0$. Le lemme \ref{lem:sim et sim infini coiincident} implique alors  $b\sim_{\infty} 0$. De plus, l'élément $\tilde{a}:= a-b$ vérifie $t(\tilde{a}) = k$, et $l_n(\tilde{a})=0$. L'hypothèse de récurrence implique alors que $\tilde{a}\sim_{\infty}0$. On peut alors en déduire que $a = \tilde{a}+b\sim_\infty 0$. 

Le deuxième cas correspond à celui où $l(b)$ est un élément de la base non nul. Cependant $l(a)=0$, et il existe donc un élément de la base $b'$ qui appartient au support de $(a)_-$ tel que $l(b')=l(b)$. Le lemme \ref{lem:sim et sim infini coiincident} implique donc $b'\sim_{\infty}b$.  L'élément $\tilde{a}:= a-b+b'$ vérifie $t(\tilde{a}) = k-1$, et $l_n(\tilde{a})=0$.  L'hypothèse de récurrence implique alors que $\tilde{a}\sim_{\infty}0$, et donc $a = \tilde{a}-b+b'\sim_\infty 0.$ 
\end{proof}

}

\subsection{Quasi-rigidité}
L'objectif de cette partie est de donner des conditions suffisantes pour que $\nu$ préserve les sommes amalgamées. 

\begin{defi}
Soit $f:M\to N$  un morphisme entre deux complexes dirigés augmentés admettant des bases unitaires et sans boucles $B_M$ et $B_N$.  Le morphisme $f$ est \textit{quasi-rigide} si pour tout $n$, et tout $b\in (B_M)_n$,
$$f_n(b)\neq 0 ~\Rightarrow ~ f_n(b)\in B_N\mbox{ et }\nu(f)\langle b \rangle = \langle f_n(b)\rangle.$$
\end{defi}

\note{
\begin{rem}
Si deux morphismes sont quasi-rigides, alors leur composition l'est aussi. 
De plus, on peut remarquer que les morphismes quasi-rigides sont quasi-libres.
\end{rem}

\begin{rem}
Dans \cite[\S 3.2]{ara2020joint}, Ara et Maltsiniotis définissent la notion de morphisme \textit{rigide} qui correspond aux morphismes $f$ tel que pour tout   $b\in (B_M)_n$,
$$ f_n(b)\in B_N\mbox{ et }\nu(f)\langle b \rangle = \langle f_n(b)\rangle.$$
Ainsi, tout morphisme rigide est quasi-rigide, et un morphisme quasi-rigide $f$ est rigide si, pour tout élément $b\in (B_M)_n$, $f_n(b)\neq 0$. Les deux notions coïncident donc pour les monomorphismes.
\end{rem}

\begin{rem}
Un morphisme quasi-libre $f:M\to N$ est quasi-rigide si et seulement si pour tout élément $b\in (B_M)_n$, tel que $f_n(b)\neq 0$, pour tout $\alpha\in\{-,+\}$ et pour tout $k<n$, $f_{k}(d^\alpha_{k}(b))=d^\alpha_k(f_n(b))$.
\end{rem}
}
\begin{prop}
\label{prop:autre caracterisation de la quasi-rigidite}
Soit $f:M\to N$  un morphisme entre deux complexes dirigés augmentés admettant des bases $B_M$ et $B_N$.
Alors les deux conditions suivantes sont équivalentes : 
\begin{enumerate}
\item $f$ est quasi-rigide;
\item pour tout $n$, et tout  $b\in (B_M)_n$, 
$$f_n(b)\neq 0~ \Rightarrow ~f_n(b)\in B_N\mbox{ et }\forall k< n,~ f_k(\langle b \rangle^-_k)\wedge  f_k(\langle b \rangle^+_k)=0.$$
\end{enumerate} 
\end{prop}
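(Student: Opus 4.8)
The plan is as follows. Both conditions are implications with hypothesis $f_n(b)\neq 0$ and both then demand $f_n(b)\in B_N$, so I would at once reduce to the case where, for a fixed $b\in (B_M)_n$, the element $c:=f_n(b)$ lies in $B_N$ — hence is a homogeneous chain of degree $n$ — leaving only the equivalence
$$\nu(f)\langle b\rangle = \langle c\rangle \quad\Longleftrightarrow\quad \forall k<n,\ f_k(\langle b\rangle^-_k)\wedge f_k(\langle b\rangle^+_k)=0$$
to be proved.

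First I would unwind the left-hand side. Since $B_M$ is unitary, $\langle b\rangle$ is a coherent tableau, so by the formula defining $\nu(f)$ on tableaux, $T:=\nu(f)\langle b\rangle$ is the tableau with entries $y^\alpha_k := f_k(\langle b\rangle^\alpha_k)$. Two elementary facts drive the argument. (i) Each $y^\alpha_k$ is positive (as $\langle b\rangle^\alpha_k\in M^*_k$ and $f$ preserves positive submonoids), and since $f$ is a chain map, $\partial_k(y^\alpha_{k+1}) = f_k(\partial_k\langle b\rangle^\alpha_{k+1}) = f_k(\langle b\rangle^+_k)-f_k(\langle b\rangle^-_k) = y^+_k - y^-_k$; positivity of $y^+_k$ and $y^-_k$ then forces $\partial_k^+(y^\alpha_{k+1}) = y^+_k\diagdown y^-_k$ and $\partial_k^-(y^\alpha_{k+1}) = y^-_k\diagdown y^+_k$. (ii) For positive chains $u,v$ one has $u\diagdown v = u$ if and only if $u\wedge v = 0$.

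For $(2)\Rightarrow(1)$ I would prove by descending induction on $k\leq n$ that $y^\alpha_k = \langle c\rangle^\alpha_k$; the case $k=n$ is immediate, and in the inductive step, assuming $y^\alpha_{k+1} = \langle c\rangle^\alpha_{k+1}$, Definition \ref{defi:tableau_de_steiner_associe_a_un singloton} together with (i) gives $\langle c\rangle^+_k = \partial_k^+\langle c\rangle^+_{k+1} = \partial_k^+ y^+_{k+1} = y^+_k\diagdown y^-_k$, which by (ii) and the hypothesis $y^+_k\wedge y^-_k=0$ equals $y^+_k$; symmetrically $\langle c\rangle^-_k = y^-_k$, whence $T=\langle c\rangle$. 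For $(1)\Rightarrow(2)$, the equality $T=\langle c\rangle$ gives $f_k(\langle b\rangle^\alpha_k)=y^\alpha_k=\langle c\rangle^\alpha_k$ for all $k$ and $\alpha$, so it suffices to show $\langle c\rangle^-_k\wedge\langle c\rangle^+_k=0$ for $k<n$; this follows from Lemma \ref{lem:le reste est le wedge de d- et d+} applied to the homogeneous chain $c$: as $|c|=n>k$ we have $r_k(c)=0$, while that lemma and Remark \ref{remarquer dans le cas coherent} (which identifies $d^\alpha_k(c)$ with $\langle c\rangle^\alpha_k$) give $r_k(c)=d^-_k(c)\wedge d^+_k(c)=\langle c\rangle^-_k\wedge\langle c\rangle^+_k$.

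The main obstacle is the non-linearity of the operators $\partial_k^\pm$: one does not have $f_k(\partial_k^\alpha x)=\partial_k^\alpha f_{k+1}(x)$ in general, and the whole argument rests on the fact that $f$ preserves positive cones, which nonetheless lets one relate the two sides through the $\diagdown$ operation and pin down precisely when $\diagdown$ acts trivially. Once this is in place, the rest is bookkeeping with the source/target operators and the already-established structure of chains.
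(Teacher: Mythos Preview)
Your argument is correct and follows essentially the same route as the paper's own proof: both directions hinge on the identity $\partial_k\big(f_{k+1}(\langle b\rangle^\alpha_{k+1})\big)=f_k(\langle b\rangle^+_k)-f_k(\langle b\rangle^-_k)$ together with the observation that two positive elements with $u\wedge v=0$ are exactly the positive and negative parts of $u-v$. Your write-up is slightly more explicit — you phrase things through the $\diagdown$ operator and, for $(1)\Rightarrow(2)$, you justify $\langle c\rangle^-_k\wedge\langle c\rangle^+_k=0$ via Lemma~\ref{lem:le reste est le wedge de d- et d+} and Remark~\ref{remarquer dans le cas coherent}, whereas the paper simply asserts this identity — but the underlying mechanism is identical.
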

\begin{proof}
Supposons tout d'abord que $f$ est quasi-rigide. On a directement $f_n(b)\in B_N$ et pour tout $k<n$ et $b\in (B_M)_n$ tels que $f_n(b)\neq 0$:
$$ f_k(\langle b \rangle^-_k)\wedge  f_k(\langle b \rangle^+_k) =  \langle f_n(b) \rangle^-_k \wedge  \langle f_n(b) \rangle^+_k = 0.$$

Supposons maintenant que $f$ vérifie la deuxième condition. 
On a $f_n(b)\in B_N$ et pour tout $k<n$ et $b\in (B_M)_n$, tels  que $f_n(b)\neq 0$,
$$f_k(\langle b \rangle^+_k)-  f_k(\langle b \rangle^-_k) =\partial_{k} f_{k+1}(\langle b \rangle^+_{k+1})$$
et comme $f_k(\langle b \rangle^-_k)\wedge  f_k(\langle b \rangle^+_k)=0$,
$$
\begin{array}{rclcl}
f_k(\langle b \rangle^+_k) &=& (\partial_{k} f_{k+1}(\langle b \rangle^+_{k+1}))_+&=& \partial^+_{k} f_{k+1}(\langle b \rangle^+_{k+1})\\
f_k(\langle b \rangle^-_k) &=&(\partial_{k} f_{k+1}(\langle b \rangle^+_{k+1}))_-&=& \partial^-_{k} f_{k+1}(\langle b \rangle^+_{k+1}).
\end{array}
$$
Par une récurrence simple, on en déduit que $\nu(f)(\langle b\rangle) = \langle f_n(b) \rangle$.			
\end{proof}

\begin{rem}
\label{rem:injection sur les bases implique quasi-ridige}
La proposition précédente implique qu'un morphisme $f:M\to N$ qui  induit une injection sur les bases  est quasi-rigide.
\end{rem}

Le reste de cette partie est dédié à la démonstration du théorème suivant:

\begin{theo}
\label{theo:theo dans le cas fort quasi-rigide}
Soit une somme amalgamée dans $\CDA$ telle que tous les complexes dirigés augmentés admettent des bases unitaires et sans boucles : 

$$
\xymatrix{
K\ar[r]^{k^1} \ar[d]_{k^0} &M_1 \ar[d]^{l^1} \\
M_0 \ar[r]_{l^0}& \pushoutcorner M}
$$
et telle que tous les morphismes soient  quasi-rigides.
Alors le carré induit dans $\infcat $:
$$
\xymatrix{
\nu K\ar[r]^{\nu k^1} \ar[d]_{\nu k^0} &\nu M_1 \ar[d]^{\nu l^1} \\
\nu M_0 \ar[r]_{\nu l^0}&\nu  M}
$$
est une somme amalgamée.
\end{theo}
Jusqu’à la fin de cette partie, on fixe une somme amalgamée dans $\CDA$ telle que tous les complexes dirigés augmentés admettent des bases unitaires et sans boucles,  et vérifiant les conditions du théorème précédent.

$$
\xymatrix{
K\ar[r]^{k^1} \ar[d]_{k^0} &M_1 \ar[d]^{l^1} \\
M_0 \ar[r]_{l^0}& \pushoutcorner M.}
$$

On note $B_K,~B_{M_0},~B_{M_0},~B_{M}$ les bases de $K,~M_0,~M_1,~ M$. La quasi-rigidité des morphismes implique l'existence, pour tout $n$, d'un carré commutatif dans la catégorie des ensembles, qui est cartésien selon le théorème  \ref{theo:theo dans le cas preque quasi-rigide}: 
$$\xymatrix{
(B_K)_n\cup\{0\}\ar[r]^{k_n^0} \ar[d]_{k_n^1} & (B_{M_0})_n\cup\{0\} \ar[d]^{l_n^0}\\
(B_{M_1})_n\cup\{0\} \ar[r]_{l_n^1}&  (B_M)_n\cup\{0\}}.$$
Les morphismes induits $l_n:(B_{M_0})_n\cup\{0\}\coprod (B_{M_1})_n\cup\{0\}\to  (B_M)_n\cup\{0\}$ sont des surjections et admettent donc des sections $s_n$.

On note $N$ la somme amalgamée du diagramme suivant : 
$$
\xymatrix{
\nu K\ar[r]^{\nu k^1} \ar[d]_{\nu k^0} &\nu M_1 \ar[d]^{j_1} \\
\nu M_0 \ar[r]_{ j_0}& \pushoutcorner N}.
$$
et on a donc par la propriété universelle de la somme amalgamée, un morphisme $\phi: N\to \nu M$.
\note{
Rappelons que
si $B$ est une base unitaire et sans boucles pour un complexe dirigé augmenté $L$, on note $\langle B\rangle$ l'ensemble 
$\{\langle b \rangle, b\in B\}.$
}

 On définit l'application
$$\begin{array}{rclcl}
j:&\langle B_{M_0}\rangle \coprod  \langle B_{M_1}\rangle&\to & N\\
&b&\mapsto & j_0(\langle b\rangle)&\mbox{ si $b\in B_{M_0}$} \\
&b&\mapsto & j_1(\langle b\rangle)&\mbox{ si $b\in B_{M_1}$}.\\
\end{array}$$

Pour tout $n$, on définit $$E_n :=\{j(\langle s_n(b)\rangle), ~b\in (B_M)_n\}$$ et on pose alors
 $E_{\leq n}:= \cup_{k\leq n} E_n$ et $E = \cup_{ n\in \mathbb{N}} E_n$. On note $\tilde{E}_n$ l'ensemble des cellules de $N$  générées par composition par  $E_{\leq n}$. \note{
Remarquons que la restriction à $E_n$ du  morphisme $\phi$ induit une bijection
$E_n\cong \langle B\rangle.$
}

\begin{defi}
Pour un complexe dirigé augmenté $L$, on note $\tau_n(L)$ le complexe dirigé augmenté défini par: 
$$
\begin{array}{rcll}
(\tau_n(L))_k  &:= &L_k &\mbox{ si $k\leq n$}\\
 &:= &0 &\mbox{ si $k> n$}\\
\end{array}
$$
Cette assignation s’étend  en un foncteur :
$$
\begin{array}{rcll}
\tau_n : & \CDA &\to &\CDA\\
& L&\mapsto& \tau_n(L).\\
\end{array}
$$
Si $B$ est une base unitaire et sans boucles pour $L$, alors $B_{\leq n}:=\cup_{k\leq n} B_k$ est une base unitaire et sans boucles pour $\tau_n(L)$.

Pour une $\omega$-catégorie $C$, on note $\tau_n(C)$ la $\omega$-catégorie vérifiant pour $k\leq n$,
$$(\tau_n(C))_k : =C_k$$
et dont toutes les $k$-cellules pour $k>n$ sont des unités.
Cette assignation s'étend  en un foncteur :
$$
\begin{array}{rcll}
\tau_n : & \infcat &\to &\infcat\\
& C&\mapsto& \tau_n(C).\\
\end{array}
$$
Si $E$ est une base atomique et sans boucles pour $C$, alors $E_{\leq n}:=\cup_{k\leq n} E_k$ est une base atomique et sans boucles pour $\tau_n(C)$.
On a enfin :
$$
\begin{array}{rcl}
\lambda \circ \tau_n &= &\tau_n \circ \lambda\\
\nu \circ \tau_n &= &\tau_n \circ \nu.
\end{array}
$$
\end{defi}

\note{\begin{rem}
Un morphisme $f:K\to L$ entre deux complexes dirigés augmentés (resp. un morphisme $f:C\to D$ entre deux $\omega$-catégories) est un isomorphisme si et seulement si $\tau_n(f)$ est un isomorphisme pour tout $n$.
\end{rem}
}

\begin{prop}
\label{prop:equivalence entre p q et r}
Soit $n$ un entier. Les assertions suivantes sont équivalentes:
\begin{enumerate}
\item Pour tout  $b\in (B_{M_0})_{\leq n}\coprod (B_{M_1})_{\leq n} $, $j(\langle b\rangle)$  est dans $\tilde{E}_n$;
\item La $\omega$-catégorie $\tau_n(N)$ est générée par composition par $E_{\leq n}$;
\item  L'ensemble $E_{\leq n}$ est une base atomique et sans boucles pour $\tau_n(N)$;
\item Le morphisme $\tau_n(\phi) : \tau_n(N) \to \tau_n(\nu M)$  est un isomorphisme.
\end{enumerate}
\end{prop}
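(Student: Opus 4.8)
La stratégie est de boucler les implications : les deux implications $(3)\Rightarrow(2)$ (une base engendre par composition) et $(2)\Rightarrow(1)$ (pour $|b|\leq n$, la cellule $j(\langle b\rangle)$ est une $\leq n$-cellule de $\tau_n(M)$, donc appartient à $\tilde E_n$ dès que $(2)$ vaut) sont immédiates, et il restera à prouver $(1)\Rightarrow(2)$, $(2)\Rightarrow(3)$, $(3)\Rightarrow(4)$ et $(4)\Rightarrow(3)$. Pour $(1)\Rightarrow(2)$, on observe d'abord que $M$ est engendrée par composition par $j_0(\nu M_0)\cup j_1(\nu M_1)$ : la sous-$\omega$-catégorie engendrée par cette partie reçoit les deux morphismes $j_0$ et $j_1$, et la flèche $M\to M$ que fournit alors la propriété universelle de la somme amalgamée est l'identité par unicité. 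D'après le corollaire \ref{cor:decomposition explicite}, chaque $\nu M_i$ est engendrée par composition par $\langle B_{M_i}\rangle$ ; comme $j_i$ est un morphisme, $M$ est donc engendrée par composition par $j(\langle B_{M_0}\rangle\sqcup\langle B_{M_1}\rangle)$. Enfin, dans la décomposition ainsi obtenue d'une cellule de dimension $m\leq n$ n'interviennent (via les $*_k$ et les unités itérées) que des cellules $j(\langle b\rangle)$ avec $|b|\leq m\leq n$, toutes dans $\tilde E_n$ par $(1)$ ; comme $\tilde E_n$ est stable par composition et par unités itérées, on conclut $c\in\tilde E_n$, d'où $(2)$.

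On dispose d'un isomorphisme auxiliaire, utile pour les implications restantes. Comme $\lambda$ préserve les colimites et commute à $\tau_n$, on a $\lambda(\tau_n M)=\tau_n(\lambda M)$ et $\lambda M=\lambda\nu M_0\sqcup_{\lambda\nu K}\lambda\nu M_1\cong M_0\sqcup_K M_1=N$ via les co-unités, lesquelles sont des isomorphismes sur $\CDAB$ (théorème \ref{theo}), puisque $K$, $M_0$, $M_1$ y appartiennent. En utilisant la naturalité de $\pi$ et la propriété universelle de $M$, on vérifie que cet isomorphisme est $\pi_N\circ\lambda(\phi)$ ; par conséquent $\lambda(\phi)$, puis $\lambda(\tau_n\phi)=\tau_n(\lambda\phi)$, sont des isomorphismes de complexes dirigés augmentés. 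De plus, pour $b\in(B_N)_k$ avec $k\leq n$, la quasi-rigidité des $l^i$ et l'égalité $l_k(s_k(b))=b$ donnent $\tau_n(\phi)\bigl(j(\langle s_k(b)\rangle)\bigr)=\langle b\rangle$ ; comme $b\mapsto\langle b\rangle$ est injective, $\tau_n(\phi)$ se restreint en une bijection de $E_{\leq n}$ sur la base atomique et sans boucles $\langle B_N\rangle_{\leq n}$ de $\nu(\tau_n N)$, et $\lambda(\tau_n\phi)$ en une bijection de $[E_{\leq n}]$ sur la base $(B_N)_{\leq n}$ de $\tau_n N\cong\lambda\nu(\tau_n N)$.

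Pour $(2)\Rightarrow(3)$, il faut montrer que $E_{\leq n}$ est une base atomique et sans boucles de $\tau_n(M)$. L'isomorphisme $\lambda(\tau_n\phi)$ transporte les opérations $\wedge$, $\vee$ et la relation $\odot$ ; envoyant $[E_{\leq n}]$ bijectivement sur la base sans boucles $(B_N)_{\leq n}$, il montre que $[E_{\leq n}]$ est une base sans boucles de $\lambda(\tau_n M)$, donc que $E_{\leq n}$ est une base sans boucles de $\tau_n(M)$. Pour l'atomicité, soient $e\in E_{\leq n}$ associé à $b$, $k<|e|$ et $\alpha\in\{-,+\}$ : comme $\tau_n(\phi)(d_k^\alpha e)=d_k^\alpha\langle b\rangle$, l'isomorphisme envoie $[d_k^\alpha e]_k$ sur $[d_k^\alpha\langle b\rangle]_k$, lui-même envoyé par $\pi$ sur $\langle b\rangle_k^\alpha$ ; l'atomicité de la base $\langle B_N\rangle$ de $\nu(\tau_n N)$ fournit $\langle b\rangle_k^+\wedge\langle b\rangle_k^-=0$, d'où $[d_k^+e]_k\wedge[d_k^-e]_k=0$ en repassant par l'isomorphisme. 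Ainsi $(3)$ est établie. L'implication $(3)\Rightarrow(4)$ en découle : sous $(3)$, $\tau_n(M)$ et $\nu(\tau_n N)$ sont dans $\infcatB$, et $\tau_n(\phi)$ correspond, via l'équivalence de Steiner, à $\lambda(\tau_n\phi)$, qui est un isomorphisme ; donc $\tau_n(\phi)$ l'est aussi. Réciproquement, pour $(4)\Rightarrow(3)$, un isomorphisme $\tau_n(\phi)$ transporte la base atomique et sans boucles $\langle B_N\rangle_{\leq n}$ de $\nu(\tau_n N)$ en une telle base de $\tau_n(M)$, qui n'est autre que $E_{\leq n}$ d'après la bijection décrite plus haut.

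L'étape délicate est $(2)\Rightarrow(3)$, et plus précisément la vérification de l'atomicité de $E_{\leq n}$ : elle repose entièrement sur le bon comportement de l'isomorphisme auxiliaire $\lambda(\tau_n\phi)$. Le point à surveiller soigneusement est la compatibilité de cet isomorphisme avec les co-unités $\pi$ et avec la propriété universelle de la somme amalgamée, ainsi que le fait qu'il échange correctement sources et buts des cellules, et les bases.
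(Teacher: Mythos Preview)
Your proof is correct and follows essentially the same route as the paper's: the chain of implications $(3)\Rightarrow(2)\Rightarrow(1)$, $(1)\Rightarrow(2)$, $(2)\Rightarrow(3)$, $(3)\Rightarrow(4)$, $(4)\Rightarrow(3)$ is identical, and all the key ingredients (preservation of colimits by $\lambda$, the isomorphism $\lambda M\cong N$, the bijection between $E_{\leq n}$ and $\langle B_N\rangle_{\leq n}$) are the same. The only cosmetic difference is in the atomicity step of $(2)\Rightarrow(3)$: you transport $[d_k^\pm e]_k$ through the isomorphism to $\langle b\rangle_k^\pm$ in $N$ and invoke the atomicity of the basis $\langle B_N\rangle$ of $\nu N$, whereas the paper stays on the $M_0$ (or $M_1$) side and applies Proposition~\ref{prop:autre caracterisation de la quasi-rigidite} to the quasi-rigid morphism $l^0$ directly; since quasi-rigidity of $l^0$ gives $l^0_k(\langle s_m(b)\rangle_k^\alpha)=\langle b\rangle_k^\alpha$, the two computations are literally the same identity read from opposite ends.
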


\begin{proof}
On a directement $(3)\Rightarrow (2) \Rightarrow (1)$. L'implication $(4)\Rightarrow (3)$ provient du fait que le foncteur
$\phi$ induit pour tout $n$ une bijection entre $E_n$ et $\langle (B_M)_n\rangle$.

Supposons $(1)$. Les $\omega$-catégories $\tau_n(\nu M_0)$ et $\tau_n(\nu M_1)$ sont générées par composition par les ensembles $\langle (B_{M_0})_{\leq n}\rangle$ et $\langle (B_{M_1})_{\leq n}\rangle$. Par la construction de la somme amalgamée dans les $\omega$-catégories, $\tau_n(\nu N)$ est donc générée par composition par $j_0(\langle (B_{M_0})_{\leq n}\rangle)$ et $j_1(\langle (B_{M_1})_{\leq n}\rangle)$. L'assertion $(1)$ stipule que ces éléments sont eux-mêmes générés par  $E_{\leq n}$, qui génère donc $\tau_n( N)$.

Supposons maintenant $(2)$.   Le foncteur $\lambda$ est un adjoint à gauche et préserve donc les sommes amalgamées. Le morphisme $\tilde{\phi} :\lambda N\xrightarrow{\sim}M$  est alors une équivalence et induit, pour tout $k$, une bijection entre $[E_k]$ et $B_k$, qui est une base sans boucle.
\note{L'ensemble $E_{\leq n}$ est donc une base sans boucles pour $\tau_n(N)$.} Il reste à montrer qu'elle est  atomique.  Donnons nous donc $m \leq n$ et $j(\langle s_m(b)\rangle)\in E_m$. On peut supposer sans perte de généralité que  $ s_m(b)$ appartient à $(B_{M_0})_m$.
On a alors, pour tout $k<m$,
 
$$\def\arraystretch{1.4}
\begin{array}{rcl}
\tilde{\phi}( [d_k^-(j_0(\langle s_m(b)\rangle)]_k \wedge [d_k^+(j_0(\langle s_m(b)\rangle)]_k) &= &
\tilde{\phi}( [d_k^-(j_0(\langle s_m(b)\rangle)]_k)\wedge \tilde{\phi}( [d_k^+(j_0(\langle s_m(b)\rangle)]_k)\\
&= &
\tilde{\phi}\lambda j_0( [d_k^-(\langle s_m(b)\rangle)]_k)\wedge \tilde{\phi}\lambda j_0( [d_k^+(\langle s_m(b)\rangle)]_k)\\
 &=& l^0_{k}( \langle s_m(b)\rangle_k^-) \wedge l^0_{k} (\langle s_m(b)\rangle_k^+)
\end{array}
$$
Le morphisme $l^0$ étant quasi-rigide,
la proposition  \ref{prop:autre caracterisation de la quasi-rigidite} implique que $ l^0_{k} (\langle s_m(b)\rangle_k^- )\wedge l^0_{k} (\langle s_m(b)\rangle_k^+)=0$. 
On a alors $$\tilde{\phi}( [d_k^-(j_0(\langle s_m(b)\rangle)]_k \wedge [d_k^+(j_0(\langle s_m(b)\rangle)]_k)=0$$ d'où $$ [d_k^-(j_0(\langle s_m(b)\rangle)]_k \wedge [d_k^+(j_0(\langle s_m(b)\rangle)]_k=0.$$
L'ensemble $E_{\leq n}$ est donc une base atomique et sans boucles pour $\tau_n(N)$. 

Supposons enfin $(3)$. 
\note{ 
Le morphisme $\tau_n(\phi)$ est la composition des deux morphismes suivants:
$$\tau_n (N)\to \nu\lambda \tau_n(N)\xrightarrow{\nu \tau_n(\tilde{\phi})}\tau_n(\nu M).$$
Comme $\tau_n (N)$ admet une base atomique et sans boucles, le premier est un isomorphisme. De plus, on a remarqué plus haut que $\tau_n(\tilde{\phi})$ est un isomorphisme, et donc $\nu \tau_n(\tilde{\phi})$ l'est aussi.}
\end{proof}

\begin{lem}
\label{lem:lem thechnique}
Soit $n$ un entier. Soient $b,c$ deux éléments de $(B_{M_0})_{n+1} \coprod (B_{M_1})_{n+1}$.
\begin{enumerate}
\item  Si $l_{n+1}(b)=l_{n+1}(c)\neq 0$, alors  $j(\langle b\rangle)=j(\langle c \rangle)$;
\item Si $l_{n+1}(b) =0$, alors   $j(\langle b\rangle)$ est une unité.
\end{enumerate}
\end{lem}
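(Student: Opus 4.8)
The plan is to translate everything into $N$ via $\phi$, and to exploit the cocartesian square of sets $(B_K)_{n+1}\cup\{0\}\to(B_{M_\epsilon})_{n+1}\cup\{0\}\to(B_N)_{n+1}\cup\{0\}$ provided by quasi-rigidity. First I would record the consequences of the hypothesis: by Proposition \ref{prop:equivalence entre p q et r}, the fact that $\tau_n(\phi)$ is an isomorphism means that $E_{\leq n}$ is an atomic and loop-free base of $\tau_n(M)$, so in particular $\phi$ is bijective on cells of dimension $\leq n$ and $\tau_n(M)$ is generated by composition by $E_{\leq n}$. Next I would record a computation valid for any quasi-rigid morphism $g\colon L\to L'$ of complexes and any $b\in (B_L)_{n+1}$: since $\nu(g)\langle b\rangle$ is the image of the table $\langle b\rangle$ and $g$ is a chain morphism, its top entry is $g_{n+1}(b)$ while its two $n$-th entries $g_n(\partial^-_n b)$ and $g_n(\partial^+_n b)$ differ by $\partial_n(g_{n+1}(b))$; hence $\nu(g)\langle b\rangle=\langle g_{n+1}(b)\rangle$ when $g_{n+1}(b)\neq 0$ (quasi-rigidity), and $\nu(g)\langle b\rangle$ is a unit — namely the unit on the $n$-cell corresponding to the chain $g_n(\partial^-_n b)$ — when $g_{n+1}(b)=0$. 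Applied to $g=l^\epsilon$ this gives $\phi(j\langle b\rangle)=\nu(l^\epsilon)\langle b\rangle$, which is $\langle l_{n+1}(b)\rangle$ if $l_{n+1}(b)\neq0$ and a unit otherwise; applied to $g=k^\epsilon$, together with the defining relation $j_0\circ\nu(k^0)=j_1\circ\nu(k^1)$ of the pushout $M$, it gives: for every elementary relation $k^0_{n+1}(a)\sim k^1_{n+1}(a)$ with $a\in(B_K)_{n+1}\cup\{0\}$ and both values nonzero one has $j\langle k^0_{n+1}(a)\rangle=j\langle k^1_{n+1}(a)\rangle$ in $M$, and with exactly one value nonzero, $j\langle\text{that value}\rangle$ is a unit of $M$.

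With these two inputs, the cocartesian square of sets does the combinatorics. For part $(2)$, $l_{n+1}(b)=0$ means $b\sim 0$ in $(B_N)_{n+1}\cup\{0\}$; choosing a path of elementary relations from $b$ to $0$ and letting $x$ be the last node before the first $0$ encountered, the first kind of equality above propagates $j\langle b\rangle=j\langle x\rangle$ along the nonzero part of the path, while the relation carrying $x$ to $0$ forces $j\langle x\rangle$, hence $j\langle b\rangle$, to be a unit. For the first half of part $(1)$, when $\bar b:=l_{n+1}(b)=l_{n+1}(c)\neq 0$, the elements $b$ and $c$ lie in the fibre of $l_{n+1}$ over $\bar b$, which is disjoint from the fibre over $0$; so there is a path of elementary relations from $b$ to $c$ all of whose nodes are nonzero, and chaining the first kind of equality along it yields $j\langle b\rangle=j\langle c\rangle$.

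It remains to treat part $(1)$ when $\bar b=l_{n+1}(b)=l_{n+1}(c)=0$. By part $(2)$, $j\langle b\rangle$ and $j\langle c\rangle$ are units, say $1^{n+1}_{w_b}$ and $1^{n+1}_{w_c}$ with $w_b=d^-_n j\langle b\rangle$ and $w_c=d^-_n j\langle c\rangle$; since these are cells of $\tau_n(M)$ and $\phi$ is injective there, it suffices to check $\phi(w_b)=\phi(w_c)$. Now $\phi(w_b)=d^-_n\phi(j\langle b\rangle)=\nu(l^\epsilon)(d^-_n\langle b\rangle)$ is the $n$-cell of $\nu N$ corresponding to the chain $l^\epsilon_n(\partial^-_n b)$, and similarly for $c$; so one has to show $l^\epsilon_n(\partial^-_n b)=l^{\epsilon'}_n(\partial^-_n c)$. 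This is obtained by following a path of elementary relations linking $b$ and $c$ and observing that, at each step whose nonzero value $v=k^\epsilon_{n+1}(a)$ is reached, quasi-rigidity of $k^\epsilon$ gives $k^\epsilon_n(\partial^-_n a)=\partial^-_n v$, while the identity $l^0\circ k^0=l^1\circ k^1$ in \CDA shows $l^\epsilon_n(\partial^-_n v)=l^{1-\epsilon}_n(k^{1-\epsilon}_n(\partial^-_n a))$; threading these equalities through the (possibly $0$-crossing) path identifies $l^\epsilon_n(\partial^-_n b)$ with $l^{\epsilon'}_n(\partial^-_n c)$.

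The main obstacle is precisely this last case: unlike the first two situations it is not a bare zigzag argument and genuinely uses the inductive hypothesis through the injectivity of $\phi$ below dimension $n+1$; the careful bookkeeping needed to propagate the source $n$-chains $l^\epsilon_n(\partial^-_n(\cdot))$ along the linking path, using $l^0k^0=l^1k^1$ and the compatibility of the $k^\epsilon$ with $\langle-\rangle$ on nonzero basis elements, is the technical heart of the lemma, everything else being routine once the cocartesian square of sets and the chain-morphism computation are in place.
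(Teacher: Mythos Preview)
For part~(2) and for part~(1) with $l_{n+1}(b)=l_{n+1}(c)\neq 0$, your argument is correct and is exactly the paper's approach made explicit: the paper simply records that $j_0\,\nu(k^0)\langle d\rangle=j_1\,\nu(k^1)\langle d\rangle$ for each $d\in(B_K)_{n+1}$, observes what this says according to whether $k^\epsilon_{n+1}(d)$ vanishes, and leaves the zigzag along the cocartesian square of pointed sets implicit. Neither argument uses the hypothesis that $\tau_n(\phi)$ is an isomorphism in these cases.

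You go beyond the paper in treating part~(1) when $l_{n+1}(b)=l_{n+1}(c)=0$; the paper does not address this case and never needs it (in the proof of Theorem~\ref{theo:theo dans le cas fort quasi-rigide} the lemma is applied only with $c=s_{n+1}l_{n+1}(b)$ and $l_{n+1}(b)\neq 0$). Your extra argument, however, has a real gap. Consider a zigzag $\dots,x_i,0,x_{i+2},\dots$ crossing~$0$, witnessed by $a_i,a_{i+1}\in(B_K)_{n+1}$ with $k^\epsilon_{n+1}(a_i)=x_i$, $k^{1-\epsilon}_{n+1}(a_i)=0=k^{\epsilon'}_{n+1}(a_{i+1})$, $k^{1-\epsilon'}_{n+1}(a_{i+1})=x_{i+2}$: the elements $a_i$ and $a_{i+1}$ share nothing beyond the fact that one of their images vanishes, so there is no relation linking $l^\epsilon_n(\partial^-_n x_i)$ to $l^{1-\epsilon'}_n(\partial^-_n x_{i+2})$, and the threading cannot cross the node~$0$. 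A minor additional point: equality of $\phi(w_b)$ and $\phi(w_c)$ in $\nu N$ means equality of full Steiner tables, not merely of their degree-$n$ entries, so your reduction to $l^\epsilon_n(\partial^-_n b)=l^{\epsilon'}_n(\partial^-_n c)$ would still be insufficient. In summary, your proof covers precisely what the paper proves and uses; the additional zero case is neither established by your argument nor needed downstream.
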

\note{\begin{proof}
On se donne deux  éléments $b$ et $c$, et on suppose que $l_{n+1}(b)=l_{n+1}(c)\neq 0$. Le théorème \ref{theo:theo dans le cas preque quasi-rigide} et la remarque \ref{rem:sur les pusout dans cda} impliquent alors que $b\sim a$. Par définition de la relation $\sim$, il existe une famille finie $\{c_p\}_{p< m}$ d'éléments de $(B_K)_{n+1}$, qui ne sont envoyés sur $0$ ni par $k_0$ et ni par $k_1$, ainsi que $\alpha\in \{0,1\}$  tel que 
$$b = k_{n+1}^{\alpha}(c_0)~~~~~ k_{n+1}^{\alpha_{p+1}}(c_p)=k_{n+1}^{\alpha_{p+1}}(c_{p+1})~~~~~c = k_{n+1}^{\alpha_{m}}(c_{m-1}),$$
où $(\alpha_{p})_{p\leq m}$ est la suite qui alterne entre $0$ et $1$ en commençant par $\alpha$.
On a alors pour tout  $p< m-1$,
$$
j(\langle k_{n+1}^{\alpha_{p}}(c_p)\rangle) =j(\langle k_{n+1}^{\alpha_{p+1}}(c_p)\rangle)
= j(\langle k_{n+1}^{\alpha_{p+1}}(c_{p+1})\rangle)$$
et par suite,
$$j(\langle b\rangle)=  j(\langle k_{n+1}^{\alpha_{m-1}}(c_{m-1})\rangle)=j(\langle k_{n+1}^{\alpha_{m}}(c_{m-1})\rangle) =j(\langle c \rangle).$$

Supposons maintenant que  $l_{n+1}(b) =0$. Pour les mêmes raisons que plus haut, il existe une famille finie $\{c_p\}_{p< m}$ d'éléments de $(B_K)$ qui ne sont envoyés sur $0$ ni par $k_0$ et ni par $k_1$, ainsi qu'un entier $\alpha\in \{0,1\}$  tel que 
$$b = k_{n+1}^{\alpha}(c_0)~~~~~ k_{n+1}^{\alpha_{p+1}}(c_p)=k_{n+1}^{\alpha_{p+1}}(c_{p+1})~~~~~ k_{n+1}^{\alpha_{m}}(c_{m-1})=0,$$
où $(\alpha_{p})_{p\leq m}$ est la suite qui alterne entre $0$ et $1$ en commençant par $\alpha$, et on a encore une fois:
$$
j(\langle k_{n+1}^{\alpha_{p}}(c_p)\rangle) 
= j(\langle k_{n+1}^{\alpha_{p+1}}(c_{p+1})\rangle)$$
Le fait que $k_{n+1}^{\alpha_{m}}(c_{m-1})=0$ implique que $\nu k^{\alpha_{m}}(\langle c_{m-1}\rangle)$ est une identité, et donc 
$$j(\langle b\rangle)=  j(\langle k_{n+1}^{\alpha_{m-1}}(c_{m-1})\rangle)= j(\nu k^{\alpha_{m-1}}\langle c_{m-1}\rangle)=j(\nu k^{\alpha_{m}}\langle c_{m-1}\rangle)$$
l'est aussi.
\end{proof}}

\begin{proof}[Démonstration du théorème \ref{theo:theo dans le cas fort quasi-rigide}]
Montrons par récurrence sur $n$ que le morphisme $\tau_n(\phi)$ est un isomorphisme.

Pour l’initialisation, remarquons qu'on a des isomorphismes $M_0\cong E_0\cong (\nu M)_0$, et donc $\tau_0(\phi)$ est un isomorphisme.
Supposons maintenant la propriété vraie au rang $n$. Selon la proposition \ref{prop:equivalence entre p q et r}, il suffit de montrer que pour tout  $b\in (B_{M_0})_{n+1}\coprod (B_{M_1})_{n+1} $, $j(\langle b\rangle)$  est dans $\tilde{E}_n$.

Supposons tout d'abord que  $l_{n+1}(b)=0$. Le lemme \ref{lem:lem thechnique} indique que $j(\langle b\rangle)$ est une unité et est donc de la forme $1_x^{n+1}$ où $x$ est une $n$-cellule. L'hypothèse de récurrence implique alors que $x\in \tilde{E}_{n}$ et donc $j(\langle b\rangle)\in \tilde{E}_{n}\subset \tilde{E}_{n+1}$.

Supposons maintenant que $l_{n+1}(b)$ n'est pas nul. Comme $l^0$ et $l^1$ sont quasi-rigides, cela implique que $l_{n+1}(b)$ est un élément de $(B_M)_{n+1}$. On a donc, selon le lemme \ref{lem:lem thechnique}, l'égalité $j(\langle b\rangle) = j(\langle s_{n+1}l_{n+1}(b)\rangle)$ et donc $j_i(\langle b\rangle)\in \tilde{E}_{n+1}$, ce qui conclut la preuve.
\end{proof}

\subsection{Équations dans une $\omega$-catégorie}

\label{section:equation dans une omega cat}
Dans cette partie, on va formaliser une notion d'équation dans une $\omega$-catégorie. On se fixe pour la suite une $\omega$-catégorie $C$.
\begin{defi}
\label{def:in et partial in}
On définit $I_n$ comme la $\omega$-catégorie qui, pour tout $k<n$, possède deux  $k$-cellules n’étant pas des unités, notées $e_k^+,e_k^-$, une $n$-cellule qui n'est pas une unité, notée $e_n$, et dont toutes les cellules de dimension strictement supérieure à $n$ sont des unités.
Les applications sources et buts sont définies par : 
$d_l^\alpha e_k^\beta = e_l^\alpha$ pour  $l<k< n$ et $d_l^\alpha e_n = e_l^\alpha$  pour $l<n$ et $\alpha,\beta\in\{-,+\}$.
Il y a alors une bijection canonique entre les cellules de dimension $n$  de $C$ et les morphismes $I_n\to C$.

On définit aussi la $\omega$-catégorie $\partial I_n $ obtenue de $I_n$ en enlevant la cellule $e_n$. Posons alors 
$$i^- : I_{n-1}\simeq I_{n-1} \coprod\emptyset\to I_{n-1}\coprod_{\partial I_{n-1}}I_{n-1} 
~~~~~~~~~i^+ : I_{n-1}\simeq \emptyset\coprod I_{n-1} \to I_{n-1}\coprod_{\partial I_{n-1}}I_{n-1}$$
Il existe alors un unique isomorphisme 
$$
\phi :  I_{n-1}\coprod_{\partial I_{n-1}}I_{n-1} \xrightarrow{\sim} \partial I_n~~~~~
\mbox{tel que $\phi \circ i^\alpha (e_{n-1}) = e_{n-1}^\alpha$ pour $\alpha\in\{-,+\}$}$$
Par abus de langage la compostion $ \phi\circ i^\alpha : I_{n-1}\to \partial I_n$  pour $\alpha\in\{-,+\}$ sera aussi notée $i^\alpha.$

\end{defi}

\begin{defi}
Soient $P$ une $\omega$-catégorie et $a,b$ deux $(n-1)$-cellules parallèles de $P$. Ces données définissent un morphisme 
$$(a,b) : \partial I_n = I_{n-1}\coprod_{\partial I_{n-1}} I_{n-1}\xrightarrow{a\coprod b}  P.$$
La somme amalgamée suivante est notée $P[ _ax_b]$ :
$$
\xymatrix{
 \partial I_n \ar[r]^{(a,b)} \ar[d] & P \ar[d] \\
 I_n \ar[r]_{(x)}  &	 \pushoutcorner P[ _ax_b].
}$$
\end{defi}

\begin{defi}
\label{defi:equation}
Soient $P$ une $\omega$-catégorie, $a,b$ deux $(n-1)$-cellules parallèles de $P$, et $c,d$ deux $n$-cellules parallèles de $P[ _ax_b]$. On définit alors 
$$\textbf{Eq}_P(y : c(x)\to d(x)) := (P[ _ax_b])[ _cy_d].$$
On dit que cette $\omega$-catégorie est \textit{une équation }lorsqu'elle admet une base atomique et sans boucles. 
\end{defi}

\note{
\begin{defi} 
Soit $C$ une catégorie admettant une base atomique et sans boucles. On définit par induction la notion de \textit{décomposition d'une cellule $c\in C$ en éléments de la base}, notée $\dec(c)$:
\begin{enumerate}
\item Si $e$ est un élément de la base, $\{e\}$ est \textit{une décomposition en éléments de la base} de $a$, et pour tout $n>d(e)$, $\{(1^n_e)\}$ est une \textit{décomposition en éléments de la base} de $1^n_e$.
\item Si $a$ et $b$ sont deux cellules $k$-composables, $dec(a)$ et $dec(b)$ sont respectivement des décompositions en éléments de la base de $a$ et de $b$, le $5$-uplet $(b,k,a,\dec(b),\dec(a))$ est une \textit{décomposition en éléments de la base} de $b*_k a$.
\end{enumerate}
Comme $C$ est générée par composition par les éléments de la base, toute cellule admet une décomposition en éléments de la base.
Si $e$ est un élément de la base, on définit par induction \textit{l’occurrence de $e$ dans une décomposition en éléments de la base $dec(c)$}, noté $\lambda_e^{\dec(c)}$:
\begin{enumerate}
\item Si $\dec(c)=\{f\}$ où $f$ est un élément de la base, $\lambda_e^{\dec(c)}:=1$ si $e=f$, et $\lambda_e^{\dec(c)}:=0$ sinon.
\item Si $\dec(c)=\{1^n_f\}$ où $f$ est un élément de la base et $n>d(f)$, $\lambda_e^{\dec(c)}:=0$. 
\item Si $\dec(c)=(a,k,b,\dec(a),\dec(b))$, $\lambda_e^{\dec(c)}:=\lambda_e^{\dec(a)}+ \lambda_e^{\dec(b)}$.
\end{enumerate}
L'élément $e$ \textit{n’apparaît pas dans une décomposition en éléments de la base} $\dec(c)$ de $c$ si $\lambda_e^{\dec(c)}=0$.
\end{defi}

\begin{rem}
\label{rem:sur l'occurence dans une décomposition.}
Soient $n$ un entier et $C$ une $\omega$-catégorie admettant une base atomique et sans boucles. Selon le corollaire \ref{cor:ajdonction entre chaine et lambda avec unite et counite explicite}, on a un isomorphisme
$$\eta:C\to \mu \lambda C.$$
 Soient $e$ un élément de la base et $c$ une cellule telle que 
$d(e)=d(c)=n$. Pour toute décomposition de $c$ en éléments de la base $\dec(c)$,  on peut déduire de la remarque 
\ref{rem:rem sur la compo explicite des chaines} que 
$\lambda^{\dec(c)}_e =\lambda^{\eta(c)}_{[e]_n}.$  
\end{rem}
}

\begin{prop}
Soit $\textbf{Eq}_P(y : c(x)\to d(x))$ une équation et $E$ une base de cette $\omega$-catégorie. Alors \note{$x$ et $y$ sont des éléments de la base}, et 
\begin{enumerate}
\item \note{Pour toute décomposition $\dec(c)$ et $\dec(d)$  de $c$ et $d$ en éléments de la base, $\lambda_x^{\dec(c)}\leq 1$ et $\lambda_x^{\dec(d)}\leq 1$.}
\item La cellule $x$ ne peut pas apparaître à la fois dans les décompositions de $c$ et dans celles de $d$.
\end{enumerate}
\end{prop}
\begin{proof}
\note{Par la définition des sommes amalgamées dans les $\omega$-catégories, $x$ et $y$ ne peuvent ni s'exprimer comme  compositions de deux cellules qui ne sont pas toutes deux des unités, ni comme unités d'autres cellules. Ainsi, $x$ et $y$ sont compris dans tout ensemble qui génère par composition $\textbf{Eq}_P(y : c(x)\to d(x))$.   En particulier, $x$ et $y$ appartiennent à $E$.

Soient $\dec(c)$ et $\dec(d)$  des décompositions de $c$ et $d$ en éléments de la base. 
Selon le corollaire \ref{cor:ajdonction entre chaine et lambda avec unite et counite explicite}, on a un isomorphisme
$$\eta: \textbf{Eq}_P(y : c(x)\to d(x))\to \mu\lambda \textbf{Eq}_P(y : c(x)\to d(x))$$
qui envoie les $n$-cellules $b$ sur  des chaînes cohérentes $\eta(b)$. 
En utilisant la remarque \ref{rem:sur l'occurence dans une décomposition.} et la proposition \ref{prop:pas de double}, on en déduit:
$$\lambda_x^{\dec(c)} = \lambda_{[x]_n}^{\eta(c)}\leq 1~~~~~~~~ \lambda_x^{\dec(d)} = \lambda_{[x]_n}^{\eta(d)}\leq 1.$$

Interressons nous maintenant à la deuxième assertion. La cellule $y$ étant un élément d'une base atomique, on a 
$$[c]_n\wedge [d]_n =[d^+_n y]_n\wedge [d^-_n y]_n=0$$ d'où 
$$\lambda_x^{\dec(c)} =\lambda_{[x]_n}^{\eta(c)}=\lambda_{[x]_n}^{[c]_n}=0 \mbox{ ~~ou~~ }
\lambda_x^{\dec(d)} =\lambda_{[x]_n}^{\eta(d)}=\lambda_{[x]_n}^{[d]_n}=0,$$ 
ce qui conclut la preuve.}
\end{proof}

\begin{defi}
Une\textit{ équation à paramètres dans $C$} est la donnée d'une equation $\textbf{Eq}_P(y : c(x)\to d(x))$ ainsi que d'un diagramme:
$$
\xymatrix{
P \ar[r]^p \ar[d] & C \\
\textbf{Eq}_P(y : c(x)\to d(x)) & .
}$$

Une \textit{pré-solution}  de  l'équation $\textbf{Eq}_P(y : c(x)\to d(x)) $ avec paramètre $p$ dans $C$ est un relèvement $$l:\textbf{Eq}_P(y : c(x)\to d(x)) \to C$$ faisant commuter le triangle induit. Une présolution est \textit{une solution} lorsque $y$ est envoyé sur une cellule faiblement inversible. 

On dit que l’équation $\textbf{Eq}_P(y : c(x)\to d(x)) $\textit{ admet toujours des pré-solutions dans $C$ }lorsqu'il existe une pré-solution pour tout choix de paramètre $p: P\to C$.

On dit que l’équation $\textbf{Eq}_P(y : c(x)\to d(x)) $ \textit{admet toujours des solutions dans $C$} lorsqu'il existe une solution pour tout choix de paramètre $p: P\to C$.
\end{defi}

\begin{rem}
La terminologie provient du fait qu'une équation étant définie par des sommes amalgamées, trouver une solution consiste à exhiber des cellules $x$ et $y$ vérifiant les conditions voulues. Il y a donc une vraie analogie avec la notion habituelle d'équation.
\end{rem}

\begin{defi}
Par abus de langage on note encore
$i^\alpha$ la composition $ I_{n-1}\overset{i^\alpha}{\to}\partial I_n\to I_n$ pour $\alpha\in\{-,+\}$.
On définit $P^+$ comme étant la somme amalgamée: 
$$
\xymatrix{
I_{n-1} \ar[r]^{i^+} \ar[d]_{i^+} & I_n \ar[d]^{i_1} \\
I_{n} \ar[r]_{i_2} & P^+ \pushoutcorner.
}$$

On 	note $e_k^\alpha$ les  cellules de $P^+$ dans l'image de $i_1$ et $f_k^\alpha$ celles dans l'image de $i_2$. On a alors $f_{n-1}^+ = e_{n-1}^+$ et $ f_k^\alpha = e_k^\alpha  \mbox{ pour $k < n-1$ et $\alpha\in\{-,+\}$} $ et on définit:
$$
\begin{array}{rcllll}

~~~~~~~~~~~~~~~~~~~~~~~~~~~~~~~~~~~
~~~~~~~~~~~~~~~~~~a_{n-1} &:=& f_{n-1}^-&&&\\
b_{n-1} &:=& e_{n-1}^-&&&\\
c_{n-1} &:=& f_{n-1}^+ &=& e_{n-1}^+&\\
i_k^\alpha &:=& f_k^\alpha &=& e_k^\alpha& \mbox{pour $k < n-1$ et $\alpha\in\{-,+\}$}   
\end{array}
$$
$$
\mbox{pour $k < n-1$ et $\alpha\in\{-,+\}$} 
$$
On définit l'équation $ \textbf{Eq}( y : f_n \to e_n*_{n-1} x )$ de paramètre $P^+$:

$$
\xymatrix{  	 
\partial I_n \ar[r]^{(f_{n-1}^-,e_{n-1}^-)}	 \ar[d]&P^+ \ar[d] \\	 
I_n \ar[r] & P^+[_{f_{n-1}^-}x_{e_{n-1}^-}] \pushoutcorner\\
}
~~~~~~~~~~
\xymatrix{  	 
\partial I_n \ar[r]^{(f_n, e_n*_{n-1} x )~~~~~~}\ar[d] & P^+[_{f_{n-1}^-}x_{e_{n-1}^-}]\ar[d]\\
I_n \ar[r] & \textbf{Eq}( y : f_n \to e_n*_{n-1} x ). \pushoutcorner
}$$
De façon symétrique, on définit l'équation :
$$P^- \to \textbf{Eq}( y : f_n \to  x *_{n-1} e_n).$$

\end{defi}

\begin{rem}
\label{rem:def explicite de l'equation}
On peut expliciter les cellules de ces $\omega$-catégories qui ne sont pas des unités.
$$\begin{array}{lclc}
\textbf{Eq}( y : f_n \to e_n*_{n-1} x )_{n+1} &=& \{y\}& \\
\textbf{Eq}( y : f_n \to e_n*_{n-1} x )_{n} &=& \{f_n,e_n,x, e_n*_{n-1}x\}&\\
\textbf{Eq}( y : f_n \to e_n*_{n-1} x )_{n-1} &=& \{a_{n-1},b_{n-1},c_{n-1}\}&\\
\textbf{Eq}( y : f_n \to e_n*_{n-1} x )_{l} &=& \{i_l^-,i_l^+\} & \mbox{pour $l<n-1$}
\end{array}$$
Et les applications sources et buts sont définies par:
$$\begin{array}{lclclclc}
d_n^-(y)&=& f_n & ~~~~~~~~~~~&d_n^+(y)&=&  e_n*_{n-1} x &\\
d_{n-1}^-(f_n)&=& a_{n-1} & ~~~~~~~~~~~&d_{n-1}^+(f_n)&=&  c_{n-1}& \\
d_{n-1}^-(e_n)&=& b_{n-1} & ~~~~~~~~~~~&d_{n-1}^+(e_n)&=&  c_{n-1} &\\
d_{n-1}^-(x)&=& a_{n-1} & ~~~~~~~~~~~&d_{n-1}^+(x_n)&=&  b_{n-1} &\\
d_{n-2}^-(a_{n-1})&=& i_{n-2}^- & ~~~~~~~~~~~&d_{n-2}^+(a_{n-1})&=&  i_{n-2}^+ &\\
d_{n-2}^-(b_{n-1})&=& i_{n-2}^- & ~~~~~~~~~~~&d_{n-2}^+(b_{n-1})&=&  i_{n-2}^+ &\\
d_{n-2}^-(c_{n-1})&=& i_{n-2}^- & ~~~~~~~~~~~&d_{n-2}^+(c_{n-1})&=&  i_{n-2}^+& \\
d_{l-1}^-(i_{l}^\alpha)&=& i_{l-1}^- & ~~~~~~~~~~~&d_{l-1}^+(i_{l}^\alpha)&=&  i_{l-1}^+ &  \\
\end{array}
$$
$~~~~~~~~~~~~~~~~~~\mbox{pour $l<n-1$ et $\alpha\in\{-,+\}$}$

\note{On vérifie aisément que $\{y,f_n,e_n,a_{n-1},b_{n-1},c_{n-1}\}\cup \{i_l^-,i_l^+,~l<n-1\}$
est une base sans boucles et atomique pour $\textbf{Eq}( y : f_n \to e_n*_{n-1} x )$.}
\end{rem}

\begin{prop}
\label{prop:caracterisation1}
Une $\omega$-catégorie $C$ est $k$-triviale si et seulement si les équations ${\textbf{Eq}(y : f_n \to e_n*_{n-1} x)} $ admettent toujours des pré-solutions  pour $n>k$. Ces pré-solutions sont alors des solutions. 
\end{prop}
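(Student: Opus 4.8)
On prouvera les deux implications séparément; rappelons d'abord la traduction concrète. Se donner un paramètre $p:P^+\to C$ revient à se donner deux $n$-cellules $E:=p(e_n)$ et $F:=p(f_n)$ de $C$ ayant même $(n-1)$-but $\gamma:=d^+_{n-1}E=d^+_{n-1}F$ et même bord en dimension strictement inférieure à $n-1$, et l'on pose $\beta:=d^-_{n-1}E$, $\alpha:=d^-_{n-1}F$; une pré-solution avec ce paramètre est alors la donnée d'une $n$-cellule $X:\alpha\to\beta$ et d'une $(n+1)$-cellule $Y:F\to E*_{n-1}X$, et c'est une solution lorsque $Y$ est faiblement inversible.

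Pour l'implication directe, supposons $C$ $k$-triviale et $n\geq k$. La $n$-cellule $E$ est alors faiblement inversible; comme $d^+_{n-1}E=d^+_{n-1}F$, la proposition~\ref{prop:propdedivision a droite} (division à gauche par $E$ appliquée à la $n$-cellule $F$) fournit une $n$-cellule $X:\alpha\to\beta$ telle que $E*_{n-1}X\sim F$, d'où, par symétrie de l'$\omega$-équivalence, une $(n+1)$-cellule faiblement inversible $Y:F\to E*_{n-1}X$. Le couple $(X,Y)$ est une solution. Enfin, une pré-solution quelconque envoie $y$ sur une $(n+1)$-cellule, de dimension $n+1>n\geq k$, donc faiblement inversible par $k$-trivialité: toute pré-solution est une solution.

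Pour l'implication réciproque, supposons que pour tout $n\geq k$ l'équation $\textbf{Eq}(y:f_n\to e_n*_{n-1}x)$ admette toujours des pré-solutions. On montrera par co-induction que toute $m$-cellule de $C$ avec $m\geq k$ est faiblement inversible. Soit $c:a\to b$ une telle cellule. En appliquant une pré-solution de l'équation d'indice $m$ au paramètre $e_m\mapsto c$, $f_m\mapsto 1_b$ (ce qui est licite puisque $c$ et l'identité $1_b$ ont même $(m-1)$-but et même bord en dimension inférieure), on obtient une $m$-cellule $r:b\to a$ et une $(m+1)$-cellule $\rho:1_b\to c*_{m-1}r$; de même, en l'appliquant au paramètre $e_m\mapsto r$, $f_m\mapsto 1_a$, on obtient $s:a\to b$ et $\rho':1_a\to r*_{m-1}s$. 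Les cellules $\rho$ et $\rho'$ étant de dimension $m+1\geq k$, l'hypothèse de co-induction les rend faiblement inversibles, de sorte que $c*_{m-1}r\sim 1_b$ et $r*_{m-1}s\sim 1_a$. En utilisant les lois d'unité, l'associativité et la compatibilité de $\sim$ avec les compositions, on en déduit
$$c=c*_{m-1}1_a\sim c*_{m-1}(r*_{m-1}s)=(c*_{m-1}r)*_{m-1}s\sim 1_b*_{m-1}s=s,$$
puis $r*_{m-1}c\sim r*_{m-1}s\sim 1_a$. Ainsi $c*_{m-1}r\sim 1_b$ et $r*_{m-1}c\sim 1_a$, c'est-à-dire que $r$ est un inverse faible de $c$, qui est donc faiblement inversible. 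Ceci achève la co-induction, donc $C$ est $k$-triviale, et la dernière assertion résulte de l'implication directe.

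Le point délicat est l'implication réciproque. L'idée clé est qu'une unique pré-solution, appliquée successivement à $c$ puis à la cellule $r$ qu'elle produit, suffit à fabriquer un inverse faible \emph{bilatéral} de $c$ : l'argument ``monoïdal'' ci-dessus (qui tire $r*_{m-1}c\sim 1_a$ de $c*_{m-1}r\sim 1_b$ et $r*_{m-1}s\sim 1_a$) dispense de recourir à l'équation réfléchie $\textbf{Eq}(y:f_n\to x*_{n-1}e_n)$. Il restera à vérifier avec soin que cet argument est effectivement co-inductif — toutes les $\omega$-équivalences intermédiaires sont témoignées par des cellules de dimension $\geq k$, relevant donc de l'hypothèse de co-induction — et que les paramètres $(c,1_b)$ et $(r,1_a)$ satisfont bien aux contraintes de bord imposées par $P^+$.
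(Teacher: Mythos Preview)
Your direct implication and the claim that pre-solutions are automatically solutions are both correct and match the paper's reasoning (via Proposition~\ref{prop:propdedivision a droite}).

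The reverse implication, however, contains a genuine gap: the co-induction is circular. A valid co-induction here amounts to showing that $S=\{\text{cells of dimension }\geq k\}$ is an invertibility set in the sense of Proposition~\ref{prop:autrecaracterisation des cellules faiblement inversibles}; concretely, for each $c:a\to b$ in $S$ one must exhibit $\tilde c$ together with \emph{explicit} cells $w_1:1_a\to \tilde c*_{m-1}c$ and $w_2:1_b\to c*_{m-1}\tilde c$ lying in $S$. You do produce $\tilde c=r$ and $w_2=\rho$, but you never construct $w_1$. Instead you invoke the co-induction hypothesis to declare $\rho,\rho'$ weakly invertible and then run the monoidal argument using the derived properties of $\sim$. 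The crucial step $(c*_{m-1}r)*_{m-1}s\sim 1_b*_{m-1}s$ requires a cell $c*_{m-1}r\to 1_b$; you only have $\rho:1_b\to c*_{m-1}r$, and reversing it is precisely what the co-induction is supposed to establish. Your closing diagnosis that the intermediate $\omega$-equivalences ``sont t\'emoign\'ees par des cellules de dimension $\geq k$'' misidentifies the obstruction: the issue is not the \emph{dimension} of the witnesses but their \emph{direction}.

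The paper closes this gap by applying the pre-solution hypothesis one more time, at dimension $m+1$. Having produced (in your notation) $r,\rho$ and then $s,\rho'$, it applies a pre-solution to the $(m+1)$-dimensional witness itself, obtaining an explicit backward cell (its $y''$), and then assembles by ordinary composition an explicit $(m+1)$-cell $\tilde y$ from the unit to the ``other'' composite; this $\tilde y$ lies in $S$ and serves as the missing $w_1$. Your two applications of the equation correspond to the paper's first two; the missing third application, one dimension higher, is what manufactures the reversed witness without any appeal to the co-inductive hypothesis. Once that step is added, $S$ is an invertibility set for each of its elements and the argument goes through.
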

\begin{proof}
\note{
Supposons donc que les équations $\textbf{Eq}(y : f_n \to e_n*_{n-1} x )$  admettent toujours des pré-solutions  pour $n> k$ et donnons nous une $n$-cellule $a$ avec $n> k$. On va construire par récurrence une suite d'ensembles $\{E^m\}_{m\in\mathbb{N}}$ telle que  $E:=\cup_{m\in\mathbb{N}}E^m$ soit un ensemble d'inversibilité comprenant $a$.

On définit donc $E^0 :=\{a\}$. Supposons $E^m$ construit. Soit $b$ une $(n+m)$ cellule de $E^m$.
Comme les équations ${\textbf{Eq}(y : f_{n+m}\to e_{n+m}*_{n+m-1} x)} $ admettent des pré-solutions pour tout choix de paramètre  $p : P^+\to C$, il existe des éléments $x,x',y,y',y'',z$  vérifiant :
$$
\begin{array}{crccl}
y &:& 1_{d^-_{n+m-1} b}  &\to & x*_{n+m-1}b \\
y' &: &1_{d^+_{n+m-1} b} &\to   &  x'*_{n+m-1}x \\
z&:&1_{d^-_{n+m} y'} &\to&     y''*_{n+m}y'
\end{array}
$$
On définit enfin la cellule $\tilde{y}$ comme la  $(n+m)$-composition:
$$
\xymatrix{
1_{d^+_{n+m-1} b} \ar@{.>}[rrrrr]^{\tilde{y}} \ar[rd]_{y'} &&&&&b*_{n+m-1} x\\
&x'*_{n+m-1}x \ar[rrr]_{x' *_{n+m-1} y*_{n+m-1} x~~~~~~~~~~~~~} &&& x'*_{n+m-1}x*_{n+m-1} b *_{n+m-1} x \ar[ru]_{~~~~~~~~y''*_{n+m-1}b *_{n+m-1} x} 
}
$$

On pose alors $(E^m)_b := \{x,y,\tilde{y}\}$ et on définit $E^{m+1}$ comme la réunion de $E^m$ et des ensembles $(E^m)_b$ pour toute $(n+m)$-cellule $b\in E^m$.

Enfin, on définit $E:= \cup_{m\in \mathbb{N}} E^m$.
Cet ensemble vérifie bien les conditions voulues. Toutes les cellules de $C$ de dimension strictement supérieure à $k$ sont donc faiblement inversibles, c'est-à-dire que $C$ est $k$-triviale. 

Réciproquement, supposons que $C$ est $k$-triviale. Le point (1) de la proposition \ref{prop:propdedivision a droite} implique directement le résultat. }
\end{proof}

\section{Nerf de Street}

\subsection{Nerf d'une $\omega$-Catégorie}

Dans la partie précédente, on a construit un foncteur $\nu: \CDA \to \infcat$ et  un foncteur $\mu : \CDAB\to \infcat$ tel que $\nu_{|\CDAB}\cong\mu$. On va s'en servir pour construire une adjonction entre la catégorie des ensembles simpliciaux et la catégorie des $\omega$-catégories. 

\note{\begin{notation}
Lorsque $C$ est une $\omega$-catégorie admettant une base atomique et sans boucles, et $e$ un élément de la base, on notera aussi $e$ l'élément $[e]_{d(e)}\in (\lambda C)_{d(e)}.$
\end{notation}}
\begin{defi}
Soit $X$ un ensemble simplicial. On note $C_\bullet(X)$ le complexe de chaînes réduit associé à $X$:
 $$C_n(X) :=\mathbb{Z} \{v\in X_n : v \mbox{ non dégénéré }\}$$
$$\begin{array}{rccl}
\partial_{n+1}  :& C_{n+1}(X)&\to &C_n(X) \\
&v&\mapsto& \sum (-1)^i d_i v
\end{array}
$$
où par convention dans cette somme $d_iv = 0$ si $d_iv$ est un simplexe
dégénéré.

On définit aussi, pour tout $n$, le monoïde additif $C_n^*(X)$ engendré par les $n$-simplexes non dégénérés, et une augmentation $e:C_0(X)\to \mathbb{Z}$ qui envoie les $0$-simplexes sur $1$. Le triplet $(C_n(X),C_n^*(X),e)$ est alors un complexe dirigé augmenté. De plus, ce complexe admet une base, donnée par l'ensemble des simplexes non dégénérés de $X$.

\note{ Dans \cite[Exemple 3.8]{steiner}, Steiner montre que cette base est unitaire et sans boucles.
}
\end{defi}

On définit  alors l'objet cosimplicial suivant dans $\infcat$: 
$$[n]\to \mu C_\bullet (\Delta[n]).$$
 et cela nous permet alors de définir une adjonction
$$|\var| : \xymatrix{ \textbf{Sset}  \ar@/^8pt/[r]& \ar@/^8pt/[l] \infcat} :  \mathcal{N}. $$

\subsection{Résolution d'équations et relèvements}
Dans cette partie, on va montrer comment on peut déduire qu'une $\omega$-catégorie est $0$ ou $1$-triviale grâce aux propriétés de relèvement de son nerf. L’objectif est de montrer le corollaire  \ref{cor : trivialite detecte par le relevement}. On fixe pour la suite une  $\omega$-catégorie $C$. On utilisera ici les $\omega$-catégories $I_n$ et $\partial I_n$ que l'on a définies en \ref{def:in et partial in}. Dans cette partie, pour un morphisme $f : X\to \N(C)$, on notera aussi $f : |X|\to C$ le morphisme obtenu par adjonction.

\begin{prop}
L'application suivante est surjective:
$$
\begin{array}{rcl}
 \N(C)_n = Hom (|\Delta_n|,C ) &\to & C_n \\
 f &\mapsto & f(\langle i_n\rangle)
 \end{array}
 $$ 
 où $i_n$ est l'unique simplexe non dégénéré de dimension $n$ de $\Delta[n]$, vu comme un élément de $C_n(\Delta[n])$.
 \label{prop:surjectivite du nerf}
\end{prop}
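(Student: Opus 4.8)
The plan is to reduce the statement to the existence of a single "collapse" $\omega$-functor onto the $n$-globe, and then to build that functor inside $\CDAB$ via Steiner's equivalence. Write $\mathcal{O}_n := |\Delta[n]| = \mu C_\bullet(\Delta[n]) \cong \nu C_\bullet(\Delta[n])$ for the $n$-th oriental; by the adjunction $|\var|\dashv\N$ one has $\N(C)_n = Hom_{\infcat}(\mathcal{O}_n, C)$, and the map of the statement is $f \mapsto f(\langle i_n\rangle)$, where $\langle i_n\rangle$ is the top $n$-cell of $\mathcal{O}_n$. Let $\overline{i_n} : I_n \to \mathcal{O}_n$ be the $\omega$-functor classifying $\langle i_n\rangle$ via the bijection of Définition \ref{def:in et partial in}, so $\overline{i_n}(e_n) = \langle i_n\rangle$. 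I claim it is enough to produce a retraction $r : \mathcal{O}_n \to I_n$ of $\overline{i_n}$, i.e.\ $r\circ\overline{i_n} = \mathrm{id}_{I_n}$: given $c \in C_n$, let $\hat c : I_n \to C$ be the functor with $\hat c(e_n) = c$; then the $n$-simplex $\hat c\circ r$ of $\N(C)$ is sent to $(\hat c\circ r)(\langle i_n\rangle) = \hat c(\overline{i_n}(e_n)) = \hat c(e_n) = c$. (Taking $C = I_n$ shows this retraction also exists as a consequence, but only this direction is needed.)

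To build $r$, observe that $\mathcal{O}_n$ and $I_n$ both lie in $\infcatB$: the base of $C_\bullet(\Delta[n])$ is unitaire and sans boucles by les propositions \ref{prop : base du complexe dirige augmente simplicial est unitaire} and \ref{prop : base du complexe dirige augmente simplicial est sans boucles} (and atomique, since $\Delta[n]$ is régulier), while $\{e^\pm_k : k<n\}\cup\{e_n\}$ is an atomic loop-free base of $I_n$ whose associated complex $\lambda I_n$ is the standard $n$-globe complex ($\mathbb{Z}\{[e^-_k],[e^+_k]\}$ in degree $k<n$, $\mathbb{Z}\{[e_n]\}$ in degree $n$, with $\partial[e_n] = [e^+_{n-1}]-[e^-_{n-1}]$ and $\partial[e^\alpha_k] = [e^+_{k-1}]-[e^-_{k-1}]$). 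By the adjoint equivalence between $\infcatB$ and $\CDAB$ given by $\lambda$ and $\nu$ (théorèmes \ref{theo} and \ref{theo:muK est une omega cat}, corollaire \ref{cor:ajdonction entre chaine et lambda avec unite et counite explicite}), together with the isomorphism $\lambda\mathcal{O}_n\cong C_\bullet(\Delta[n])$, producing $r$ is equivalent to producing a morphism of complexes dirigés augmentés $g : C_\bullet(\Delta[n]) \to \lambda I_n$ which is a left inverse of the morphism underlying $\lambda\overline{i_n}$ (as $\nu$ is faithful on $\CDAB$, $g\circ\lambda\overline{i_n} = \mathrm{id}$ is equivalent to $r\circ\overline{i_n} = \mathrm{id}$). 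Under these identifications $\lambda\overline{i_n}$ carries $[e_n]$ to $i_n$ and $[e^\alpha_k]$ to the chain $d^\alpha_k(i_n)$, which by la proposition \ref{prop:but et source explicite des simplexes} equals $d_{s_i^{n-k}}(i_n)$ for $\alpha=-$ and $d_{s_p^{n-k}}(i_n)$ for $\alpha=+$; its various images have pairwise disjoint supports by la proposition \ref{prop:injectivite dans lse complexes simpliciaux}.

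The remaining task — and the main obstacle — is to define $g$ on the base $\{i_n\}\cup\{\sigma_S : \emptyset\neq S\subsetneq[n]\}$ of $C_\bullet(\Delta[n])$ and to check it works. One must have $g(i_n)=[e_n]$; the augmentation together with the fact that $\lambda I_n$ has just the two $0$-cells $[e^\pm_0]$ fixes $g$ on vertices ($\sigma_0\mapsto[e^-_0]$, $\sigma_n\mapsto[e^+_0]$, the intermediate vertices to $[e^+_0]$); and for $0<|S|-1<n$ one defines $g(\sigma_S)$ by induction on $|S|$ as a positive lift — necessarily one of $[e^-_{|S|-1}]$, $[e^+_{|S|-1}]$, $0$ — of the already-defined cycle $g(\partial\sigma_S)$. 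What has to be proved is that these choices can be made coherently so that $g$ becomes a genuine chain map restricting to the identity along $\lambda\overline{i_n}$: concretely, using le lemme \ref{lem:differentielle des sigma} and la proposition \ref{prop:but et source explicite des simplexes}, one shows that among the faces occurring in $d^\alpha_k(i_n)$ exactly one can be sent to $[e^\alpha_k]$ and all the rest to units (to $0$ in $\lambda I_n$), compatibly with every boundary identity $\partial g(\sigma_S)=g(\partial\sigma_S)$. This amounts to the (classical but combinatorially delicate) fact that the canonical collapse $\omega$-functor $\mathcal{O}_n\to I_n$ onto the $n$-globe exists; positivity is the subtle point, since a cycle of $\lambda I_n$ need not be a positive boundary, so the inductive choices must be organised carefully. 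Granting this, $g$ preserves positivity and the augmentation by construction and, by the disjointness of supports above, is a left inverse of $\lambda\overline{i_n}$; transporting $\nu(g)$ along the equivalence yields the retraction $r$, and the proposition follows from the reduction of the first paragraph.
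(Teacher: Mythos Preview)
Your strategy coincides with the paper's: reduce to constructing a morphism of complexes dirigés augmentés $p:C_\bullet(\Delta[n])\to\lambda I_n$ sending $i_n$ to $e_n$, apply $\mu$ (equivalently $\nu$), and postcompose with $\hat c:I_n\to C$. The gap is that you do not actually construct $p$. You propose to define $g(\sigma_S)$ inductively as a positive lift of $g(\partial\sigma_S)$, correctly identify positivity as the obstacle (nothing you have written rules out $g(\partial\sigma_S)$ landing on a negative cycle or a multiple of $e^+_{k-1}-e^-_{k-1}$, in which case no lift in $\{0,e^+_k,e^-_k\}$ exists), and then defer to a ``classical but combinatorially delicate'' fact before writing ``Granting this''. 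That sentence is where the argument stops being a proof. A minor point: the augmentation does not ``fix $g$ on vertices'' as you claim; it only forces each vertex to one of $e^\pm_0$, and the paper's map in fact sends every intermediate vertex to $e^-_0$, not $e^+_0$.

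The paper bypasses the induction with a closed formula (lemme~\ref{lem:pour la prop surjectivite du nerf}). Every non-degenerate face of $\Delta[n]$ is written uniquely as $d^{k_1,\dots,k_l}:=d_{k_1}\cdots d_{k_l}i_n$ with $k_1\geq\cdots\geq k_l$, and one sets
\[
p(d^{k_1,\dots,k_l})=
\begin{cases}
e^+_{n-l} & \text{if } k_1=0,\\
e^-_{n-l} & \text{if } k_1=1,\\
0 & \text{if } k_1>1.
\end{cases}
\]
Compatibility with $\partial$ is then a direct case check using the simplicial identity $d_id_j=d_{j-1}d_i$ for $i<j$: one computes $p(\partial d^{k_1,\dots,k_l})$ separately for $k_1=0$, $k_1=1$, $k_1=2$, and $k_1\geq 3$, and each collapses to $e^+_{n-l-1}-e^-_{n-l-1}$ or to $0$. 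So the step you flagged as delicate requires no induction and no positivity argument once the right formula is written down.
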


Pour démontrer cette proposition, on a besoin du lemme suivant.
\begin{lem}
\label{lem:pour la prop surjectivite du nerf}
Il existe un morphisme de complexes dirigés augmentés
$$ p: C_\bullet(\Delta[n])\to \lambda(I_n)$$
tel que $p_n(i_n) = e_n$
\end{lem}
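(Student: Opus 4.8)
First I would make the target $\lambda I_n$ completely explicit: a direct computation (or the fact that $I_n\in\infcatB$ together with the basis correspondence recalled earlier) shows that $(\lambda I_n)_k$ is free abelian on $[e_k^-]_k,[e_k^+]_k$ for $k<n$, free on $[e_n]_n$ for $k=n$, and zero for $k>n$, with differential $[e_{k+1}^\alpha]_{k+1}\mapsto[e_k^+]_k-[e_k^-]_k$ and $[e_n]_n\mapsto[e_{n-1}^+]_{n-1}-[e_{n-1}^-]_{n-1}$, augmentation $[e_0^\alpha]_0\mapsto 1$, and positive submonoid $(\lambda I_n)_k^*$ equal to the non-negative combinations of the displayed basis. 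Since $i_n$ is the only basis element of $C_n(\Delta[n])$, the requirement $p_n(i_n)=e_n$ already determines $p_n$, as the isomorphism $C_n(\Delta[n])\xrightarrow{\ \sim\ }(\lambda I_n)_n$.

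Next I would write $p$ down on the basis of non-degenerate simplices, identifying such a $k$-simplex with its vertex set $S\subseteq\{0,\dots,n\}$, $|S|=k+1$. Set $p_n(\{0,\dots,n\}):=[e_n]_n$, and for $k<n$ set
$$p_k(S):=\begin{cases}[e_k^+]_k,&S=\{n-k,n-k+1,\dots,n\},\\ [e_k^-]_k,&\{n-k+1,\dots,n\}\subseteq S\ \text{ and }\ n-k\notin S,\\ 0,&\{n-k+1,\dots,n\}\not\subseteq S,\end{cases}$$
so that for $k=0$ one has $p_0(\{n\})=[e_0^+]_0$ and $p_0(\{v\})=[e_0^-]_0$ for $v<n$; then extend $\mathbb Z$-linearly to $p_k\colon C_k(\Delta[n])\to(\lambda I_n)_k$. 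Every value of $p_k$ lies in $\{0,[e_k^-]_k,[e_k^+]_k\}$ (resp. in $\{0,[e_n]_n\}$), hence $p_k$ carries $C_k^*(\Delta[n])$ into $(\lambda I_n)_k^*$; and $e(p_0(\{v\}))=1=e(\{v\})$, so the augmentation is respected. The one remaining point is the chain-map identity $\partial\circ p=p\circ\partial$.

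That identity is the only genuine obstacle, and I would settle it by a finite case analysis on a non-degenerate $(k{+}1)$-simplex $T\subseteq\{0,\dots,n\}$; note that every face $T\setminus\{v\}$ is again non-degenerate, so $\partial T=\sum_{v\in T}(-1)^{\mathrm{pos}_T(v)}\,(T\setminus\{v\})$ has all terms present. If $k+1=n$, then $T=\{0,\dots,n\}$ and among its faces only $\{1,\dots,n\}$ and $\{0,2,\dots,n\}$ contain the segment $\{2,\dots,n\}$, so (with signs $+$ and $-$) $p_{n-1}(\partial T)=[e_{n-1}^+]_{n-1}-[e_{n-1}^-]_{n-1}=\partial[e_n]_n$. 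If $k+1<n$ and $\{n-k+1,\dots,n\}\not\subseteq T$, then no face of $T$ contains that segment and $p_{k+1}(T)=0$, so both sides vanish. If $k+1<n$ with $\{n-k+1,\dots,n\}\subseteq T$ but $n-k\notin T$, then $T=\{a<b\}\sqcup\{n-k+1,\dots,n\}$ with $b<n-k$, $p_{k+1}(T)=0$, and only the two faces deleting $a$ and $b$ retain the segment, each mapping to $[e_k^-]_k$ with opposite signs, so $p_k(\partial T)=0$. Finally, if $\{n-k,\dots,n\}\subseteq T$, then $T=\{c\}\sqcup\{n-k,\dots,n\}$ with $c<n-k$, so $p_{k+1}(T)=[e_{k+1}^{\pm}]_{k+1}$ (the sign is immaterial, as $\partial[e_{k+1}^+]_{k+1}=\partial[e_{k+1}^-]_{k+1}=[e_k^+]_k-[e_k^-]_k$); deleting $c$ gives $\{n-k,\dots,n\}\mapsto[e_k^+]_k$ with sign $+$, deleting $n-k$ gives $\{c\}\sqcup\{n-k+1,\dots,n\}\mapsto[e_k^-]_k$ with sign $-$, and every other face drops a vertex of $\{n-k+1,\dots,n\}$ and maps to $0$, so $p_k(\partial T)=[e_k^+]_k-[e_k^-]_k=\partial p_{k+1}(T)$. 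These cases exhaust the possibilities, so $p$ is a well-defined morphism of augmented directed complexes with $p_n(i_n)=[e_n]_n$.

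I expect this combinatorial bookkeeping to be essentially the whole difficulty. A more conceptual route is possible: since $\mu C_\bullet(\Delta[n])\cong\nu C_\bullet(\Delta[n])=|\Delta[n]|$ and $\nu\lambda I_n\cong I_n$, the adjunction $\lambda\dashv\nu$ turns a morphism $C_\bullet(\Delta[n])\to\lambda I_n$ into an $\omega$-functor $|\Delta[n]|\to I_n$ sending the fundamental $n$-cell to $e_n$, i.e. the standard collapse of the $n$-th oriental onto the $n$-globe; but describing that functor cell by cell leads back to the same combinatorics, so the explicit $p$ above seems the most economical.
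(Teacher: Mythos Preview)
Your proof is correct and is essentially the paper's argument in different clothing. The paper indexes the non-degenerate simplices by their weakly-decreasing face-map normal form $d^{k_1,\dots,k_l}=d_{k_1}\cdots d_{k_l}i_n$ and defines $p$ by casing on $k_1\in\{0,1,\geq 2\}$; unwinding this, $k_1=0$ corresponds exactly to your case $S=\{n-k,\dots,n\}$, $k_1=1$ to $\{n-k+1,\dots,n\}\subseteq S$ with $n-k\notin S$, and $k_1\geq 2$ to $\{n-k+1,\dots,n\}\not\subseteq S$, so the two maps coincide and the chain-map verifications match case for case. Your vertex-set bookkeeping is arguably a bit more transparent than the face-map calculus, but there is no substantive difference.
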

\begin{proof}
Pour un entier $l\leq n$ et une famille d'entiers décroissante $\{k_i\}_{i\leq l}$, on définit :
 $d^{k_1,k_2,...,k_l} = d_{k_1}...d_{k_l} i_n$. Tous les simplexes non dégénérés de $\Delta[n]$ peuvent s’écrire sous cette forme d'une unique façon. On définit alors
$$\begin{array}{rccl}
p : &C_\bullet(\Delta[n])& \to& \lambda( I_n) \\
&i_n &\mapsto & e_n\\
&d^{k_1,k_2,...,k_l} &\mapsto &  
\left\{\begin{matrix}
  e_{n-l}^+& \mbox{si $k_1=0$} \\
 e_{n-l}^-& \mbox{si $k_1=1$} \\
 0 & \mbox{ si $k_1 >1$}\\
\end{matrix}\right.
 \\
\end{array}$$ 
Il suffit maintenant de vérifier que cela définit bien un morphisme de complexes dirigés augmentés. La seule vérification non triviale est la compatibilité avec les différentielles. Il faut donc montrer que pour tout $m$-simplexe non dégénéré $x$, on a $p_{m-1}(\partial_{m-1}(x))=\partial_{m-1}(p_m(x))$. Si $x=i_n$, on a 
$$ p_{n-1}(\partial_{n-1}(i_n))= p_{n-1} (\sum_{0\leq i\leq n} (-1)^id_ii_n) =e_{n-1}^+ - e_{n-1}^- = \partial_{n-1} (e_n)=\partial_{n-1} (p_n(i_n)).$$
Supposons maintenant que $x$ soit sous la forme $x = d^{k_1,k_2,...,k_l}$ avec $l\leq n$. La règle simpliciale 
$$ d_id_j = d_{j-1}d_i ~,~ i<j$$
implique que 
$$d_i~d^{k_1,...,k_l} = d^{k_1',k_2',...,k_{l+1}'}$$
avec 
$$k_1' = \left\{ 
\begin{matrix}
i &\mbox{si $i\geq k_1$}\\
k_1-1 &\mbox{si $i<k_1$}\\
\end{matrix}
\right.
$$
et par suite
$$
p_{n-l-1}(d_i~d^{k_1,...,k_l}) = 
\left\{
\begin{matrix}
0 & \mbox{si} & 
\left\{
\mbox{ou~~}
\begin{matrix}
k_1\geq 3 \\
k_1\leq 2 & \mbox{et}& i\geq 2
\end{matrix}
\right.\\

e_{n-l-1}^- & \mbox{si} & 
\left\{
\mbox{ou~~}
\begin{matrix}
k_1= 2 & \mbox{et}& i\leq 1 \\
k_1\leq 1 & \mbox{et}& i=1 
\end{matrix}
\right.\\

e_{n-l-1}^+ & \mbox{si} &~~ k_1\leq 1 \mbox{~~et~~} i=0

\end{matrix}
\right.
$$

On en déduit que si $k_1=0$ ou $k_1 = 1$, on a 
$$p_{n-l-1}(\partial_{n-l-1}(d^{k_1,..,k_l})) = p_{n-l-1}(\sum_{i\leq n-l} (-1)^i d_i~d^{k_1,..,k_l})=e^+_{n-l-1}-e^-_{n-l-1} = \partial_{n-l-1}(p_{n-l}(d^{k_1,..,k_l})),$$
si $k_1=2$, on a 
$$p_{n-l-1}(\partial_{n-l-1}(d^{k_1,..,k_l})) = p_{n-l-1}(\sum_{i\leq n-l} (-1)^i d_i~d^{k_1,..,k_l})=e^-_{n-l-1}-e^-_{n-l-1} =0= \partial_{n-l-1}(p_{n-l}(d^{k_1,..,k_l})),$$
et si $k_1\geq 3$, on a 
$$p_{n-l-1}(\partial_{n-l-1}(d^{k_1,..,k_l})) = p_{n-l-1}(\sum_{i\leq n-l} (-1)^i d_i~d^{k_1,..,k_l}) =0= \partial_{n-l-1}(p_{n-l}(d^{k_1,..,k_l})).$$
\end{proof}
\begin{proof}[Démonstration de la proposition \ref{prop:surjectivite du nerf}]
C'est une conséquence directe du lemme précédent. En effet,  une cellule  $c : a\to b$  de dimension $n$ correspond à un morphisme  $f : I_n\to C$. 
Le théorème \ref{theo} implique que  $\mu \lambda(I_n)\cong I_n$. On définit le morphisme composé suivant:
$$\lfloor c \rfloor :  |\Delta[n]| \xrightarrow{\mu(p)} \mu\lambda (I_n)\cong I_n \xrightarrow{f} C$$
et on a bien $\lfloor c \rfloor (\langle i_n\rangle) = c$.
\end{proof}

\begin{prop}
Si l'ensemble simplicial $\N(C)$  a la propriété de relèvement par rapport aux inclusions $\Lambda^2[n+1]\to \Delta[n+1]$, alors l'équation $\textbf{Eq}( y :  f_n \to e_n*_{n-1}x)$  admet toujours une pré-solution dans $ C$.
\label{prop:complexe de Kan implique inversible}
\end{prop}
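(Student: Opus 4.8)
The plan is to translate the lifting problem $\Lambda^2[n+1] \to \Delta[n+1]$ against $\N(C)$ into an equation-solving problem, using the adjunction $|\var| \dashv \N$ and the decomposition machinery of Section~2. First I would set up the combinatorics: a map $\Lambda^2[n+1] \to \N(C)$ is, by adjunction, a map $|\Lambda^2[n+1]| \to C$ of $\omega$-categories. Using Proposition~\ref{prop:surjectivite du nerf} (and the explicit decomposition of $\sigma_{0\cdots n+1}$ illustrated in Example~\ref{exe : delta 4}), such a map is the same data as a choice of cells in $C$ indexed by the non-degenerate simplices of $\Lambda^2[n+1]$ — i.e.\ all faces of $\Delta[n+1]$ except the top cell $\sigma_{0\cdots n+1}$ and the face $d_2 = \sigma_{0\,1\,3\,4\cdots n+1}$ — subject to the compatibility relations expressed by the $\odot$-decompositions of the sources and targets of those simplices. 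Extending this map to $\Delta[n+1]$ amounts to producing the two missing cells: the $n$-cell filling the face $d_2$ and the $(n+1)$-cell filling $\sigma_{0\cdots n+1}$.

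Next I would identify precisely which equation this is. The face $d_2$ is, by the decomposition theorem~\ref{theo:decomposition explicite} applied to $C_\bullet(\Delta[n+1])$, a cell whose source and target are forced by the outer faces already specified; writing these out, the $(n+1)$-cell $\sigma_{0\cdots n+1}$ has source a composite of the even-indexed faces and target a composite of the odd-indexed faces, and among the latter sits precisely the unknown cell $x := d_2$, appearing in the shape $e_n *_{n-1} x$ where $e_n$ is (a composite built from) the known data. Matching this against Definition of $\textbf{Eq}(y : f_n \to e_n *_{n-1} x)$ and the explicit description of $P^+$, I would construct the parameter morphism $p : P^+ \to C$ out of the cells already chosen, check that it is well-defined (the source/target conditions of Remark~\ref{rem:def explicite de l'equation} hold because they are exactly the simplicial identities packaged by $\odot$), and observe that a lift $\Delta[n+1] \to \N(C)$ yields exactly a pre-solution $(x,y)$ of $\textbf{Eq}(y : f_n \to e_n *_{n-1} x)$ with parameter $p$, and conversely. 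This direction — lift gives pre-solution — is what the statement asks for, so in fact I only need the implication in one direction, which simplifies the bookkeeping: given the lift, restrict it along $P^+[_{f_{n-1}^-}x_{e_{n-1}^-}] \to |\Delta[n+1]|$ (induced by $\mu$ applied to a suitable map of augmented directed complexes, as in Lemma~\ref{lem:pour la prop surjectivite du nerf}) to read off $x$ and $y$.

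The main obstacle I anticipate is purely bookkeeping-heavy: constructing the comparison morphism $P^+ \to |\Delta[n+1]|$ (equivalently $\lambda P^+ \to C_\bullet(\Delta[n+1])$, then apply $\mu$) and verifying it is compatible with sources, targets and compositions, since $P^+$ has a rather rigid globular shape while the oriental $|\Delta[n+1]|$ has many cells. The cleanest route is to exhibit a map of augmented directed complexes $C_\bullet(\Delta[n+1]) \to \lambda P^+$ sending $\sigma_{0\cdots n+1} \mapsto 0$, $\sigma_{0\,1\,3\cdots n+1}$ (the face $d_2$) to $e_n$, and the remaining simplices to the appropriate $e_k^\pm$ or $f_k^\pm$ or $0$, exactly in the style of the formula for $p$ in the proof of Lemma~\ref{lem:pour la prop surjectivite du nerf}; then $\mu$ of (a section of) this map, precomposed with the given lift $\Delta[n+1] \to \N(C)$ read as $|\Delta[n+1]| \to C$, produces the desired morphism $\textbf{Eq}(y : f_n \to e_n*_{n-1}x) \to C$ whose existence is precisely the pre-solution. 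The verification that differentials are respected is, as in that lemma, a finite case-check on the simplicial face relations $d_i d_j = d_{j-1} d_i$, and I would present it schematically rather than exhaustively.
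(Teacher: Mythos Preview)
Your overall strategy is sound and close to the paper's: build a comparison map from $C_\bullet(\Delta[n+1])$ to the equation, transport the lifting hypothesis along it, and read off the pre-solution. But two points in your execution are off, and one crucial ingredient is missing.

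First, the target of your comparison map should be $\lambda\big(\textbf{Eq}(y:f_n\to e_n*_{n-1}x)\big)$, not $\lambda(P^+)$. You need to send $i_{n+1}\mapsto y$ and $d^2\mapsto x$; if you send $i_{n+1}\mapsto 0$ and land in $\lambda(P^+)$ you have thrown away precisely the two cells you are trying to produce. Relatedly, your ``section'' idea does not work: a section $\lambda(\textbf{Eq})\to C_\bullet(\Delta[n+1])$ in $\CDA$ does not exist, since $\partial y = e_n + x - f_n$ in the former while $\partial i_{n+1}$ has $n+2$ nonzero terms in the latter.

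Second, and more importantly, once you have the correct map $q:C_\bullet(\Delta[n+1])\to\lambda(\textbf{Eq})$ restricting to $q':C_\bullet(\Lambda^2[n+1])\to\lambda(P^+)$, simply ``reading off'' $x=\ell(d^2)$ and $y=\ell(i_{n+1})$ from a lift $\ell:|\Delta[n+1]|\to C$ is not enough: a pre-solution is a \emph{morphism} $\textbf{Eq}\to C$ extending the given $p:P^+\to C$, and you must verify that $d^-_n\ell(i_{n+1})=p(f_n)$ and $d^+_n\ell(i_{n+1})=p(e_n)*_{n-1}\ell(d^2)$, etc. These identities are not formal; they require knowing that $\mu(q')$ sends the relevant chains (e.g.\ $d^-_n i_{n+1}=d^1+d^3+\cdots$) to the intended cells of $P^+$ with no lower-dimensional correction terms. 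The paper handles this cleanly: it checks that $q$ and $q'$ are \emph{quasi-rigid} (Lemme~\ref{lem:quasi-rigidite de q}), which lets it invoke Théorème~\ref{theo:theo dans le cas fort quasi-rigide} to conclude that the square
\[
\xymatrix{
|\Lambda^2[n+1]|\ar[r]^{\tilde q'}\ar[d] & P^+\ar[d]\\
|\Delta[n+1]|\ar[r]_{\tilde q} & \textbf{Eq}(y:f_n\to e_n*_{n-1}x)
}
\]
is a pushout in $\infcat$ (Lemme~\ref{lem:somme amalgame}). The proposition then follows in one line: the class of maps with the left lifting property against $C\to 1$ contains $|\Lambda^2[n+1]|\to|\Delta[n+1]|$ by adjunction and is closed under pushout, hence contains $P^+\to\textbf{Eq}$. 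Your direct verification would, if carried out, essentially reprove this pushout statement by hand; it is cleaner to isolate it as the paper does.
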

Pour prouver cette proposition,  on a besoin de quelques lemmes. On pose pour la suite $P:= P^+$.

\begin{lem}
Il existe un carré commutatif:
$$\xymatrix{
C_\bullet(\Lambda^2[n+1])\ar[r]^{q'} \ar[d] & \lambda(P) \ar[d] \\ 
C_\bullet(\Delta[n+1]) \ar[r]_{q~~~~~~~~~}& \lambda(\textbf{Eq}( y : f_n \to e_n*_{n-1} x )) &
}$$
tel que $q_{n+1}(i_{n+1}) = y$ et $q_n(d^2) = x$.
\label{lem:carre commutatif}
\end{lem}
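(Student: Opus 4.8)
Construct a morphism of augmented directed complexes $q' : C_\bullet(\Lambda^2[n+1]) \to \lambda(P)$ together with a compatible $q : C_\bullet(\Delta[n+1]) \to \lambda(\mathbf{Eq}(y : f_n \to e_n *_{n-1} x))$ making the square commute, with $q_{n+1}(i_{n+1}) = y$ and $q_n(d^2) = x$.
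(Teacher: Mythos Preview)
Your proposal is not a proof: it merely restates the statement of the lemma. You have written down what must be constructed, but you have not constructed anything, nor verified any of the required properties. The entire content of the lemma lies in exhibiting $q$ explicitly and checking that it is a morphism of augmented directed complexes.

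Concretely, what is missing is the following. First, you must specify the value of $q$ on every element of the basis of $C_\bullet(\Delta[n+1])$, i.e.\ on every nondegenerate simplex $d^{k_1,\dots,k_l} := d_{k_1}\cdots d_{k_l}\, i_{n+1}$. The paper does this by sending $i_{n+1}\mapsto y$, the three top faces $d^0,d^1,d^2$ to $e_n,f_n,x$ respectively (and $d^k\mapsto 0$ for $k>2$), the codimension-two faces $d^{0,0},d^{1,0},d^{1,1}$ to $c_{n-1},b_{n-1},a_{n-1}$ (and the rest to $0$), and lower-dimensional faces to $i_{n-l+1}^\pm$ or $0$ according to whether $k_1$ equals $0$, $1$, or is larger. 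Second, you must verify that this assignment commutes with the differentials $\partial$: this is a genuine computation, and the nontrivial cases are $\partial_n i_{n+1}$ and $\partial_{n-1} d^k$ for $k=0,1,2$, which must match $\partial y = e_n + x - f_n$ and the boundaries of $e_n,f_n,x$ in $\lambda(\mathbf{Eq})$. Third, you must observe that the restriction of $q$ to the sub-complex $C_\bullet(\Lambda^2[n+1])$ lands in $\lambda(P)$, which gives $q'$ and the commutativity of the square. None of these steps appears in your proposal.
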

\begin{proof}
On va réutiliser les notations de la preuve précédente. Pour $l\leq n$ et $\{k_i\}_{i\leq l}$ une famille décroissante d'entiers \note{positifs}, on définit $d^{k_1,k_2,...,k_l} = d_{k_1}...d_{k_l} i_{n+1}$. Rappelons que tous les simplexes non dégénérés de $\Delta[{n+1}]$ peuvent s’écrire sous cette forme de façon unique. On a donné dans la remarque \ref{rem:def explicite de l'equation}  la description explicite des cellules de $\textbf{Eq}( y : f_n \to e_n*_{n-1} x )$.
On définit  alors

$$\begin{array}{rccl}
q : &C_\bullet(\Delta[{n+1}])& \to& \lambda(\textbf{Eq}( y : f_n \to e_n*_{n-1} x ))  \\
&i_{n+1} &\mapsto & y\\
&d^{k_1}&\mapsto &  
\left\{\begin{array}{cl}
 e_{n}& \mbox{si $k_1=0$} \\
 f_{n}& \mbox{si $k_1=1$} \\
 x& \mbox{si $k_1 =2$ }\\
  0& \mbox{sinon} \\
\end{array}\right.\\

&d^{k_1,k_2}&\mapsto & 
\left\{\begin{array}{cl}
 c_{n-1}& \mbox{si $(k_1,k_2)=(0,0)$} \\
 b_{n-1}& \mbox{si $(k_1,k_2)=(1,0)$} \\
 a_{n-1}& \mbox{si $(k_1,k_2)=(1,1)$} \\
  0& \mbox{sinon} \\
\end{array}\right.\\

(l>2)&d^{k_1,k_2,...,k_l}&\mapsto & 
\left\{\begin{array}{cl}
 i_{n-l+1}^+& \mbox{si $k_1=0$} \\
 i_{n-l+1}^-& \mbox{si $k_1=1$} \\
  0& \mbox{sinon} \\
\end{array}\right.\\

\end{array}$$

Il faut maintenant vérifier que cette application est compatible avec les différentielles.

$$
\def\arraystretch{1.4}\begin{array}{lclc}
q_n(\partial_n i_{n+1}) &=& q_n(\sum_{i\leq n+1 } (-1)^i d_i i_{n+1})= e_n + x -f_n = \partial_n(y)=\partial_n(q_{n+1}(i_{n+1}))\\
q_{n-1}(\partial_{n-1} d^0 )&=& q_{n-1}(\sum_{i\leq n} (-1)^i d^{i,0}) =  c_{n-1}- b_{n-1} =  \partial_{n-1}(e_n)=\partial_n(q_{n}(d^0))\\
q_{n-1}(\partial_{n-1} d^1 )&=& q_{n-1}(d^{0,0}+\sum_{0<i\leq n} (-1)^i d^{i,1}) =  c_{n-1}- a_{n-1} =  \partial_{n-1}(f_n)=\partial_n(q_{n}(d^1))\\
q_{n-1}(\partial_{n-1} d^2 )&=& q_{n-1}(d^{1,0} - d^{1,1}+\sum_{1<i\leq n} (-1)^i d^{i,2}) =  b_{n-1}- a_{n-1} =  \partial_{n-1}(x)=\partial_n(q_{n}(d^2))\\
q_{n-1}(\partial_{n-1} d^k )&=& 0 =\partial_n(q_{n}(d^k))~~~~\mbox{pour $k>2$}\\
\end{array}$$
Pour les simplexes de dimension strictement inférieure à $n$, la vérification est identique à celle présente dans la preuve du lemme \ref{lem:pour la prop surjectivite du nerf}.
On remarque  de plus que la restriction de $q$ à $\Lambda^2[n+1]$ se factorise par $P$. On définit donc $q' := q_{|\Lambda^2[n+1]}$. Cela conclut la preuve.
\end{proof}

\note{

\begin{lem}
Soit $a$ une chaîne cohérente de  $\lambda \textbf{Eq}( y : f_n \to e_n*_{n-1} x )$. On a alors $(a)_{|a|} = a$.
\end{lem}
\begin{proof}
Donnons nous une chaîne cohérente $a$.
Selon le corollaire \ref{cor:ajdonction entre chaine et lambda avec unite et counite explicite}, on a un isomorphisme
$$\eta:\textbf{Eq}( y : f_n \to e_n*_{n-1} x )\to \mu \lambda \textbf{Eq}( y : f_n \to e_n*_{n-1} x ).$$
Il existe donc une unique cellule $c$ telle que $\eta(c)=a$. Si $c$ est un élément de la base, la chaîne $\eta(c)$ est alors réduite à un élément, et vérifie donc la propriété voulue. La description explicite des cellules de cette $\omega$-catégorie donnée dans la remarque \ref{rem:def explicite de l'equation} montre que l'unique cellule qui n'est ni une unité, ni un élément de la base est $e_n*_{n-1} x$. Il suffit donc de vérifier que $\eta(e_n*_{n-1} x)$ vérifie la propriété. Or on a:
$$\def\arraystretch{1.4}\begin{array}{rcl}
\eta(e_n*_{n-1}x)&:=&[e_n*_{n-1}x]_n + \sum_{k<n} ([d_k^+ (e_n*_{n-1}x)]_k - \partial_k^+([(d_{k+1}^+ e_n*_{n-1}x)]_{k+1}))\\
&=&e_n  +x +c_{n-1} - c_{n-1} + \sum_{k<n-1}  i_k^+-i_k^+  \\
&=& e_n + x\\
\end{array}$$
Et donc $$(\eta(e_n*_{n-1}x))_n = (e_n + x)_n = e_n + x =\eta(e_n*_{n-1}x).$$
\end{proof}

}

\begin{lem}
\label{lem:quasi-rigidite de q}
Les morphismes $q$ et $q'$ sont quasi-rigides. 
\end{lem}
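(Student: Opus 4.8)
The plan is to check the criterion of Proposition~\ref{prop:autre caracterisation de la quasi-rigidite}. Its first clause is immediate: comparing the defining formulas of $q$ and $q'$ with the list of cells of $\textbf{Eq}( y : f_n \to e_n*_{n-1} x )$ given in Remark~\ref{rem:def explicite de l'equation} (and the analogous list for $P:=P^+$, whose non-unit cells are $e_n,f_n$, then $a_{n-1},b_{n-1},c_{n-1}$, then the $i^\pm_l$), every non-degenerate simplex of $\Delta[n+1]$ --- equivalently, every basis element of $C_\bullet(\Delta[n+1])$ --- is sent by $q$ either to $0$ or to one of those non-unit cells, hence to $0$ or to a basis element of $\lambda(\textbf{Eq}( y : f_n \to e_n*_{n-1} x ))$; likewise $q'$ sends every non-degenerate simplex of $\Lambda^2[n+1]$ to $0$ or to a basis element of $\lambda(P)$. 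It then remains to prove, for every non-degenerate $m$-simplex $c$ with $q_m(c)\neq 0$ and every $k<m$, that $q_k(\langle c\rangle^-_k)\wedge q_k(\langle c\rangle^+_k)=0$, and the same statement for $q'$.

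First I would reduce this to a purely combinatorial assertion about the fibres of $q$. By Remark~\ref{remarquer dans le cas coherent} the chains $\langle c\rangle^\pm_k$ are the sources and targets $d^\pm_k(c)$ of the simplex $c$ computed in $C_\bullet(\Delta[n+1])$, and Proposition~\ref{prop:but et source explicite des simplexes} identifies them with $d_{s_i^{m-k}}(c)$ and $d_{s_p^{m-k}}(c)$; by Proposition~\ref{prop:injectivite dans lse complexes simpliciaux} these are $\{0,1\}$-valued chains with disjoint supports, since $s_i^{m-k}\neq s_p^{m-k}$. On the other hand, identifying a non-degenerate simplex of $\Delta[n+1]$ with its set of omitted vertices, the explicit formula for $q$ gives the following description of its fibres: a simplex of codimension $l$ is sent to a non-zero cell exactly when all its omitted vertices are $\leq\max(2,l)$; the fibre of $q$ over each of $e_n,f_n,x,a_{n-1},b_{n-1},c_{n-1}$ and over each $i^+_{n+1-l}$ (for $l\geq 3$) is a single simplex, whereas $q^{-1}(i^-_{n+1-l})$ (for $l\geq 3$) consists of the $l$ simplices with vertex set $\{w\}\cup\{l+1,\dots,n+1\}$, $w\in\{0,\dots,l-1\}$. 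Since $supp(\langle c\rangle^-_k)$ and $supp(\langle c\rangle^+_k)$ are disjoint, a non-zero term in $q_k(\langle c\rangle^-_k)\wedge q_k(\langle c\rangle^+_k)$ could only come from a simplex of $supp(\langle c\rangle^-_k)$ and a distinct simplex of $supp(\langle c\rangle^+_k)$ lying in a common fibre of $q$; by the previous sentence that fibre would have to be one of the $q^{-1}(i^-_{n+1-l})$.

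The crux is then to show that, when $q_m(c)\neq 0$, the $k$-target $d^+_k(c)$ contains no simplex belonging to any $q^{-1}(i^-_{n+1-l})$. Write $V(c)=\{v_0<\dots<v_m\}$ and set $l=n+1-k$. Translating the signature conditions of Proposition~\ref{prop:but et source explicite des simplexes} through the simplicial identities $d_id_j=d_{j-1}d_i$ ($i<j$), one finds that among the faces of $c$ of shape $\{w\}\cup\{l+1,\dots,n+1\}$, only the one with $w=v_0$ occurs in $d^-_k(c)$, and the only one that could occur in $d^+_k(c)$ is the one whose distinguished vertex sits in internal position $m-n+l-1$, namely $\{v_{m-n+l-1}\}\cup\{l+1,\dots,n+1\}$. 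A short case analysis on the shape of $V(c)$ forced by $q_m(c)\neq 0$ --- codimension $0$, $1$, $2$, or $\geq 3$, each read off directly from the fibre description above --- shows that in every case $v_{m-n+l-1}=l$, so that this last simplex is in fact $\{l,l+1,\dots,n+1\}$, which lies in $q^{-1}(i^+_{n+1-l})$ and not in $q^{-1}(i^-_{n+1-l})$. Hence $d^+_k(c)$ meets no fibre $q^{-1}(i^-_{n+1-l})$, the feared collision cannot occur, and $q_k(\langle c\rangle^-_k)\wedge q_k(\langle c\rangle^+_k)=0$. For $q'$ the argument is the same: $\Lambda^2[n+1]$ omits among the non-degenerate simplices of $\Delta[n+1]$ only $i_{n+1}$ and $d^2$, which are the unique preimages under $q$ of $y$ and of $x$ and are proper faces of no other simplex; so for $c$ a non-degenerate simplex of $\Lambda^2[n+1]$ the chains $\langle c\rangle^\pm_k$ and their images are unchanged, and they already take values in the basis-preserving subcomplex $\lambda(P)\subseteq\lambda(\textbf{Eq}( y : f_n \to e_n*_{n-1} x ))$. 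The only genuine difficulty is the staircase bookkeeping underlying the crux; the case analysis on the shape of $V(c)$ is routine.
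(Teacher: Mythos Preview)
Your proof is correct, but it follows a genuinely different route from the paper's.

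The paper does \emph{not} verify the criterion of Proposition~\ref{prop:autre caracterisation de la quasi-rigidite} directly. Instead it works in the $\mu$-formalism and shows, for each basis element $b$ of degree $k$ with $q_k(b)\neq 0$, that the coherent chain $\mu(q)(b)$ coincides with the single atom $q_k(b)$. For $k<n$ this is immediate from the preceding structural lemma: every $k$-cell of $\textbf{Eq}(y:f_n\to e_n*_{n-1}x)$ is a basis element or a unit, and since $(\mu(q)(b))_k=q_k(b)\neq 0$ the chain $\mu(q)(b)$ is a $k$-cell that is not a unit, hence a single atom, necessarily $q_k(b)$. For $k=n,\,n+1$ a short direct computation (essentially only the top two levels) finishes. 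Thus the paper exploits the very simple shape of the \emph{target} $\omega$-category to avoid any combinatorics on the source.

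Your argument instead analyses the \emph{source}: you describe all fibres of $q$ over basis elements, observe that only the fibres $q^{-1}(i^-_{n+1-l})$ are non-singletons, and then show via the explicit signature description of $d^+_k$ that for any $c$ with $q_m(c)\neq 0$ the chain $d^+_k(c)$ never meets such a fibre. This is more self-contained --- it does not appeal to the structural lemma on the target --- and your ``staircase'' claim $v_{m-k}=l$ does check out case by case. The trade-off is that you carry more combinatorial bookkeeping, whereas the paper's route is shorter and more conceptual, getting the low-dimensional cases for free from the shape of $\textbf{Eq}$.
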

\begin{proof}
On va montrer que pour tout élément $b$ de la base de $C_\bullet(\Delta[n+1])$, 
$$q_k(b)\neq 0 \Rightarrow \mu(q)(b) = q_k( b).$$
\note{ 
Soit $b$ un élément de la base tel que $q_k(b)\neq 0$. La chaîne $\mu(q)(b)$ vérifie 
$((\mu(q)(b))_{|b|} = q_k( b) $. Or cette chaîne est cohérente, et en appliquant le lemme précédent, on obtient $\mu(q)(b) = q_k( b)$.}
\end{proof}

\begin{lem}
Dans la catégorie des $\omega$-catégories, il existe une somme amalgamée: 
$$\xymatrix{
|\Lambda^2[n+1]|\ar[r]^{\tilde{q'}} \ar[d] & P \ar[d] \\ 
|\Delta[n+1]| \ar[r]_{\tilde{q}~~~~~~~~~}& \pushoutcorner \textbf{Eq}( y : f_n \to e_n*_{n-1} x )&
}$$
telle que $\tilde{q}_{n+1}(i_{n+1}) = y$ et $\tilde{q}_n(d^2) = x$.
\label{lem:somme amalgame}
\end{lem}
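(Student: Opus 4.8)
The goal is to upgrade the cocartesian square of augmented directed complexes from Lemma~\ref{lem:carre commutatif} into a cocartesian square of $\omega$-categories after applying $\mu$. The natural strategy is to apply the pushout-preservation theorem \ref{theo:theo dans le cas fort quasi-rigide}. First I would observe that $C_\bullet(\Lambda^2[n+1])$, $C_\bullet(\Delta[n+1])$, $\lambda(P)$ and $\lambda(\textbf{Eq}(y:f_n\to e_n*_{n-1}x))$ all admit unitary loop-free bases: for the simplicial complexes this is Propositions~\ref{prop : base du complexe dirige augmente simplicial est unitaire} and~\ref{prop : base du complexe dirige augmente simplicial est sans boucles} (together with the remark that $\Lambda^2[n+1]$ is regular), and for $\lambda(P)$ and $\lambda(\textbf{Eq}(\dots))$ it follows from the fact that $P=P^+$ and $\textbf{Eq}(y:f_n\to e_n*_{n-1}x)$ are $\omega$-categories admitting atomic loop-free bases (verified in the remark following \ref{rem:def explicite de l'equation}), via the correspondence at the end of Section~1.2 between atomic loop-free bases on $\omega$-categories and unitary loop-free bases on the associated directed complexes.

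Next I would check that the square of Lemma~\ref{lem:carre commutatif} is actually a pushout in \CDA. Since $C_\bullet$ is a left adjoint (it is the left Kan extension / reduced-chains functor), it preserves the pushout $\Lambda^2[n+1]\to\Delta[n+1]$ along its inclusion into the relevant simplicial set; here one must identify the pushout of simplicial sets realizing $\Delta[n+1]$ from $\Lambda^2[n+1]$ by gluing in the missing $2$-horn data — concretely, $\textbf{Eq}(y:f_n\to e_n*_{n-1}x)$ is built by amalgamated sums exactly mirroring adding the faces $d^2$ and $i_{n+1}$ to $\Lambda^2[n+1]$, so the square $C_\bullet(\Lambda^2[n+1])\to C_\bullet(\Delta[n+1])\to \lambda(\textbf{Eq}(\dots))\leftarrow\lambda(P)$ is cocartesian. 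Then I would invoke Lemma~\ref{lem:quasi-rigidite de q}, which says $q$ and $q'$ are quasi-rigid; the remaining two maps in the square — the inclusion $C_\bullet(\Lambda^2[n+1])\to C_\bullet(\Delta[n+1])$ and the map $\lambda(P)\to\lambda(\textbf{Eq}(\dots))$ — are induced by inclusions on bases, hence quasi-rigid by Remark~\ref{rem:injection sur les bases implique quasi-ridige}.

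Having verified all hypotheses, Theorem~\ref{theo:theo dans le cas fort quasi-rigide} yields that
$$
\xymatrix{
\nu C_\bullet(\Lambda^2[n+1])\ar[r]\ar[d] & \nu\lambda(P)\ar[d]\\
\nu C_\bullet(\Delta[n+1])\ar[r] & \nu\lambda(\textbf{Eq}(y:f_n\to e_n*_{n-1}x))
}
$$
is a pushout in \infcat. Since all four complexes lie in \CDAB, I may replace $\nu$ by $\mu$ (Theorem~\ref{theo:muK est une omega cat}); then using $|\var|=\mu C_\bullet(\var)$ on the simplicial objects and the equivalence $\mu\lambda\cong\mathrm{id}$ on \infcatB\ (Theorem~\ref{theo}) to identify $\mu\lambda(P)\cong P$ and $\mu\lambda(\textbf{Eq}(\dots))\cong\textbf{Eq}(y:f_n\to e_n*_{n-1}x)$, I obtain the desired pushout, with $\tilde q := \mu(q)$ and $\tilde q' := \mu(q')$. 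The equalities $\tilde q_{n+1}(i_{n+1})=y$ and $\tilde q_n(d^2)=x$ follow from the corresponding equalities $q_{n+1}(i_{n+1})=y$, $q_n(d^2)=x$ of Lemma~\ref{lem:carre commutatif} together with quasi-rigidity (Lemma~\ref{lem:quasi-rigidite de q}), which guarantees $\mu(q)$ agrees with $q$ on basis elements. The main obstacle is the bookkeeping in identifying the pushout of simplicial sets underlying Lemma~\ref{lem:carre commutatif} and confirming it survives $C_\bullet$ — i.e. matching the iterated amalgamated sums defining $\textbf{Eq}(y:f_n\to e_n*_{n-1}x)$ with the combinatorics of filling a $2$-horn — while the quasi-rigidity inputs have already been done in the preceding lemmas.
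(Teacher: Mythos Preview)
Your overall strategy coincides with the paper's: verify the square of Lemma~\ref{lem:carre commutatif} is a pushout in \CDA with all four arrows quasi-rigid, apply Theorem~\ref{theo:theo dans le cas fort quasi-rigide}, and then use $\nu\lambda\cong\mathrm{id}$ on \infcatB\ to identify $\nu\lambda(P)\cong P$ and $\nu\lambda(\textbf{Eq}(\dots))\cong\textbf{Eq}(\dots)$.

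The one place where your argument is less efficient is precisely the step you flag as ``the main obstacle'': showing the square is cocartesian in \CDA. Your route through ``$C_\bullet$ is a left adjoint'' does not directly apply, since two of the four corners are not of the form $C_\bullet(\text{simplicial set})$; and the attempt to match the iterated amalgamated sums defining $\textbf{Eq}(\dots)$ with the horn-filling combinatorics amounts to re-proving the pushout by hand. The paper bypasses this entirely: since all four complexes are free on their bases and the maps are quasi-rigid (hence send basis elements to basis elements or to $0$), one simply observes that the induced square of pointed sets
\[
\xymatrix{
B_{\Lambda^2[n+1]}\cup\{0\}\ar[r]\ar[d] & B_P\cup\{0\}\ar[d]\\
B_{\Delta[n+1]}\cup\{0\}\ar[r] & B_{\textbf{Eq}}\cup\{0\}
}
\]
is a pushout (an immediate inspection, since on each side one is adjoining exactly the two elements $d^2,i_{n+1}$ on the left and $x,y$ on the right). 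Freeness then yields the pushout in \CDA. This replaces your bookkeeping obstacle with a one-line verification.
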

\begin{proof}
Notons $B_{\Lambda^2[n+1]}$, $B_{\Delta[n+1]}$, $B_P$ et $B_{\textbf{Eq}}$ les bases de $C_\bullet(\Lambda^2[n+1])$, $C_\bullet(\Delta[n+1])$, $\lambda(P)$ et $ \lambda(\textbf{Eq}( y : f_n \to e_n*_{n-1} x ))$. Selon le lemme
\ref{lem:carre commutatif}, il existe un diagramme commutatif :
\begin{equation}
\label{comdiag}
\xymatrix{
C_\bullet(\Lambda^2[n+1])\ar[r]^{q'} \ar[d] & \lambda(P) \ar[d] \\ 
C_\bullet(\Delta[n+1]) \ar[r]_{q~~~~~~~~~}& \lambda(\textbf{Eq}( y : f_n \to e_n*_{n-1} x )). &
}
\end{equation}
Ce diagramme induit une carré commutatif d'ensembles:
$$\xymatrix{
B_{\Lambda^2[n+1]}\cup \{0\}\ar[r]^{q'} \ar[d] & B_P\cup \{0\} \ar[d] \\ 
B_{\Delta[n+1]}\cup \{0\}\ar[r]_{q}& \pushoutcorner B_{\textbf{Eq}}\cup \{0\}&
}.$$
\note{Remarquons de plus que $$B_{\Delta[n+1]}\cup \{0\}:=B_{\Lambda^2[n+1]}\cup \{0\}\cup \{i_{n+1},d^2\}~~~~~B_{\textbf{Eq}}\cup \{0\}:=B_{P}\cup \{0\}\cup \{y,x\}.$$ Le morphisme $q$ envoie $i_{n+1}$ sur $y$, et $d^2$ sur $x$. Le carré précédent est donc une somme amalgamée dans la catégorie des ensembles.
}
Le théorème \ref{theo:theo dans le cas preque quasi-rigide} implique alors que le diagramme \eqref{comdiag} est une somme amalgamée dans \CDA . De plus,  selon la remarque \ref{rem:injection sur les bases implique quasi-ridige},
les morphismes verticaux sont  quasi-rigides car il induisent des injections sur les bases, et les morphismes horizontaux le sont aussi selon le lemme \ref{lem:quasi-rigidite de q}.

 Le corollaire \ref{theo:theo dans le cas fort quasi-rigide} implique alors que le diagramme suivant est une somme amalgamée: 
$$\xymatrix{
|\Lambda^2[n+1]|\ar[r]^{\nu (q')} \ar[d] & \nu \lambda P \ar[d] \\ 
|\Delta[n+1]| \ar[r]_{\nu(q)~~~~~~~~~}& \pushoutcorner\nu \lambda \textbf{Eq}( y : f_n \to e_n*_{n-1} x ).
}$$
Enfin, comme $P$ et $\textbf{Eq}( y : f_n \to e_n*_{n-1} x )$ sont des $\omega$-catégories admettant des bases sans boucles et atomiques, on a des isomorphismes : 
$$\nu \lambda P \cong P ~~~~~~~~~ \nu \lambda \textbf{Eq}( y : f_n \to e_n*_{n-1} x ) \cong \textbf{Eq}( y : f_n \to e_n*_{n-1} x )$$
\end{proof}
\begin{proof}[Démonstration de la proposition \ref{prop:complexe de Kan implique inversible}]
Soit $C$ une $\omega$-catégorie telle que $\N(C)$  ait la propriété de relèvement par rapport aux inclusions $\Lambda^2[n+1]\to \Delta[n+1]$.  Notons $\mathcal{L}$ la classe des morphismes ayant la propriété de relèvement à gauche par rapport à $C\to 1$. Par adjonction, $\mathcal{L}$ comprend $|\Lambda^2[n+1]|\to |\Delta[n+1]|$. De plus, $\mathcal{L}$ est stable par image directe et comprend donc, selon le lemme \ref{lem:somme amalgame}, le morphisme $P\to \textbf{Eq}( y : f_n \to e_n*_{n-1} x )$. Cela conclut la preuve.
\end{proof}

\begin{cor}
\label{cor : trivialite detecte par le relevement}
Soit $C$  une $\omega$-catégorie. 
\begin{enumerate}
	\item Si $\N(C)$  a la propriété de relèvement par rapport aux inclusions $\Lambda^2[n+1]\to \Delta[n+1]$ pour tout $n>0$, alors $C$ est $0$-trivial;
	\item Si $\N(C)$  a la propriété de relèvement par rapport aux inclusions $\Lambda^2[n+1]\to \Delta[n+1]$  pour tout $n>1$, alors $C$ est $1$-trivial;\end{enumerate}
\end{cor}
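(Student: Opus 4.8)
The plan is simply to combine the two preceding propositions. Recall that Proposition~\ref{prop:caracterisation1} characterizes $k$-triviality of $C$ by the existence of pre-solutions of the equations $\textbf{Eq}(y : f_n \to e_n*_{n-1} x)$ for all $n\geq k$, while Proposition~\ref{prop:complexe de Kan implique inversible} produces exactly such a pre-solution out of the right lifting property of $\N(C)$ against the inclusion $\Lambda^2[n+1]\to\Delta[n+1]$ for the corresponding value of $n$. So the corollary will follow by matching, in each of the two items, the range of indices occurring in the hypothesis with the range of $n$ over which the equations $\textbf{Eq}(y : f_n \to e_n*_{n-1} x)$ are required to be solvable.

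For item (1): assuming $\N(C)$ has the right lifting property against $\Lambda^2[n+1]\to\Delta[n+1]$ for every $n>0$, Proposition~\ref{prop:complexe de Kan implique inversible} gives, for each such $n$ — that is, for every $n\geq 1$ — a pre-solution of $\textbf{Eq}(y : f_n \to e_n*_{n-1} x)$ in $C$. Feeding this into the "if" direction of Proposition~\ref{prop:caracterisation1} with $k=1$ yields that $C$ is $1$-trivial. For item (2) the argument is identical, except that one only obtains pre-solutions of $\textbf{Eq}(y : f_n \to e_n*_{n-1} x)$ for $n\geq 2$, and Proposition~\ref{prop:caracterisation1} with $k=2$ then gives $2$-triviality.

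There is essentially no obstacle here: all the substance is already contained in the two cited propositions, and the corollary is a one-line deduction. The only point worth a moment of care is the index bookkeeping — verifying that "$n>0$" in the hypothesis is precisely the set of $n$ for which $\textbf{Eq}(y : f_n \to e_n*_{n-1} x)$ must admit pre-solutions in order to conclude $1$-triviality via Proposition~\ref{prop:caracterisation1} (and likewise "$n>1$" for $2$-triviality) — together with the remark that Proposition~\ref{prop:complexe de Kan implique inversible} indeed applies uniformly over that range, since the $\omega$-category $P^+$ and the equation $\textbf{Eq}(y : f_n \to e_n*_{n-1} x)$ are defined for all $n\geq 1$.
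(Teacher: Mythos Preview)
Your proof is correct and follows exactly the same approach as the paper: apply Proposition~\ref{prop:complexe de Kan implique inversible} over the given range of $n$ to obtain pre-solutions of $\textbf{Eq}(y : f_n \to e_n*_{n-1} x)$, then invoke Proposition~\ref{prop:caracterisation1} with $k=1$ (resp.\ $k=2$) to conclude $1$-triviality (resp.\ $2$-triviality). The index bookkeeping you flag is the only thing to check, and you have it right.
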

\begin{proof}
Supposons tout d'abord que pour tout $n>0$, $\N(C)$ a la propriété de relèvement par rapport aux inclusions $\Lambda^2[n+1]\to \Delta[n+1]$. Alors la proposition \ref{prop:complexe de Kan implique inversible} induit que pour tout $n>0$, les équations $\textbf{Eq}(y : f_n \to e_n*_{n-1} x) $ admettent toujours des pré-solutions dans $C$, et donc selon la proposition \ref{prop:caracterisation1}, $C$ est $0$-trivial. 

De façon analogue, si pour tout $n>1$, $\N(C)$ a la propriété de relèvement par rapport aux inclusions $\Lambda^2[n+1]\to \Delta[n+1]$  alors  les équations $\textbf{Eq}(y : f_n \to e_n*_{n-1} x )$ admettent toujours des pré-solutions.	 La proposition \ref{prop:caracterisation1} indique que  $C$ est alors $1$-trivial.
 \end{proof}

\subsection{Équations représentées par les inclusions de cornets}
Soit $n$ un entier quelconque. On sait déjà que $|\Delta[n]|$ est une $\omega$-catégorie admettant une base sans boucles et atomique. Le morphisme 
$|\Lambda^r[n]| \to |\Delta[n]|$ est obtenu en ajoutant librement une $(n-1)$-cellule, et une $n$-cellule. La $\omega$-catégorie $|\Delta[n]|$ est donc bien une équation au sens de la définition \ref{defi:equation}. L'objectif de cette partie est d'expliciter cette équation. Pour cela, commençons par rappeler la décomposition explicite des chaînes cohérentes du théorème \ref{theo:decomposition explicite}.
Soit $a := \sum_{i\leq m}  b_i +  r_{|a|_c}(a)$ une chaîne cohérente sous forme ordonnée. On définit : 
$$\beta_k  := b_k +~(d_{|a|_c}^-(\sum_{i<k}b_i)\diagdown d_{|a|_c}^+ b_k) \vee (d_{|a|_c}^+(\sum_{k<i\leq m}b_i) \diagdown d_{|a|_c}^- b_k)~+ r_{|a|_c}(a)$$
on a alors $$a = \beta_0 *_{|a|_c} \beta_1 *_{|a|_c}... *_{|a|_c} \beta_m .$$

\begin{defi}
\label{defi:definition de gamma}
Soient un entier $n>0$ et un entier $i\leq n$. On pose $\alpha := +$ si $i$ est pair et $\alpha := -$ si $i$ est impair.
La chaîne $d_{n-1}^\alpha i_n$ est de degré de composition $n-2$. On peut donc l'exprimer comme une composition de chaînes de degré de composition strictement inférieur, et on définit alors $\gamma$ comme étant le facteur comprenant $d^i$, et $a,b$ les $(n-1)$-cellules vérifiant:

$$d_{n-1}^\alpha i_n =a *_{n-2}\gamma*_{n-2} b.$$
La chaîne $\gamma$ est alors de degré de composition inférieur à $(n-3)$. On va répéter ce processus sur $\gamma$ afin "d'isoler" $d^i$.

On va définir  par une récurrence descendante sur $1\leq k \leq n$, une famille $(a^i_k,b_k^i,\gamma^i_k)_{1\leq k\leq n-1}$ vérifiant pour tout $k$,
\begin{enumerate}
\item  $a_k^i$ et $b_k^i$ sont des $k$-cellules,
\item $\gamma^i_k$ est une chaîne cohérente de degré de composition inférieur ou égal à $k-2$ et $r_{k-2}(\gamma^i_k) = 0$,
\item $\gamma^i_k$ comprend $d^i$, 
\item $\begin{array}{crcl}
\mbox{si $k=n-1$} &d_{n-1}^\alpha i_n &=&a^i_{n-1} *_{n-2}\gamma^i_{n-1} *_{n-2} b^i_{n-1}\\
\mbox{si $k< n-1$} & \gamma^i_{k+1} &=&a^i_k *_{k-1} \gamma^i_k *_{k-1} b^i_k.
\end{array}
$
\end{enumerate}

Pour cela, on pose tout d'abord alors $(a^i_{n-1},b_{n-1}^i,\gamma^i_{n-1}) := (a,b,\gamma)$. Supposons $(a^{i}_{k+1},b_{k+1}^i,\gamma^i_{k+1})$ construit pour $k<n-1$. 

Si $|\gamma^i_{k+1}|_c<k-1$, on pose 
$(a^{i}_{k},b_{k}^i,\gamma^i_{k}):=
(1_{d_{k-1}^+\gamma^i_{k+1} },1_{d_{k-1}^-\gamma^i_{k+1} },\gamma^i_{k+1}).$

Si $|\gamma^i_{k+1}|_c=k-1$, on définit $\gamma^i_{k}$ comme étant le facteur comprenant $d^i$ dans la décomposition de $\gamma^i_{k+1}$. Les cellules $a^{i}_{k}$ et $b_{k}^i$ sont les $k$-cellules vérifiant $\gamma^i_{k+1} =a^i_k *_{k-1} \gamma^i_k *_{k-1} b^i_k$. 
\label{definion des ai bi et gammai}

Le faite que $\gamma^i_k$ soit une chaîne cohérente de degré de composition inférieur ou égal à $k-2$ et que $r_{k-2}(\gamma^i_k) = 0$ provient de la construction explicite de la factorisation présentée dans le théorème \ref{theo:decomposition explicite}.
En particulier la chaîne cohérente $\gamma^i_1$ est de degré de composition $-1$, et est donc réduit à un singleton selon la proposition \ref{prop:coherende de degre moin un est reduit a un singloton}, d'où $\gamma^i_1=d^i$.
Enfin, remarquons que par construction, $\gamma^i_k$ est $(k-2)$-parallèle à $i_n$.
\end{defi}
\note{
\begin{rem}
\label{rem:sur les compose de gamma}
Soient $n$ un entier, et $p<q\leq n$ deux entiers de même parité. Si $p$ et $q$ sont pairs, on a $d^p\odot_{n-1}d^q$, et si $p$ et $q$ sont impairs, $d^q\odot_{n-1}d^p$.
Si $n$ est pair, on en déduit donc que les écritures ordonnées de $d_{n-1}^-i_n$ et $d_{n-1}^+i_n $ sont
$$d_{n-1}^-i_n = d^{n-1}+d^{n-3}+...+d^{1}~~~~d_{n-1}^+i_n = d^0 +d^2+...+d^n.$$
et on a alors par construction
$$d_{n-1}^-i_n = \gamma^{n-1}_{n-1}*_{n-2}\gamma^{n-3}_{n-1}*_{n-2} \cdots  *_{n-2} \gamma^{1}_{n-1}~~~~
d_{n-1}^+i_n =\gamma^0_{n-1}*_{n-2}\gamma^2_{n-1}*_{n-2} \cdots  *_{n-2} \gamma^{n}_{n-1}.$$
De façon analogue, si $n$ est impair, on a
$$d_{n-1}^-i_n =\gamma^n_{n-1}*_{n-2}\gamma^{n-2}_{n-1}*_{n-2} \cdots  *_{n-2} \gamma^{1}_{n-1}~~~~
d_{n-1}^+i_n =\gamma^0_{n-1}*_{n-2}\gamma^2_{n-1}*_{n-2} \cdots  *_{n-2} \gamma^{n-1}_{n-1}$$
\end{rem}}

\begin{exe}
Soient $n=4$ et $i=2$. On a alors $\alpha = +$. En se servant des notations et calculs de l'exemple \ref{exe : delta 4}, on a alors:$$\def\arraystretch{1.4}
\begin{array}{rclllllllll}
a_3^2& = & \sigma_{1 2 3 4 }+ \sigma_{0 1 4} &~~~& a_2^2& = & \sigma_{1 2 3} + \sigma_{0 1} + \sigma_{3 4}&~~~& a_1^2& = & 1_{\sigma_0} \\
\gamma_3^2 & = &\sigma_{0 1 3 4 }+ \sigma_{1 2 3}&~~~&\gamma_2^2 & = & \sigma_{0 1 3 4} &~~~&\gamma_1^2 & = & \sigma_{0 1 3 4}
\\
b_3^2& = &\sigma_{0 1 2 3}+ \sigma_{0 3 4}&~~~&b_2^2& = & 1_{\sigma_{0 4}}&~~~&b_1^2& = &1_{\sigma_4}.\\
\end{array}
$$
\end{exe}

\begin{rem}
Les cellules $a^i_k$ et $b^i_k$ sont des  compositions de chaînes cohérentes dont tous les éléments sont des simplexes de $\Lambda^i[n]$. Elles sont donc elles-mêmes des cellules  de $|\Lambda^i[n]|$. 
Les chaînes $d_{n-1}^+ i_n$ et $d_{k}^{\bar{\alpha}}(d^i)$ pour $\bar{\alpha}\in\{+,-\}$ et $k\leq n-2$ sont composées de simplexes de $\Lambda^i[n]$ et sont donc des cellules  $|\Lambda^i[n]|$. 
\end{rem}
Si $i$ est impair, on  a donc un isomorphisme en dessous de $|\Lambda^i[n]|$: 
$$|\Delta[n]| \cong \textbf{Eq}^+_{i,\Delta[n]} := \textbf{Eq}\big(y : (a^i_{n-1}*_{n-2}(...(a^i_1*_0 x *_0 b^i_1)...) *_{n-2} b^i_{n-1}) \to d_{n-1}^+ i_n \big),$$
et si $i$ est  pair, on a 
un isomorphisme en dessous de $|\Lambda^i[n]|$: 
$$|\Delta[n]| \cong   \textbf{Eq}^-_{i,\Delta[n]} := \textbf{Eq}\big(y : d_{n-1}^- i_n \to  (a^i_{n-1}*_{n-2}(...(a^i_1*_0 x *_0 b^i_1)...) *_{n-2} b^i_{n-1})\big).$$

\begin{prop}
Soit $C$ une catégorie $0$-triviale. Alors $\N(C)$ est un complexe de Kan. 
\label{prop:1trivial implique complexe de Kan}
\end{prop}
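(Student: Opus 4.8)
Le plan est de reformuler la condition de Kan en termes des équations $\textbf{Eq}^{\pm}_{i,\Delta[n]}$ explicitées ci-dessus, puis de résoudre ces équations en utilisant de façon répétée la propriété de division des cellules faiblement inversibles.

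D'abord, je rappellerais que $\N(C)$ est un complexe de Kan si et seulement s'il a la propriété de relèvement par rapport à toute inclusion de cornet $\Lambda^i[n]\to\Delta[n]$ avec $n\geq 1$ et $0\leq i\leq n$. Le cas $n=1$ est immédiat : les deux cornets $\Lambda^0[1]$ et $\Lambda^1[1]$ sont isomorphes à $\Delta[0]$, un morphisme $\Delta[0]\to\N(C)$ est une $0$-cellule $c$ de $C$, et $1_c\in C_1$ fournit le $1$-simplexe cherché. Pour $n\geq 2$, l'adjonction transforme un problème de relèvement pour $\Lambda^i[n]\to\Delta[n]$ par rapport à $\N(C)\to *$ en le problème de prolonger l'adjoint $p\colon|\Lambda^i[n]|\to C$ le long de $|\Lambda^i[n]|\to|\Delta[n]|$. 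Grâce à l'isomorphisme $|\Delta[n]|\cong\textbf{Eq}^{\pm}_{i,\Delta[n]}$ en dessous de $|\Lambda^i[n]|$ obtenu plus haut (le signe étant déterminé par la parité de $i$), un tel prolongement est exactement une pré-solution de l'équation $\textbf{Eq}^{\pm}_{i,\Delta[n]}$ avec le paramètre induit. Il suffit donc de montrer que, lorsque $C$ est $1$-triviale, les équations $\textbf{Eq}^{\pm}_{i,\Delta[n]}$ admettent une pré-solution pour tout $n\geq 2$, tout $0\leq i\leq n$ et tout paramètre.

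Fixons un tel paramètre ; par symétrie, supposons $i$ impair, de sorte que l'équation est
$$\textbf{Eq}\big(y\colon a^i_{n-1}*_{n-2}(\cdots(a^i_1*_0 x*_0 b^i_1)\cdots)*_{n-2}b^i_{n-1}\to d^+_{n-1}i_n\big),$$
et notons $\bar a_k,\bar b_k,\bar d$ les images dans $C$ de $a^i_k$, $b^i_k$ et $d^+_{n-1}i_n$. Il faut produire une $(n-1)$-cellule $x$, de bord prescrit par l'équation, et une $n$-cellule $y$ de $\bar a_{n-1}*_{n-2}(\cdots(\bar a_1*_0 x*_0\bar b_1)\cdots)*_{n-2}\bar b_{n-1}$ vers $\bar d$. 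Comme $C$ est $1$-triviale, chaque $\bar a_k$ et chaque $\bar b_k$ (ce sont des $k$-cellules avec $k\geq 1$) est faiblement inversible ; choisissons des inverses faibles $\tilde a_k,\tilde b_k$ (en prenant l'unité elle-même là où la cellule est une unité). En appliquant la propriété de division des cellules faiblement inversibles (proposition \ref{prop:propdedivision a droite}, sous sa forme $m>n$ pour les facteurs agissant par composition itérée) afin d'« éliminer » successivement $\bar b_{n-1},\bar a_{n-1}$, puis $\bar b_{n-2},\bar a_{n-2}$, et ainsi de suite jusqu'à $\bar b_1,\bar a_1$, on obtient qu'une $(n-1)$-cellule $x$ de bord prescrit rend le composé ci-dessus $\omega$-équivalent à $\bar d$ si et seulement si $x\sim\delta$, où
$$\delta:=\tilde a_1*_0\big(\tilde a_2*_1\big(\cdots\big(\tilde a_{n-1}*_{n-2}\bar d*_{n-2}\tilde b_{n-1}\big)\cdots\big)*_1\tilde b_2\big)*_0\tilde b_1.$$
Comme la proposition \ref{prop:propdedivision a droite}(2) produit la cellule quotient avec une source et un but prescrits, on peut de plus faire en sorte que $\delta$ ait exactement le bord requis pour $x$ ; les conditions de compatibilité nécessaires à chaque étape de division sont vérifiées car les identités correspondantes sont valables dans $|\Delta[n]|$ entre des cellules appartenant à la sous-$\omega$-catégorie $|\Lambda^i[n]|$, et sont donc préservées par le paramètre.

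Il suffit alors de poser $x:=\delta$ : comme $\delta\sim\delta$, le composé $\bar a_{n-1}*_{n-2}(\cdots(\bar a_1*_0\delta*_0\bar b_1)\cdots)*_{n-2}\bar b_{n-1}$ est $\omega$-équivalent à $\bar d$, donc il existe une $n$-cellule faiblement inversible $y$ de ce composé vers $\bar d$ (si $u\sim v$, il existe une cellule faiblement inversible dans chaque sens). Le couple $(\delta,y)$ est ainsi une pré-solution — en fait une solution — de $\textbf{Eq}^{+}_{i,\Delta[n]}$ pour le paramètre donné. Ceci valant pour tout $n\geq 2$, tout $i$ et tout paramètre, $\N(C)$ a la propriété de relèvement par rapport à toute inclusion de cornet, et est donc un complexe de Kan. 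Le seul point réellement délicat est le travail de vérification du paragraphe précédent : contrôler que la division itérée est légitime à chaque étape et que la chaîne $\delta$ obtenue porte exactement le bord exigé par l'équation $\textbf{Eq}^{+}_{i,\Delta[n]}$ ; une fois cela acquis, l'argument est purement formel.
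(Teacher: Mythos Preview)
Your overall strategy coincides with the paper's: reduce the Kan condition to finding pre-solutions of the equations $\textbf{Eq}^{\pm}_{i,\Delta[n]}$, then solve them by iterated division using the weak invertibility of the cells $f(a^i_k)$ and $f(b^i_k)$ via proposition~\ref{prop:propdedivision a droite}. The paper organises this as a descending induction on $k$, introducing intermediate equations
\[
\textbf{Eq}_k := \textbf{Eq}\big(y:\ a^i_{n-1}*_{n-2}(\cdots(a^i_k*_{k-1} x *_{k-1} b^i_k)\cdots) *_{n-2} b^i_{n-1}\to d_{n-1}^{+} i_n \big),
\]
applying point~(1) of proposition~\ref{prop:propdedivision a droite} for $k=n-1$ and point~(2) at each inductive step to produce a cell $x$ with \emph{prescribed} source and target.

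The genuine gap in your write-up is the explicit formula
\[
\delta=\tilde a_1*_0\big(\tilde a_2*_1\big(\cdots\big(\tilde a_{n-1}*_{n-2}\bar d*_{n-2}\tilde b_{n-1}\big)\cdots\big)*_1\tilde b_2\big)*_0\tilde b_1.
\]
A weak inverse $\tilde a_k$ of a $k$-cell $\bar a_k$ swaps the $(k-1)$-source and $(k-1)$-target but leaves all lower boundaries unchanged; consequently the composite $\tilde a_k*_{k-1}(\cdot)*_{k-1}\tilde b_k$ has $(k-1)$-source $d^+_{k-1}(\bar b_k)$ and $(k-1)$-target $d^-_{k-1}(\bar a_k)$, which are in general \emph{not} the boundaries $a^i_{k-1}*_{k-2}(\cdots)*_{k-2}b^i_{k-1}$ required by the equation. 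So neither the intermediate composability conditions nor the final boundary of $\delta$ come out right from this formula. You are aware of this (you invoke point~(2) of proposition~\ref{prop:propdedivision a droite} and flag the boundary check as ``le seul point réellement délicat''), but then the displayed $\delta$ is not what that proposition hands you, and the verification you defer is precisely the content of the proof. Replacing the closed formula by the paper's descending induction on the $\textbf{Eq}_k$---where at each step the division property is used in its form~(2) to obtain a quotient with the prescribed source and target $s,t$---is what makes the argument go through without residual checks.
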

\begin{proof}
L'ensemble simplicial $\N(C)$ est un complexe de Kan, si et seulement si  pour tout entier $n>1$ et tout entier $i\leq n$, $C$ a la propriété de relèvement par rapport aux morphismes $|\Lambda^i[n]| \to |\Delta[n]|$  et donc si et seulement si pour tout entier $n>1$  et pour tout entier $i\leq n$, dans le cas où $i$ est impair, l’équation $\textbf{Eq}^+_{i,\Delta[n]}$ a  une pré-solution dans $C$ pour tout choix de paramètre  $f : |\Lambda^i[n]|\to C$, et dans le cas où $i$ est pair, l’équation $\textbf{Eq}^-_{i,\Delta[n]}$ a  une pré-solution dans $C$ pour tout choix de paramètre  $f : |\Lambda^i[n]|\to C$.
Montrons donc cette dernière assertion. On se donne donc un entier $n>1$, un entier impair $i\leq n$ et un choix de paramètre quelconque $f : |\Lambda^i[n]|\to C$.

Considérons les équations suivantes : 
$$
\textbf{Eq}_k := \textbf{Eq}\big(y:(a^i_{n-1}*_{n-2}(...(a^i_k*_{k-1} x *_{k-1} b^i_k)...) *_{n-2} b^i_{n-1})\to d_{n-1}^{+} i_n \big)
$$ $$
\mbox{où pour $\alpha\in \{-,+\}$, $d_{n-2}^\alpha(x)= 
a^i_{k-1}*_{k-2}(...(a^i_1*_0 d^\alpha_{n-2}(d^i) *_0 b^i_1)...) *_{k-2} b^i_{k-1}
$}
$$

On va montrer par une récurrence descendante sur  $k\leq n-1$ que les équations $\textbf{Eq}_k$ admettent de telles solutions.

Dans le cas $k=n-1$, il faut trouver une $(n-1)$-cellule $x$ vérifiant
$${f(a^i_{n-1})*_{n-2} x *_{n-2} f(b^i_{n-1})\sim f(d_{n-1}^{+} i_n)}.$$ Les cellules $f(a^i_{n-1})$ et $f(b^i_{n-1})$ étant faiblement inversibles, on peut appliquer le premier point de la proposition \ref{prop:propdedivision a droite} pour obtenir la cellule $x$ recherchée.

Supposons maintenant que l’équation $\textbf{Eq}_{k+1}$ admet des solutions pour les paramètres $f$ et soit $\tilde{x}$ l'une d'entre elles. Comme 
comme $C$ est $0$-trivial, cette cellule vérifie 
$$(f(a^i_{n-1})*_{n-2}(...(f(a^i_{k+1})*_{k} \tilde{x} *_{k} f(b^i_{k+1}))...) *_{n-2} f(b^i_{n-1}))\sim f(d_{n-1}^{+} i_n).$$
On définit alors:
$$\def\arraystretch{1.4}\begin{array}{rcl}
s &:= &f(a^i_{k-1})*_{k-2}(...(f(a^i_1)*_0 f(d^-_{n-2}(d^i)) *_0 f(b^i_1))...) *_{k-2} f(b^i_{k-1}) \\
t &:= &f(a^i_{k-1})*_{k-2}(...(f(a^i_1)*_0 f(d^+_{n-2}(d^i)) *_0 f(b^i_1))...) *_{k-2} f(b^i_{k-1}),
\end{array}
$$
et on a $$f(a^i_{k})*_{k-1} s *_{k-1} f(b^i_{k}) = d_{n-2}^- \tilde{x} 
\mbox{~~~ et~~~ }f(a^i_{k})*_{k-1} t *_{k-1} f(b^i_{k}) = d_{n-2}^+\tilde{x}.$$
Comme $f(a^i_{k})$ et $f(b^i_{k})$ sont faiblement inversibles, on peut appliquer le deuxième point de la proposition \ref{prop:propdedivision a droite} qui assure l'existence d'une cellule $x$ vérifiant 
$f(a^i_{k-1})*_{k-2} x *_{k-2} f(b^i_{k-1}) \sim  \tilde{x}$.
On a alors 
$$(f(a^i_{n-1})*_{n-2}(...(f(a^i_k)*_{k-1} x *_{k-1} f(b^i_k))...) *_{n-2} f(b^i_{n-1}))\to f(d_{n-1}^{+} i_n) $$
et $x$ est une solution de $\textbf{Eq}_k$ pour les paramètres $f$.

On peut donc trouver des solutions pour les équations $\textbf{Eq}_k$ pour tout choix de paramètre, et en particulier pour $\textbf{Eq}_1$ qui est égale à $\textbf{Eq}^+_{i,\Delta[n]}$. On peut montrer de façon analogue que les équations $\textbf{Eq}^-_{i,\Delta[n]}$ admettent des solutions pour tout choix de paramètre.
Cela prouve qu'on peut relever les inclusions de cornets,  et donc que $\N(C)$ est un complexe de Kan.
\end{proof}

\begin{prop}
Soit $C$ une catégorie $1$-triviale. Alors $\N(C)$ est une quasi-catégorie. 
\label{prop:2trivial implique quasi categorie}
\end{prop}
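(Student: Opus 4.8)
Le plan est de calquer exactement la démonstration de la proposition \ref{prop:1trivial implique complexe de Kan}. D'abord, je traduirais la condition « $\N(C)$ est une quasi-catégorie » en termes d'équations : l'ensemble simplicial $\N(C)$ est une quasi-catégorie si et seulement si $C$ a la propriété de relèvement par rapport aux morphismes $|\Lambda^i[n]|\to|\Delta[n]|$ pour tout $n>1$ et tout $0<i<n$, ce qui, d'après la discussion précédant cette proposition, équivaut à dire que pour un tel couple $(n,i)$ l'équation $\textbf{Eq}^+_{i,\Delta[n]}$ (si $i$ est impair) ou $\textbf{Eq}^-_{i,\Delta[n]}$ (si $i$ est pair) admet une pré-solution dans $C$ pour tout choix de paramètre $f:|\Lambda^i[n]|\to C$. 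On fixe donc $n>1$, $0<i<n$ et un paramètre $f$, et l'on traite par exemple le cas $i$ impair, l'autre étant symétrique.

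Ensuite, je dégagerais l'unique ingrédient nouveau par rapport au cas $1$-trivial : lorsque $0<i<n$, les $1$-cellules $a^i_1$ et $b^i_1$ de la définition \ref{defi:definition de gamma} sont des unités. En effet, comme $i$ n'est ni $0$ ni $n$, la $(n-1)$-cellule $d^i=d_i i_n$ a pour $0$-source la $0$-cellule $\sigma_0$ et pour $0$-but la $0$-cellule $\sigma_n$, exactement comme $i_n$; en reprenant la construction descendante des triplets $(a^i_k,b^i_k,\gamma^i_k)$ et en observant que $|\Delta[n]|$ ne possède aucune $1$-cellule non triviale de source $\sigma_0$ et but $\sigma_0$ (resp. de source $\sigma_n$ et but $\sigma_n$), on conclut que $a^i_1=1_{\sigma_0}$ et $b^i_1=1_{\sigma_n}$. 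Par suite, $f(a^i_1)$ et $f(b^i_1)$ sont des unités de $C$, donc faiblement inversibles, sans aucune hypothèse sur $C$. (C'est précisément ce qui tombe en défaut pour les cornets externes $i=0$ ou $i=n$, où $a^i_1$ ou $b^i_1$ est en général une $1$-cellule non inversible, et explique qu'on n'ait besoin que de la $2$-trivialité.)

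Je reprendrais alors telle quelle la récurrence descendante sur $1\leq k\leq n-1$ de la preuve de la proposition \ref{prop:1trivial implique complexe de Kan}, en montrant que chaque équation auxiliaire $\textbf{Eq}_k$ admet une solution pour le paramètre $f$. À l'étape $k$, on applique le point correspondant de la proposition \ref{prop:propdedivision a droite} relativement aux cellules $f(a^i_k)$ et $f(b^i_k)$; celles-ci sont faiblement inversibles : pour $k\geq 2$ parce que ce sont des $k$-cellules et que $C$ est $2$-triviale, pour $k=1$ parce que ce sont des unités d'après l'étape précédente. En particulier $\textbf{Eq}_1=\textbf{Eq}^+_{i,\Delta[n]}$ admet une pré-solution, d'où le relèvement cherché, et $\N(C)$ est une quasi-catégorie.

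Le point délicat est l'étape intermédiaire : vérifier soigneusement que $a^i_1$ et $b^i_1$ sont des unités dès que $0<i<n$. Cela demande de suivre l'évolution des $0$-source et $0$-but le long de la récurrence de la définition \ref{defi:definition de gamma} et d'utiliser l'« acyclicité » de l'oriental $|\Delta[n]|$ (absence de $1$-cellule « en boucle »). Une fois ce lemme acquis, tout le reste est, mot pour mot, la preuve du cas $1$-trivial, la $2$-trivialité se substituant à la $1$-trivialité pour les cellules de dimension $\geq 2$ et les unités prenant le relais en dimension $1$.
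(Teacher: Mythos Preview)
Your proposal is correct and follows essentially the same route as the paper: the key lemma is exactly that $a^i_1$ and $b^i_1$ are units when $0<i<n$ (this is the paper's lemme \ref{lemetechnique}), and the rest is the descending recursion of the proof of proposition \ref{prop:1trivial implique complexe de Kan}. Two cosmetic remarks: first, with the paper's composition convention one finds $a^i_1=1_{\sigma_n}$ and $b^i_1=1_{\sigma_0}$ rather than the other way around, which is harmless for your argument; second, the paper stops the recursion at $k=2$ and simply observes that, since $a^i_1,b^i_1$ are units, $\textbf{Eq}_2$ is already $\textbf{Eq}^+_{i,\Delta[n]}$, whereas you run the recursion down to $k=1$ --- both are equivalent, your extra step being trivially true.
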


\begin{lem}
\label{lemetechnique}
Soit un entier $0<i<n$. Alors $a_1^i$ et $b_1^i$ sont des unités.
\end{lem}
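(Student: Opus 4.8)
The plan is to pin down the $0$-source and $0$-target of the $1$-cells $a_1^i$ and $b_1^i$, observe that for each of the two cells these two $0$-cells coincide, and then use that a $1$-cell of $|\Delta[n]|$ with equal $0$-source and $0$-target is an identity. The hypothesis $0<i<n$ enters at exactly one point.

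First I would record that, by Proposition~\ref{prop:but et source explicite des simplexes}, a non-degenerate $m$-simplex $x=[v_0<\dots<v_m]$ of $\Delta[n]$, regarded as a chain, satisfies $d_0^-(x)=[v_0]$ and $d_0^+(x)=[v_m]$: the unique strictly increasing sequences of signature $s_i^m$ and $s_p^m$ are $(1,2,\dots,m)$ and $(0,1,\dots,m-1)$, and the corresponding iterated faces of $x$ pick out its first and last vertex. Since $0<i<n$, the missing face $\gamma_1^i=d^i=d_i i_n=[0,\dots,i-1,i+1,\dots,n]$ still has smallest vertex $0$ and largest vertex $n$, so $d_0^-(\gamma_1^i)=[0]=d_0^-(i_n)$ and $d_0^+(\gamma_1^i)=[n]=d_0^+(i_n)$; in other words $\gamma_1^i$ is $0$-parallel to $i_n$. (This fails for $i=0$ or $i=n$, which is exactly why those values are excluded.)

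Next I would feed this into the construction of Definition~\ref{defi:definition de gamma}. The cell it writes as $a_1^i *_0 \gamma_1^i *_0 b_1^i$ — namely $\gamma_2^i$ when $n\geq 3$, and $d_{n-1}^{\alpha} i_n$ in the degenerate case $n=2$ — is itself $0$-parallel to $i_n$: $\gamma_2^i$ by the $0$-parallelism built into the $\gamma^i_k$, and $d_{n-1}^{\alpha} i_n$ by the source/target compatibilities of Proposition~\ref{prop:d_n est une differencielle}. Since $\gamma_1^i$ occupies the middle slot of this iterated $0$-composition and has distinct $0$-source $[0]$ and $0$-target $[n]$, matching up $0$-sources and $0$-targets along the composite forces each of $a_1^i$ and $b_1^i$ to have its $0$-source equal to its $0$-target (one is an endomorphism $1$-cell of $[0]$, the other of $[n]$); in the remaining branch of the construction, where $a_1^i$ and $b_1^i$ are defined directly as $1$-dimensional units, there is nothing to check.

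Finally I would prove the one genuine lemma: a $1$-cell $c$ of $|\Delta[n]| = \mu C_\bullet(\Delta[n])$ with $d_0^-(c)=d_0^+(c)$ is an identity. If $|c|\leq 0$, then $c$ is a non-zero coherent $0$-chain, hence a single vertex by Proposition~\ref{prop:coherende de degre moin un est reduit a un singloton}, i.e. an identity. If $|c|=1$, write $(c)_1=\sum_e\lambda_e\,e$ for its $1$-homogeneous part, the $e=[a_e,b_e]$ being non-degenerate edges ($a_e<b_e$) and $\lambda_e\in\mathbb{N}$; by Definition~\ref{defi:des sources et but} one gets $d_0^+(c)-d_0^-(c)=\partial_0((c)_1)=\sum_e\lambda_e\big([b_e]-[a_e]\big)$, so the hypothesis gives $\sum_e\lambda_e([b_e]-[a_e])=0$ in $C_0(\Delta[n])$; pairing with the linear form $[v]\mapsto v$ yields $\sum_e\lambda_e(b_e-a_e)=0$ with every summand $\geq 0$, whence all $\lambda_e=0$, contradicting $|c|=1$. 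Applying this to $a_1^i$ and $b_1^i$ concludes. I expect this last step — essentially the absence of a non-trivial directed loop of edges in $\Delta[n]$ — to be the substantive point; the branch distinction in Definition~\ref{defi:definition de gamma} and the case $n=2$ both collapse to the same $0$-source/$0$-target bookkeeping.
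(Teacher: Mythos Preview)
Your argument is correct and follows the same route as the paper's proof: both use the $0$-composition $a^i_1 *_0 d^i *_0 b^i_1$ together with the fact that for $0<i<n$ the simplex $d^i$ has extreme vertices $0$ and $n$, and then conclude from the absence of non-trivial $1$-loops in $|\Delta[n]|$ that $a^i_1$ and $b^i_1$ are identities. Your version is more careful---you handle the case $n=2$ and the trivial branch of Definition~\ref{defi:definition de gamma} explicitly, and you spell out the loop-free fact by a direct displacement argument rather than merely invoking that the basis is \emph{sans boucles}---but the substance is the same.
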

\begin{proof}
On a par définition l'égalité suivante:
$$\gamma^i_2 = a^1_i *_0d^i *_0 b^1_i.$$
Or pour $0<i<n$, $d_0(d^i) = 0$ et $d_1(d^i)=n$, comme la $\omega$-catégorie $\Delta[n]$ est sans boucles, les cellules  $a^1_i$ et $b^1_i$ sont forcément des unités.
\end{proof}

\begin{proof}[Démonstration de la proposition \ref{prop:2trivial implique quasi categorie}]
On va procéder de façon analogue à la preuve de la proposition \ref{prop:1trivial implique complexe de Kan}.

L'ensemble simplicial $\N(C)$ est une quasi-catégorie, si et seulement si  pour tout entier $n>1$ et tout entier $0<i< n$, $C$ a la propriété de relèvement par rapport aux morphismes $|\Lambda^i[n]| \to |\Delta[n]|$  et donc si et seulement si pour tout entier $n>1$  et pour tout entier $0<i< n$, dans le cas où $i$ est impair, l’équation $\textbf{Eq}^+_{i,\Delta[n]}$ a  une pré-solution dans $C$ pour tout choix de paramètre  $f : |\Lambda^i[n]|\to C$, et dans le cas où $i$ est pair, l’équation $\textbf{Eq}^-_{i,\Delta[n]}$ a  une pré-solution dans $C$ pour tout choix de paramètre  $f : |\Lambda^i[n]|\to C$.

On se donne un entier $n>1$, un entier  impair $0<i<n$, une application $f:|\Lambda^i[n]|\to C$ et comme plus haut, on définit pour $k\geq 2 $:
$$
\textbf{Eq}_k := \textbf{Eq}\big(y:(a^i_{n-1}*_{n-2}(...(a^i_k*_{k-1} x *_{k-1} b^i_k)...) *_{n-2} b^i_{n-1})\to d_{n-1}^{+} i_n \big)
$$ $$
\mbox{où pour $\alpha\in \{-,+\}$, $d_{n-2}^\alpha(x)= 
a^i_{k-1}*_{k-2}(...(a^i_1*_0 d^\alpha_{n-2}(d^i) *_0 b^i_1)...) *_{k-2} b^i_{k-1}
$}
$$

Pour $k\geq 2$, les cellules $a^i_k$ et $b_i^k$ sont de dimension au moins $2$, et donc, par hypothèse, les cellules $f(a^i_k)$ et $f(b_i^k)$ sont faiblement inversibles. 
On peut alors montrer par une récurrence descendante sur $k$, de la même façon que dans la preuve de la proposition \ref{prop:1trivial implique complexe de Kan}, que pour $k\geq 2$, l'équation $\textbf{Eq}_k$ admet une solution pour les paramètres $f$.

Or selon le lemme \ref{lemetechnique}, $a^i_1$ et  $b^i_1$ sont des unités et donc 
$\textbf{Eq}_2 = \textbf{Eq}^+_{i,\Delta[n]}$. Cela prouve donc que cette équation a toujours des solutions dans $C$. De façon analogue, pour tout entier $n>0$ et tout entier $0<i<n$, l'équation $\textbf{Eq}^-_{i,\Delta[n]}$ admet toujours des solutions dans $C$. 
Cela prouve donc que $\N(C)$ est une quasi-catégorie.
\end{proof}
On peut alors résumer les résultats précédents en un seul théorème:

\begin{theo}
\label{theo:theo principale}
Soit $C$ est une $\omega$-catégorie. Les trois assertions sont équivalentes : 
\begin{enumerate}
	\item  L'ensemble simplicial $\N(C)$ a la propriété de relèvement par rapport aux inclusions ${\Lambda^2[n+1]\to \Delta[n+1]}$ pour tout $n>0$ (resp. pour tout $n>1$);
	\item L'ensemble simplicial $\N(C)$ est un complexe de Kan (resp. une quasi-catégorie);
	\item La $\omega$-catégorie $C$ est $0$-triviale (resp. $1$-triviale).
\end{enumerate}
\end{theo}

\section{Généralisation au nerf complicial}
\subsection{Ensembles compliciaux}
On ne donnera  ici que les définitions et résultats qui nous seront utiles pour la suite. Pour une introduction détaillée, voir \cite{riehl}.

\begin{defi}
Une \textit{stratification} d'un ensemble simplicial $X$ est un sous-ensemble $tX\subset\coprod_{n>0} X_n$  qui contient l'ensemble des simplexes dégénérés.
\end{defi}

Un ensemble stratifié est un couple $(X,tX)$. On dit  des simplexes étant dans $tX$ qu'ils  sont \textit{marqués}. Pour $(X,tX)$ et $(Y,tY)$ des ensembles stratifiés,
un morphisme  d'ensembles simpliciaux $f: X\to Y$ est stratifié si $f(tX)\subset tY$. On note  $\strat$ la catégorie des ensembles stratifiés.

\begin{defi}
Une inclusion $ i :U\to V$ entre ensembles stratifiés est :
\begin{enumerate}
	\item \textit{régulière}, notée $\to_r$, si un simplexe est marqué dans $U$ si et seulement il l'est dans $V$;
	\item \textit{pleine}, notée $\to_e$, si le morphisme est l'identité sur les ensembles simpliciaux sous-jacents. 
\end{enumerate}
\end{defi}

\begin{notation}
Si $i : U\to V$ est une inclusion et $V'$ est une stratification de $V$, il existe une unique stratification de $U$ rendant $i$ régulière. Elle correspond à celle où un simplexe de $U$  est marqué si et seulement si son image par $i$ l'est. Cette stratification sera notée $U^\circ$. On a alors $i : U^\circ\to_r V'$.
\end{notation}

\begin{defi}
Pour $0\leq k \leq n$ on définit l'ensemble stratifié $\Delta^k[n]$ dont l'ensemble simplicial sous-jacent est $\Delta[n]$. Les simplexes marqués sont ceux qui comprennent $\{k-1,k,k+1\}\cap [n]$.

On définit l’ensemble stratifié $\Delta^k[n]'$, obtenu à partir de $\Delta^k[n]$ et en marquant la $(k-1)$- et la $(k+1)$-face. L'ensemble stratifié $\Delta^k[n]''$ est obtenu en marquant tous les simplexes de codimension $1$.
\end{defi}

\begin{defi}
Un ensemble stratifié $X$ est un \textit{ensemble complicial}, si $X\to 1$ a la propriété de relèvement par rapport aux morphismes 
$ \Lambda^k[n]^\circ \to_r  \Delta^k[n]$ et $\Delta^k[n]'\to_e \Delta^k[n]''$.
\end{defi}

\begin{defi}
Un ensemble complicial est \textit{$k$-trivial} si toutes les  cellules de dimension strictement supérieure à $k$ sont marquées.
\end{defi}

\begin{defi}
On définit $\Delta[3]^{eq}$ comme étant l'ensemble stratifié sur $\Delta[3]$ où $[02]$ et $[13]$ sont marqués ainsi que tous les simplexes de dimension au moins 2.  On note $\Delta[3]^\#$ l'ensemble stratifié où tous les simplexes non dégénérés sont marqués.
\end{defi}

\begin{defi}
On note $\star: \textbf{Sset}\times \textbf{Sset}\to \textbf{Sset}$ le joint d'ensembles simpliciaux. On l'étend aux ensembles stratifiés de la façon suivante: soient $U$ et $V$ deux ensembles stratifiés, un simplexe
$v :\Delta[n]\to U\star V$ est marqué dès lors qu'un des morphismes induit $v_1 : \Delta[i]\to U$,  $v_2 : \Delta[n-i-1]\to V$ l'est. 
Cela permet donc de définir le\textit{ joint d'ensembles stratifiés} :$\star: \strat\times \strat\to \strat$.
\end{defi}

\begin{defi}
\label{defi:ensembles complicial sature}
Un ensemble complicial  est \textit{saturé} s'il a la propriété de relèvement par rapport aux morphismes
$\Delta[3]^{eq}\star \Delta[n]\to \Delta[3]^\#\star \Delta[n]$ et $\Delta[n]\star \Delta[3]^{eq}\to \Delta[n]\star\Delta[3]^\#.$
\end{defi}

\subsection{Nerfs et ensembles compliciaux}
 \begin{defi}
 \label{defi:stratification de n(c)}
 On définit une stratification sur $\N(C)$. Un simplexe $v \in \N(C)_n$ correspond à un morphisme entre $\omega$-catégories : $v:|\Delta[n]|\to C$. On note $i_n$ l'unique simplexe non dégénéré de dimension $n$ de $\Delta[n]$. Le simplexe $v$ est marqué si  $v(i_n)$ est une $n$-cellule faiblement inversible. Par abus de langage on note aussi $\N(C)$ l'ensemble stratifié obtenu.
 \end{defi}

Le but de cette section est de montrer que $\N(C)$ muni de cette stratification est un ensemble complicial. 
On va procéder de la même façon que dans les preuves des propositions \ref{prop:1trivial implique complexe de Kan} et \ref{prop:2trivial implique quasi categorie}.
Avant cela, on a besoin de plusieurs résultats:

\begin{prop}
\label{prop:chaines est faible inversible}
Soient $K$ un complexe dirigé augmenté admettant une base sans boucles et unitaire, $C$ une $\omega$-catégorie, et un morphisme $f : \mu K \to C$. Soit $a$ une chaîne cohérente telle que tout élément de $a$ de degré $|a|$ soit envoyé par $f$ sur une cellule faiblement inversible. Alors $f(a)$ est faiblement inversible.
\end{prop}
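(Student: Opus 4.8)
The plan is to argue by induction on the composition degree $|a|_c$, reducing to the case of a single basis element by means of the explicit one-step decomposition of Theorem \ref{theo:decomposition explicite}. A coherent chain is nonzero, so $|a|_c$ is well defined. When $|a|_c = -1$, Proposition \ref{prop:coherende de degre moin un est reduit a un singloton} gives that $a$ is a single basis element $b$; then $(a)_{|a|} = b$, so $f(a) = f(b)$ is weakly invertible by hypothesis.

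For the inductive step I would assume the statement for coherent chains of composition degree $< m$ and take $|a|_c = m \geq 0$. Writing $a = \sum_{i\leq m'} b_i + r_{|a|_c}(a)$ in ordered form (Definition \ref{defi:forme ordonne}), first note that $m' \geq 1$: since $|a|_c \geq 0$ the support of $a$ has at least two elements, hence contains two basis elements of degree $> |a|_c$ (those realizing the largest and second-largest degree, the latter being $|a|_c + 1$ by Remark \ref{rem:sur le degre de composition}), and both lie among the $b_i$. Theorem \ref{theo:decomposition explicite} then yields $a = \beta_0 *_{|a|_c} \cdots *_{|a|_c} \beta_{m'}$ with each $\beta_k$ a coherent chain of the form $\beta_k = b_k + c_k + r_{|a|_c}(a)$, where $c_k$ and $r_{|a|_c}(a)$ have degree $\leq |a|_c$. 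From this I extract the two facts used below: $b_k$ is the only support element of $\beta_k$ of degree $> |a|_c$, so $|\beta_k| = |b_k|$ and, by coherence (Proposition \ref{prop:pas de double}), $(\beta_k)_{|\beta_k|} = b_k$; and the second-largest degree of the support of $\beta_k$ is $\leq |a|_c$, so $|\beta_k|_c \leq |a|_c - 1 < m$, which is precisely what makes the induction hypothesis available for each $\beta_k$.

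The argument then splits into two cases. If $|a| = |a|_c + 1$, every $b_i$ has degree $|a|$, hence belongs to $(a)_{|a|}$, so $f(b_i)$ is weakly invertible for all $i$; by the two facts above and the induction hypothesis each $f(\beta_k)$ is weakly invertible, and since $f$ preserves composition, $f(a) = f(\beta_0) *_{|a|_c} \cdots *_{|a|_c} f(\beta_{m'})$ is an $|a|_c$-composite of weakly invertible $|a|$-cells, hence weakly invertible (closure of weak invertibility under composition, recorded among the examples following the definition of weak invertibility). If $|a| > |a|_c + 1$, there is a unique $j$ with $|b_j| = |a|$ and $(a)_{|a|} = \{b_j\}$, so the induction hypothesis applied to $\beta_j$ makes the $|a|$-cell $f(\beta_j)$ weakly invertible, while for $k \neq j$ the cell $f(\beta_k)$ has dimension $|a|_c + 1 < |a|$; rewriting $f(a) = L *_{|a|_c} f(\beta_j) *_{|a|_c} R$ by associativity, with $L$ and $R$ the $|a|_c$-composites of the $f(\beta_k)$ for $k < j$ and $k > j$ (lower-dimensional cells, or units), I would conclude by the fact — also among those examples — that whiskering a weakly invertible cell by a cell of strictly smaller dimension preserves weak invertibility.

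The only delicate point I anticipate is the degree bookkeeping: checking that $|\beta_k|_c < |a|_c$ so the recursion genuinely descends, and recognizing that in the inhomogeneous case exactly one $\beta_k$ is top-dimensional while the others are lower-dimensional whiskers. Both amount to reading off the degrees of the explicit terms appearing in Theorem \ref{theo:decomposition explicite}; once that is settled, the stability of weak invertibility under composition and whiskering finishes the proof.
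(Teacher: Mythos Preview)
Your proof is correct and follows essentially the same route as the paper: induction on $|a|_c$, base case via Proposition~\ref{prop:coherende de degre moin un est reduit a un singloton}, and the inductive step via the decomposition of Theorem~\ref{theo:decomposition explicite}. The only cosmetic difference is that the paper does the case split per factor~$\beta_k$ (either $|\beta_k|=|a|$ and induction applies, or $|\beta_k|<|a|$ and $\beta_k$ as an $|a|$-cell is a unit, hence weakly invertible), whereas you split globally on whether $|a|=|a|_c+1$; this avoids your separate whiskering argument but is otherwise the same.
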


\begin{proof}
Rappelons que $|a|$ est la dimension maximale des éléments de la base présents dans $a$, et donc $|a|> |a|_c$. 
On va procéder par récurrence sur le degré de composition. Lorsque $|a|_c = -1$, alors, selon la proposition \ref{prop:coherende de degre moin un est reduit a un singloton}, $a$ est réduit à un unique élément, et la propriété est trivialement vraie.

Supposons maintenant le résultat vrai pour les chaînes de degré de composition $m$, et donnons nous une chaîne cohérente $a:= \sum_{i\leq n} b_i +r_{|a|_c}(a) $ écrite sous forme ordonnée, de degré de composition $m+1$  et telle que pour tout $i\leq n$, si $|b_i| =|a|$, alors $f(b_i)$ est faiblement inversible. 

Selon le théorème \ref{theo:decomposition explicite}, on a 
$$a = \beta_0 *_{|a|_c} \beta_1 *_{|a|_c}... *_{|a|_c} \beta_m $$
où $$\beta_k  := b_k +~(d_{|a|_c}^-(\sum_{i<k}b_i)\diagdown d_{|a|_c}^+ b_k) \vee (d_{|a|_c}^+(\sum_{k<i\leq m}b_i) \diagdown d_{|a|_c}^- b_k)~+ r_{|a|_c}(a).$$

Pour tout $k$ le degré de composition de $\beta_k$ est inférieur ou égal à $m$. Deux cas de figure se présentent alors. Le premier est celui où $|\beta_k|=|a|$. L'élément $b_k$ est le seul de degré $|\beta_k|$ et il est alors envoyé par $f$ sur une cellule faiblement inversible. On peut donc  appliquer l'hypothèse de récurrence qui implique que  $f(\beta_k)$ est faiblement inversible. Le second cas est celui où $|\beta_k|<|a|$. La $|a|$-cellule correspondant à $\beta_k$ est alors une unité, donc inversible et \textit{a fortiori} faiblement inversible. 

La cellule $f(a)$ est donc une composition de $|a|$-cellules faiblement inversibles et est donc elle-même faiblement inversible.
\end{proof}

\begin{prop}
\label{prop:expression d'un face}
Soient  $x$ un $n$-simplexe non dégénéré de $\Delta[m]$, et $d$ une $k$-face de $x$ avec $k\geq 1$. Alors il existe une chaîne cohérente $a \in C_\bullet(\Delta[m])$, comprenant $d$ et  $(k-1)$-parallèle à $x$.
\end{prop}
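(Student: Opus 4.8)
The plan is to prove a strengthening of the statement, phrased for coherent chains rather than for simplices, because the assertion as stated does not recurse: an element $\hat d$ of the support of a coherent chain $c$ is in general not $m$-parallel to $c$ for any $m$. The strengthening I would prove is: \emph{if $K$ is a regular simplicial set, $c$ a coherent chain of $|K|$ of degree $N\geq 1$, $\hat d$ an element of $(c)_N$ (a top-degree basis element of $c$), and $d$ a $k$-face of the simplex $\hat d$ with $1\leq k\leq N$, then there is a coherent chain $a\in|K|$ containing $d$ and $(k-1)$-parallel to $c$.} Proposition \ref{prop:expression d'un face} is the case $c=\hat d=x$ and $N=n$; the induction is on the codimension $N-k$.

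Before the induction I would record an elementary consequence of regularity: every face of a non-degenerate simplex of $K$ is again non-degenerate. By iteration it suffices to treat codimension-one faces. If $\hat d$ is a non-degenerate $N$-simplex and the face $d_v\hat d$ obtained by deleting the vertex $v$ were degenerate, say $d_v\hat d=s_l w$, then $d_l d_v\hat d=d_{l+1}d_v\hat d=w$; but these two $(N-2)$-faces of $\hat d$ correspond to deleting the distinct pairs $\{v,u_l\}$ and $\{v,u_{l+1}\}$ (with $u_l\neq u_{l+1}$ consecutive among the vertices of $\hat d$ other than $v$), so by the injectivity for $k=2$ in Definition \ref{defi:complexe simplicial} they must differ --- a contradiction. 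In particular the face $d$ in the statement is automatically non-degenerate, and throughout the induction we may replace a simplex by a codimension-one face without leaving the non-degenerate simplices.

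For the induction, the case $N-k=0$ is immediate: then $d=\hat d\in\mathrm{supp}(c)$ and $a:=c$ works, being $(N-1)$-parallel to itself. Suppose $N-k\geq 1$, so $N\geq 2$ and $k\leq N-1$. Choose a vertex $v$ of $\hat d$ not among the vertices of $d$, and let $\hat d'$ be the codimension-one face deleting $v$; it is non-degenerate by the previous paragraph, contains $d$, and $d$ has codimension $N-1-k$ in $\hat d'$. Pick $\alpha\in\{-,+\}$ according to the parity of $v$ so that $\hat d'\in\mathrm{supp}(d_{N-1}^\alpha\hat d)$ (Proposition \ref{prop:but et source explicite des simplexes}). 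If $(c)_N$ has more than one element then $|c|_c=N-1$, and I would first replace $c$ by the factor $\beta_l$ of its decomposition in Theorem \ref{theo:decomposition explicite} that contains $\hat d$ (so $\hat d=b_l$ in the ordered form): $\beta_l$ is coherent, $(\beta_l)_N=\hat d$ is its unique top-degree element, and $\beta_l$ is $(N-2)$-parallel to $c$ by Proposition \ref{prop:nom provisoire}. Thus one may assume $(c)_N=\hat d$. Now set $c':=d_{N-1}^\alpha(c)$; since $|c|=N$ and $(c)_N=\hat d$ one has $c'=d_{N-1}^\alpha\hat d+r_{N-1}(c)$, so $c'$ is coherent (a source or target of a coherent chain), of degree $N-1$, with $\hat d'\in(c')_{N-1}$ (no cancellation, both summands being positive), and $c'$ is $(N-2)$-parallel to $c$ by Proposition \ref{prop:d_n est une differencielle}. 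The induction hypothesis applied to $(c',\hat d',d)$, of codimension $N-1-k$, yields a coherent chain $a$ containing $d$ and $(k-1)$-parallel to $c'$; since $k-1\leq N-2$, $a$ is then $(k-1)$-parallel to $c$, hence to the original chain (the replacement $\beta_l$ being $(N-2)$-parallel to it).

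I expect the crux, beyond this bookkeeping, to be precisely the move to the strengthened statement: one has to carry a distinguished top-degree element along the induction and, having used Theorem \ref{theo:decomposition explicite} to reduce to the case where that element is the \emph{unique} top-degree element, check that applying $d_{N-1}^\alpha$ keeps a chosen codimension-one face of it in the top-degree support without cancellation. The auxiliary observation that regularity forces all faces of non-degenerate simplices to be non-degenerate is what allows this reduction to iterate cleanly.
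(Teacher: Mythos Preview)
Your argument is correct. The strengthening to coherent chains, the reduction to a single top-degree element via the factor $\beta_l$ of Theorem~\ref{theo:decomposition explicite}, and the descent by applying $d_{N-1}^\alpha$ all work as you describe; the auxiliary observation that faces of non-degenerate simplices in a regular simplicial set remain non-degenerate is exactly what is needed for the recursion to stay among basis elements.

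The route, however, is genuinely different from the paper's. The paper does \emph{not} strengthen the statement: it keeps $x$ a simplex throughout and, in the inductive step, applies the hypothesis to the pair $(d^i,d)$ where $d^i$ is a codimension-one face of $x$ containing $d$, obtaining a chain $\tilde a$ that is $(k-1)$-parallel to $d^i$. It then corrects $\tilde a$ to be $(k-1)$-parallel to $x$ by adding the ``complement'' $c$ in $\gamma^i_{k+1}=d^i+c$ (the chains $\gamma^i_k$ from Definition~\ref{defi:definition de gamma}) and invoking Corollary~\ref{cor:lembis}. So the paper pads from below using the pre-built $\gamma$ machinery, whereas you peel from above using $d_{N-1}^\alpha$ and the decomposition theorem. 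Your approach is more self-contained---it does not touch the $\gamma^i_k$ at all---at the cost of carrying a chain (rather than a simplex) through the induction. One minor framing point: your remark that the original statement ``does not recurse'' is a little misleading, since the paper shows it does recurse directly on simplices via the face $d^i$; what is true is that \emph{your} particular inductive scheme requires the strengthening.
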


\begin{proof}
On va   montrer le résultat pour tout couple $(x,d)$ où $x$ est un $n$-simplexe, et $d$ une $k$-face de $x$, par récurrence sur $n-k$. 

Supposons  tout d'abord $n-k=1$, c'est-à-dire $k=n-1$. On peut se ramener au cas où $m=n$, $x=i_n$ et $d$ est un $(n-1)$-simplexe de $\Delta[n]$.
Soit $i$ l'entier tel que $d^i = d$. On définit $\alpha = +$ si $i$ est pair, $\alpha=-$ sinon. La chaîne $d_{n-1}^\alpha i_n$ comprend $d$, et est $(n-2)$-parallèle à $x$.

Supposons maintenant  que le résultat est vrai pour $n-k =m$ et montrons le pour $n-k=m+1$. On se ramène  encore une fois au cas où $m=n$, $x=i_n$ et $d$ est un $k$-simplexe de $\Delta[n]$.
Il existe un entier $i$ tel que $d$ soit une $k$-face de $d^i$. On peut alors appliquer l'hypothèse de récurrence sur $(d^i,d)$ et il existe donc une chaîne cohérente $\tilde{a}$, comprenant $d$, et  $(k-1)$-parallèle à $d^i$. 

Or la chaîne $\gamma^i_{k+1}$, défini en \ref{defi:definition de gamma}, est de degré de composition $k-1$, comprend $d^i$, et peut donc s'exprimer sous la forme 
$\gamma^i_{k+1} = d^i+ c$ où $c$ est de degré $k$. Selon le corollaire \ref{cor:lembis}, la chaîne $\tilde{a}+c$ est donc $(k-1)$-parallèle à $\gamma^i_{k+1}$, et donc $(k-1)$-parallèle à $i_n$. De plus, elle comprend $d$ et vérifie donc les conditions voulues.
\end{proof}

\begin{lem}
\label{lem:condition pour une relation}
Soient
$a$ une chaîne cohérente de $C_\bullet (\Delta[m])$ et  $b$,$b'\in a$  deux simplexes de $\Delta[m]$, de dimensions strictement supérieures au degré de composition de $a$. Supposons de plus que $b$ et $b'$ aient une $|a|_c$-face en commun. On a alors $$b\odot_{|a|_c}b' \mbox{ ou }b'\odot_{|a|_c}b.$$
\end{lem}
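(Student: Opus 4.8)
The plan is to reduce the statement to a single application of Proposition \ref{prop: plus de fourche } together with the definition of the relation $\odot_{|a|_c}$. Write $m:=|a|_c$. If $m=-1$, then $a$ is coherent, nonzero and of composition degree $-1$, hence reduced to a single simplex by Proposition \ref{prop:coherende de degre moin un est reduit a un singloton}; thus $b=b'$ and $b\odot_{-1}b'$ by reflexivity. So assume $m\geq 0$. Put $a$ in ordered form $a=\sum_{i}b_i+r_m(a)$. Since every simplex occurring in $r_m(a)$ has dimension $\leq m$ while $\dim b,\dim b'>m$, both $b$ and $b'$ are among the $b_i$; by the remark following Proposition \ref{prop:pas de double} all coefficients are $1$, so $b=b_k$ and $b'=b_l$ for unique indices, which we order so that $k\leq l$. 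If $k=l$ then $b=b'$ and we are done by reflexivity, so assume $k<l$.

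Recall that for a basis element (i.e.\ a nondegenerate simplex) $s$ one has $d_m^{\gamma}(s)=\langle s\rangle_m^{\gamma}$ by Remark \ref{remarquer dans le cas coherent}, and that by Proposition \ref{prop:but et source explicite des simplexes} the supports of $d_m^-(s)$ and $d_m^+(s)$ are disjoint: they are of the form $d_{s_i^{\dim s-m}}(s)$ and $d_{s_p^{\dim s-m}}(s)$ with $s_i^{\dim s-m}\neq s_p^{\dim s-m}$, so Proposition \ref{prop:injectivite dans lse complexes simpliciaux} applies. A common $|a|_c$-face of $b$ and $b'$ — in the chain calculus, an $m$-simplex $c$ occurring in the $m$-source or $m$-target of each — thus occurs in exactly one of $\mathrm{supp}(d_m^-(b)),\ \mathrm{supp}(d_m^+(b))$, say with sign $\alpha$, and in exactly one of $\mathrm{supp}(d_m^-(b')),\ \mathrm{supp}(d_m^+(b'))$, say with sign $\beta$.

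The crucial observation is that $\alpha\neq\beta$. Indeed, if $\alpha=\beta$ then $c\in\mathrm{supp}(d_m^{\alpha}(b_k))\cap\mathrm{supp}(d_m^{\alpha}(b_l))$, contradicting point (1) of Proposition \ref{prop: plus de fourche }, which gives $d_m^{\alpha}(b_k)\wedge d_m^{\alpha}(b_l)=0$ for $k<l$. Hence $\{\alpha,\beta\}=\{-,+\}$. If $\alpha=-$ and $\beta=+$, then $c$ lies in the supports of $d_m^-(b)=\langle b\rangle_m^-$ and of $d_m^+(b')=\langle b'\rangle_m^+$, so $\langle b\rangle_m^-\wedge\langle b'\rangle_m^+\neq 0$ and therefore $b\odot_m b'$ by the generating relation of $\odot_m$. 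If $\alpha=+$ and $\beta=-$, the symmetric computation gives $\langle b'\rangle_m^-\wedge\langle b\rangle_m^+\neq 0$, i.e.\ $b'\odot_m b$. In either case the lemma follows.

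The only place where coherence of $a$ is used is the use of Proposition \ref{prop: plus de fourche } to exclude that the common face is carried simultaneously by the $m$-sources (or simultaneously by the $m$-targets) of $b$ and $b'$; once that is ruled out, reading off $\odot_m$ is immediate from its definition and from Remark \ref{remarquer dans le cas coherent}. The reduction to ordered form and the identification $d^{\alpha}_m=\langle\,\cdot\,\rangle^{\alpha}_m$ are routine.
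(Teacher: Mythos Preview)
Your reduction has a genuine gap. You write that a common $m$-face $c$ of $b$ and $b'$ is ``an $m$-simplex occurring in the $m$-source or $m$-target of each'', and then pick signs $\alpha,\beta$ accordingly. This is fine for the simplex of dimension exactly $m+1$ (every $m$-face of an $(m+1)$-simplex lies in $d_m^-$ or $d_m^+$, by Proposition~\ref{prop:but et source explicite des simplexes} with $k=m$). But by Remark~\ref{rem:sur le degre de composition} only \emph{one} of $b,b'$ is forced to have dimension $m+1$; the other, say $b'$, may have dimension $>m+1$. In that case $d_m^{-}(b')\cup d_m^{+}(b')$ consists only of the $m$-faces of $b'$ with signature $s_i^{|b'|-m}$ or $s_p^{|b'|-m}$, which is a strict subset of all $m$-faces of $b'$ as soon as $|b'|-m\geq 2$ (for instance, in $\Delta[3]$ the $1$-faces $d_0d_2 i_3$ and $d_1d_3 i_3$ lie in neither $d_1^-(i_3)$ nor $d_1^+(i_3)$). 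So there is no reason for the common face $c$ to appear in $\langle b'\rangle_m^{\pm}$, and your appeal to Proposition~\ref{prop: plus de fourche } never gets off the ground in that case.

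The paper's proof is designed precisely to bridge this gap. Assuming $|b'|\geq |b|$, it first picks an $(m+1)$-face $\tilde{b}$ of $b'$ containing the common $m$-face (so that $c$ does lie in $d_m^{\pm}(\tilde b)$), then uses Proposition~\ref{prop:expression d'un face} to produce a chain $a'=\tilde a+\tilde b$ which is $m$-parallel to $b'$. Substituting $a'$ for $b'$ in $a$ gives a new coherent chain, to which Proposition~\ref{prop: plus de fourche } can legitimately be applied to force $\alpha=-\beta$ between $b$ and $\tilde b$. A final computation (using $d_m^{+}(b')=d_m^{+}(a')$ and Proposition~\ref{prop:presque lineraite des sources et but}) transfers the non-vanishing of $d_m^{-}b\wedge d_m^{+}\tilde b$ back to $d_m^{-}b\wedge d_m^{+}b'\neq 0$, yielding $b\odot_m b'$. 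Your argument is correct when both $b$ and $b'$ have dimension $m+1$, but the general statement (which is what is used in Remark~\ref{rem:sur la comparabilite} with $d^i$ of dimension $n-1$ against $(k-1)$-simplices in $\gamma^i_k$) requires this extra step.
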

\begin{proof}
Les éléments $b$ et $b'$ jouent des rôles symétriques, on peut donc supposer que $|b'|\geq |b|$. La chaîne $a$ peut s'exprimer sous la forme $a = b'+b+c$ où $|b+c| = |a|_c+1$.
On sait de plus qu'il existe une $(|a|_c+1)$-face $\tilde{b}$ de $b'$ telle que $\tilde{b}$ et $b$ aient une $|a|_c$-face en commun. Il existe donc $\alpha,\beta\in\{-,+\}$ tel que $d^\alpha_{|a|_c}b\wedge d^\beta_{|a|_c} \tilde{b}\neq 0$.

Selon la proposition \ref{prop:expression d'un face}, il existe une chaîne $a'$, $|a|_c$-parallèle à $b'$ et qui comprend $\tilde{b}$. Il existe donc $\tilde{a}$ tel que $a'=\tilde{a}+\tilde{b}$. Le corollaire \ref{cor:lembis} indique que la chaîne $\tilde{a}+\tilde{b}+b+c$ est $|a|_c$-parallèle à $a$ et est donc cohérente. La propriété \ref{prop: plus de fourche } appliquée à la chaîne $\tilde{a}+\tilde{b}+b+c$
implique alors que $\alpha = -\beta$. On suppose $\alpha=-$, l'autre cas étant similaire. On a donc $d^-_{|a|_c} b\wedge d^+_{|a|_c} \tilde{b}\neq 0$. En utilisant encore une fois la propriété \ref{prop: plus de fourche }, on sait que pour tout $v \in \tilde{a}$ on a  $d_{|a|_c} ^- b\wedge d_{|a|_c} ^- v =  0$, d'où, selon le lemme \ref{lem:pseudolinearite}: $$d_{|a|_c} ^- b\wedge d_{|a|_c} ^- (\tilde{a})\leq d_{|a|_c} ^- b\wedge \sum_{v\in \tilde{a}} d_{|a|_c}^-v=  0.$$ On a donc
$$\def\arraystretch{1.4}
\begin{array}{rcl}
d^-_{|a|_c} b\wedge d^+_{|a|_c} b' &=&d^-_{|a|_c} b\wedge d^+_{|a|_c} (\tilde{a}+ b)\\
& =  & d^-_{|a|_c} b\wedge \big((d^+_{|a|_c} \tilde{b}\diagdown d_{|a|_c} ^- (\tilde{a}))_+
+ (d_{|a|_c} ^+ (\tilde{a})\diagdown d^-_{|a|_c}\tilde{b})_+\big)\\
&\geq & d^-_{|a|_c} b\wedge (d^+_{|a|_c} \tilde{b}\diagdown d_{|a|_c} ^- (\tilde{a}))_+\\
&\geq &d^-_{|a|_c} b\wedge d^+_{|a|_c} \tilde{b}\\
&\neq& 0.
\end{array}
$$
On a alors obtenu $b\odot_{|a|_c}b'$.
\end{proof}

\begin{prop}
\label{prop:complicial axiom un}
Soit $i\leq n$ un entier.
Tout simplexe dans $\gamma^i_k$ différent de $d^i $ comprend $\{i\}$. 
\end{prop}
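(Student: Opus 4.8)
The plan is to deduce the statement from a single lemma about the decomposition of Theorem~\ref{theo:decomposition explicite}, applied iteratively. First I would unfold Definition~\ref{defi:definition de gamma}: whenever $\gamma^i_k\neq\gamma^i_{k+1}$ (the other case being vacuous), $\gamma^i_k$ is the factor containing $d^i$ of the decomposition, at level $p:=|a|_c$, of a coherent chain $a$ of $C_\bullet(\Delta[n])$ — namely $a=d^\alpha_{n-1}i_n$ if $k=n-1$, and $a=\gamma^i_{k+1}$ if $k<n-1$. Since $|d^i|=n-1>p$ in every case, $d^i$ appears as one of the leading terms $b_{j_0}$ of an ordered form $a=\sum_j b_j+r_p(a)$, it is the only simplex of $\gamma^i_k$ of degree $>p$, and by Theorem~\ref{theo:decomposition explicite}
$$\gamma^i_k=d^i+\big(d^-_p(a_{<j_0})\diagdown d^+_p d^i\big)\vee\big(d^+_p(a_{\geq j_0+1})\diagdown d^-_p d^i\big)+r_p(a).$$

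The lemma I would prove is: \emph{if $a$ is a coherent chain of $C_\bullet(\Delta[n])$ with $p:=|a|_c\geq 0$, $d^i\in supp(a)$ of degree $>p$, and every simplex of $supp(a)$ other than $d^i$ contains the vertex $i$, then the factor $\gamma:=\beta_{j_0}$ above has the same property.} Granting this, the proposition follows by iteration starting from $a=d^\alpha_{n-1}i_n=\sum_{j\equiv i\,(2)} d^j$ (Proposition~\ref{prop:but et source explicite des simplexes}): the summands $d^j$ with $j\neq i$ all contain $i$ and $r_{n-2}=0$, so the hypothesis holds; the resulting $\gamma^i_{n-1}$ is again coherent (Theorem~\ref{theo:decomposition explicite}), has $|d^i|=n-1$ above its composition degree, and by the lemma has all other simplexes containing $i$, so one feeds it back in to get $\gamma^i_{n-2}$, and so on down to $\gamma^i_1=d^i$, a singleton by Proposition~\ref{prop:coherende de degre moin un est reduit a un singloton}.

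To prove the lemma, observe that $r_p(a)$ has degree $\leq p<n-1$, hence is distinct from $d^i$ and has all its simplexes containing $i$ by hypothesis; it remains to treat the two $\vee$-summands. I would show that the part of $d^+_p(a_{\geq j_0+1})$ (resp. of $d^-_p(a_{<j_0})$) supported on simplexes \emph{not} containing $i$ is $\leq d^-_p d^i$ (resp. $\leq d^+_p d^i$); since $i\notin d^i$ these two chains miss $i$ entirely, so $\diagdown$ annihilates exactly the bad part, and $\vee$ then preserves that every simplex of the result contains $i$. So let $w$ be a simplex with $i\notin w$ in $supp(d^+_p(a_{\geq j_0+1}))$. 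By Remark~\ref{rem:pseudolinearite}, $w\in supp(d^+_p b_{j'})$ for some $j'>j_0$; by Remark~\ref{rem:sur le degre de composition} this $b_{j'}$ has degree exactly $p+1$, and it contains $i$, so the unique $p$-face of $b_{j'}$ avoiding $i$ is $b_{j'}\smallsetminus i$, whence $w=b_{j'}\smallsetminus i=b_{j'}\cap d^i$. This is then the unique common $p$-face of $b_{j'}$ and $d^i$; and since every $b_j$ contains $i$, $b_{j'}$ is the only $b_j$ having $w$ as a $p$-face, so $w$ has coefficient $\leq 1$ in $d^+_p(a_{\geq j_0+1})$. Now $b_{j'}$ and $d^i$, both of degree $>p$, share the $p$-face $w$, so the proof of Lemma~\ref{lem:condition pour une relation} shows $d^-_p b_{j'}\wedge d^+_p d^i\neq 0$ or $d^-_p d^i\wedge d^+_p b_{j'}\neq 0$, both wedges being supported on $\{w\}$. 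As $w\in supp(d^+_p b_{j'})$ while $supp(d^+_p b_{j'})$ and $supp(d^-_p b_{j'})$ are disjoint ($\Delta[n]$ is regular, Proposition~\ref{prop:injectivite dans lse complexes simpliciaux}), the first alternative fails, so $w\in supp(d^-_p d^i)$ with coefficient $\geq 1$ there (and $\leq 1$, by coherence and Proposition~\ref{prop:pas de double}). Hence $d^+_p(a_{\geq j_0+1})\diagdown d^-_p d^i$ has no simplex avoiding $i$; the argument for $d^-_p(a_{<j_0})$ is symmetric, swapping $+$ and $-$.

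The main obstacle is this last reconciliation — that the shared face $w$ lies in $d^-_p d^i$, not merely that it is some $p$-face of $d^i$ (a $p$-face of $d^i$ need not lie in $d^-_p d^i\cup d^+_p d^i$ at all when $p<n-2$). What makes it go through is the combinatorial uniqueness of the common $p$-face of $b_{j'}$ and $d^i$ (forced by $i\in b_{j'}$, $i\notin d^i$), which pins both wedges of Lemma~\ref{lem:condition pour une relation} onto $\{w\}$, together with the orientation of simplices (disjointness of their source and target faces), which selects the correct alternative.
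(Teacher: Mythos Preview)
Your argument is correct, but the paper takes a much shorter route. Instead of iterating through the decomposition algorithm and tracking orientations of faces, it argues directly for each fixed $k$: write $\gamma^i_k=d^i+c$ with $c$ homogeneous of degree $k-1$ (using $r_{k-2}(\gamma^i_k)=0$ from Definition~\ref{defi:definition de gamma}); given any $(k-1)$-simplex $v$ not containing $i$, it is a face of $d^i$, so Proposition~\ref{prop:expression d'un face} produces a coherent chain $a'$ containing $v$ and $(k-2)$-parallel to $d^i$; substituting via Corollary~\ref{cor:lembis}, $a'+c$ is $(k-2)$-parallel to $\gamma^i_k$, hence coherent, and Proposition~\ref{prop:pas de double} forces all coefficients $\leq 1$; since $v\in a'$, one gets $v\notin c$.

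Your approach trades this substitution trick for an explicit step-by-step analysis of the factorisation formula of Theorem~\ref{theo:decomposition explicite}. That costs more --- you need the proof of Lemma~\ref{lem:condition pour une relation} and a face-counting argument to pin down the unique common face $w$ and decide which side of $d^\pm_p d^i$ it lands in --- but it does make visible \emph{why} the ``bad'' faces are cancelled by the $\diagdown$ operation. The paper's argument is slicker and reusable (the same substitution idea underlies Lemma~\ref{lem:condition pour une relation} and the inductive step of Proposition~\ref{prop:expression d'un face}); yours is self-contained but essentially reproves pieces of those results in situ. One small note: the remark about $r_p(a)$ is harmless but unnecessary, since in every case where you actually decompose (either $a=d^\alpha_{n-1}i_n$ or $a=\gamma^i_{k+1}$ with $|a|_c=k-1$) one already has $r_p(a)=0$.
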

\begin{proof}
La chaîne $\gamma^i_k$ est de degré de composition $k-2$ et vérifie $r_{k-2}(\gamma^i_k) = 0$. On peut donc l'exprimer sous la forme $\gamma^i_k = d^i+c$ où  $c$ est homogène de degré $k-1$. Donnons nous un $(k-1)$-simplexe $v$ quelconque ne comprenant pas $i$. C'est donc en particulier une $k$-face de $d^i$, et donc selon la proposition \ref{prop:expression d'un face}, il existe une chaîne $a$, $(k-2)$-parallèle à $d^i$ et comprenant $v$. Le corollaire \ref{cor:lembis} implique alors que la chaîne $a +c$ est $(k-2)$-parallèle à $d^i$, et donc cohérente. La proposition \ref{prop:pas de double} implique que $v\notin c$. Les simplexes apparaissant dans $\gamma^i_k$ et différents de $d^i $ comprennent donc $\{i\}$. 
\end{proof}

\begin{rem}
Un simplexe $v\in d_{n-1}^\alpha i_n\diagdown d^i$  est de la forme $d^k$ pour $k\neq i$. Rappelons que $\Delta[n]\cong \kappa^*\kappa[n]$, et donc le lemme \ref{lem:condition pour une relation} appliqué à $\kappa [n]$ implique que $v$ et $d^i$ sont comparables pour la relation $\odot_{n-2}$.
En posant $\alpha^v_i := +$ si $i$ est en position paire dans $v$, et $\alpha^v_i  = -$ si $i$ est en position impaire dans $v$, on a alors 
$$
\begin{array}{rl}
v\odot_{n-2}d^i &\mbox{si $\alpha^v_i =-$}\\
d^i\odot_{n-2}v &\mbox{si $\alpha^v_i =+$}.
\end{array}
$$

De même, la proposition \ref{prop:complicial axiom un} implique que tout $v\in \gamma^i_k \diagdown d^i$ comprend $i$. Il existe donc un entier $k$ tel que $d_k v$ ne comprenne pas $i$ et donc $d_k v$ est une face de $d_i$. En posant encore une fois $\alpha^v_i := +$ si $i$ est en position paire dans $v$, et $\alpha^v_i  = -$ si $i$ est en position impaire dans $v$, on a alors 
$$
\begin{array}{rl}
v\odot_{k-2}d^i &\mbox{si $\alpha^v_i =-$}\\
d^i\odot_{k-2}v &\mbox{si $\alpha^v_i =+$}.
\end{array}
$$
\label{rem:sur la comparabilite}
\end{rem}

\begin{prop}
\label{prop:complicial axiom}
Soient $i$ un entier tel que $0\leq i\leq n$, et $\alpha=+$ si $i$ est pair, et $\alpha=-$ sinon. Alors
tout simplexe dans $d^{\alpha}_{n-1} i_n$ différent de $d^i$ comprend $\{ i-1, i, i+1\}\cap [n]$ et
tout simplexe dans $\gamma^i_k$ différent de $d^i $ comprend $\{ i-1, i, i+1\}\cap [n]$. 
\end{prop}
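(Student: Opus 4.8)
The statement splits into two claims. The first, concerning $d^\alpha_{n-1}i_n$, is immediate from Proposition~\ref{prop:but et source explicite des simplexes}: that proposition gives $d^-_{n-1}(i_n)=d_{s_i^1}(i_n)=\sum_{j\text{ odd}}d^j$ and $d^+_{n-1}(i_n)=d_{s_p^1}(i_n)=\sum_{j\text{ even}}d^j$, hence $d^\alpha_{n-1}(i_n)=\sum_{j\equiv i\,(2)}d^j$ with the convention on $\alpha$. A simplex of this chain other than $d^i$ is a $d^j$ with $j\equiv i\pmod 2$ and $j\neq i$, so $|i-j|\geq 2$; since $d^j$ omits only the vertex $j$ and $j\notin\{i-1,i,i+1\}$, it contains $\{i-1,i,i+1\}\cap[n]$. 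Via Theorem~\ref{theo:decomposition explicite} this also settles the case $k=n-1$ (i.e.\ $\gamma^i_{n-1}=\gamma$) of the second claim.

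For the second claim the plan is a downward induction on $k$, the cases $k\geq n-1$ being done above and $k\leq 1$ being vacuous since $\gamma^i_1=d^i$. Fix $k<n-1$ and assume the claim for $\gamma^i_{k+1}$. If $|\gamma^i_{k+1}|_c<k-1$ then $\gamma^i_k=\gamma^i_{k+1}$ by Definition~\ref{defi:definition de gamma}, so assume $|\gamma^i_{k+1}|_c=k-1$; then $d^i$ is the unique element of $\gamma^i_{k+1}$ of dimension $>k$ (Remark~\ref{rem:sur le degre de composition}), $r_{k-1}(\gamma^i_{k+1})=0$, and an ordered form (Definition~\ref{defi:forme ordonne}) reads $\gamma^i_{k+1}=d^i+\sum_{j\neq l}b_j$ with $b_l=d^i$ and each $b_j$ ($j\neq l$) a $k$-simplex. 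Applying Theorem~\ref{theo:decomposition explicite}, then Remark~\ref{rem:pseudolinearite} and Proposition~\ref{prop:but et source explicite des simplexes}, every simplex $v$ of $\gamma^i_k$ distinct from $d^i$ lies in $\mathrm{supp}\,d^-_{k-1}(b_j)$ for some $b_j$ preceding $d^i$ in the ordered form, or in $\mathrm{supp}\,d^+_{k-1}(b_j)$ for some $b_j$ following $d^i$ — where $d^-_{k-1}(b_j)$, resp.\ $d^+_{k-1}(b_j)$, is the sum of the faces of $b_j$ obtained by deleting the vertex in an odd, resp.\ even, position.

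Now take such a $v$, coming from a simplex $b_j$. By Proposition~\ref{prop:complicial axiom un} we have $i\in v$, and by the induction hypothesis $b_j$ contains $\{i-1,i,i+1\}\cap[n]$, so these vertices occupy consecutive positions in $b_j$; it remains to see that $v=b_j\setminus\{w\}$ with $w\notin\{i-1,i+1\}$. Suppose $w=i-1$ (so $i\geq1$); the key is to pin down the parity of the position of $i$ in $b_j$. Since $\gamma^i_{k+1}$ is in ordered form, $d^i\odot_{k-1}b_j$ is false whenever $b_j$ precedes $d^i$ and $b_j\odot_{k-1}d^i$ is false whenever $b_j$ follows $d^i$; by Remark~\ref{rem:sur la comparabilite} (which yields $d^i\odot_{k-1}b_j$ when $i$ is in an even position of $b_j$, and $b_j\odot_{k-1}d^i$ when $i$ is in an odd position), it follows that $i$ sits in an odd position of $b_j$ if $b_j$ precedes $d^i$, and in an even position if it follows $d^i$. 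In the former case $i-1$ then sits in an even position, so $v=b_j\setminus\{i-1\}$ cannot occur in $d^-_{k-1}(b_j)$, a contradiction; the latter case, and the case $w=i+1$, are identical. Hence $v\supseteq\{i-1,i,i+1\}\cap[n]$. I expect the main obstacle to be the careful bookkeeping of positions and signatures linking Remark~\ref{rem:sur la comparabilite}, the ordered form, and the description of $d^\pm_{k-1}$ of a simplex; the extreme cases $i=0,n$ (where the intersection has two elements and $d^i$ is pushed to one end of the ordered form) require only a remark.
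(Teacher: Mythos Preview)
Your proof is correct and follows essentially the same route as the paper's: a downward induction on $k$, bounding the support of $\gamma^i_k\setminus d^i$ by the supports of $d^{\pm}_{k-1}(b_j)$ for the other $b_j$ in (an ordered form of) $\gamma^i_{k+1}$, and then using the parity of the position of $i$ in $b_j$ to rule out deletion of $i-1$ or $i+1$. The paper packages these two steps as separate lemmas (Lemma~\ref{lem:inegalite} for the support bound, Lemma~\ref{lem: i i plus un} for the parity analysis), whereas you inline both; your handling of the base case $k=n-1$ is a touch terse but the intended argument (run the same step with $d^\alpha_{n-1}i_n$ in the role of $\gamma^i_n$) is exactly what the paper does.
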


On a besoin de deux lemmes:

\begin{lem} Pour $k<n-1$, on a les inégalités suivantes
$$\def\arraystretch{1.4}
\begin{array}{rcl}
\gamma^i_{n-1}&\leq& d^i + \sum_{v\in d_{n-1}^\alpha i_n\diagdown d^i} d^{\alpha^v_i}_{n-2}(v)\\
\gamma^i_{k}&\leq& d^i + \sum_{v\in  \gamma^i_{k+1}\diagdown d^i} d^{\alpha^v_i}_{k-1}(v).
\end{array}
$$
\label{lem:inegalite}
\end{lem}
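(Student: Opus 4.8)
The plan is to obtain both inequalities from the explicit decomposition of a coherent chain recalled just above (Theorem \ref{theo:decomposition explicite}), combined with the two elementary facts $x\vee y\leq x+y$ and $x\diagdown z\leq x$ for chains, and the sub-additivity of the source/target operators from Remark \ref{rem:pseudolinearite}.

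First I would treat the inequality for $\gamma^i_{n-1}$. By Proposition \ref{prop:but et source explicite des simplexes}, $d_{n-1}^\alpha i_n$ is the sum of the faces $d^j$ of $i_n$ whose index has the same parity as $i$; in particular $d^i$ is one of them. This chain is homogeneous of degree $n-1$, hence of composition degree $n-2$ with vanishing $(n-2)$-rest. Writing it in ordered form as $\sum_l v_l$ and letting $l_0$ be the index with $v_{l_0}=d^i$, the formula for the factors recalled before Definition \ref{defi:definition de gamma} identifies $\gamma^i_{n-1}$ with the factor $\beta_{l_0}=d^i+(d_{n-2}^-(\sum_{l<l_0}v_l)\diagdown d_{n-2}^+d^i)\vee(d_{n-2}^+(\sum_{l>l_0}v_l)\diagdown d_{n-2}^-d^i)$. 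Bounding the join by the sum of its two terms, each $\diagdown$ by its first argument, and then distributing $d_{n-2}^\mp$ over the two sub-sums via Remark \ref{rem:pseudolinearite}, I get $\gamma^i_{n-1}\leq d^i+\sum_{l<l_0}d_{n-2}^-(v_l)+\sum_{l>l_0}d_{n-2}^+(v_l)$. It then remains to match signs: by Remark \ref{rem:sur la comparabilite} every $v_l$ with $l\neq l_0$ is comparable to $d^i$ for $\odot_{n-2}$, and since the listing is ordered, those with $v_l\odot_{n-2}d^i$ (equivalently $\alpha^{v_l}_i=-$) must occur before $l_0$ while those with $d^i\odot_{n-2}v_l$ (equivalently $\alpha^{v_l}_i=+$) must occur after it; hence $d_{n-2}^-(v_l)=d_{n-2}^{\alpha^{v_l}_i}(v_l)$ in the first sum and $d_{n-2}^+(v_l)=d_{n-2}^{\alpha^{v_l}_i}(v_l)$ in the second, which is exactly the claimed bound.

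The inequality for $\gamma^i_k$ with $k<n-1$ is then handled identically, with $\gamma^i_{k+1}$ playing the role of $d_{n-1}^\alpha i_n$, with $\gamma^i_k$ that of $\gamma^i_{n-1}$, and with $k-1$ that of $n-2$. If $|\gamma^i_{k+1}|_c<k-1$, then since every element of $\gamma^i_{k+1}$ has degree at least $k$ (Definition \ref{defi:definition de gamma}(2)) the chain cannot have two elements, so it is reduced to $d^i$, giving $\gamma^i_k=\gamma^i_{k+1}=d^i$ and the inequality trivially. Otherwise $|\gamma^i_{k+1}|_c=k-1$ with vanishing $(k-1)$-rest, and by Proposition \ref{prop:pas de double} $\gamma^i_{k+1}=d^i+(\gamma^i_{k+1}\diagdown d^i)$ with $\gamma^i_{k+1}\diagdown d^i$ homogeneous of degree $k$; running the same computation on its ordered form, using Proposition \ref{prop:complicial axiom un} (every simplex of $\gamma^i_{k+1}$ other than $d^i$ contains $i$, hence shares a $(k-1)$-face with $d^i$) together with Lemma \ref{lem:condition pour une relation}, or equivalently the second part of Remark \ref{rem:sur la comparabilite} with its index shifted from $k$ to $k+1$, to settle the $\odot_{k-1}$-comparabilities and signs, yields the bound.

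I expect the delicate point to be the sign bookkeeping in the final step of each case: one must be sure the ordered form really places all the ``$\alpha^v_i=-$'' faces strictly before $d^i$ and all the ``$\alpha^v_i=+$'' faces strictly after it. This rests on the antisymmetry of $\odot$ --- valid here because the base of $C_\bullet(\Delta[n])$ is loop-free (Proposition \ref{prop : base du complexe dirige augmente simplicial est sans boucles}) --- via the observation that a face $v$ with $v\odot d^i$ listed after $d^i$ would contradict the defining condition of an ordered form. The chain-inequality manipulations ($\vee\leq+$, $\diagdown$, sub-additivity) are routine.
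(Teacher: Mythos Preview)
Your proof is correct and follows essentially the same route as the paper: identify $\gamma^i_{n-1}$ (resp.\ $\gamma^i_k$) as the $\beta_{l_0}$-factor in the decomposition of Theorem~\ref{theo:decomposition explicite}, use Remark~\ref{rem:sur la comparabilite} together with loop-freeness to place each $v$ on the correct side of $d^i$, drop the $\diagdown$-terms, and distribute via Remark~\ref{rem:pseudolinearite}. Your treatment is in fact slightly more careful than the paper's, which silently writes the $\vee$ as a $+$ and does not isolate the degenerate case $|\gamma^i_{k+1}|_c<k-1$.
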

\begin{proof}
La chaîne $\gamma^i_{n-1}$ est définie comme étant le facteur  comprenant $d^i$ dans la décomposition de $d_{n-1}^\alpha i_n$. 
On écrit cette chaîne sous forme ordonnée: $d_{n-1}^\alpha i_n = \sum_{i\leq m}b_i$. On dénote par $l$ l'entier vérifiant $b_l = d^i$. Comme la base de $C_\bullet(\Delta[n])$ est sans boucles, on déduit de la remarque \ref{rem:sur la comparabilite} que
pour tout $j<l$, $b_j\odot_{n-2} d^i$ et pour tout $j>l$, $d^i\odot_{n-2} b_j$. Selon la décomposition explicite du théorème \ref{theo:decomposition explicite}, on a donc:
$$\def\arraystretch{1.8}\begin{array}{rcl}
\gamma^i_{n-1} &=&d^i + d_{n-2}^+(\sum^{v\in d_{n-1}^\alpha i_n\diagdown d^i}_{ d^i\odot_{n-2}v} v)\diagdown d_{n-2}^-(d^i) +
 d_{n-2}^-(\sum^{v\in d_{n-1}^\alpha i_n\diagdown d^i}_{ v\odot_{n-2}d^i} v)\diagdown d_{n-2}^+(d^i)\\
 &\leq &d^i + d_{n-2}^+(\sum^{v\in d_{n-1}^\alpha i_n\diagdown d^i}_{ d^i\odot_{n-2}v} v)+
 d_{n-2}^-(\sum^{v\in d_{n-1}^\alpha i_n\diagdown d^i}_{ v\odot_{n-2}d^i} v).
\end{array}
$$

En appliquant le lemme \ref{lem:pseudolinearite}, on obtient bien,
$$\gamma^i_{n-1}\leq  d^i + \sum_{v\in d_{n-1}^\alpha i_n\diagdown d^i} d^{\alpha^v_i}_{n-2}(v).
$$

De même $\gamma^i_{k}$ est défini comme étant le facteur  comprenant $d^i$ dans la décomposition de $\gamma^i_{k+1}$. On écrit cette chaîne sous forme ordonnée: $\gamma^i_{k+1}  = \sum_{i\leq m} b_i$ et on dénote par $l$ l'entier vérifiant $b_l = d^i$. Comme plus haut,
pour tout $j<l$, $b_j\odot_{k-1} d^i$ et pour tout $j>l$, $d^i\odot_{k-1} b_j$. On a donc
$$\def\arraystretch{1.8}
\begin{array}{rcl}
\gamma^i_{k} &=&d^i + d_{k-1}^+(\sum^{v\in \gamma^i_{k+1}\diagdown d^i}_{ d^i\odot_{k-1}v} v)\diagdown d_{k-1}^-(d^i) +
 d_{k-1}^-(\sum^{v\in \gamma^i_{k+1}\diagdown d^i}_{ v\odot_{k-1}d^i} v)\diagdown d_{k-1}^+(d^i)\\
&\leq & d^i + d_{k-1}^+(\sum^{v\in \gamma^i_{k+1}\diagdown d^i}_{ d^i\odot_{k-1}v} v) +
 d_{k-1}^-(\sum^{v\in \gamma^i_{k+1}\diagdown d^i}_{ v\odot_{k-1}d^i} v).
\end{array}
$$
d'où
$$\gamma^i_{k}\leq d^i + \sum_{v\in  \gamma^i_{k+1}\diagdown d^i} d^{\alpha^v_i}_{k-1}(v).
$$
\end{proof}

\begin{lem}
Soit $v$ un $k$-simplexe comprenant $\{ i-1, i, i+1\}\cap [n]$. Alors les éléments de $d_{k-1}^{\alpha^v_i}(v)$ comprennent  $\{ i-1, i+1\}\cap [n]$.
\label{lem: i i plus un}
\end{lem}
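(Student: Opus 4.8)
Le plan est de conclure par un argument de parité sur les positions des sommets, en s'appuyant sur la description explicite des sources et buts des simplexes établie en \ref{prop:but et source explicite des simplexes}. On écrit $v=(v_0<v_1<\dots<v_k)$ comme simplexe non dégénéré de $\Delta[n]$, et on note $a$ l'indice tel que $v_a=i$. Puisque $v$ contient $\{i-1,i,i+1\}\cap[n]$ et que $i-1,i,i+1$ sont des entiers consécutifs, on a nécessairement $v_{a-1}=i-1$ dès que $i-1\geq 0$ et $v_{a+1}=i+1$ dès que $i+1\leq n$ : les deux sommets $i-1$ et $i+1$ qui appartiennent à $[n]$ occupent donc dans $v$ des positions dont la parité est l'opposée de celle de $a$.

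Ensuite, j'appliquerais la proposition \ref{prop:but et source explicite des simplexes} au $k$-simplexe $v$. Comme $s_p^1=p$ et $s_i^1=i$, elle fournit
$$d_{k-1}^{+}(v)=d_p(v)=\sum_{0\leq j\leq k,\ j\ \mathrm{pair}}d_j(v)\qquad\text{et}\qquad d_{k-1}^{-}(v)=d_i(v)=\sum_{0\leq j\leq k,\ j\ \mathrm{impair}}d_j(v),$$
où $d_j$ désigne la $j$-ième application face, qui supprime le sommet de position $j$. Aucun terme de ces sommes n'est nul, les faces des simplexes non dégénérés de $\Delta[n]$ étant non dégénérées ; et ces faces étant deux à deux distinctes, le support de chacune de ces chaînes est exactement l'ensemble des $d_j(v)$ avec $j$ de parité fixée. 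Par définition, $\alpha^v_i=+$ si $a$ est pair et $\alpha^v_i=-$ sinon, de sorte que dans les deux cas $d_{k-1}^{\alpha^v_i}(v)$ est la somme des $d_j(v)$ pour lesquels $j$ a la même parité que $a$.

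Il ne reste plus qu'à observer que pour un tel $j$ on a $j\neq a-1$ et $j\neq a+1$, puisque $a-1$ et $a+1$ sont de parité opposée à $a$. La face $d_j(v)$ ne supprime donc ni le sommet $v_{a-1}=i-1$ ni le sommet $v_{a+1}=i+1$, et comprend par conséquent $\{i-1,i+1\}\cap[n]$. Ceci valant pour chaque élément du support de $d_{k-1}^{\alpha^v_i}(v)$, on obtient l'énoncé voulu. Il n'y a pas ici d'obstacle véritable ; le seul point méritant attention est le traitement des cas de bord $i=0$ (où forcément $a=0$, et seul $i+1$ est en jeu) et $i=n$ (où $a=k$, et seul $i-1$ est en jeu), mais la notation $\cap[n]$ les absorbe sans effort, l'argument de parité ne portant que sur ceux des sommets $i-1,i+1$ qui appartiennent effectivement à $[n]$.
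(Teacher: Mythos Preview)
Your proof is correct and follows essentially the same parity argument as the paper: both observe that $d_{k-1}^{\alpha^v_i}(v)$ is the sum of the faces $d_j(v)$ with $j$ of the same parity as the position of $i$ in $v$, whereas $i-1$ and $i+1$ occupy adjacent positions of the opposite parity and are therefore never removed. The paper's version is simply terser, treating only the case $\alpha^v_i=+$ explicitly and leaving the boundary cases $i=0,\,i=n$ implicit in the $\cap[n]$ notation.
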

\begin{proof}
Donnons nous un tel simplexe. On suppose que $\alpha^v_i=+$, c'est-à-dire que  $i$ est en position paire. Les 
entiers $i-1$ et $i+1$ sont donc en position impaire. Or
$d_{k-1}^{\alpha^v_i}(v) = d_{k-1}^{+}(v) = d_p v$.  Un simplexe $\tilde{v}\in d_{k-1}^{\alpha^v_i}(v)$ est donc de la forme $d_{2j}v$, et comprend $\{ i-1, i+1\}\cap [n]$.
\end{proof}

\begin{proof}[Démonstration de la proposition \ref{prop:complicial axiom}]
Remarquons tout d'abord que les éléments de $d_{n-1}^\alpha i_n\diagdown d^i$ sont de la forme $d^j$ pour un $j$ de la même parité que $i$ et différent de $i$. Ils comprennent donc $\{ i-1, i, i+1\}\cap [n]$.

Montrons maintenant par une récurrence descendante sur $k$ que tout $v\in \gamma^i_k\diagdown d^i$ comprend ${\{ i-1, i, i+1\}\cap [n]}$. Commençons donc par le cas $k=n-1$, et donnons nous $v\in\gamma^i_{n-1}\diagdown d^i$. Selon le lemme \ref{lem:inegalite}, il existe donc $\tilde{v}\in d_{n-1}^\alpha i_n\diagdown d^i$ tel que $ v\in d^{\alpha^v_i}_{n-2}(\tilde{v})$. Le lemme \ref{lem: i i plus un} implique donc que $v$ comprend ${\{ i-1, i+1\}\cap [n]}$ et la proposition \ref{prop:complicial axiom un} que $v$ comprend $i$.

Supposons maintenant le résultat vrai pour les simplexes de $\gamma^i_{k+1}$ et donnons nous $v\in \gamma^i_{k}\diagdown d^i$. Le lemme \ref{lem:inegalite} implique qu'il existe donc $\tilde{v}\in \gamma^i_{k+1}\diagdown d^i$ tel que $ v\in d^{\alpha^v_i}_{k-1}(\tilde{v})$. Par hypothèse de récurrence $\tilde{v}$ comprend ${\{ i-1, i, i+1\}\cap [n]}$, et le lemme \ref{lem: i i plus un} et la proposition \ref{prop:complicial axiom un} impliquent alors le résultat.
\end{proof}

\begin{prop}
\label{prop: n(c) est un ensemble complicial}
L'ensemble stratifié $\N(C)$ est un ensemble complicial.	
\end{prop}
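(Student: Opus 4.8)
Le plan est de vérifier séparément les deux propriétés de relèvement définissant un ensemble complicial : celle de $\N(C)\to 1$ par rapport aux cornets compliciaux $\Lambda^k[n]^\circ\to_r\Delta^k[n]$ et celle par rapport aux extensions de marquage $\Delta^k[n]'\to_e\Delta^k[n]''$. Dans les deux cas on réutilisera les outils de la section précédente : l'isomorphisme $|\Delta[n]|\cong\textbf{Eq}^{\pm}_{k,\Delta[n]}$ en dessous de $|\Lambda^k[n]|$, la récurrence descendante de la preuve de la proposition~\ref{prop:1trivial implique complexe de Kan}, et surtout le contrôle, fourni par la proposition~\ref{prop:complicial axiom}, des simplexes apparaissant dans les cellules $a^k_j$, $b^k_j$ et $\gamma^k_j$ de la définition~\ref{defi:definition de gamma}.

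Pour les cornets, on se donnera $0\le k\le n$ et un morphisme stratifié $g:\Lambda^k[n]^\circ\to\N(C)$, vu par adjonction comme $f:|\Lambda^k[n]|\to C$. On remarquera d'abord que le seul simplexe de $\Delta^k[n]$ qui soit marqué et ne soit pas déjà un simplexe de $\Lambda^k[n]$ est $i_n$ — la face manquante $d^k$ omet le sommet $k$, donc n'est pas marquée — de sorte qu'il suffit de prolonger $f$ en $|\Delta[n]|\to C$ en envoyant $i_n$ sur une cellule faiblement inversible. Pour cela on reprendra la récurrence descendante de la proposition~\ref{prop:1trivial implique complexe de Kan} en substituant à l'hypothèse de $1$-trivialité l'observation suivante : si $a^k_j$ n'est pas une unité itérée, ses éléments de degré maximal sont des simplexes de $\Lambda^k[n]$ distincts de $d^k$ figurant dans $d^{\alpha}_{n-1}i_n$ (si $j=n-1$) ou dans $\gamma^k_{j+1}$ (si $j<n-1$), donc — par la proposition~\ref{prop:complicial axiom} — ils contiennent $\{k-1,k,k+1\}\cap[n]$ ; ils sont par conséquent marqués dans $\Lambda^k[n]^\circ$, donc envoyés par $f$ sur des cellules faiblement inversibles, et la proposition~\ref{prop:chaines est faible inversible} donne que $f(a^k_j)$ et $f(b^k_j)$ sont faiblement inversibles. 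On pourra alors appliquer la propriété de division (proposition~\ref{prop:propdedivision a droite}) à chaque étape, exactement comme dans la preuve de la proposition~\ref{prop:1trivial implique complexe de Kan} ; à la dernière étape ($j=1$, où l'équation en jeu est $\textbf{Eq}^{\pm}_{k,\Delta[n]}$), la division produit une $(n-1)$-cellule $x$ telle que le composé correspondant soit $\omega$-équivalent à $d^{\pm}_{n-1}i_n$, et l'$\omega$-équivalence témoin est une $n$-cellule faiblement inversible, qui n'est autre que l'image de $i_n$.

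Pour les extensions $\Delta^k[n]'\to_e\Delta^k[n]''$, on se donnera un morphisme stratifié $h:\Delta^k[n]'\to\N(C)$, vu comme $h:|\Delta[n]|\to C$ envoyant sur des cellules faiblement inversibles tout simplexe contenant $\{k-1,k,k+1\}\cap[n]$ ainsi que $d^{k-1}$ et $d^{k+1}$. Comme le morphisme simplicial sous-jacent au relèvement cherché est encore $h$, il suffira de montrer que $h(d^k)$ est faiblement inversible, $d^k$ étant le seul simplexe de codimension $1$ non déjà marqué dans $\Delta^k[n]'$. On posera $\alpha=+$ si $k$ est pair et $\alpha=-$ sinon, de sorte que $d^{\alpha}_{n-1}i_n$ contienne $d^k$. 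La chaîne $d^{-\alpha}_{n-1}i_n$ a tous ses éléments de degré maximal parmi les $d^m$ avec $m$ de parité $-\alpha$, donc distincts de $k$ : la proposition~\ref{prop:chaines est faible inversible} donne que $h(d^{-\alpha}_{n-1}i_n)$ est faiblement inversible, et comme $h(i_n)$ est une $n$-cellule faiblement inversible dont c'est l'un des bords, la compatibilité de la relation $\sim$ avec les compositions entraîne que l'autre bord $h(d^{\alpha}_{n-1}i_n)$ l'est aussi. On utilisera enfin les décompositions $d^{\alpha}_{n-1}i_n=a^k_{n-1}*_{n-2}\gamma^k_{n-1}*_{n-2}b^k_{n-1}$ et $\gamma^k_{j+1}=a^k_j*_{j-1}\gamma^k_j*_{j-1}b^k_j$ pour $j<n-1$ : comme plus haut chaque $h(a^k_j)$ et $h(b^k_j)$ est faiblement inversible, et le corollaire~\ref{cor;compatibilite de faible inversible}, appliqué deux fois à chaque étape pour supprimer ces facteurs extérieurs, donne par récurrence descendante que $h(\gamma^k_j)$ est faiblement inversible pour tout $j$, d'où $h(d^k)=h(\gamma^k_1)$ l'est.

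La difficulté principale est surtout comptable : il faudra identifier correctement les simplexes marqués de $\Delta^k[n]$ et de $\Delta^k[n]'$, faire coïncider le cornet complicial avec l'équation $\textbf{Eq}^{\pm}_{k,\Delta[n]}$, et — seul point véritablement nouveau par rapport aux propositions~\ref{prop:1trivial implique complexe de Kan} et~\ref{prop:2trivial implique quasi categorie} — s'assurer que la construction produit une solution et pas seulement une pré-solution, c'est-à-dire que la cellule de tête $y=i_n$ est faiblement inversible. Ce dernier point ne coûte rien de plus, puisque la propriété de division fournit déjà une $\omega$-équivalence, une fois acquise l'observation élémentaire — conséquence de la compatibilité de $\sim$ avec les compositions — qu'une $\omega$-équivalence dont le but est faiblement inversible a une source faiblement inversible.
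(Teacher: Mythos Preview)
Your plan is correct and follows essentially the same route as the paper's proof: invoke Proposition~\ref{prop:complicial axiom} to see that the top-degree simplices of each $a^k_j,b^k_j$ are marked, hence sent to weakly invertible cells by Proposition~\ref{prop:chaines est faible inversible}, then rerun the descending recursion of Proposition~\ref{prop:1trivial implique complexe de Kan} for the horns and use repeated cancellation via Corollary~\ref{cor;compatibilite de faible inversible} for the thinness extensions. Your write-up is in fact a bit more careful than the paper's on two points: you explicitly check that the only newly marked simplex the horn lift must respect is $i_n$ and that the division argument already produces a weakly invertible witness for it, and for the extension you correctly work with $d^{-\alpha}_{n-1}i_n$ (the side not containing $d^k$), whereas the paper writes $d^{\alpha}_{n-1}i_n$ in both the claim and the displayed relation, which as stated would be circular.
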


Rappelons que pour tout entier $n>0$ et tout entier $i\leq n$, si $i$ est pair, on  a  un isomorphisme en dessous de $|\Lambda^i[n]|$: 
$$|\Delta[n]| \cong \textbf{Eq}^+_{i,\Delta[n]} := \textbf{Eq}\big(y : (a^i_{n-1}*_{n-2}(...(a^i_1*_0 x *_0 b^i_1)...) *_{n-2} b^i_{n-1}) \to d_{n-1}^+ i_n \big),$$
et si $i$ est  impair, on a 
un isomorphisme en dessous de $|\Lambda^i[n]|$: 
$$|\Delta[n]| \cong   \textbf{Eq}^-_{i,\Delta[n]} := \textbf{Eq}\big(y : d_{n-1}^- i_n \to  (a^i_{n-1}*_{n-2}(...(a^i_1*_0 x *_0 b^i_1)...) *_{n-2} b^i_{n-1})\big).$$

\begin{proof}
Soient $i\leq n$ un entier et $\alpha=+$ si $i$ est pair, et $\alpha=-$ sinon.
Montrons que $\N(C)$ a la propriété de relèvement par rapport à l'inclusion d'ensembles compliciaux $\Lambda^i[n]^\circ\to \Delta^i[n]$. 
On se donne un morphisme $f: \Lambda^i[n]^\circ\to \N( C)$. Cela correspond à un morphisme $f : |\Lambda^i[n]|\to C$ qui envoie tout simplexe  de $\Lambda^i[n]$ comprenant $\{ i-1, i, i+1\}\cap [n]$ sur une cellule faiblement inversible.

Soient $k\leq n+1$ et $v$ un $k$-simplexe dans $ a^i_k$ ou $ b^i_k$. La proposition \ref{prop:complicial axiom}, implique que $v$ comprend $\{ i-1, i, i+1\}\cap [n]$, et est donc  envoyé par $f$ sur une cellule faiblement inversible. La proposition \ref{prop:chaines est faible inversible} implique donc que le morphisme $f$ envoie $a^i_k$ et $b^i_k$ sur des cellules faiblement inversibles.

  On peut donc procéder de la même façon que dans la preuve de la proposition \ref{prop:1trivial implique complexe de Kan} pour trouver, pour tout $\alpha\in\{-,+\}$, une pré-solution à l’équation $\textbf{Eq}^\alpha_{i,\Delta[n]}\cong |\Delta[n]|$ pour les paramètres $f : |\Lambda^i[n]|\to C$.

On veut maintenant montrer que pour tout entier $n>0$ et $i\leq n$, $\N(C)$  a la propriété de relèvement par rapport à l'inclusion d'ensembles compliciaux $\Delta^i[n]'\to \Delta^i[n]''$. La donnée d'un morphisme $f:\Delta^i[n]'\to \N(C)$ est équivalente à celle d'un morphisme $f:\textbf{Eq}^\alpha_{i,\Delta[n]}\cong|\Delta^i[n]| \to C$ qui envoie  $d^{i-1}$, $d^{i+1}$ et les simplexes  
comprenant $\{i-1,i,i+1\}\cap[n]$ sur des cellules faiblement inversibles. Ce morphisme se factorise par $\Delta^i[n]''$ si et seulement si $f(x)$ est faiblement inversible. 

On définit $\alpha=+$ si $i$ est pair, et $\alpha=-$ sinon.
Tous les $(n-1)$-simplexes de $d_{n-1}^\alpha i_n$ sont envoyés sur des cellules faiblement inversibles et selon la proposition \ref{prop:chaines est faible inversible}, cela implique que $d_{n-1}^\alpha i_n$ est envoyé sur une cellule faiblement inversible. Pour les mêmes raisons que plus haut les chaînes  $a^i_k, b^i_k$ et $d_{n-1}^\alpha i_n$ sont envoyées sur des cellules faiblement inversibles.

Donnons nous un tel morphisme $f$. On a donc 
$$  (f(a^i_{n-1})*_{n-2}(...(f(a^i_1)*_0 f(x) *_0 f(b^i_1))...) *_{n-2} f(b^i_{n-1})) \sim  f(d_{n-1}^\alpha 
i_n)
$$
Une application répétée du corollaire \ref{cor;compatibilite de faible inversible} implique alors le résultat. 
\end{proof}

\begin{prop}
\label{prop:n(c) est sature}
L'ensemble complicial $\N(C)$ est saturé.
\end{prop}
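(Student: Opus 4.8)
The plan is to imitate the proofs of Propositions~\ref{prop:1trivial implique complexe de Kan} and~\ref{prop: n(c) est un ensemble complicial}, reducing saturation to a statement about weak invertibility that is then settled with the decomposition theorem~\ref{theo:decomposition explicite}, Proposition~\ref{prop:chaines est faible inversible} and Corollary~\ref{cor;compatibilite de faible inversible}. First I reduce to markings. The underlying simplicial sets of $\Delta[3]^{eq}\star\Delta[n]$ and $\Delta[3]^\#\star\Delta[n]$ are both $\Delta[n+4]$, so these two stratified sets differ only by their markings; hence $\N(C)$ has the right lifting property against $\Delta[3]^{eq}\star\Delta[n]\to\Delta[3]^\#\star\Delta[n]$ (and the symmetric one) if and only if every stratified morphism $g\colon\Delta[3]^{eq}\star\Delta[n]\to\N(C)$, viewed as a morphism $g\colon|\Delta[n+4]|\to C$, already sends every simplex marked in $\Delta[3]^\#\star\Delta[n]$ to a weakly invertible cell. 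A nondegenerate face of $\Delta[3]\star\Delta[n]$ is a subset $S=S_1\sqcup S_2$ with $S_1\subseteq\{0,1,2,3\}$, $S_2\subseteq\{4,\dots,n+4\}$, and it is marked in $\Delta[3]^{?}\star\Delta[n]$ exactly when $S_1$ is marked in $\Delta[3]^{?}$; so the simplices marked in $\Delta[3]^\#\star\Delta[n]$ but not in $\Delta[3]^{eq}\star\Delta[n]$ are exactly the faces $\{j,k\}\star T$ with $\{j,k\}$ one of the four edges $\{0,1\},\{1,2\},\{2,3\},\{0,3\}$ of $\Delta[3]$ and $T\subseteq\{4,\dots,n+4\}$ arbitrary. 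It therefore suffices to prove: if $g$ sends every face whose $\{0,1,2,3\}$-part is marked in $\Delta[3]^{eq}$ (i.e.\ is $[02]$, $[13]$, or has at least three vertices) to a weakly invertible cell, then $g$ sends every face $\{j,k\}\star T$ as above to a weakly invertible cell.

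I would prove this by induction on $|T|$, handling the four edges in the order $\{1,2\}$, then $\{0,1\}$ and $\{2,3\}$, then $\{0,3\}$. For $[12]\star T$: the faces $[012]\star T$ and $[123]\star T$ are marked in $\Delta[3]^{eq}\star\Delta[n]$, so $g$ is weakly invertible on them and on all their marked subfaces. Using Theorem~\ref{theo:decomposition explicite}, write the source of the atomic cell associated with $[012]\star T$ as a $*$-composite of factors: by Proposition~\ref{prop:but et source explicite des simplexes} every such factor has top part a face of the form $[02]\star T'$ or $[012]\star T'$, all marked, so by Proposition~\ref{prop:chaines est faible inversible} its image under $g$ is weakly invertible, hence so is the image of the source; since $g$ of the atom is weakly invertible, the image of the target is weakly invertible too. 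In the decomposition of this target the only non-marked top faces are $[12]\star T$ and $[01]\star T$, and a computation with $\odot$ (via regularity, Proposition~\ref{prop:injectivite dans lse complexes simpliciaux}, and Proposition~\ref{prop: plus de fourche }) shows that the $[12]\star T$-factor is the first factor $\beta_0$ and the $[01]\star T$-factor is the last factor $\beta_m$, all intermediate factors being marked; the inductive hypothesis on $|T|$ then shows that the lower-dimensional padding of $\beta_0$ and $\beta_m$ only involves faces $[01]\star T'$, $[12]\star T'$ with $|T'|<|T|$, or marked faces, all weakly invertible under $g$, so that $g(\beta_0)$ is weakly invertible iff $g([12]\star T)$ is (by Corollary~\ref{cor;compatibilite de faible inversible} and its one-sided refinement), and likewise for $\beta_m$ and $[01]\star T$. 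Extracting one-sided weak inverses from the composite — exactly the identity ``$X*_{m-1}Y$ weakly invertible $\Rightarrow$ $X$ weakly invertible à droite and $Y$ weakly invertible à gauche'' — gives that $g([12]\star T)$ is weakly invertible à droite; the same argument applied to $[123]\star T$ instead of $[012]\star T$ gives $g([12]\star T)$ weakly invertible à gauche, hence weakly invertible. Then $g([01]\star T)$ and $g([23]\star T)$ follow by peeling the now-known $g([12]\star T)$ off the two composites coming from $[012]\star T$ and $[123]\star T$ via Corollary~\ref{cor;compatibilite de faible inversible}, and $g([03]\star T)$ follows in the same way from $[013]\star T$, $[023]\star T$ once $[01]\star T$, $[13]\star T$, $[23]\star T$, $[02]\star T$ are weakly invertible (using Proposition~\ref{prop:propdedivision a droite}).

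The base case $T=\varnothing$ — equivalently the non-family axiom $\Delta[3]^{eq}\to\Delta[3]^\#$ — is the conceptual core and is clean. Write $a=g(\sigma_{01})$, $b=g(\sigma_{12})$, $c=g(\sigma_{23})$. The weak invertibility of $g(\sigma_{012})$ and of $g(\sigma_{02})$ gives $a*_0 b$ weakly invertible, and that of $g(\sigma_{123})$ and $g(\sigma_{13})$ gives $b*_0 c$ weakly invertible. Taking a weak inverse $u$ of $a*_0 b$, the cell $b*_0 u$ is a right weak inverse of $a$ and $u*_0 a$ is a left weak inverse of $b$, so $b$ is weakly invertible à gauche; symmetrically, $b*_0 c$ weakly invertible makes $b$ weakly invertible à droite, hence $b$ is weakly invertible. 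Then $a$, $c$, and $g(\sigma_{03})$ (which is $\omega$-equivalent to $a*_0 g(\sigma_{13})$ and to $g(\sigma_{23})*_0 g(\sigma_{02})$) follow from Corollary~\ref{cor;compatibilite de faible inversible}.

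The main obstacle is the combinatorial bookkeeping of the inductive step: one must verify that in the decomposition of the target of $[012]\star T$ (and of $[123]\star T$) the two non-marked faces land precisely at the two ends of the composite while every other top factor is marked, and that the lower-dimensional padding attached to the end factors involves only faces with strictly smaller $T$-part (together with marked faces), so that the inductive hypothesis, Proposition~\ref{prop:chaines est faible inversible} and Corollary~\ref{cor;compatibilite de faible inversible} apply. This is the analogue, for the joins $\Delta[3]^{eq}\star\Delta[n]$, of the analysis of the chains $\gamma^i_k$ carried out in~\ref{defi:definition de gamma} and Propositions~\ref{prop:complicial axiom un}--\ref{prop:complicial axiom}, and I expect it to demand a similarly careful description of which simplices occur in the relevant faces.
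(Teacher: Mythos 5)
Your reduction to markings is correct (for the entire inclusion $\Delta[3]^{eq}\star\Delta[n]\to_e\Delta[3]^\#\star\Delta[n]$, lifting is exactly the statement that the four edges $[01],[12],[23],[03]$ joined with an arbitrary face $T$ of $\Delta[n]$ go to weakly invertible cells), and your base case $T=\varnothing$ is right and coincides with the paper's (two-out-of-six via one-sided inverses, modulo a harmless flip of the composition convention). But the inductive step on $|T|$ contains a genuine gap: the two combinatorial claims it rests on are asserted, not proved, and you yourself flag them as ``the main obstacle''. Concretely, you need (i) that in the ordered decomposition of $d^+_{|T|+1}([012]\star T)$ the factors with tops $[12]\star T$ and $[01]\star T$ sit at the two ends with only marked tops between them, and (ii) that the lower-dimensional padding of those two end factors involves only marked faces or faces $[jk]\star T'$ with $|T'|<|T|$. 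Claim (ii) is the delicate one: the padding of $\beta_0$ and $\beta_m$ lives in codimension $\geq 2$ of $[012]\star T$, and codimension-$2$ faces include $\{0\}\star T$, $\{1\}\star T$, $\{2\}\star T$, which are \emph{neither} marked in $\Delta[3]^{eq}\star\Delta[n]$ (their $\{0,1,2,3\}$-part is a single vertex, their $T$-part is unmarked) \emph{nor} of the form covered by your induction hypothesis. In small examples these faces do occur in $d^\pm$ of the individual top simplices and happen to cancel in the $\diagdown$ of Théorème \ref{theo:decomposition explicite}; showing that they always cancel is precisely the kind of statement that Propositions \ref{prop:complicial axiom un} and \ref{prop:complicial axiom} establish for the chains $\gamma^i_k$, and an analogous (new) lemma would have to be proved for your decomposition before Corollaire \ref{cor;compatibilite de faible inversible} can transfer weak invertibility between $g(\beta_0)$ and $g(\langle[12]\star T\rangle)$.

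For comparison, the paper avoids this new combinatorics entirely by inducting on the total dimension rather than on $|T|$: it restricts $g$ along $|\Delta[d_3]\star\mathrm{id}|$ and $|\Delta[d_0]\star\mathrm{id}|$, so that the four problematic faces become the $d^0$ and $d^2$ of two copies of $|\Delta[2]\star\Delta[n-2]|$, and then applies the already-established isolation $d_{n-1}^\alpha i_n=a^i_{n-1}*\gamma^i_{n-1}*b^i_{n-1}$ together with Proposition \ref{prop:complicial axiom}: the padding there consists of simplices containing $\{i-1,i,i+1\}$, hence having at least two vertices in $\{0,1,2,3\}$ and dimension $\leq n-2$, which the induction hypothesis already marks. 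Each restriction yields one-sided invertibility of one unmarked edge-face, and the two restrictions combine to give two-sided invertibility — exactly the role your two relations from $[012]\star T$ and $[123]\star T$ are meant to play, but with the hard lemma already in hand. If you want to keep your induction on $|T|$, you must prove the analogue of Proposition \ref{prop:complicial axiom} for the decomposition of $d^+([012]\star T)$; otherwise the shorter path is to follow the paper and reuse the $\gamma^i_k$ machinery.
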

\begin{proof}
On va montrer par récurrence  sur $n\geq -1$ que $\N(C)$ a la propriété de relèvement à droite par rapport aux morphismes $\Delta[3]^{eq}\star\Delta[n-3]\to \Delta[3]^{\#}\star\Delta[n-3]$, où on définit pour un ensemble complicial quelconque $X$, $X\star \Delta[-1] := X$. 

Commençons par se donner un morphisme $g:\Delta[3]^{eq}\to \N(C)$. Cela correspond à un morphisme $g : |\Delta[3]| \to C$ qui envoie tout simplexe  de $\Delta[3]$ comprenant $\{ 0,2\}$ ou $\{1,3\}$ sur une cellule faiblement inversible.  Ce morphisme se factorise  par $|\Delta[3]^{eq}|$ si et seulement si $g([0,1]), g([0,3]), g([1,2])$ et $g([2,3])$ sont des cellules faiblement inversibles. Remarquons alors que l'on a 
$$g([1,2])*_0 g([0,1])\sim g([0,2])\mbox{   ~~et~~   }g([2,3])*_0 g([1,2])\sim g([1,3])$$
Les cellules $g([0,2])$ et $g([1,3])$ étant par hypothèse faiblement inversibles, on en déduit que $g([0,1]), g([1,2])$ et $ g([2,3])$  le sont aussi. Enfin, la relation 
$$g([0,3])\sim g([2,3])*_0g([1,2])*_0g([0,1])$$ implique que $g([0,3])$ est aussi faiblement inversible.

Supposons maintenant le résultat vrai pour $n\geq 3$. 
On se donne un morphisme $$g: \Delta[3]^{eq}\star \Delta[n-2] \to \N( C).$$ Cela correspond à un morphisme $$g : |\Delta[n+1]|\cong|\Delta[3]\star \Delta[n-2]|\to C$$ qui envoie tout simplexe  comprenant $\{ 0,2\}$ ou $\{1,3\}$ sur une cellule faiblement inversible. De plus, l'hypothèse de récurrence implique que  tout simplexe $v$ de dimension strictement inférieure ou égale à $(n-2)$ et tel que $\{0,1,2,3\}\cap v$ soit de cardinal au moins $2$, est envoyé par $g$ sur une cellule faiblement inversible. Ce morphisme se factorise par $|\Delta[3]^{eq}\star \Delta[n-2]|$ si et seulement si $g(d^{0,1}), g(d^{0,3}), g(d^{1,2})$ et $g(d^{2,3})$ sont des cellules faiblement inversibles.

On va tout d'abord s’intéresser au morphisme induit:
$$f: |\Delta[n]|\cong |\Delta[2]\star \Delta[n-2]|\xrightarrow{ |\Delta [d_3] \star \Delta[n-3]|}|\Delta[3]\star \Delta[n-2]|\xrightarrow{~g~}C.$$

\note{La cellule $f(i_n)$ est  alors faiblement inversible. On a donc $f(d^-_{n-1}(i_n))\sim f(d^+_{n-1}(i_n))$. Selon la remarque \ref{rem:sur les compose de gamma}, 
si $n$ est pair, on a alors
$$f(\gamma^{n-1}_{n-1})*_{n-2}f(\gamma^{n-3}_{n-1})*_{n-2} \cdots  *_{n-2} f(\gamma^{1}_{n-1})
 \sim f(\gamma^0_{n-1})*_{n-2}f(\gamma^2_{n-1})*_{n-2} \cdots  *_{n-2} f(\gamma^{n}_{n-1})$$
et si $n$  est impair,
$$f(\gamma^n_{n-1})*_{n-2}f(\gamma^{n-2}_{n-1})*_{n-2} \cdots  *_{n-2} f(\gamma^{1}_{n-1})
 \sim f(\gamma^0_{n-1})*_{n-2}f(\gamma^2_{n-1})*_{n-2} \cdots  *_{n-2} f(\gamma^{n-1}_{n-1}).$$
}
Pour tout $i=1$ ou $i>2$, la $(n-1)$-cellule $f(d^i)$ est faiblement inversible, et la proposition \ref{prop:chaines est faible inversible}
 implique que $f(\gamma_{n-1}^i)$ l'est aussi.

Le corollaire \ref{cor;compatibilite de faible inversible} implique alors que  $f(\gamma^0_{n-1})*_{n-2}f(\gamma^2_{n-1})$ est  une cellule faiblement inversible. La cellule $f(\gamma^0_{n-1})$ (resp. $f(\gamma^2_{n-1})$) est donc faiblement inversible à gauche (resp. à droite).

Pour $i=0,2$, selon la proposition \ref{prop:complicial axiom}, tous les simplexes apparaissant dans la décomposition de $\gamma^i_{n-1}$ et différents de $d^i$ sont envoyés sur des cellules faiblement inversibles.  Une application répétée du corollaire \ref{cor;compatibilite de faible inversible} implique alors que $g(d^{0,3}) = f(d^0)$ (resp. $g(d^{2,3})=f(d^2)$) est faiblement inversible à gauche (resp. à droite).

En étudiant
$$
\begin{array}{r}
|\Delta[n]|\cong |\Delta[2]\star \Delta[n-2]|\xrightarrow{ |\Delta [d_0] \star \Delta[n-3]|}|\Delta[3]\star \Delta[n-2]|\xrightarrow{~g~}C\\
\end{array}
$$
on montre de la même façon que $g(d^{0,1})$ (resp. $g(d^{0,3})$) est faiblement inversible à gauche (resp. à droite). On est déduit donc que $g(d^{1,2})$ est faiblement inversible, ce qui implique que $g(d^{0,3})$ et $g(d^{2,3})$ le sont aussi.
\end{proof}

\begin{theo}
\label{theo: n(c) est un ensemble complicial}
Soit $C$ une $\omega$-catégorie. La stratification présentée à la définition \ref{defi:stratification de n(c)} munit $\mathcal{N}(C)$ d'une structure d'ensemble complicial saturé qui est $k$-triviale si et seulement si $C$ l'est. 
\end{theo}
\begin{proof}
C'est une application directe des propositions \ref{prop: n(c) est un ensemble complicial} et \ref{prop:n(c) est sature}.
\end{proof}

\printbibliography

\end{document}